\documentclass[11pt,oneside]{article}
\usepackage{setspace,graphicx,amssymb,amsmath,latexsym,amsfonts,amscd,amsthm,multirow,ctable,mathdots,caption,array}
\usepackage{fancyhdr,tabularx,cite,mathrsfs}
\usepackage[headings]{fullpage}
\usepackage{stmaryrd}
\usepackage{rotating}

\usepackage{hyperref}



\newcommand{\bea}{\begin{eqnarray}} 
\newcommand{\eea}{\end{eqnarray}} 
\newcommand{\bee}{\begin{eqnarray*}} 
\newcommand{\eee}{\end{eqnarray*}} 
\newcommand{\al}{\begin{align*}} 
\newcommand{\eal}{\end{align*}} 
\newcommand{\be}{\begin{equation}} 
\newcommand{\ee}{\end{equation}} 
\newcommand{\eq}[1]{(\ref{#1})} 
\newcommand{\bem}{\begin{pmatrix}} 
\newcommand{\eem}{\end{pmatrix}} 

\def\a{\alpha} 
\def\b{\beta} 
\def\c{\gamma} 
\def\d{\delta} 
 
\def\e{\epsilon}    
\def\f{\phi}

\def\h{\eta}

\def\mm{^{(m)}}
\def\inf{\infty} 
 
\def\k{\kappa}
\def\l{\lambda} 
\def\m{\mu} 
\def\n{\nu}

\def\p{\pi}    
\def\pa{\partial}

\def\s{\sigma}            
\def\t{\tau} 
\def\th{\theta} 
\def\til{\widetilde}

\def\D{\Delta}

\def\L{\Lambda} 
\def\O{\Omega}

\newcolumntype{R}{ >{$}r <{$}}
\newcolumntype{C}{ >{$}c <{$}}
\newcolumntype{L}{ >{$}l <{$}}
\newcolumntype{F}{>{\centering\arraybackslash}m{1.5cm}}

\def\ll{\ell}
\def\LL{\Lambda}

\newcommand{\mc}[1]{\mathcal{#1}}

\newcommand{\comment}[1]{}

\newcommand{\RR}{{\mathbb R}}
\newcommand{\CC}{{\mathbb C}}
\newcommand{\ZZ}{{\mathbb Z}}
\newcommand{\QQ}{{\mathbb Q}}
\newcommand{\HH}{{\mathbb H}}


\newcommand{\ii}{{\bf i}}

\newcommand{\tpi}{2\pi\ii}

\newcommand{\lab}{{\langle}}    
\newcommand{\rab}{{\rangle}}    


\newcommand{\Aut}{\operatorname{Aut}}

\newcommand{\Span}{\operatorname{Span}}

\def\jac{\operatorname{j}}

\def\infm{\operatorname{inf}}
\def\supr{\operatorname{sup}}

\newcommand{\tr}{\operatorname{{tr}}}
\newcommand{\str}{\operatorname{{str}}}

\newcommand{\Sym}{{\textsl{Sym}}}
\newcommand{\Alt}{{\textsl{Alt}}}
\newcommand{\Dih}{{\textsl{Dih}}}

\newcommand{\sgn}{\operatorname{sgn}}
\newcommand{\ex}{\operatorname{e}} 


\newcommand{\PSL}{\operatorname{\textsl{PSL}}}    
\newcommand{\SL}{\operatorname{\textsl{SL}}}      
\newcommand{\PGL}{\operatorname{\textsl{PGL}}}    
\newcommand{\AGL}{{\textsl{AGL}}}    
\newcommand{\GL}{{\textsl{GL}}}      
\newcommand{\SU}{\operatorname{\textsl{SU}}}    
\newcommand{\SO}{\operatorname{\textsl{SO}}}    

\newcommand{\G}{\Gamma}	
\newcommand{\g}{\gamma}	


\newcommand{\rs}{{X}}	

\newcommand{\MM}{\mathbb{M}}	
\newcommand{\Co}{\textsl{Co}}	


\newtheorem{thm}{Theorem}[section]
\newtheorem{cor}[thm]{Corollary}
\newtheorem{lem}[thm]{Lemma}
\newtheorem{prop}[thm]{Proposition}
\newtheorem{conj}[thm]{Conjecture}

\theoremstyle{definition}

\theoremstyle{remark}
\newtheorem{rmk}[thm]{Remark}

\numberwithin{equation}{section}


\pagestyle{fancy}

\addtolength{\headheight}{1.7pt}

\fancyhf{} \fancyhead[C]{\textsc{{U}mbral {M}oonshine and the {N}iemeier {L}attices}}

\fancyhead[R]{\thepage}


\begin{document}

\setstretch{1.4}

\title{
\vspace{-35pt}
    \textsc{\huge{{U}mbral {M}oonshine\\ and the {N}iemeier {L}attices}
        }
    }

\renewcommand{\thefootnote}{\fnsymbol{footnote}} 
\footnotetext{\emph{MSC2010:} 11F22, 11F37, 11F46, 11F50, 20C34, 20C35}     
\renewcommand{\thefootnote}{\arabic{footnote}} 

\author{
	Miranda C. N. Cheng\footnote{
	Universit\'e Paris 7, UMR CNRS 7586, Paris, France.
	{\em E-mail:} {\tt chengm@math.jussieu.fr}
	}\\
	John F. R. Duncan\footnote{
         Department of Mathematics, Applied Mathematics and Statistics,
         Case Western Reserve University,
         Cleveland, OH 44106,
         U.S.A.
         {\em E-mail:} {\tt john.duncan@case.edu}
                  }\\
	Jeffrey A. Harvey\footnote{
	Enrico Fermi Institute and Department of Physics,
         University of Chicago,
         Chicago, IL 60637,
         U.S.A.
         {\em E-mail:} {\tt j-harvey@uchicago.edu}
          }
}

\maketitle
\abstract{

In this paper we relate umbral moonshine to the Niemeier lattices: the 23 even unimodular positive-definite lattices of rank 24 with non-trivial root systems.
To each Niemeier lattice we attach a finite group by considering a naturally defined quotient of the lattice automorphism group,
and for each conjugacy class of each of these groups we identify a vector-valued mock modular form whose components coincide with mock theta functions of Ramanujan in many cases. 
This leads to the umbral moonshine conjecture, stating that an infinite-dimensional module is assigned to each of the Niemeier lattices in such a way that the associated graded trace functions are mock modular forms of a distinguished nature.
These constructions and conjectures extend those of our earlier paper, and in particular include the Mathieu moonshine observed by Eguchi--Ooguri--Tachikawa as a special case. 
Our analysis also highlights a correspondence between genus zero groups and Niemeier lattices. As a part of this relation we recognise the Coxeter numbers of Niemeier root systems with a type A component as exactly those levels for which the corresponding classical modular curve has genus zero.

}

\clearpage

\tableofcontents

\clearpage

\section{Introduction}\label{sec:intro}

In this paper we relate umbral moonshine to the Niemeier lattices. 
This relation associates one case of umbral moonshine to each of the  23 Niemeier lattices and in particular constitutes an extension of our previous work \cite{UM}, incorporating 17 new instances. 
Moreover, this prescription displays interesting connections to genus zero groups (subgroups $\G <\SL_2(\mathbb R)$ that define a genus zero quotient of the upper-half plane) and extended Dynkin diagrams via McKay's correspondence.

To explain this moonshine relation,  let us first recall what Niemeier lattices are. 
In 1973 Niemeier classified the even unimodular positive-definite lattices of rank 24 \cite{Nie_DefQdtFrm24}. 
There are 24 of them, including the so-called {\em Leech lattice} discovered almost a decade earlier in the context of optimal lattice sphere packing in 24 dimensions \cite{Lee_SphPkgs}. Niemeier proved that the Leech lattice is the unique even, unimodular and positive-definite lattice of rank 24 with no root vectors (lattice vectors with norm-square 2), while the other 23 even unimodular rank 24-dimensional lattices all have root systems of the full rank 24. 
Moreover, these 23 lattices are uniquely labelled by their root systems, which are in turn uniquely specified by the following two conditions: first, they are unions of simply-laced root systems with the same Coxeter numbers; second, the total rank is 24.  

We will refer to these 23 root systems as the {\em Niemeier root systems} and the 23 corresponding lattices as the {\em Niemeier lattices}. In this paper we associate a finite group and a set of vector-valued mock modular forms to each of these 23 Niemeier lattices. 
The main result of the present paper is then the {\em umbral moonshine conjecture} relating the two. 

To understand this statement let us recall what one means by moonshine. 
This term was first introduced in mathematics 
to describe the remarkable {\em monstrous moonshine} phenomenon. 
The study of monstrous moonshine was initiated by John McKay's observation that $196883+1=196884$, where the summands on the left are degrees of irreducible representations of the {\em Fischer--Griess monster} $\MM$ and the number on the right is the coefficient of $q$ in the Fourier expansion of the {\em normalised elliptic modular invariant}
\begin{gather}\label{eqn:intro:FouExpJ}
J(\t)=\sum_{m \ge -1} a(m)\, q^m= q^{-1} + 196884\, q + 21493760 \,q^2 + 864299970 \,q^3 + \cdots,
\end{gather}
where we write $q=e^{2\p i \t}$. 
Following Thompson's idea \cite{Tho_FinGpsModFns} that $J(\t)$ should be the graded dimension of an infinite-dimensional module for $\MM$, this observation was later expanded into the full {\em monstrous moonshine conjecture}  by Conway and Norton  \cite{conway_norton}, conjecturing that the graded character $T_g(\t)$ attached to the monster module
\be
V=\bigoplus_{m \ge -1} V_m
\ee
and $g\in \MM$ should be a principal modulus for a certain genus zero group $\G_g <\SL_2(\RR)$. (When a discrete group $\G<\SL_2(\RR)$ has the property that $\G\backslash\HH$ is isomorphic to the Riemann sphere minus finitely many points, there exists a holomorphic function $f$ on $\HH$ that generates the field of $\G$-invariant functions on $\HH$. Such a function $f$ is called a {\em principal modulus}, or {\em Hauptmodul}, for $\G$.) We refer to \cite{gannon} or the introduction of \cite{UM} for a more detailed account of monstrous moonshine.

In 2010 the study of a new type of moonshine was triggered by an observation of Eguchi--Ooguri--Tachikawa, which constituted an analogue of McKay's  observation in monstrous moonshine. In the work of Eguchi--Taormina and Eguchi--Ooguri--Taormina--Yang in the 1980's \cite{Eguchi1987,Eguchi1988,Eguchi1989}, these authors encountered a $q$-series
\be\label{eqn:intro:H2Fou}
H^{(2)}(\tau)= 2\, q^{-1/8}(-1 + 45\, q + 231\, q^2 + 770 \,q^3+2277\, q^4 + \cdots )
\ee
in the  
decomposition of the elliptic genus of a $K3$ surface into irreducible characters of the ${\cal N}=4$ superconformal algebra. It was later understood by Eguchi--Hikami \cite{Eguchi2008} that the above $q$-series is a mock modular form. See \S\ref{sec:forms} for the definition of mock modular forms.  Subsequently the coincidence between the numbers 45, 231, 770, 2277,\ldots and the dimensions of irreducible representations of $M_{24}$  was pointed out in \cite{Eguchi2010}. This observation was later extended into a {\em Mathieu moonshine} conjecture in \cite{Cheng2010_1,Gaberdiel2010,Gaberdiel2010a,Eguchi2010a} by providing the corresponding twisted characters, the mock modular forms $H^{(2)}_g$, and was moreover related in a more general context to the $K3$-compactification of superstring theory in \cite{Cheng2010_1}.  Very recently, the existence of an infinite-dimensional $M_{24}$-module underlying the mock modular form \eq{eqn:intro:H2Fou} and those constructed in \cite{Cheng2010_1,Gaberdiel2010,Gaberdiel2010a,Eguchi2010a}  was shown by T. Gannon \cite{Gannon:2012ck}, although the nature of this $M_{24}$-module remains mysterious. See \cite{CheDun_M24MckAutFrms} and \cite{Gaberdiel:2012um} for a review of this $M_{24}$-mock modular relation, and  
see \cite{Creutzig2012,Gaberdiel:2012gf,Gaberdiel:2013nya,Taormina:2013jza,Taormina:2013mda} for recent developments in this direction. 

Meanwhile, it was found that Mathieu moonshine  
is but one example of a more general phenomenon, 
{\em umbral moonshine}. In \cite{UM} we associated a finite group $G^{(\ell)}$ and a vector-valued mock modular form $H_g^{(\ll)}=(H_{g,r}^{(\ll)})$ with $(\ell-1)$-components for every conjugacy class $[g]$ of $G^{(\ell)}$ to each of the six positive integers $\ell$ such that $\ell-1$  divides 12, and conjectured  that there exists an infinite-dimensional $G^{(\ll)}$-module, the {\em umbral module}, with the property that its graded character coincides with the mock modular form $H_g^{(\ll)}$ for every conjugacy class $[g] \subset G^{(\ll)}$.

Despite the discovery of this more general framework of umbral moonshine, encompassing  
Mathieu moonshine as a special case 
and displaying various beautiful properties, many questions remained unanswered. For example: why these specific umbral groups $G^{(\ll)}$? Why are they labelled by divisors of the number 12? What is the  structure underlying all these instances of moonshine? 

In the present paper we provide partial answers to the above questions. We present evidence that there exists an instance of umbral moonshine naturally associated to each of the 23 Niemeier lattices. As a Niemeier lattice is uniquely determined by its root system $X$, in the main text we shall use $X$ (or equivalently the corresponding {\em lambency}; see Tables \ref{tab:Hauptmodul}-\ref{tab:mugs}) to label the instances of umbral moonshine.
In particular, we construct  in each instance an {\em umbral group} $G^X$ as the quotient of the automorphism group of the corresponding Niemeier lattice $L^X$ by the normal subgroup
generated by refections in root vectors. 
This property gives a uniform construction as well as a concrete understanding of the umbral groups. 

Similarly, we provide a prescription that attaches to each of the 23 Niemeier lattices a distinguished  
vector-valued modular form---the {\em umbral mock modular form} $H^X$---which conjecturally encodes the dimensions of the homogeneous subspaces of the corresponding umbral module.
The Niemeier lattice uniquely specifies the shadow of the mock modular form through a map which associates a unary theta series of a specific type to each of the irreducible simply-laced ADE root systems, as well as unions of such root systems where all the irreducible components have the same Coxeter number. As will be explained in \S\ref{sec:forms:ADE}, this map bears a strong resemblance to the ADE classification by Cappelli--Itzykson--Zuber of modular invariant combinations of the characters of the $\widehat{A}_1$ affine Lie algebra \cite{Cappelli:1987xt}.
When applied to the Niemeier root systems, we dictate the resulting unary theta series to be the shadow of the corresponding umbral mock modular form. 
Together with the natural requirement that $H^X$ satisfies an optimal growth condition
the specification of the shadow uniquely fixes the desired umbral form (cf. Theorem \ref{thm:uniqueness_umbral_mock_jac} and Corollary  \ref{cor:uniqueness_umbral_mock_mod}). 

By associating a case of umbral moonshine to each Niemeier lattice we extend our earlier work on umbral moonshine to include 17 more instances. 
In fact, the 6 instances discussed in the earlier paper, labelled by the 6 divisors of 12, correspond to {\em pure A-type}  Niemeier root systems containing only A-type  irreducible components. 
There are 8 pure A-type Niemeier root systems, one for each divisor $\ll-1$ of 24, and they are given simply as the union of $\frac{24}{\ll-1}$ copies of $A_{\ll-1}$. 
This new proposal relating Niemeier lattices and umbral moonshine can be regarded as a completion of our earlier work \cite{UM}, in that it includes Niemeier root systems with D- or/and E- components and sheds important light on the underlying structure of umbral moonshine. 

More properties of umbral moonshine reveal themselves as new instances are included and as the structure of umbral moonshine is examined in light of the connection to Niemeier lattices. 
Recall that in \cite{UM}  
we observed a connection between the (extended) Dynkin diagrams and some of the groups $G^{(\ll)}$  via McKay's correspondence for subgroups of $\SU(2)$.
In the present paper we observe that the same holds for many of the new instances of umbral moonshine, and the result  
presents itself as a natural sequence of extended Dynkin diagrams with decreasing rank, starting with $\widehat E_8$ and ending with $\widehat A_1$. 
Moreover, we observe an  
interesting relation between umbral moonshine and the genus zero groups $\G <\SL_2(\RR)$ through the shadows of the former and the principal moduli for the latter.
As will be discussed in \S\ref{sec:holes:gzero} and \S\ref{sec:forms:genus0}, 
our construction attaches a principal modulus  
for a genus zero group to each Niemeier lattice. In particular, we recognise the Coxeter numbers of the root systems with an  A-type component as exactly those levels for which the corresponding classical modular curve has genus zero.

The outline of this paper is as follows.
In \S\ref{sec:holes} we give some background on Niemeier lattices, define the umbral finite groups, and discuss the mysterious relation to  extended ADE diagrams and genus zero quotients of the upper-half plane. 
In \S\ref{sec:forms} we introduce various automorphic objects that play a role in umbral moonshine, including (mock) modular forms and Jacobi forms of the weak, meromorphic, and mock type. For later use we also introduce the Eichler--Zagier (or Atkin--Lehner) map on Jacobi forms, and an ADE classification of such maps. In \S\ref{sec:forms:umbral} we focus on the umbral mock modular forms, which are conjecturally the generating functions of the dimensions of the homogeneous subspaces of the umbral modules. In \S\ref{sec:forms:umbral}-\ref{sec:mckay} we give explicit formulas for these umbral mock modular forms as well as most of the umbral McKay--Thompson series. This is  achieved partially with the help of {\em multiplicative relations}, relating McKay--Thompson series in different instances of umbral moonshine corresponding to Niemeier lattices 
with one Coxeter number being the  multiple of the nother.  In \S\ref{sec:conj} we present the main results of the paper, which are the umbral moonshine conjectures relating the umbral groups and umbral mock modular forms, and a counterpart for umbral moonshine of the genus zero property of monstrous moonshine. 
We also observe certain discriminant properties relating the exponents of the powers of $q$ in the mock modular forms and the imaginary quadratic number fields over which the homogeneous submodules of the umbral modules are defined, extending the discriminant properties observed in \cite{UM}. Finally, we  
present some conclusions and discussions in \S\ref{sec:conc}. 

To provide the  data and evidence in support of our conjectures, this paper also contains four appendices. 
In Appendix \ref{sec:modforms} we describe some modular forms and Jacobi forms which are utilised in the paper. 
In Appendix \ref{sec:chars} we present tables of irreducible characters  
as well as the characters of certain naturally defined (signed) permutation representations of the 23 umbral groups. 
In Appendix \ref{sec:coeffs} we provide the first few dozen coefficients of all the umbral McKay--Thompson series. 
In Appendix \ref{sec:decompositions}, using the tables in Appendix \ref{sec:chars} and  \ref{sec:coeffs}, we explicitly present decompositions into irreducible representations for the first 10 or so homogeneous subspaces of the umbral modules for all instances of umbral moonshine. 

\subsection{Notation}

We conclude this introduction with a guide to the most important and frequently used notation, and indications as to where the relevant definitions can be found.
\begin{list}{}{
	\itemsep -1pt
	\labelwidth 23ex
	\leftmargin 7ex
	}
\item[$X$]  A root system (cf. \S\ref{sec:holes:rootsys}). Usually $X$ is a union of irreducible simply-laced root systems with the same Coxeter number; for example, a Niemeier root system (cf. \S\ref{sec:holes:lats}).
\item[${\sf m}$] The Coxeter number of an irreducible root system $X$ (cf. \S\ref{sec:holes:rootsys}), or the Coxeter number of any irreducible component of $X$ when all such numbers coincide.
\item[${\sf r}$] The rank of a root system $X$ (cf. \S\ref{sec:holes:rootsys}).
\item[$\pi^X$] The (formal) product of Frame shapes of Coxeter elements of irreducible components of a root system $X$ (cf. \S\ref{sec:holes:rootsys}).
\item[$W^X$] The Weyl group of a root system $X$ (cf. \S\ref{sec:holes:rootsys}).
\item[$L^X$] The Niemeier lattice attached to the Niemeier root system $X$ (cf. \S\ref{sec:holes:lats}).
\item[$X_A$] The union of irreducible components of type A in a Niemeier root system $X$ (cf. \S\ref{sec:holes:lats}). Similarly for $X_D$ and $X_E$. 
\item[$d^X_A$] The number of irreducible components of type A in the root system $X$ (cf. \S\ref{sec:holes:lats}). Similarly for $d^X_D$ and $d^X_E$.
\item[$d^X$] The total number of irreducible components of the root system $X$ (cf. \S\ref{sec:holes:gzero}).
\item[$\G^X$] The genus zero subgroup of $\SL_2(\RR)$ attached to the Niemeier root system $X$ (cf. \S\ref{sec:holes:gzero}).
\item[$T^X$] A certain principal modulus for $\G^X$, for $X$ a Niemeier root system (cf. \S\ref{sec:holes:gzero}).
\item[$f^X$] A certain modular form of weight $2$ for $\G^X$, for $X$ Niemeier root system (cf. \S\S\ref{sec:holes:gzero},\ref{sec:forms:genus0}).
\item[$\ll$] A lambency. A symbol that encodes a genus zero group $\G^X$ attached to a Niemeier root system $X$, and thereby also $X$ (cf. \S\ref{sec:holes:gzero}).
\item[$G^X$] The umbral group attached to the Niemeier root system $X$ (cf. \S\ref{sec:holes:gps}). Also denoted $G^{(\ll)}$ for $\ll$ the lambency corresponding to $X$.
\item[$\bar{G}^X$] A naturally defined quotient of $G^X$ (cf. \S\ref{sec:holes:gps}). Also denoted $\bar{G}^{(\ll)}$.
\item[$\widetilde{\chi}^X$] A twisted Euler character. A certain naturally defined character of $G^X$ (cf. \S\ref{sec:holes:gps}). Similarly for $\widetilde{\chi}^{X_A}$, $\chi^{X_A}$, $\bar{\chi}^{X_A}$, \&c.
\item[$D^{(\ll)}$] The finite subgroup of $\SU(2)$ attached to the umbral group $G^{(\ll)}$ for $2<\ll<11$ (cf. \S\ref{sec:grps:dyn}).
\item[$\D^{(\ll)}$] The extended Dynkin diagram of rank $11-\ll$ attached to the umbral group $G^{(\ll)}$ for $\ll$ as above (cf. \S\ref{sec:grps:dyn}).
\item[$m$] Usually the index of a Jacobi form (cf. \S\ref{sec:forms:jac}). Often this is chosen to coincide with the Coxeter number ${\sf m}$ of some root system $X$, in which case we write $m$ for both (cf. \S\ref{sec:forms:ADE}).
\item[$\theta_{m}$] The vector-valued function whose components are the standard index $m$ theta functions $\th_{m,r}$ for $r\in \ZZ/2m\ZZ$ (cf. \S\ref{sec:forms:jac}).
\item[$\Omega^X$] The $2m\times 2m$ matrix attached to a simply-laced root system $X$ with all irreducible components having Coxeter number $m$ (cf. \S\ref{sec:forms:ADE}).
\item[${\cal W}^X$] The Eichler--Zagier operator on Jacobi forms of index $m$ attached to a simply-laced root system $X$ with all irreducible components having Coxeter number $m$ (cf. \S\ref{sec:forms:ADE}).
\item[$\psi^P$] The polar part of a meromorphic Jacobi form $\psi$ (cf. \S\ref{sec:forms:meromock}).
\item[$\psi^F$] The finite part of a meromorphic Jacobi form $\psi$ (cf. \S\ref{sec:forms:meromock}).
\item[$\mu_{m,j}$] A generalised Appell--Lerch sum of index $m$ (cf. \S\ref{sec:forms:meromock}). The function $\mu_{1,0}$ is a meromorphic Jacobi form of weight $1$ and index $1$ with vanishing finite part. More generally, scalar multiples of $\mu_{m,0}$ arise as polar parts of certain meromorphic Jacobi forms of weight $1$ and index $m$.
\item[$h$]	Usually a vector-valued mock modular form, with components $h_r$ for $r\in \ZZ/2m\ZZ$, obtained from the theta expansion of the finite part of a meromorphic Jacobi form of weight $1$ and index $m$ (cf. \S\ref{sec:forms:meromock}).
\item[$S_m$]	The vector-valued cusp form of weight $3/2$ whose components are the unary theta series $S_{m,r}$ for $r\in \ZZ/2m\ZZ$ (cf. \S\ref{sec:forms:meromock}), related to $\th_m$ by $S_{m,r}(\t)=\tfrac{1}{2\pi i}\partial_z\th_{m,r}(\t,z)|_{z=0}$.
\item[$S^X$] The vector-valued cusp form of weight $3/2$ attached to a simply-laced root system $X$ with all irreducible components having the same Coxeter number (cf. \S\ref{sec:umbral shadow}). An umbral shadow in case $X$ is a Niemeier root system.
\item[$\psi^X$] The unique meromorphic Jacobi form of weight $1$ and index $m$ satisfying the conditions of Theorem \ref{thm:uniqueness_umbral_mock_jac}, if such a function exists, where $X$ is a simply-laced root system for which all irreducible components have Coxeter number $m$ (cf. \S\ref{sec:umbral shadow}).
\item[$H^X$] The unique vector-valued mock modular form with shadow $S^X$ whose components furnish the theta expansion of the finite part of $\psi^X$, if $\psi^X$ exists (cf. \S\ref{sec:umbral shadow}). An umbral mock modular form in case $X$ is a Niemeier root system (cf. \S\ref{sec:weight_zero_umbral_forms}), and, in this situation, also denoted $H^{(\ll)}$ for $\ll$ the lambency corresponding to $X$.
\item[$\sigma^X$] The skew-holomorphic Jacobi cusp form of weight $2$ and index $m$ naturally attached to $X$, where $X$ is a simply-laced root system for which all irreducible components have Coxeter number $m$ (cf. \S\ref{sec:forms:genus0}).
\item[$H^X_g$] The umbral McKay--Thompson series attached to $g\in G^X$ for $X$ a Niemeier root system (cf. \S\ref{sec:mckay}). A vector-valued mock modular form of weight $1/2$.  Also denoted $H^{(\ll)}_g$ for $\ll$ the lambency corresponding to $X$.
\item[$S^X_g$] The vector-valued cusp form conjectured to be the shadow of $H^X_g$, for $g\in G^X$ and $X$ a Niemeier root system (cf. \S\S\ref{sec:mckay:aut},\ref{sec:conj:aut}).
\item[$\O^X_g$] The $2m\times 2m$ matrix attached to $g\in G^X$ for $X$ a Niemeier root system with Coxeter number $m$ (cf. \S\ref{sec:mckay:aut}).
\item[$K^X$]	The umbral module attached to the Niemeier root system $X$. A conjectural $G^X$-module with graded-super-characters given by the $H^X_g$ (cf. \S\ref{sec:conj:mod}).
\item[$n_g$] The order of the image of an element $g\in G^X$ in the quotient group $\bar{G}^X$ (cf. \S\ref{sec:conj:aut}).
\item[$h_g$] The unique positive integer such that $n_gh_g$ is the product of the shortest and longest cycle lengths in the cycle shape $\widetilde{\Pi}^X_g$ for $g\in G^X$ and $X$ a Niemeier root system (cf. \S\ref{sec:conj:aut}).
\item[$\widetilde{\Pi}^X_g$] The cycle shape attached to $g\in G^X$ via the permutation representation of $G^X$ with twisted Euler character $\widetilde{\chi}^X$ (cf. \S\S\ref{sec:holes:gps},\ref{sec:conj:aut},\ref{sec:chars:eul}). Similarly for $\widetilde{\Pi}^{X_A}_g$, $\widetilde{\Pi}^{X_D}_g$, \&c.
\item[$\nu^X_g$] The multiplier system of $H^X_g$ (cf. \S\ref{sec:conj:aut}).
\end{list}

\section{Groups}\label{sec:holes}

\subsection{Root Systems}\label{sec:holes:rootsys}

In this subsection we give a brief summary of  simply-laced root systems and their corresponding {\em Dynkin diagrams}. Standard references for this
material include \cite{MR1890629} and \cite{MR0323842}.

Let $V$ be a finite-dimensional vector space of rank $\sf{r}$ over $\RR$ equipped with an  inner product $\langle \cdot, \cdot \rangle$. For $v \in V$ define the hyperplane $H_v $ to be the set of elements of $V$ orthogonal to $v$ and the reflection in the hyperplane $H_v$ to be the linear map
$r_v: V \rightarrow V$ defined by
\be
r_v(v') = v'- 2 \frac{\langle v,v' \rangle}{\langle v,v \rangle}v .
\ee

A finite subset $X \subset V$ of non-zero vectors is a rank ${\sf r}$  crystallographic {\em root system } if
\begin{itemize}
\item $X$ spans $V$,
\item  $r_\alpha(X)\in X$ for all $\alpha \in X$, 
\item  $X \cap \RR \alpha = \{\alpha,-\alpha \}$ for all $\alpha \in X$,
\item $2 \langle \alpha,\beta \rangle/\langle \alpha,\alpha \rangle \in \ZZ$ for all $\alpha, \beta \in X$.
\end{itemize}

Given a root system $X$ we say that $X$ is {\em irreducible} provided that it can not be partitioned into proper subsets
$X= X_1 \cup X_2$ with $\langle \alpha_1, \alpha_2 \rangle=0$ for all $\alpha_1 \in X_1$ and $\alpha_2 \in X_2$.
If $X$ is an irreducible root system then there are at most two values for the length $\langle \alpha,\alpha \rangle^{1/2}$ that occur. If all roots have
the same length then the irreducible root system is called {\em simply-laced}.

It is possible to choose a subset of roots in $X$ that form a basis of $V$. We define a subset $\Phi=   
	\{f_1,f_2,\cdots,f_{{\sf r}}\} \subset X$ to be a set of {\em simple roots} provided that
\begin{itemize}
\item $\Phi$ is a basis for $V$,
\item each root $\alpha \in X$ can be written as a linear combination of the  
$f_i$ with integer coefficients
\be \label{simprotdecom}
\alpha = \sum_{i=1}^{\sf r}  n_i 
	f_i
\ee
and with either all $n_i \le 0$ or all $n_i \ge 0$. 
\end{itemize}
Given a choice of simple roots we define the positive roots of $X$ to be those $\alpha$ for which all $n_i \ge 0$ in (\ref{simprotdecom}). The negative roots are those for which all $n_i \le 0$. 
We also define the {\em height}
of $\alpha$ as in (\ref{simprotdecom})  
by setting
\be
{\rm ht}(\alpha) = \sum_{i=1}^{\sf r} n_i.
\ee

To each irreducible root system we can associate a connected Dynkin diagram as follows. We associate a node to each simple root. The nodes associated to two distinct simple roots  
$f_i,f_j$ are then either not connected if $\langle 
f_i,f_j\rangle =0$ or connected by $N_{i j}$ lines with
\be
N_{ij}= \frac{2 \langle f_i, f_j \rangle}{\langle f_i, f_i \rangle} \frac{2 \langle f_j, f_i \rangle}{\langle f_j, f_j \rangle}~ \in \{1,2,3 \}.
\ee
The Dynkin diagrams associated to simply-laced irreducible root systems all have $N_{ij} =\{0,1 \}$ and 
are of type $A_n, D_{n}, E_6, E_7, E_8$ as shown in Figure \ref{fig:dynkin}. Here the subscript indicates the rank of the associated root system, and in the figure we choose a specific enumeration of simple roots for later use in \S\ref{sec:holes:gps}.

\begin{figure}[h]
\begin{center}
\includegraphics[scale=0.3]{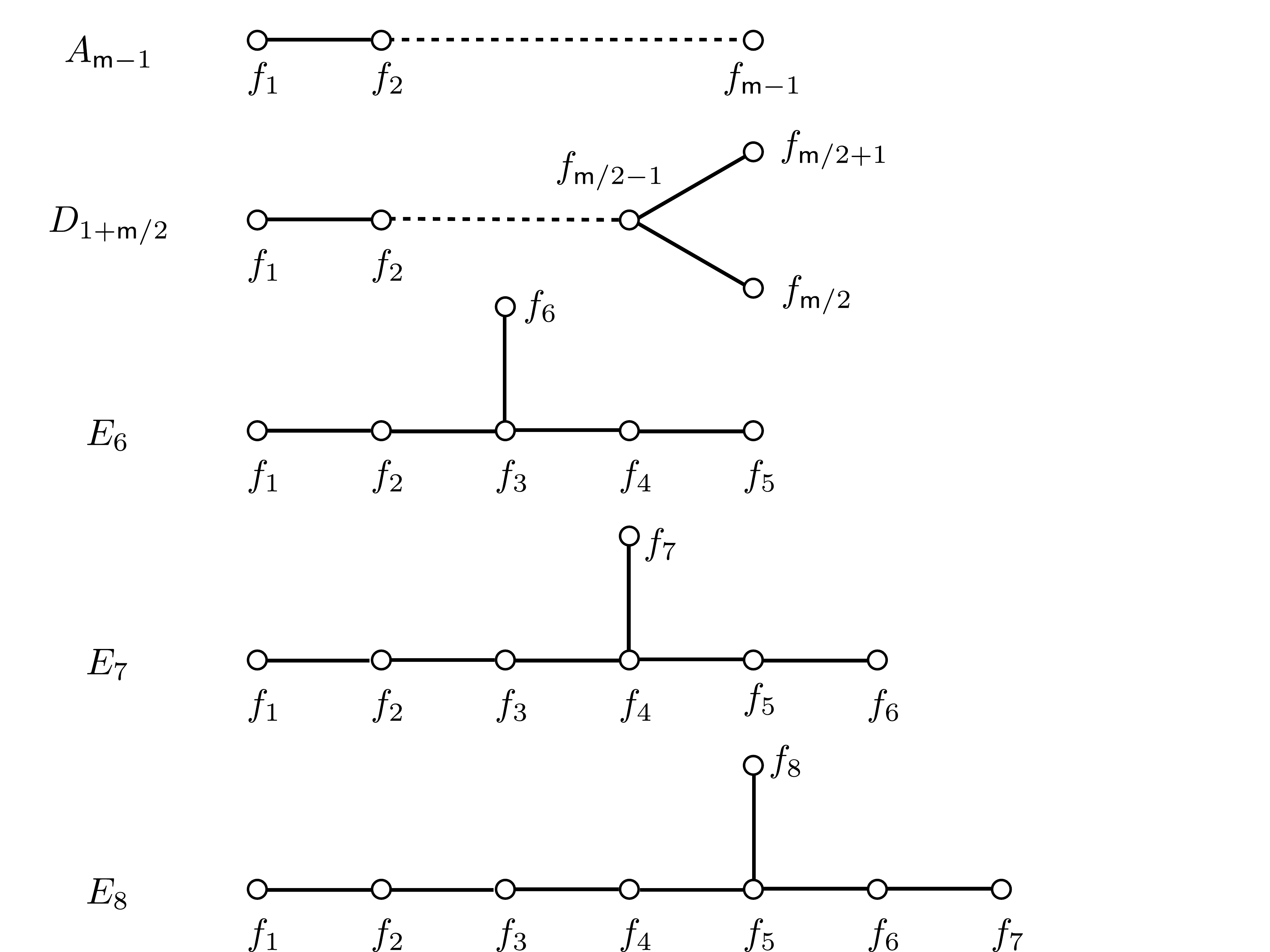}
\caption{The ADE Dynkin diagrams\label{fig:dynkin}}
\end{center}
\end{figure}

The height function defines a $\ZZ$-gradation on the set of roots.  Every irreducible root system has a unique root  $\theta$ of largest height with respect to a given set of simple roots $\Phi=\{f_i\}$ with an expansion
\be
\theta = \sum_{i=1}^{\sf r} a_i 	f_i
\ee
where the $a_i$ are a set of integers known as the {\em Coxeter labels} of the root system or Dynkin diagram. 
If we append the negative of this highest root (the lowest root) to the simple roots of the simply-laced root system, we obtain the extended Dynkin diagrams of type $\widehat A_n, \widehat D_{n}, \widehat E_6, \widehat E_7, \widehat E_8$.   These are shown
in Figure \ref{fig:edynkin}, where we indicate the lowest root with a filled in circle and the simple roots with empty circles.

\begin{figure}[h]
\begin{center}
\includegraphics[scale=0.35]{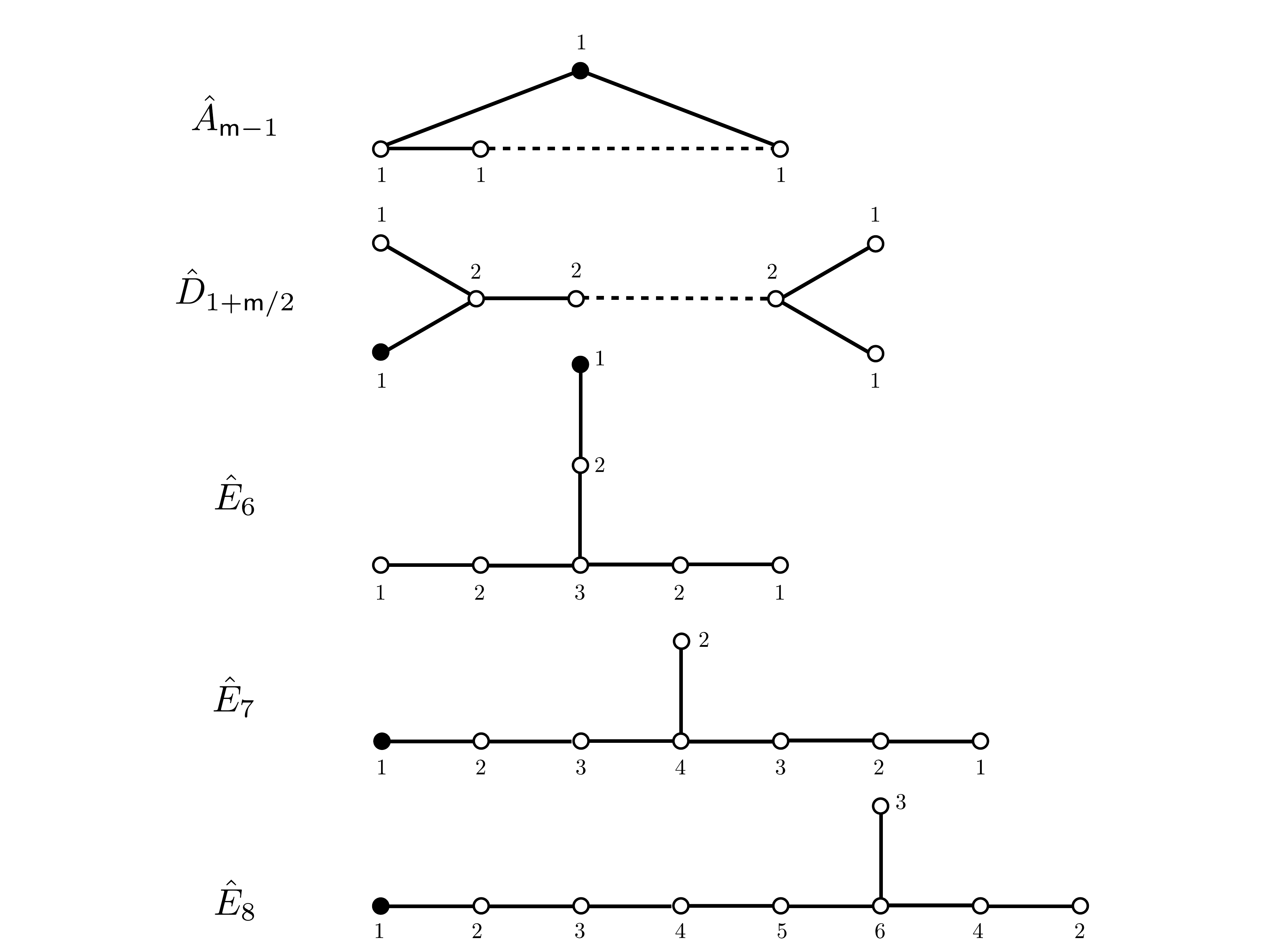}
\caption{The extended ADE Dynkin diagrams\label{fig:edynkin}}
\end{center}
\end{figure}

Given  
an irreducible root system $X$, its {\em Coxeter number} ${\sf m}={\sf m}(X)$ 
is the sum
\be\label{def_coxeter_number}
{\sf{m} }= 1 + \sum_{i=1}^{\sf r} a_i.
\ee
An equivalent definition of the Coxeter number may be given in terms of the Weyl group of $X$. The {\em Weyl group} $W^X$ is the group generated by the 
reflections $r_{\alpha}$ for $\alpha\in X$. The product $w = r_1 r_2 \dots r_{\sf r}$ of reflections $r_i:=r_{f_i}$ in simple roots $f_i \in \Phi$ is called a {\em Coxeter element} of $W^X$ and is uniquely determined up to conjugacy in $W^X$, meaning that different choices of simple roots and different orderings of the simple roots chosen lead to conjugate elements of $W^X$. The Coxeter number ${\sf m}={\sf m}(X)$ is then the order of any Coxeter element of $X$. 

We obtain a finer invariant of $X$ by considering the eigenvalues of a Coxeter element of $W^X$. Say $u_1,\dots, u_{\sf r}$ are the {\em Coxeter exponents} of $X$ if a Coxeter element $w$ has eigenvalues $e^{2\p i u_1/\sf{m}},\dots,e^{2\p i u_{\sf r}/\sf{m}}$ (counting multiplicity). This data is conveniently recorded using the notion of {\em Frame shape}, whereby a formal product $\prod_in_i^{k_i}$ (with $n_i,k_i\in\ZZ$ and $n_i>0$) serves as a shorthand for the rational polynomial $\prod_i(x^{n_i}-1)^{k_i}$. For each Coxeter element there is a Frame shape $\pi^X$---the {\em Coxeter Frame shape} of $X$---such that the corresponding polynomial function coincides with the characteristic polynomial $\prod_{i=1}^{\sf r}(x-e^{2\pi i u_i/{\sf m}})$ of $w$. These Frame shapes will play a prominent role in what follows. They are given along with the corresponding Coxeter numbers in Table \ref{tab:CoxNum}.

\begin{table}[h]
\captionsetup{font=small}
\centering
\begin{tabular}{c|cccccc}
\toprule
	&$A_{\sf m-1}$&$D_{1+{\sf m}/2}$&$E_6$&$E_7$&$E_8$\\
	\midrule
	Coxeter &\multirow{2}*{${\sf m}$}&\multirow{2}*{${\sf m}$}&\multirow{2}*{$12$}&\multirow{2}*{$18$}&\multirow{2}*{$30$} \\ 
	number\\\midrule
 Coxeter & \multirow{2}*{$1,2,3,\dots,{\sf m-1}$} & $1,3,5,\dots,{\sf m}-1,$ & 1,4,5,& 1,5,7,9,&1,7,11,13,\\
 exponents&&${\sf m}/{2}$&$7,8,11$&11,13,17 &17,19,23,29\\ \midrule
	Coxeter &\multirow{2}*{$\frac{{\sf m}}{1}$}&\multirow{2}*{$\frac{2.{\sf m}}{1.({\sf m}/{2})}$}&\multirow{2}*{$\frac{2.3.12}{1.4.6}$}&\multirow{2}*{$\frac{2.3.18}{1.6.9}$}&\multirow{2}*{$\frac{2.3.5.30}{1.6.10.15}$} \\ 
	Frame shapes\\
 \bottomrule
\end{tabular}
\caption{\label{tab:CoxNum}{Coxeter numbers, exponents, and Frame shapes}}
\end{table}

\subsection{Lattices}\label{sec:holes:lats}

A {\em lattice} is a free $\ZZ$-module 
equipped with a symmetric bilinear form $\lab\cdot\,,\cdot\rab$. We say that a lattice $L$ is {positive-definite} if $\lab\cdot\,,\cdot\rab$ induces a positive-definite inner product on the vector space $L_{\RR}=L\otimes_{\ZZ}\RR$. 
Since $L$ is a free $\ZZ$-module the natural map $L\to L_{\RR}$ is an embedding and we may identify $L$ with its image in $L_{\RR}$. Say that $L$ is {integral} if we have $\lab\l,\m\rab\in\ZZ$ for all $\l,\m\in L$ and say that $L$ is {even} if we have $\lab\l,\l\rab\in 2\ZZ$ for each $\l\in\LL$. (An even lattice is necessarily integral.) The {dual} of $L$ is the lattice $L^*\subset L_{\RR}$ defined by setting 
\begin{gather}
	L^*=\{\l\in L_{\RR}\mid 	
							\lab\l,L\rab\subset\ZZ\}.
\end{gather}
Clearly, if $L$ is integral then $L^*$ contains $L$. In the case that $L^*$ coincides with (the image of) $L$ (in $L_{\RR}$) we say that $L$ is unimodular. For an even lattice $L$ we call $L_2=\{\l\in L\mid \lab\l,\l\rab=2\}$ the set of {\em roots} of $L$.

The {Leech lattice} is the unique (up to isomorphism) even, unimodular,  positive-definite lattice of rank $24$ with no roots \cite{Con_ChrLeeLat}, and is named for its discoverer, John Leech \cite{Lee_SphPkgs,Lee_SphPkgHgrSpc}. Shortly after Leech's work, the unimodular even positive-definite lattices of rank $24$ were classified by Niemeier \cite{Nie_DefQdtFrm24}; we refer to those with non-empty root sets as the  Niemeier lattices. There are exactly $23$ Niemeier lattices up to isomorphism, and if $L$ is such a lattice then its isomorphism type is determined by its root set $L_2$, which is a union of irreducible simply-laced root systems (cf. \S\ref{sec:holes:rootsys}). Say a root system $\rs$ is a {Niemeier root system} if it occurs as $L_2$ for some Niemeier lattice $L$. The {Niemeier root systems} are  precisely the 23 root systems satisfying the two conditions that first, they are unions of simply-laced root systems with the same Coxeter numbers, and second, the total rank is 24.  Explicitly, they are
\begin{gather}
A_1^{24},\,A_2^{12},\,A_3^{8},\,A_4^6,\,A_5^4D_4,\,A_6^4,\,A_7^2D_5^2,\,A_8^3,\,A_9^2D_6,\,A_{11}D_7E_6,\,A_{12}^2,\,A_{15}D_9,\,A_{17}E_7,\,A_{24},
\label{eqn:holes:NieRoot_A}\\
D_4^6,\,D_6^4,\,D_8^3,\,D_{10}E_7^2,\,D_{12}^2,\,D_{16}E_8,\,D_{24},
\label{eqn:holes:NieRoot_D}\\
E_6^4,\,E_8^3.
\label{eqn:holes:NieRoot_E}
\end{gather}
In (\ref{eqn:holes:NieRoot_A}) we list the Niemeier root systems containing a type $A$ component, in (\ref{eqn:holes:NieRoot_D}) we list the root systems containing a type $D$ component but no type $A$ component, and the remaining two root systems, having only type $E$ components, appear in (\ref{eqn:holes:NieRoot_E}). We will call them the $A${\em -type}, $D${\em -type}, and the $E${\em -type} Niemeier root systems, respectively. 
We say that a Niemeier root system $\rs$ has {\em Coxeter number} $\sf{m}$ if $\sf{m}$ is the Coxeter number of any simple component of $\rs$.

Since all the simple components of a Niemeier root system have the same Coxeter number all the type $A$ components appearing have the same rank, and similarly for components of type $D$ and $E$. So we can write 
\begin{gather}\label{eqn:holes:lats:ADEdecomp}
	\rs=
	\rs_A\rs_D\rs_E
\end{gather}
where $\rs_A=A_{{\sf m}-1}^{d^{\rs}_A}$ for some non-negative integer $d^{\rs}_A$ (or $\rs_A=\emptyset$), and ${\sf m}$ the Coxeter number of $\rs$, and similarly for $\rs_D$ and $\rs_E$. For example,
\begin{gather}\label{eqn:holes:lats:ADEtypeofrs}
\text{if $\rs=A_7^2D_5^2$ then ${\sf m}=8$, $\rs_A=A_7^2$, $\rs_D=D_5^2$, $d^{\rs}_A=d^{\rs}_D=2$ and $\rs_E=\emptyset$.}
\end{gather}

Before finishing this subsection we will comment on the relation between the Niemeier lattices and the Leech lattice. The {covering radius} of the Leech lattice is $\sqrt{2}$ according to \cite{ConParSlo_CvgRadLeeLat}, meaning that $R=\sqrt{2}$ is the minimal positive $R$ such that the $24$-dimensional vector space $\LL_{\RR}=\LL\otimes_{\ZZ}\RR$ is covered by placing a closed ball of radius $R$ at each point of $\LL$,
\begin{gather}
	\sqrt{2}=
	\supr_{x\in\LL_{\RR}} \infm_{\l\in\LL}\| x-\l \|.
	\end{gather}
A point $x\in \LL_{\RR}$ that realizes the maximum value $\sqrt{2}= \infm_{\l\in\LL}\|x-\l\|$ is called a {\em deep hole} of $\LL$. Let $x\in\LL_{\RR}$ be a deep hole and let $V_x$ be the set of {vertices of $x$},
\begin{gather}
	V_x=\left\{\l\in\LL\mid \|x-\l\|=\sqrt{2}\right\}.
\end{gather}
It is shown in \cite{ConParSlo_CvgRadLeeLat} that if $\l,\l'\in V_x$ with $\l\neq \l'$ then $\|\l-\l'\|^2\in\{4,6,8\}$. Following \cite{ConParSlo_CvgRadLeeLat} define the {hole diagram} attached to $x$ by joining vertices $\l,\l'\in V_x$ with a single edge if $\|\l-\l'\|^2=6$, and by joining them with a double edge if $\|\l-\l'\|^2=8$. The vertices $\l$ and $\l'$ are disjoined in case $\|\l-\l'\|^2=4$. Then the diagram so obtained is the extended Dynkin diagram corresponding to a Niemeier root system, and all Niemeier root systems arise in this way \cite{ConParSlo_CvgRadLeeLat}.
Conversely, from each Niemeier lattice one can obtain a different ``holy" construction of the Leech lattice \cite{MR661720}. 

\subsection{Genus Zero Groups}\label{sec:holes:gzero}

In this section we attach a genus zero subgroup of $\SL_2(\RR)$ to each of the $23$ Niemeier root systems.

If $\G$ is a discrete subgroup of $\SL_2(\RR)$ that is commensurable with the modular group $\SL_2(\ZZ)$ then its natural action on the boundary $\widehat{\RR}=\RR\cup\{\ii\infty\}$ of the upper half plane $\HH$ restricts to $\widehat{\QQ}=\QQ\cup\{\ii\infty\}$. The orbits of $\G$ on $\widehat{\QQ}$ are called the {\em cusps} of $\G$, and the quotient space
\begin{gather}\label{eqn:sums:XG}
	X_{\G}=\G\backslash\HH\cup\widehat{\QQ}
\end{gather}
is naturally a compact Riemann surface (cf. e.g. \cite[\S1.5]{Shi_IntThyAutFns}). We adopt the common practice of saying that $\G$ has {\em genus zero} in case $X_{\G}$ is a genus zero surface.

For $n$ a positive integer the {\em Hecke congruence group} of level $n$, denoted $\Gamma_0(n)$, is defined by setting
\begin{gather}\label{def:hecke_congruence}
     \Gamma_0(n)
     =\left\{
     \left.
  \bem
         a & b \\
         cn & d \\
       \eem\right|
     a,b,c,d\in \ZZ,\,ad-bcn=1
     \right\}.
\end{gather}
Say $e$ is an {\em exact divisor} of $n$, and write $e\|n$, if $e|n$ and $(e,n/e)=1$. According to \cite{conway_norton} the normaliser $N(\Gamma_0(n))$ of $\Gamma_0(n)$ in $\SL_2(\RR)$ is commensurable with $\SL_2(\ZZ)$ and admits
the description
\begin{gather}\label{eqn:conven:groups:Normalizer_Gamma0(n)}
     N(\Gamma_0(n))
     =\left\{\left.
     \frac{1}{\sqrt{e}}
     \left(
       \begin{array}{cc}
         ae & b/h \\
         cn/h & de \\
       \end{array}
     \right)\right|
     a,b,c,d\in \ZZ,\,e\|n/h,\,ade-bcn/eh^2=1
     \right\}
\end{gather}
where $h$ is the largest divisor of $24$ such that $h^2$ divides
$n$. So if $e\|n$ then we obtain a coset $W_n(e)$ for $\G_0(n)$ in its normaliser by setting
\begin{gather}\label{def:AT_inv}
	W_n(e)
	     =\left\{\left.
	\frac{1}{\sqrt{e}}
     \left(
       \begin{array}{cc}
         ae & b \\
         cn & de \\
       \end{array}
     \right)\right|
     a,b,c,d\in \ZZ,\,e\|n,\,ade-bcn/e=1
     \right\}.
\end{gather}
Observe that the product of any two elements of $W_n(e)$ lies in $W_n(1)=\G_0(n)$. More generally, the operation $e*f=ef/(e,f)^2$ equips the set of exact divisors of $n$ with a group structure isomorphic to that obtained by multiplication of Atkin--Lehner involutions, $W_n(e) W_n(f)=W_n(e*f)$. So for $S$ a subgroup of the group of exact divisors of $n$ we may define a group $\G_0(n)+S$, containing and normalizing $\G_0(n)$, by taking the union of the Atkin--Lehner cosets $W_n(e)$ for $e\in S$. It is traditional \cite{conway_norton} to simplify this notation by writing $\G_0(n)+e,f,\ldots$ in place of $\G_0(n)+\{1,e,f,\ldots\}$. Thus we have
\begin{gather}\label{eqn:conven:groups:Gamma0(n|h)+S}
     \Gamma_0(n)+S
     =\left\{\left.
     \frac{1}{\sqrt{e}}
     \left(
       \begin{array}{cc}
         ae & b \\
         cn & de \\
       \end{array}
     \right)
     \right|
     a,b,c,d\in \ZZ,\,
     e\in S,\,
     ade-bcn/e=1
     \right\}.
\end{gather}

The positive integers occurring as Coxeter numbers of the A-type  Niemeier root systems (cf. (\ref{eqn:holes:NieRoot_A})) are 
\begin{gather}\label{a_type_coxeter_list}
2,3,4,5,6,7,8,9,10,12,13,16,18,25.
\end{gather}
Observe that these are exactly the positive integers $n>1$ for which the {Hecke congruence group} $\G_0(n)$ has genus zero (cf. e.g. \cite{Fer_Genus0prob}). The Coxeter numbers of the D-type Niemeier root systems (cf. (\ref{eqn:holes:NieRoot_D})) are 
\begin{gather}
6,10,14,18,22,30,46,
\end{gather}
and these are exactly the even integers $2n$ such that the group $\G_0(2n)+n$ has genus zero \cite{Fer_Genus0prob}. We will demonstrate momentarily that the root system $E_6^4$, having Coxeter number $12$, is naturally attached to the genus zero group $\G_0(12)+4$, and $E_8^3$, having Coxeter number $30$, is naturally attached to the genus zero group $\G_0(30)+6,10,15$. 
As such, we obtain a correspondence between the $23$ Niemeier root systems and the genus zero groups of the form 
\begin{gather}
\G_0(n), \quad\G_0(2n)+n,\quad \G_0(12)+4,\quad \G_0(30)+6,10,15.
\end{gather}
Write $\G^X$ for the genus zero subgroup of $\SL_2(\RR)$ associated in this way to a Niemeier root system $X$. Write $T^X$ for the unique principal modulus for $\G^X$ that has an expansion 
\begin{gather}
	T^X=q^{-1}-d^X+O(q)
\end{gather} 
about the infinite cusp, where $d^X$ denotes the number of irreducible components of $X$. Then we may recover $T^X$, and hence also $\G^X$, directly, as follows, from the Coxeter Frame shapes (cf. \S\ref{sec:holes:rootsys}) of the irreducible components of $X$.

\begin{table}[h]
\captionsetup{font=small}
\begin{center}
\caption{Niemeier Root Systems and Principal Moduli}\label{tab:Hauptmodul}
\medskip

\begin{tabular}{ccccccccccc}
\multicolumn{1}{c|}{$\rs$}&$A_1^{24}$&$A_2^{12}$&$A_3^8$&$A_4^6$&$A_5^4D_4$&$A_6^4$&$A_7^2D_5^2$\\
	\cmidrule{1-8}
\multicolumn{1}{c|}{$\ll$}&	2&	3&	4&	5&	6&	7&	8\\
	\cmidrule{1-8}
\multicolumn{1}{c|}{$\pi^{\rs}$}&			$\frac{2^{24}}{1^{24}}$&	$\frac{3^{12}}{1^{12}}$&	$\frac{4^{8}}{1^{8}}$&	$\frac{5^{6}}{1^{6}}$&	$\frac{2^16^{5}}{1^{5}3^1}$&	$\frac{7^{4}}{1^{4}}$&$\frac{2^{2} 8^4}{1^{4}4^2}$\vspace{0.2em}\\
	\cmidrule{1-8}
\multicolumn{1}{c|}{$\G^{\rs}$}&			$2B$&	$3B$&	$4C$&	$5B$&	$6E$&	$7B$&$8E$\\
\\
\multicolumn{1}{c|}{$\rs$}&$A_8^3$&$A_9^2D_6$&$A_{11}D_7E_6$&$A_{12}^2$&$A_{15}D_9$&$A_{17}E_7$&$A_{24}$\\
	\cmidrule{1-8}
\multicolumn{1}{c|}{$\ll$}&	9&	10& 12&	13&	16&	18&	25\\
	\cmidrule{1-8}
\multicolumn{1}{c|}{$\pi^{\rs}$}&$\frac{9^{3}}{1^{3}}$&$\frac{2^1 10^{3}}{1^{3} 5^1}$&$\frac{2^{2} 3^112^3 }{1^{3} 4^1 6^2}$&$\frac{13^{2}}{1^2}$& $\frac{2^116^{2}}{1^{2}8^1}$&$\frac{2^1 3^1 18^{2}}{1^{2} 6^1 9^1}$&$\frac{25^1}{1^1}$\vspace{0.2em}\\
	\cmidrule{1-8}
\multicolumn{1}{c|}{$\G^{\rs}$}&$9B$&$10E$&$12I$&	$13B$& $16B$&$18D$&$(25Z)$\\
\\
\multicolumn{1}{c|}{$\rs$}&$D_4^{6}$&$D_6^{4}$&$D_8^3$&$D_{10}E_7^2$&$D_{12}^2$&$D_{16}E_8$&$D_{24}$\\
	\cmidrule{1-8}
\multicolumn{1}{c|}{$\ll$}& 6+3&	10+5&	14+7&	18+9&	22+11&	30+15&	46+23\\
	\cmidrule{1-8}
\multicolumn{1}{c|}{$\pi^{\rs}$}&$\frac{2^6 6^6 }{1^6 3^6}$&	$\frac{2^4 10^4 }{1^4 5^4}$&	$\frac{2^3 14^3 }{1^3 7^3}$&	$\frac{2^3 3^2 18^3 }{1^3 6^2 9^3}$&	$\frac{2^2 22^2 }{1^2 11^2}$&	$\frac{2^2 3^1 5^1  30^2 }{1^2 6^1 10^1  15^2}$&$\frac{2^1 46^1 }{1^1 23^1}$\vspace{0.2em}\\
	\cmidrule{1-8}
\multicolumn{1}{c|}{$\G^{\rs}$}&$6C$&	$10B$&	$14B$&	$18C$&	$22B$&	$30G$&$46AB$\\
 \\
\multicolumn{1}{c|}{$\rs$}&$E_6^4$&$E_8^3$\\
	\cmidrule{1-3}
\multicolumn{1}{c|}{$\ll$}	&12+4&	30+6,10,15 &
\\
	\cmidrule{1-3}
\multicolumn{1}{c|}{$\pi^{\rs}$}&$\frac{2^{4} 3^4 12^4}{1^{4} 4^4 6^4}$&$\frac{2^{3} 3^3 5^3 30^3}{1^{3} 6^3 10^3 15^3}$\vspace{0.2em}\\
	\cmidrule{1-3}
\multicolumn{1}{c|}{$\G^{\rs}$}&$12B$&$30A$\\
\end{tabular}
\end{center}
\end{table}

Define the Coxeter Frame shape $\pi^X$ of an arbitrary root system $X$ to be the product of Coxeter Frame shapes of the irreducible components of $X$. Next, for a Frame shape $\pi=\prod_in_i^{k_i}$ define the associated {\em eta product} $\eta_{\pi}$ by setting
\begin{gather}
	\eta_{\pi}(\t)=\prod_i \eta(n_i\t)^{k_i},
\end{gather}
and observe that if $X$ is simply laced and $\pi^X$ is the Coxeter Frame shape of $X$ then 
\begin{gather}
\frac{1}{\eta_{\pi^X}(\t)}=q^{-{\sf r}/24}(1-d^Xq+O(q^2))
\end{gather} 
where ${\sf r}$ denotes the rank of $X$ and $d^X$ is the number of irreducible components. We may also consider the {\em lambda sum} $\l_{\pi}$ attached to a Frame shape $\pi$, which is the function
\begin{gather}
	\lambda_{\pi}(\t)=\sum_i k_i\l_{n_i}(\t)
\end{gather}
where $\l_n(\t)$ is defined in (\ref{Eisenstein_form}). Observe that if $\pi=\prod_i n_i^{k_i}$ is such that $\sum_i k_i=0$ then $\l_{\pi}=q\partial_q\log\eta_{\pi}$.

The Coxeter Frame shapes of the Niemeier root systems are given in Table \ref{tab:Hauptmodul}. By inspection we obtain the following result.
\begin{prop}\label{prop:TX}
If $X$ is a Niemeier root system and $\pi^X$ is the Coxeter Frame shape of $X$ then 
\begin{gather}
	T^X=\frac{1}{\eta_{\pi^X}}
\end{gather}
is the unique principal modulus for $\G^X$ satisfying $T^X=q^{-1}-d^X+O(q)$ as $\t\to i\infty$.
\end{prop}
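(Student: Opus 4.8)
The plan is to separate the claim into the $q$-expansion of $1/\eta_{\pi^X}$ at the infinite cusp, the assertion that $1/\eta_{\pi^X}$ is a principal modulus for $\G^X$, and the uniqueness of such a modulus with the prescribed normalization; only the middle assertion carries genuine content, the outer two being essentially formal once it is in hand. The expansion is immediate: since $X$ is a Niemeier root system its rank is ${\sf r}=24$, so the identity $1/\eta_{\pi^X}(\t)=q^{-{\sf r}/24}(1-d^Xq+O(q^2))$ recalled above specializes to $1/\eta_{\pi^X}=q^{-1}-d^X+O(q)$. Thus $1/\eta_{\pi^X}$ already carries exactly the normalization imposed on $T^X$, with a simple pole at $i\infty$ and no remaining freedom in the constant term.

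The heart of the matter is to verify, for each of the eta products displayed in Table \ref{tab:Hauptmodul}, that $1/\eta_{\pi^X}$ is a principal modulus for the associated group $\G^X$. I would check three points. First, every Coxeter Frame shape is balanced: one reads off $\sum_i k_i=0$ for each ADE type from Table \ref{tab:CoxNum}, and this is preserved under the products defining $\pi^X$, so $1/\eta_{\pi^X}$ has weight zero. Second, $1/\eta_{\pi^X}$ is invariant under $\G^X$ with trivial multiplier: using the transformation law of $\eta$---concretely the congruences $\sum_i n_ik_i\equiv 0$ and $\sum_i (N/n_i)k_i\equiv 0 \pmod{24}$ together with the requirement that $\prod_i n_i^{k_i}$ be a rational square---one sees that $1/\eta_{\pi^X}$ is a modular function for $\G_0(n)$, and for the three Atkin--Lehner groups $\G_0(2n)+n$, $\G_0(12)+4$ and $\G_0(30)+6,10,15$ one checks that the relevant involutions merely permute the factors $\eta(n_i\t)$ while preserving their exponents, hence fix the quotient. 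Third, since $\eta$ is holomorphic and nonvanishing on $\HH$ the function $1/\eta_{\pi^X}$ is holomorphic and nonvanishing on $\HH$, so all its poles lie at cusps; computing the order of $\eta_{\pi^X}$ at each cusp of $\G^X$ by the standard eta-quotient formula then shows that $1/\eta_{\pi^X}$ is holomorphic at every cusp other than $i\infty$, where it has the single simple pole found above. A weight-zero modular function on the genus zero surface $X_{\G^X}$ with exactly one simple pole defines a degree-one map to $\PP^1$, and is therefore a principal modulus; this completes the identification.

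Once $1/\eta_{\pi^X}$ is known to be a principal modulus, uniqueness is automatic: on the genus zero curve $X_{\G^X}$ any two principal moduli are related by $T\mapsto aT+b$ with $a\neq 0$, and matching the expansion $q^{-1}-d^X+O(q)$ forces $a=1$ and $b=0$. I expect the real obstacle to lie in the second and third checks for the three Atkin--Lehner groups, where establishing triviality of the multiplier and the correct cusp orders under the Fricke and Atkin--Lehner involutions is the most delicate step. The most economical route may be to observe that the group named in the bottom row of Table \ref{tab:Hauptmodul} is, in each case, a genus zero group of monstrous moonshine, so that $1/\eta_{\pi^X}$ differs by the additive constant $d^X$ from the corresponding McKay--Thompson series normalized to have vanishing constant term, whose status as a principal modulus for $\G^X$ is already recorded in the literature; the verification then reduces to the inspection of Table \ref{tab:Hauptmodul} that the proof invokes.
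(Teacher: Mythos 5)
Your proposal is correct, and its closing paragraph is in fact the paper's entire proof: the paper disposes of Proposition \ref{prop:TX} with the single sentence ``By inspection we obtain the following result,'' the inspection being precisely the identification, recorded in Table \ref{tab:Hauptmodul} via ATLAS class names, of each eta quotient $1/\eta_{\pi^X}$ with a monstrous McKay--Thompson series already known (by Conway--Norton) to be a principal modulus for the group $\G^X$, together with the normalization argument you give. What you do differently is to make the bulk of your argument a self-contained verification that bypasses the moonshine literature: weight zero from $\sum_i k_i=0$, trivial multiplier on $\G_0(n)$ from the standard eta-quotient congruences and the square condition on $\prod_i n_i^{k_i}$, invariance under the Atkin--Lehner cosets because the relevant permutations $n\mapsto e*n$ of divisors preserve the multiset of exponents in $\pi^X$ (with the automorphy factors cancelling exactly because the shape is balanced), and the cusp computation showing a unique simple pole, whence a degree-one map $X_{\G^X}\to\PP^1$. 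This route is more laborious---it is a finite but nontrivial case check over $23$ root systems and, for the groups $\G_0(2n)+n$, $\G_0(12)+4$ and $\G_0(30)+6,10,15$, over several Atkin--Lehner cosets each---but it buys independence from the Conway--Norton tables and makes explicit the verifications the paper leaves implicit; note it still does not supply the ``conceptual proof'' whose absence the paper laments in the remark following the proposition, since it remains case-by-case. The paper's route buys brevity at the cost of outsourcing both the genus zero property and the principal-modulus property to the monstrous moonshine literature, which also silently covers the one case ($X=A_{24}$, lambency $25$) where $\pi^X$ is not the Frame shape of any element of $\Co_0$ but $\G_0(25)$ is nonetheless genus zero.
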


\begin{rmk}
Niemeier's classification of even unimodular positive-definite lattices of rank $24$ together with Proposition \ref{prop:TX} imply that if $X$ is the root system of an even unimodular positive-definite lattice of rank $24$ then the eta product of the Coxeter Frame shape of $X$ is a principal modulus for a genus zero subgroup of $\SL_2(\RR)$. It would be desirable to have a conceptual proof of this fact.
\end{rmk}

The relation between the $T^X$ and umbral moonshine will be discussed in \S\ref{sec:forms:genus0}, where the weight two Eisenstein forms 
\begin{gather}
f^X=\l_{\pi^X}
\end{gather}
will play a prominent role. (Cf. Table \ref{tab:ADEfX}.) We have $\sum_ik_i=0$ when $\pi^X=\prod_in_i^{k_i}$ for every Niemeier root system $X$, so the functions $f^X$ and $T^X$ are related by 
\begin{gather}
	f^X=\l_{\pi}=q\partial_q\log\eta_{\pi}=-q\partial_q\log T^X
\end{gather}
for $\pi=\pi^X$.

It is interesting to note that all of the $\G^X$, except for $X=A_{24}$, appear in monstrous moonshine as genus zero groups for whom monstrous McKay--Thompson series serve as principal moduli. 
Indeed, all of the Frame shapes $\pi^X$, except for $X=A_{24}$, are Frame shapes of elements of Conway's group $\Co_0$, the automorphism group of the Leech lattice (cf. \cite[\S7]{conway_norton}). We observe that for the cases that $\pi^X$ is the Frame shape of an element in $\Co_0$ the corresponding centralizer in $\Co_0$ typically contains a subgroup isomorphic to $G^X$.

We include the ATLAS names \cite{atlas} (see also \cite{conway_norton}) for the monstrous conjugacy classes corresponding to the groups $\G^X$ via monstrous moonshine in the rows labelled $\G^X$ in Table \ref{tab:Hauptmodul}. Extending the notation utilised in \cite{UM} we assign {\em lambencies} $\ll$---now symbols rather than integers---to each Niemeier system $X$ according to the prescription of Table \ref{tab:Hauptmodul}. The lambencies then serve to name the groups $\G^X$ also, according to the convention that $n$ corresponds to $\G_0(n)$, and $12+4$ corresponds to $\G_0(12)+4$, \&c. It will be convenient in what follows to sometimes use ${(\ll)}$ in place of $X$, writing $G^{(\ll)}$, $H^{(\ll)}$, \&c., to 
label the finite groups and mock modular forms associated to the corresponding Niemeier root system.

\subsection{Umbral Groups}\label{sec:holes:gps}

Given a Niemeier root system $\rs$ we may consider the automorphism group of the associated Niemeier lattice $L^{\rs}$. The reflections in roots of $L^{\rs}$ generate a normal subgroup of the full automorphism group of $L^{\rs}$---the Weyl group of $X$---which we denote $W^{\rs}$. We define $G^{\rs}$ to be the corresponding quotient,
\begin{gather}\label{def:umbral_group}
	G^{\rs}=\Aut(L^{\rs})/W^{\rs}.
\end{gather}
The particular groups $G^{\rs}$ arising in this way are displayed in Table \ref{tab:mugs}. Observe\footnote{We are grateful to George Glauberman for first alerting us to this fact.} that the group $G^{(\ll)}$ of \cite{UM} appears here as $G^{\rs}$ for $\rs$ the unique root system with a component $A_{\ll-1}$. In fact, the $G^{(\ll)}$ of \cite{UM} are exactly those $G^{\rs}$ for which $\rs$ 
is of the form $\rs=A_{\ll-1}^d$ with even $d$. It will develop that, for every Niemeier root system $X$, the representation theory of $G^{\rs}$ is intimately related to a set of vector-valued mock modular forms $H^{\rs}_g$, to be introduced in \S\ref{sec:forms:umbral}-\ref{sec:mckay}. 

\begin{table}[h]
\captionsetup{font=small}
\begin{center}
\caption{Umbral Groups}\label{tab:mugs}
\medskip

\begin{tabular}{ccccccccccc}
\multicolumn{1}{c|}{$\rs$}&$A_1^{24}$&$A_2^{12}$&$A_3^8$&$A_4^6$&$A_5^4D_4$&$A_6^4$&$A_7^2D_5^2$\\
	\cmidrule{1-8}
\multicolumn{1}{c|}{$\ll$}&	2&	3&	4&	5&	6&	7&	8\\
	\cmidrule{1-8}
\multicolumn{1}{c|}{$G^{\rs}$}&			$M_{24}$&	$2.M_{12}$&	$2.\AGL_3(2)$&	$\GL_2(5)/2$&	$\GL_2(3)$&	$\SL_2(3)$&$\Dih_4$\\
\multicolumn{1}{c|}{$\bar{G}^{\rs}$}&		$M_{24}$&	$M_{12}$&	$\AGL_3(2)$&		$\PGL_2(5)$&	$\PGL_2(3)$&	$\PSL_2(3)$&$2^2$\\
\\
\multicolumn{1}{c|}{$\rs$}&$A_8^3$&$A_9^2D_6$&$A_{11}D_7E_6$&$A_{12}^2$&$A_{15}D_9$&$A_{17}E_7$&$A_{24}$\\
	\cmidrule{1-8}
\multicolumn{1}{c|}{$\ll$}&	9&	10& 12&	13&	16&	18&	25\\
	\cmidrule{1-8}
\multicolumn{1}{c|}{$G^{\rs}$}&$\Dih_6$&$4$&			$2$&	$4$& $2$&$2$&$2$\\
\multicolumn{1}{c|}{$\bar{G}^{\rs}$}&$\Sym_3$&$2$&		$1$&$2$& $1$&$1$&$1$\\
\\
\multicolumn{1}{c|}{$\rs$}&$D_4^{6}$&$D_6^{4}$&$D_8^3$&$D_{10}E_7^2$&$D_{12}^2$&$D_{16}E_8$&$D_{24}$\\
	\cmidrule{1-8}
\multicolumn{1}{c|}{$\ll$}& 6+3&	10+5&	14+7&	18+9&	22+11&	30+15&	46+23\\
	\cmidrule{1-8}
\multicolumn{1}{c|}{$G^{\rs}$}&			$3.\Sym_6$&	$\Sym_4$&	$\Sym_3$&	$2$&	$2$&	$1$&$1$\\
\multicolumn{1}{c|}{$\bar{G}^{\rs}$}&		$\Sym_6$&	$\Sym_4$&	$\Sym_3$&	$2$&	$2$&	$1$&$1$\\\\
\multicolumn{1}{c|}{$\rs$}&$E_6^4$&$E_8^3$\\
	\cmidrule{1-3}
\multicolumn{1}{c|}{$\ll$}	&12+4&	30+6,10,15 &
\\
	\cmidrule{1-3}
\multicolumn{1}{c|}{$G^{\rs}$}&	$\GL_2(3)$&$\Sym_3$&\\
\multicolumn{1}{c|}{$\bar{G}^{\rs}$}&	$\PGL_2(3)$&$\Sym_3$&
\end{tabular}
\end{center}
\end{table}

As mentioned in \S\ref{sec:holes:gzero}, it will often be useful to use the lambencies to label the groups and mock modular forms associated to a given Niemeier root system. 
To this end we define $G^{(n)}=G^{\rs}$ in case $\G_0(n)$ has genus zero and $\rs$ is the unique A-type  Niemeier root system with Coxeter number $n$ (cf. (\ref{eqn:holes:NieRoot_A})). We define $G^{(2n+n)}=G^{\rs}$ when $\G_0(2n)+n$ has genus zero and $\rs$ is the unique D-type Niemeier root system with Coxeter number $2n$ (cf. (\ref{eqn:holes:NieRoot_D})). We write $G^{(12+4)}$ for $G^{\rs}$ when $\rs=E_6^4$ and we write $G^{(30+6,10,15)}$ for $G^{\rs}$ when $\rs=E_8^3$. 

Observe that the subgroup $\widehat{W}^{\rs}<\Aut(L^{\rs})$ consisting of automorphisms of $L^{\rs}$ that stabilize the irreducible components of $\rs$ is also normal in $\Aut(L^{\rs})$. Define $\bar{G}^{\rs}$ to be the corresponding quotient,
\begin{gather}
	\bar{G}^{\rs}=\Aut(L^{\rs})/\widehat{W}^{\rs},
\end{gather}
so that $\bar{G}^{\rs}$ is precisely the group of permutations of the irreducible components of $\rs$ induced by automorphisms of $L^{\rs}$, and is a quotient of $G^{\rs}$ (viz., the quotient by $\widehat{W}^{\rs}/W^{\rs}$) since $W^{\rs}<\widehat{W}^{\rs}$. It turns out that $\widehat{W}^{\rs}/W^{\rs}$ has order $2$ when $\rs_A\neq\emptyset$ or $\rs=\rs_E=E_6^4$, has order $3$ when $\rs=\rs_{D}=D_4^6$, and is trivial otherwise.

\begin{rmk}
In terms of the notation of \cite{MR661720} we have $W^{\rs}=G_0$, $\widehat{W}^{\rs}/W^{\rs}\simeq G_1$, $\bar{G}^{\rs}\simeq G_2$ and $G^{\rs}\simeq G_1G_2$.
\end{rmk}

The groups $G^{\rs}$ and $\bar{G}^{\rs}$ come naturally equipped with various permutation representations. To see this choose a set $\Phi$ of simple roots for $L^{\rs}$, meaning a set which is the union 
of sets of simple roots for each irreducible root sublattice of $L^{\rs}$. Then $\Phi$ constitutes a basis for the $24$-dimensional space $L^{\rs}_{\RR}$, and for each $g\in G^{\rs}$ there is a unique element in the pre-image of $g$ under $\Aut(L^{\rs})\to G^{\rs}$ that belongs to the subgroup $\Aut(L^{\rs},\Phi)$, consisting of automorphisms of $L^X$ that stabilize $\Phi$ as a set (i.e. act as permutations of the irreducible root subsystems followed by permutations---corresponding to Dynkin diagram automorphisms---of simple roots within irreducible root subsystems). Thus we obtain a section $G^{\rs}\to\Aut(L^{\rs})$ of the projection $\Aut(L^{\rs})\to G^{\rs}$ whose image is $\Aut(L^{\rs},\Phi)$, and composition with the natural map $\Aut(L^{\rs},\Phi)\to\Sym_{\Phi}$ defines a permutation representation of $G^{\rs}$ on $\Phi$. Write $\Phi=\Phi_A\cup\Phi_D\cup\Phi_E$ where $\Phi_A$ contains the roots in $\Phi$ belonging to type $A$ components of $\rs$, and similarly for $\Phi_D$ and $\Phi_E$. Then the decomposition $\Phi=\Phi_A\cup\Phi_D\cup\Phi_E$ is stable under $G^{\rs}$, since $\Aut(L^{\rs})$ cannot mix roots that belong to non-isomorphic root systems, so we obtain maps $G^{\rs}\to \Sym_{\Phi_A}$, $G^{\rs}\to \Sym_{\Phi_D}$ and $G^{\rs}\to\Sym_{\Phi_E}$. Write $g\mapsto\widetilde{\chi}^{\rs}_g$ for the character of $G^{\rs}$ attached to the representation $G^{\rs}\to\Sym_{\Phi}$, write $g\mapsto\widetilde{\chi}^{\rs_A}_g$ for that attached to $G^{\rs}\to\Sym_{\Phi_A}$, and interpret $\widetilde{\chi}^{\rs_D}$ and $\widetilde{\chi}^{\rs_E}$ similarly. Observe that $\widetilde{\chi}^{\rs}$ (and hence also $\widetilde{\chi}^{\rs_A}$, $\widetilde{\chi}^{\rs_D}$ and $\widetilde{\chi}^{\rs_E}$) are independent of the choice of $\Phi$. We set $\widetilde{\chi}^{\rs_A}=0$ in case $\Phi_A$ is empty, and similarly for $\widetilde{\chi}^{\rs_D}$ and $\widetilde{\chi}^{\rs_E}$. We have 
\begin{gather}
\widetilde{\chi}^{\rs}=\widetilde{\chi}^{\rs_A}+\widetilde{\chi}^{\rs_D}+\widetilde{\chi}^{\rs_E}.
\end{gather}

The characters $\widetilde{\chi}^{\rs_A}$, \&c., are naturally decomposed further as follows. Suppose that $\rs_A\neq \emptyset$. Then $\rs_A=A_{{\sf m}-1}^{d_A}$ for $d_A=d^{\rs}_A$ (cf. (\ref{eqn:holes:lats:ADEdecomp})) and we may write 
\begin{gather}
\Phi_A=\left\{f^i_j\mid 1\leq i\leq d_A,\,1\leq j\leq {\sf m}-1\right\}
\end{gather}
where the superscript indicates the $A_{{\sf m}-1}$ component to which the simple root $f^i_j$ belongs, and the inner products between the $f^i_j$ for varying $j$ are as described by the labeling in Figure \ref{fig:dynkin} (so that $\lab f^i_j,f^i_k\rab$ is $-1$ or $0$ according as the nodes labelled $f_j$ and $f_k$ are joined by an edge or not). Then for fixed $j$ the vectors $\{f^i_j+f^i_{{\sf m}-j}\mid 1\leq i\leq d_A\}$ define a permutation representation of degree $d_A$ for $G^{\rs}$. We denote the corresponding character $g\mapsto \bar{\chi}^{\rs_A}_g$ since the isomorphism type of the representation is evidently independent of the choice of $j$. Observe that $\bar{\chi}^{\rs_A}$ is generally not a faithful character since permutations of $\Phi_A$ arising from diagram automorphisms, exchanging $f^i_j$ with $f^i_{{\sf m}-j}$ for some $i$, act trivially. The vectors $\{f^i_{j}-f^i_{{\sf m}-j}\mid 1\leq i\leq d_A\}$ also span $G^{\rs}$-invariant subspaces of $\Span_{\RR} \Phi_A<L^{\rs}_{\RR}$, with different $j$ in the range $0< j<{\sf m}/2$ furnishing isomorphic (signed permutation) representations; we denote the corresponding character $g\mapsto \chi^{\rs_A}_g$. Since the $f^i_j$ are linearly independent we can conclude that 
\begin{gather}
	\widetilde{\chi}^{\rs_A}=\left\lceil \frac{{\sf m}-1}{2}\right\rceil \bar{\chi}^{\rs_A}+\left\lfloor \frac{{\sf m}-1}{2}\right\rfloor\chi^{\rs_A}
\end{gather}
by counting the possibilities for $j$ in each case.

If $\Phi_D$ is non-empty then ${\sf m}$ is even and $\rs_D=D_{{\sf m}/2+1}^{d_D}$ for $d_D=d^{\rs}_D$ (cf. (\ref{eqn:holes:lats:ADEdecomp})). Write now
\begin{gather}
\Phi_D=\left\{f^i_j\mid 1\leq i\leq d_D,\,1\leq j\leq {\sf m}/2+1\right\}
\end{gather}
where, similar to the above, the superscript indicates the $D_{{\sf m}/2+1}$ component to which the simple root $f^i_j$ belongs, and the inner products between the $f^i_j$ for varying $j$ are as described in Figure \ref{fig:dynkin}. Suppose first that ${\sf m}>6$. Then ${\sf m}/2+1>4$ and the only non-trivial diagram automorphism of $D_{{\sf m}/2+1}$ has order $2$ and interchanges $f^i_{{\sf m}/2}$ and $f^i_{{\sf m}/2+1}$. So we find that for $1\leq j<{\sf m}/2$ the sets $\{f^i_j\mid 1\leq i\leq d_D\}$ serve as bases for isomorphic permutation representations of degree $d_D$ for $G^{\rs}$, as does $\{f^i_{{\sf m}/2}+f^i_{{\sf m}/2 +1}\mid 1\leq i\leq d_D\}$; we denote the character of this (i.e. any one of these) permutation representation(s) by $g\mapsto \bar{\chi}^{\rs_D}_g$. We define $\chi^{\rs_D}$ to be the (signed permutation) character of the representation spanned by the vectors $\{f^i_{{\sf m}/2}-f^i_{{\sf m}/2 +1}\mid 1\leq i\leq d_D\}$, and we have
\begin{gather}
	\widetilde{\chi}^{\rs_D}=\frac{{\sf m}}{2} \bar{\chi}^{\rs_D}+\chi^{\rs_D}
\end{gather}
when $\rs_D\neq\emptyset$ and ${\sf m}>6$. In case ${\sf m}=6$ the group of diagram automorphisms of $D_{{\sf m}/2+1}=D_4$ is a copy of $S_3$, acting transitively on the sets $\{f^i_1,f^i_3,f^i_4\}$ (for fixed $i$), so we define $\bar{\chi}^{\rs_D}$ to be the character attached to the (permutation) representation of $G^{\rs}$ spanned by $\{f^i_2\mid 1\leq i\leq d_D\}$ (or equivalently, $\{f^i_1+f^i_3+f^i_4\mid 1\leq i\leq d_D\}$) and define $\check{\chi}^{\rs_D}$ to be the character of the representation spanned by the vectors $\{f^i_1-f^i_3,f^i_1-f^i_4\mid 1\leq i\leq d_D\}$. Evidently
\begin{gather}
	\widetilde{\chi}^{\rs_D}=2\bar{\chi}^{\rs_D}+\check{\chi}^{\rs_D}
\end{gather}
in case ${\sf m}=6$. In preparation for \S\ref{sec:mckay:aut}, where the characters defined here will be used to specify certain vector-valued cusp forms of weight $3/2$, we define $\chi^{\rs_D}_g={\sgn}^{\rs_D}_g\bar{\chi}^{\rs_D}_g$ for $g\in G^{\rs}$ when $\rs=A_4^5D_4$ or $\rs=D_4^6$---the two cases for which $\rs$ involves $D_4$---where ${\sgn}^{\rs_D}_g=\pm 1$ is the function defined as follows. Write the image of $g\in G^{\rs}$ in $\Sym_{\Phi_D}$ as a product $g_d\circ g_p$ where $g_p\cdot f^i_j=f^{\pi(i)}_j$ for all $j\in\{1,2,3,4\}$, for some permutation $\pi\in \Sym_{d^{\rs}_D}$, and $g_d\cdot f^i_j=f^i_{\sigma_i(j)}$ for some permutations $\sigma_i\in\Sym_4$. Then set $\sgn^{\rs_D}_g=\prod_{i=1}^{d^{\rs}_D}\sgn(\sigma_i)$.

If $\Phi_E=E_n^{d_E}$ for $d_E=d^{\rs}_E>0$ then we may identify $f^i_j\in\Phi_E$ such that
\begin{gather}
\Phi_E=\left\{f^i_j\mid 1\leq i\leq d_E,\,1\leq j\leq n\right\}
\end{gather}
and, as above, the superscripts enumerate simple components of $X_E$ and the subscripts indicate inner products for simple vectors within a component as per Figure \ref{fig:dynkin}. Define $\bar{\chi}^{\rs_E}$ to be the character of $G^{\rs}$ attached to the permutation representation spanned by the set $\{f^i_3\mid 1\leq i\leq d_E\}$ (for example). In case $n=6$ write $\chi^{\rs_E}$ for the character of $G^{\rs}$ attached to the representation afforded by ${\rm Span}_{\RR}\{f^i_1-f^i_5\mid 1\leq i\leq d_E\}$.
We have 
\begin{gather}
\widetilde{\chi}^{\rs_E}=n\bar{\chi}^{\rs_E}
\end{gather}
when $n\in\{7,8\}$ since for each $1\leq j\leq n$ the set $\{f^i_j\mid 1\leq i\leq d_E\}$ spans a representation with character $\bar{\chi}^{\rs_E}$ in these cases, and 
\begin{gather}
\widetilde{\chi}^{\rs_E}=4\bar{\chi}^{\rs_E}+2\chi^{\rs_E}
\end{gather}
when $n=6$, the invariant subspace with character $2\chi^{\rs_E}$ being spanned by the vectors $f^i_1-f^i_5$ and $f^i_2-f^i_4$ for $1\leq i\leq d_E$. 

We call the functions $\bar{\chi}^{\rs_A}_g$, $\chi^{\rs_A}_g$, $\bar{\chi}^{\rs_D}_g$, $\chi^{\rs_D}_g$, \&c., the {\em twisted Euler characters} attached to $G^X$. They are given explicitly in the tables of \S\ref{sec:chars:eul}. As mentioned above, we will use them to attach a vector-valued cusp form $S_g^X$ to each $g\in G^X$ for $X$ a Niemeier root system in \S\ref{sec:mckay:aut}.

\subsection{McKay Correspondence}\label{sec:grps:dyn}

The {\em McKay correspondence} \cite{McKay_Corr} relates finite subgroups of $\SU(2)$ to the extended Dynkin diagrams of ADE type by associating irreducible representations of the finite groups to nodes of the corresponding diagrams. A beautiful explanation for this can be given in terms of resolutions of {simple singularities} $\CC^2/G$ for $G<\SU(2)$ \cite{Slo_SmpSngSmpAlgGps,GonVer_GeomCnstMcKCorr}. In \S3.5 of \cite{UM} we observed a curious connection between the umbral groups $G^{(\ll)}$ and certain finite subgroups $D^{(\ll)}<\SU(2)$, for the cases $\ll\in\{3,4,5,7\}$, such that the lambency $\ll$ and the rank ${\sf r}$ of the Dynkin diagram attached to $D^{(\ll)}$ via McKay's correspondence are related by $\ll+{\sf r}=11$. In this section we describe an extension of this observation, relating the umbral group $G^{(\ll)}$ to a finite subgroup $D^{(\ll)}<\SU(2)$, for each $\ll$ in $\{3,4,5,6,7,8,9,10\}$.

In \cite{UM} it was observed that a Dynkin diagram of rank $11-\ll$ may be attached to each $G^{(\ll)}$ for $\ll\in \{3,4,5,7\}$ in the following manner. If $p=(25-\ll)/(\ll-1)$ then $p$ is a prime and there is a unique (up to conjugacy) subgroup $\bar{L}^{(\ll)}<\bar{G}^{(\ll)}$ such that $\bar{L}^{(\ll)}$ is isomorphic to $\PSL_2(p)$ and acts transitively in the degree $24/(\ll-1)$ permutation representation of $G^{(\ll)}$ defined in \cite[\S3.3]{UM}. Now $\bar{L}^{(\ll)}$ has a unique (up to isomorphism) subgroup $\bar{D}^{(\ll)}$ of index $p$ in $\bar{L}^{(\ll)}$---a fact which is peculiar to the particular $p$ arising---and $\bar{D}^{(\ll)}$ is a finite subgroup of $\SO(3)$ whose preimage $D^{(\ll)}$ in $\SU(2)$ realises the extended diagram $\Delta^{(\ll)}$ corresponding (cf. Figure \ref{fig:edynkin}) to a Dynkin diagram of rank $11-\ll$ via McKay's correspondence. In the present setting, with groups $G^{(\ll)}$ defined for all $\ll$ such that $\G_0(\ll)$ has genus zero, and in particular for $3\leq \ll\leq 10$, it is possible to extend this correspondence as follows. 

\begin{table}[h]
\captionsetup{font=small}
\begin{center}
\caption{The McKay Correspondence in Umbral Moonshine}\label{tab:dyntab}

\medskip

\begin{tabular}{l|cccccccc}
$\rs$&$A_2^{12}$&$A_3^8$&$A_4^6$&$A_5^4D_4$&$A_6^4$&$A_7^2D_5^2$&$A_8^3$&$A_9^2D_6$\\
	\midrule
$\ll$&			3&	4&	5&	6&	7&	8&	9&	10\\
$p$&			11&	7&	5&	4&	3&	2&	2&	2\\
	\midrule
$G^{(\ll)}$&	$2.M_{12}$&	$2.\AGL_3(2)$&$\GL_2(5)/2$&$\GL_2(3)$&$\SL_2(3)$&	$\Dih_4$&$\Dih_6$	&$4$\\
${D}^{(\ll)}$&	$2.\Alt_5$&		$2.\Sym_4$&		&$\Dih_6$&		$Q_8$&	$4$&	$3$&	$2$\\
	\midrule
$\bar{G}^{(\ll)}$&	$M_{12}$&	$\AGL_3(2)$&	$\PGL_2(5)$&$\PGL_2(3)$&	$\PSL_2(3)$&$2^2$	&$\PSL_2(2)$	&$2$\\
$\bar{L}^{(\ll)}$&	$\PSL_2({11})$&	$\PSL_2(7)$&	$\PSL_2(5)$&$\PGL_2(3)$&	$\PSL_2(3)$	&$2^2$	&$\PSL_2(2)$&	$2$	\\
$\bar{D}^{(\ll)}$&	$\Alt_5$&		$\Sym_4$&		$\Alt_4$&	$\Sym_3$&	$2^2$&	$2$&	$3$&	$1$\\
	\midrule
$\Delta^{(\ll)}$& 	$\widehat{E}_8$&	$\widehat{E}_7$&	$\widehat{E}_6$&	$\widehat{D}_5$&	$\widehat{D}_4$&	$\widehat{A}_3$&	$\widehat{A}_2$&	$\widehat{A}_1$\\	
\end{tabular}
\end{center}
\end{table}
Since $(25-\ll)/(\ll-1)$ is not an integer for $\ll\in\{6,8,10\}$ we seek a new definition of $p$. Armed with the Niemeier root systems attached to each $G^{(\ll)}$ we set $p=d_A-1$ in case $\rs=\rs_A=A_{\ll-1}^{d_A}$ has only A-type  components, and set $p=d_A$ otherwise. This definition yields values coincident with the former one when $(25-\ll)/(\ll-1)$ is an integer. Next we seek a subgroup $\bar{L}^{(\ll)}<\bar{G}^{(\ll)}$ acting transitively on the irreducible components of $X_A$ and $X_D$ that has a unique up to isomorphism index $p$ subgroup $\bar{D}^{(\ll)}$,
\begin{gather}
	[\bar{L}^{(\ll)}:\bar{D}^{(\ll)}]=p.
\end{gather}
Such $\bar{L}^{(\ll)}$ and $\bar{D}^{(\ll)}$ exist for each $3\leq \ll\leq 10$ and are given explicitly in Table \ref{tab:dyntab}. In the new cases $\ll\in\{6,8,9,10\}$ the groups $\bar{L}^{(\ll)}$ and $\bar{G}^{(\ll)}$ coincide. 

The main observation of this section is the following.
\begin{quote}
{\em For every $3\leq \ll\leq 10$ the group $\bar{D}^{(\ll)}$ is the image in $\SO(3)$ of a finite subgroup $D^{(\ll)}<\SU(2)$ that is attached, via McKay's correspondence, to the extended diagram $\Delta^{(\ll)}$ corresponding to a Dynkin diagram of rank $11-\ll$.} 
\end{quote}
The group $D^{(\ll)}$ is even a subgroup of $G^{(\ll)}$---the pre image under the natural map $G^{\rs}\to\bar{G}^{\rs}$---except in the case that $\ll=5$. (We refer to \cite[\S3.4]{UM} for a discussion of this exceptional case.) To aid in the reading of Table \ref{tab:dyntab} we note here the exceptional isomorphisms 
\begin{gather}
	\PGL_2(5)\simeq \Sym_5,\; \PSL_2(5)\simeq \Alt_5,\\
	\PGL_2(3)\simeq \Sym_4,\; \PSL_2(3)\simeq \Alt_4,\\
	\PGL_2(2)\simeq\PSL_2(2)\simeq \Sym_3.
\end{gather}
In \cite{UM} we used the common abbreviation $L_n(q)$ for $\PSL_n(q)$.
	
Recall from \S3.5 of \cite{UM} the following procedure for obtaining a length $8$ sequence of Dynkin diagrams. Start with the (finite type) $E_8$ Dynkin diagram, being star shaped with three {branches}, and construct a sequence of diagrams iteratively by removing the end node from a branch of maximal length at each iteration. In this way we obtain ${E}_8$, ${E}_7$, $E_6$, $D_5$, $D_4$, $A_3$, $A_2$, $A_1$, and it is striking to observe that our list $\D^{(\ll)}$, obtained by applying the McKay correspondence to distinguished subgroups of the $G^{(\ll)}$, is exactly the sequence obtained from this by replacing (finite type) Dynkin diagrams with their corresponding extended diagrams.

\section{Automorphic Forms}\label{sec:forms}

In this section we discuss  modular objects that play a role in the moonshine relation between mock modular forms and finite groups that is the main focus of this paper.

In what follows we take $\tau$ in the upper half-plane $\HH$ and $z \in \CC$, and adopt the shorthand notation $e(x) = e^{2\p ix}$. We also define $q=e(\tau)$ and $y=e(z)$ and write 
\be
 \g\t = \frac{a\t+b}{c\t+d}, \quad \g= 
 	\begin{pmatrix}
	a&b\\
	c&d
	\end{pmatrix}
	\in\SL_2(\RR)
\ee
for the natural action of $\SL_2(\RR)$ on $\HH$,
and write
\be
 \g (\t,z) = \left(\frac{a\t+b}{c\t+d},\frac{z}{c\t+d} \right)
\ee
for the action of $\SL_2(\ZZ)$ on $\HH \times \CC$. We set
\be
\jac(\g,\t) = (c\t+d)^{-1}
\ee
and choose the principal branch of the logarithm (i.e. $x^s=|x|^se^{\ii\th s}$ when $x=|x|e^{\ii\th}$ and $-\pi<\th\leq \pi$) to define non-integer exponentials.

\subsection{Mock Modular Forms}\label{sec:forms:mock}

We briefly recall modular forms, mock modular forms, and their vector-valued generalisations. 

Let $\G$ be a discrete subgroup of the  group $\SL_2(\RR)$ that is commensurable with the modular group $\SL_2(\ZZ)$. 
For $w\in \frac{1}{2}\ZZ$ say that a non-zero function $\psi\colon\G\to \CC$ is a {\em multiplier system} for $\G$ with weight $w$ if
\begin{gather}\label{eqn:sums:mult}
	\psi(\g_1)\psi(\g_2)\jac(\g_1,\g_2\t)^{w}\jac(\g_2,\t)^{w}
	=
	\psi(\g_1\g_2)\jac(\g_1\g_2,\t)^{w}
\end{gather}
for all $\g_1,\g_2\in \G$. 
Given such a multiplier system $\psi$ for $\G$  
we may define the {\em $(\psi,w)$-action} of $\G$ on the space $\mc{O}(\HH)$ of holomorphic functions on the upper half-plane by setting
\begin{gather}\label{eqn:sums:psiw_actn}
	(f|_{\psi,w}\g)(\t)=f(\g\t)\psi(\g)\jac(\g,\t)^{w}
\end{gather}
for $f\in \mc{O}(\HH)$ and $\g\in \G$. We then say that $f\in \mc{O}(\HH)$ is an {\em (unrestricted) modular form} with multiplier $\psi$ and weight $w$ for $\G$ in the case that $f$ is invariant for this action; i.e. $f|_{\psi,w}\g=f$ for all $\g\in\G$. We say that an unrestricted modular form $f$ for $\G$ with multiplier $\psi$ and weight $w$ is a {\em weakly holomorphic modular form} in case $f$ has at most exponential growth at the cusps of $\G$. We say that $f$ is a {\em modular form} if $(f|_{\widetilde{\psi},w}\s)(\t)$ remains bounded as $\Im(\t)\to \inf$ for any $\s\in\SL_2(\ZZ)$, and we say $f$ is a {\em cusp form} if $(f|_{\widetilde{\psi},w}\s)(\t)\to 0$ as $\Im(\t)\to\inf$ for any $\s\in\SL_2(\ZZ)$.

Suppose that $\psi$ is a multiplier system for $\G$ with weight $w$, and $g$ is a modular form for $\G$ with the {conjugate multiplier system} $\bar{\psi}\colon\g\mapsto \overline{\psi(\g)}$ and {dual} weight $2-w$. Then we may use $g$ to twist the $(\psi,w)$-action of $\G$ on $\mc{O}(\HH)$ by setting
\begin{gather}\label{eqn:sums:gtwact}
	\left(f|_{\psi,w,g}\g\right)(\t)
	=
	f(\g\t)\psi(\g)\jac(\g,\t)^{w}
	+e(\tfrac{w-1}{4})
	\int_{-\g^{-1}\infty}^{\infty}(\t'+\t)^{-w}\overline{g(-\bar{\t}')}{\rm d}\t'.
\end{gather}
With this definition, we say that $f\in \mc{O}(\HH)$ is an {\em (unrestricted) mock modular form} with multiplier $\psi$, weight $w$ and shadow $g$ for $\G$ if $f$ is invariant for this action; i.e. $f|_{\psi,w,g}\g=f$ for all $\g\in\G$.
We say that an unrestricted mock modular form $f$ for $\G$ with multiplier $\psi$, weight $w$ and shadow $g$ is a {\em weakly holomorphic mock modular form} in case $f$ has at most linear exponential growth at the cusps of $\G$. 
From this point of view a (weakly holomorphic) modular form is a (weakly holomorphic) mock modular form with vanishing shadow. This notion of mock modular form developed from the Maass form theory due to Bruinier--Funke \cite{BruFun_TwoGmtThtLfts}, and from Zwegers' 
work \cite{zwegers} on Ramanujan's mock theta functions. 

In this paper we will consider the generalisation of the above definition to {\em vector-valued (weakly holomorphic) mock modular forms} with $n$ components, where the multiplier  $\psi \colon \Gamma \to \GL_n(\CC)$ is a (projective) representation of $\G$. From the definition \eq{eqn:sums:gtwact}, it is not hard to see that the multiplier $\psi$ of a (vector-valued) mock modular form is necessarily the inverse of that of its shadow. 
To avoid clutter, we omit the adjective ``weakly holomorphic" in the rest of the paper when there is no room for confusion.

Following Zwegers \cite{zwegers} and Zagier \cite{zagier_mock} we define a {\em mock theta function} to be a $q$-series $h=\sum_n a_n q^n$ such that for some $\lambda \in \QQ$ the assignment $\t\mapsto q^{\lambda}h|_{q=e(\t)}$ 
defines a mock
modular form of weight $1/2$ whose shadow is a unary (i.e. attached to a quadratic form in one variable) theta series of weight ${3}/{2}$.
In \S\ref{sec:conj} we conjecture that specific sets of mock theta functions appear as McKay--Thompson series associated to infinite-dimensional modules for the groups $G^{X}$ (cf. \S\ref{sec:holes:gps}), where $X$ is a Niemeier root system.

\subsection{Jacobi Forms}\label{sec:forms:jac}

We first discuss Jacobi forms following \cite{eichler_zagier}. For every pair of integers $k$ and $m$, we define the $m$-action of the group $\ZZ^2$ and the $(k,m)$-action of the group $\SL_2(\ZZ)$  on the space of holomorphic functions $\f\colon \HH \times \CC \to \CC$ as
\begin{align} \label{elliptic}
(\f\lvert_{m} (\l,\m) )(\t,z) &= e( m(\l^2 \t + 2\l z)) \, \f(\t, z+\l \t +\m)  \\\label{modular}
(\f\lvert_{k,m}\g )(\t,z) &= e(-m \tfrac{c z^2}{c\t+d})\, \jac(\g,\t)^{{k}} \f(\g(\t, z))
\end{align}
where $\g \in \SL_2(\ZZ)$ and $ \l,\m\in \ZZ$. 
We say a holomorphic function $\f\colon \HH \times \CC \to \CC$ is an {\em (unrestricted) Jacobi form} of weight $k$ and index $m$ for the Jacobi group $\SL_2(\ZZ)\ltimes \ZZ^2$ if it is invariant under the above actions, $\f= \f\lvert_{k,m}\g$ and $\f= \f\lvert_{m} (\l,\m)$, for all $\g \in \SL_2(\ZZ)$ and for all $ (\l,\m)\in \ZZ^2$.
 In what follows we refer to the transformations  \eqref{elliptic} and \eqref{modular}  as the {\em  elliptic} and {\em modular} transformations, respectively.

The invariance of $\f(\tau,z)$ under $\tau \rightarrow \tau+1$ and $z \rightarrow z+1$ implies a Fourier expansion
\be\label{eqn:forms:jac:FouExp}
\f(\t,z) = \sum_{n,r \in \ZZ} c(n,r) q^n y^r 
\ee
and the elliptic transformation can be used to show that $c(n,r)$ depends only on the {\em discriminant} $D=r^2-4mn$ and on $r ~{\rm mod}~ 2m$. In other words, we have $c(n,r)=C_{\til r} (r^2-4mn)$ where  $\til r\in \ZZ/2m\ZZ$ and $r=\til r$ mod $2m$, for some appropriate function $D\mapsto C_{\til r}(D)$. An unrestricted Jacobi form is called a {\em weak Jacobi form}, a {\em (strong) Jacobi form}, or a {\em Jacobi cusp form}  when the Fourier coefficients satisfy $c(n,r)=0$ whenever $n< 0$, $C_{\til r}(D)=0$ whenever $D > 0$, or $C_{\til r}(D)=0$ whenever $D \ge0$, respectively. In a slight departure from the notation in \cite{eichler_zagier} we denote the space of weak Jacobi forms of weight $k$ and index $m$ by $J_{k,m}$.

In what follows we will need two further generalisations of the above definitions. The first is relatively straightforward and replaces  $\SL_2(\ZZ)$  by a finite index subgroup $\Gamma \subset \SL_2(\ZZ)$ in the modular transformation law. (One has to consider Fourier expansions (\ref{eqn:forms:jac:FouExp}) for each cusp of $\G$.) The second is more subtle and leads to {\em meromorphic Jacobi forms} which obey the modular and elliptic transformation laws but are such that the functions $z\mapsto \phi(\t,z)$ are allowed to have poles lying at values of $z\in\CC$ corresponding to torsion points of the elliptic curve $\CC/(\ZZ \tau + \ZZ)$. Our treatment of meromorphic Jacobi forms (cf. \S\ref{sec:forms:meromock}) mostly follows \cite{zwegers} and \cite{Dabholkar:2012nd}. We will only consider functions with simple poles in $z$.

The elliptic transformation \eq{elliptic} implies (cf. \cite{eichler_zagier}) that a (weak) Jacobi form of weight $k$ and index $m$ admits an expansion 
\be\label{eqn:forms:jac:thetaxpn}
\phi(\tau,z)= \sum_{r \,({\rm mod}~2m)}  \til h_{m,r}(\tau) \th_{m,r}(\tau,z)
\ee
in terms of the {\em index $m$ theta functions},
\be\label{thetexp}
\theta_{m,r}(\tau,z) =\sum_{\substack{k\in \ZZ \\ k= r~{\text{mod}}~2m}} q^{k^2/4m} y^{k}. 
\ee
Recall that the vector-valued function $\th_{m} =(\th_{m,r}) $ 
satisfies
\begin{gather}
\th_{m}(-\frac{1}{\t},-\frac{z}{\t}) = \sqrt{-i \t}\, e\left(\frac{mz^2}{\t}\right) \, {\bf S} \,\th_{m}(\t,z),\qquad\label{transf_theta}
\th_{m}({\t}+1,{z}) = \,{\bf T} \,\th_{m}(\t,z),
\end{gather}
where ${\bf S}$ and ${\bf T}$ are the $2m \times 2m$ unitary matrices with entries
\begin{gather}
{\bf S}_{rr'} = \frac{1}{\sqrt{2m}} e\left(\frac{rr'}{2m}\right),\qquad \label{transf_theta_2}
{\bf T}_{rr'} = e\left(\frac{r^2}{4m}\right)\,\d_{r,r'}.
\end{gather}
From this we can see that $\til h= ( \til h_{m,r})$ is a $2m$-component vector transforming as a weight $k-1/2$ modular form for $\SL_2(\ZZ)$,  with a multiplier system represented by 
the matrices ${\bf S}^\dag$ and ${\bf T}^\dag$, satisfying ${\bf S}{\bf S}^\dag={\bf T}{\bf T}^\dag =I_{2m}$, and corresponding to the modular transformations $S$ and $T$, respectively. (See \cite{MR0332663}.)
Moreover, the invariance under the modular transformation \eqref{modular} with $\gamma=-I_2$ implies that $\phi(\tau,-z)=(-1)^k \phi(\tau,z)$. Combining this with the identity
$\th_{m,-r}(\tau,z)= \th_{m,r}(\tau,-z)$ we see that 
\be\label{halving_theta_coeff}  \til h_{m,r} = (-1)^k \til h_{m,-r}.\ee

\subsection{The Eichler--Zagier Operators and an ADE Classification}\label{sec:forms:ADE}

We now turn to a  discussion of the Eichler--Zagier operators on Jacobi forms and establish an ADE classification of 
maps satisfying a certain positivity condition.  

Recall from \S\ref{sec:forms:jac} that a Jacobi form of weight $k$ and index $m$ admits a decomposition (\ref{eqn:forms:jac:thetaxpn}) into a combination of theta functions $\th_{m,r}$ and the $2m$ components $\til h_{m,r}$ of a vector-valued modular form $\til h_m$ of weight $k-1/2$. On the other hand, one can also consider the following question: for a given vector-valued modular form $\til h_m$, is the expression  in (\ref{eqn:forms:jac:thetaxpn}) the only combination of $\th_{m,r}$ and $\widetilde h_{m,r}$ that has the right transformation property to be a weight $k$,  index $m$ Jacobi form? 
In other words, we would like to consider all  
$2m\times 2m$ matrices $\Omega$ such that 
\begin{gather}
\til h_m^T\cdot \O \cdot\th_m =\sum_{r,r' \,({\rm mod}~2m)}\, \widetilde h_{m,r} \, \O_{r,r'} \th_{m,r'}
\end{gather}
is again a weight $k$ index $m$ Jacobi form.

From (\ref{transf_theta})-(\ref{transf_theta_2}) as well as the transformation under $\g=-I_2$,  we see that this condition amounts to considering the commutants $\O$ of ${\bf S}$ and  ${\bf T}$ satisfying
\be\label{commutants}
{\bf S}^\dag \Omega {\bf S} = {\bf T}^\dag \Omega {\bf T} = \Omega .
\ee
In particular, as $\Omega$ commutes with ${\bf S}^2$ we see that it has the reflection symmetry 
\be\label{prop_reflection_Omega}
\O_{r,r'} = \O_{-r,-r'}. 
\ee
Such commutants have been classified in \cite{Gepner:1986hr}. 
For each positive integer $m$, the space of $2m\times 2m$ matrices satisfying \eq{commutants} has dimension given by the number of divisors of $m$, $\s_0(m)=\sum_{d|m} 1$, and is spanned by the set of linearly independent matrices $\{\Omega_{m}(n_1), \Omega_{m}(n_2), \dots ,\Omega_{m}(n_{\s_0(m)})\}$ whose entries are given by 
\begin{align}\label{def:OmegaMatrices}
\Omega_{m}(n_i)_{r,r'} = \begin{cases} 1 &\text{if $r+r' = 0$ mod $2 n_i$ and $r-r' = 0$ mod ${2m}/{n_i}$,} \\ 
0 &{\rm otherwise},
\end{cases}
\end{align}
where  $1=n_1<n_2<\dots< m=n_{\s_0(m)}$ are the divisors of $m$. It is easy to check that these matrices automatically satisfy \eq{prop_reflection_Omega}.

Note that 
\be\label{reflection_AL}
 \widetilde h_{m}^T \cdot \Omega_m(n)\cdot \th_m= (-1)^k\,  \widetilde h_{m}^T \cdot \Omega_m(m/n)\cdot \th_m ,
\ee
as is evident from the definition \eq{def:OmegaMatrices} of $\O_m(n)$  as well as the reflection property \eq{halving_theta_coeff} of the components $\til h_{m,r}$ of $\til h_m$.

In fact, as we will now show, for a given vector-valued modular form $\widetilde h_m= (\widetilde h_{m,r})$ and any given divisor $n$ of $m$, the new Jacobi form 
$\til h_m^T\cdot \O_m(n) \cdot\th_m $ can be obtained from the original one $\til h_m^T\cdot \th_m $
via a natural operator---the so-called Eichler--Zagier operator \cite{eichler_zagier}---on Jacobi forms. 

Given positive integers $n$, $m$ such that $n|m$, we define an {\em Eichler--Zagier operator} ${\cal W}_m{(n)}$ acting on a function $f\colon \HH \times \CC \to \CC$ by setting
\be\label{Atkin--Lehner1}
( f\lvert {\cal W}_m{(n)}) \,(\t,z) = \frac{1}{n} \sum_{a,b = 0}^{n-1} e\left(m\left(\tfrac{a^2}{n^2} \t + 2 \tfrac{a}{n}z +\tfrac{ab}{n^2}\right)\right) f\left(\t,z+\tfrac{a}{n}\t+\tfrac{b}{n}\right). 
\ee
It is easy to see that the operator ${\cal W}_m{(n)}$ commutes with the index $m$ elliptic transformation (\ref{elliptic}) 
\be
f\lvert {\cal W}_m{(n)}\,\lvert_{m}(\l,\m) = f\lvert_{m}(\l,\m)\,\lvert {\cal W}_m{(n)} 
\ee
for all $\m,\l \in \ZZ$ and in particular preserves the invariance under elliptic transformations. 
Moreover, one can easily check that the modular invariance (\ref{modular}) is also preserved. 
As a result ${\cal W}_m{(n)}$ maps an unrestricted Jacobi form of weight $k$ and index $m$ to another unrestricted Jacobi form of the same weight and index. Moreover, when $n\| m$ this operation is an involution on the space of strong Jacobi forms. This involution is sometimes referred to as an {\em Atkin--Lehner involution} for Jacobi forms due to its intimate relation to Atkin--Lehner involutions for modular forms \cite{MR958592,MR1074485}. We will explain and utilise some aspects of this relation in \S\ref{sec:forms:genus0}. 

For later use, we define the more general operator ${\cal W} = \sum_{n_i|m} c_i {\cal W}_m(n_i)$ by setting
\be
f\lvert {\cal W}  = \sum_{n_i|m} c_i  \,\left( f\lvert {\cal W}_m(n_i)\right) .
\ee

The relation between the Eichler--Zagier operators ${\cal W}$ and the transformation on Jacobi forms 
\begin{gather}
\til h_m^T\cdot \th_m \mapsto \til h_m^T\cdot \O\cdot \th_m,
\end{gather}
where $\O$ is a linear combination of the matrices $\O_m(n)$ in \eq{def:OmegaMatrices}, 
 can be seen via the action of the former on the theta functions $\th_{m,r}$.
Notice that 
\begin{align}
\th_{m,r} \lvert {\cal W}_m{(n)} 
=\sum_{r'  \,({\rm mod}~2m)} \O_m(n)_{r,r'} \th_{m,r'} .
\end{align}
In terms of the $2m$-component vector $\th_{m} = (\th_{m,r})$,  we have 
\be\label{EZ_on_theta}
 \th_{m} \lvert {\cal W}_m{(n)}= \O_m{(n)} \cdot\th_{m} \;,
\ee 
which immediately leads to 
\be\label{relation_EZ_Omega}
  \til h_{m}^T\cdot \th_{m}\lvert {\cal W}_m{(n)}
=
 \til h_{m}^T\cdot\O_m(n)\cdot \th_{m}.
\ee
In other words, the Jacobi forms we discussed above in terms of the matrices $ \Omega_{m}(n)$ are simply the images of the original Jacobi forms under the corresponding Eichler--Zagier operators. This property makes it  obvious that $\til h_{m}^T\cdot\O_m(n)\cdot \th_{m}$ is also a Jacobi form since ${\cal W}_m{(n)}$ preserves the transformation under the Jacobi group.  
This relation will be important in the discussion in \S\ref{sec:forms:umbral}.

Apart from the modularity (Jacobi form) condition, it is also natural to impose a certain positivity condition.  As we will see, this additional condition leads to an ADE classification of the matrices $\O$. 
To explain this positivity condition, first recall that all the entries of the matrices $ \Omega_{m}(n)$ for any divisor $n$ of $m$ are non-negative integers and it might seem that any positivity condition would be redundant. 
However, we have also seen that the description of the theta-coefficients $\widetilde h_{m,r}$ of a weight $k$, index $m$ Jacobi form as a vector with $2m$ components has some redundancy since different components are related to each other by $\til h_{m,r}= (-1)^k \til h_{m,-r}$ (cf. (\ref{halving_theta_coeff})). 
For the purpose of the present paper we will from now on consider only the case of odd $k$, where there are at most $m-1$ independent components in $(\widetilde h_{m,r})$.
In this case, using the property \eq{prop_reflection_Omega}
we can rewrite the Jacobi form $\til h^T_m \cdot \O \cdot \th_m$ as
\be
\til h^T_m \cdot \O \cdot \th_m=\sum_{r,r' =1}^{m-1} \widetilde h_{m,r}\, ( \Omega_{r,r'}-\Omega_{r,-r'} )\, ( \th_{m,r'}- \th_{m,-r'}).
\ee
As a result, it is natural to consider the $2m\times 2m$ matrices $\O=\sum_{i=1}^{\s_0(m)} c_{i} \O_m({n_i})$ where $n_1,n_2,\dots $ are the (distinct) divisors of $m$ that satisfy the corresponding positivity and integrality condition  
\be\label{positivity_ADE}
 \text{$\O_{r,r'}-\O_{r,-r'} \in \ZZ_{\geq 0}$ for all $r,r'=1,\dots, m-1$,} 
\ee
with a natural normalisation
\be\label{positivity_ADE2}
\O_{1,1}-\O_{1,-1} =1.
\ee
Evidently, they are in one-to-one correspondence with the non-negative integer combinations of $(\til h_{m,r})$ and  $( \th_{m,r'}- \th_{m,-r'})$, with $r,r'=1,2,\dots, m-1$, with the coefficient of the term $\til h_{m,1} \th_{m,1}$ equal to $1$. From a conformal field theory point of view this is precisely the requirement of having a unique ground state in the theory. 

\begin{table}
\captionsetup{font=small}
\centering
\begin{tabular}{CCCC}\toprule
 X & m(X) & \pi^X & \O^{X} \\\midrule
A_{m-1} & m &\frac{m}{1} & \O_m{(1)} 		\vspace{0.2em}\\
D_{{m}/{2}+1} & m &\frac{2.m}{1.(m/2)} &  \O_m{(1)}+ \O_m{(m/2)}	\vspace{0.2em}\\
E_6 & 12&\frac{2.3.12}{1.4.6} & \O_{12}{(1)}+ \O_{12}{(4)}+ \O_{12}{(6)} 	\vspace{0.2em}\\

E_7& 18&\frac{2.3.18}{1.6.9} & \O_{18}{(1)}+\O_{18}{(6)}+\O_{18}{(9)}   	\vspace{0.2em}\\

E_8 & 30&\frac{2.3.5.30}{1.6.10.15} & \O_{30}{(1)}+\O_{30}{(6)}+\O_{30}{(10)}+\O_{30}{(15)}
\vspace{0.1em}\\
\bottomrule
\end{tabular}
\caption{\label{ADE1}{The ADE classification of matrices $\Omega$ producing Jacobi forms $\til h_m^T\cdot \O \cdot\th_m $ 	}}
\end{table}

It turns out that this problem has been studied by Cappelli--Itzykson--Zuber \cite{Cappelli:1987xt}. They found a beautiful ADE classification of such  
$2m\times 2m$ matrices $\O$  (see Proposition 2 of \cite{Cappelli:1987xt}) and we present these matrices in Table \ref{ADE1}, denoting by $\O^X$ the matrix  corresponding to the irreducible simply-laced root system $X$. 
The motivation of \cite{Cappelli:1987xt} was very different from ours: these authors were interested in classifying the modular invariant combinations of chiral and anti-chiral characters of the affine Lie algebra $\widehat{A}_1$ (the $\SU(2)$ current algebra). However, as the modular transformation of the $\widehat A_1$ characters at level $m-2$ is very closely related to that of the index $m$ theta functions $\th_{m,r}$, the relevant matrices are also the commutants of the same ${\bf S}$ and ${\bf T}$ matrices satisfying \eq{commutants}.

The relation between the Eichler--Zagier operators and the $\O_m(n)$ matrices discussed earlier makes it straightforward to extend the above ADE classification to an ADE classification of Eichler--Zagier operators.
Combining the results of the above discussion, we arrive at the following theorem.

\begin{thm}\label{thm_ADE}
For any integer $m$ and any odd integer $k$, and any vector-valued modular form $\widetilde h_m=(\widetilde h_{m,r})$ 
such that $\til h_m^T\cdot \th_m$ 
is an (unrestricted) weight $k$, index $m$ Jacobi form, suppose $\O$ coincides with a matrix $\O^X$ corresponding to an irreducible simply-laced root system $X$ with Coxeter number $m$ via Table \ref{ADE1}. Then the combination $\til h_m^T\cdot\O\cdot \th_m$ is also a weight $k$, index $m$ (unrestricted)  Jacobi form which moreover satisfies the positivity condition 
\be\label{thm1_jacform}
\til h_m^T\cdot\O\cdot \th_m = \sum_{r,r' =1}^{m-1} c_{r,r'} \widetilde h_{m,r} (\th_{m,r'}-\th_{m,-r'}),~ c_{r,r'}\in\ZZ_{\geq 0}, ~c_{1,1}=1.
\ee
Conversely, any $2m\times 2m$ matrix $\O$ for which  the above statement is true necessarily coincides with a matrix $\O^X$ corresponding to an irreducible simply-laced root system $X$ with Coxeter number $m$. 
Moreover, the resulting  (unrestricted)  Jacobi form is  the image of the original Jacobi form $\til h_m^T\cdot \th_m$ under the Eichler--Zagier operator ${\cal W}^X$ defined by replacing $\O_m{(n)}$ in $\O^X$ with ${\cal W}_m{(n)}$ (cf. \eq{EZ_relation}). 
 \end{thm}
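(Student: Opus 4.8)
The plan is to reduce the whole statement to the commutant analysis already carried out above together with the Cappelli--Itzykson--Zuber classification \cite{Cappelli:1987xt}, and to read off the Eichler--Zagier reformulation from the intertwining identity \eq{relation_EZ_Omega}. Concretely there are three things to establish: (i) that each $\O^X$ of Table \ref{ADE1} produces a Jacobi form obeying the positivity normalisation \eq{thm1_jacform}; (ii) that, conversely, any $\O$ with this property is one of the $\O^X$; and (iii) that the resulting form is $\til h_m^T\cdot\th_m\,\lvert\,{\cal W}^X$. Much of the groundwork---the spanning set $\{\O_m(n_i)\}$ for the commutant of ${\bf S}$ and ${\bf T}$, the reflection identities \eq{prop_reflection_Omega} and \eq{halving_theta_coeff}, and the action \eq{EZ_on_theta}---is already in place, so the argument is largely organisational.

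For the forward direction I would first note that, by Table \ref{ADE1}, every $\O^X$ is a fixed non-negative integer combination $\O^X=\sum_i c_i\,\O_m(n_i)$ over a subset of divisors $n_i\mid m$. Setting ${\cal W}^X=\sum_i c_i\,{\cal W}_m(n_i)$ and invoking linearity in \eq{relation_EZ_Omega} gives
\begin{equation}\label{EZ_relation}
	\til h_m^T\cdot\O^X\cdot\th_m = \til h_m^T\cdot\th_m\,\lvert\,{\cal W}^X .
\end{equation}
Since each ${\cal W}_m(n_i)$ preserves the index $m$ elliptic and the weight $k$ modular transformations (as shown above), the right-hand side is again a weight $k$, index $m$ unrestricted Jacobi form; this simultaneously proves the Jacobi-form assertion and the final ${\cal W}^X$ assertion. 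The positivity claim \eq{thm1_jacform} then amounts to checking, directly from the definition \eq{def:OmegaMatrices}, that the reduced coefficients $c_{r,r'}=\O^X_{r,r'}-\O^X_{r,-r'}$ for $1\le r,r'\le m-1$ are non-negative integers with $c_{1,1}=1$; this is a finite verification for the $A$, $D$, $E_6$, $E_7$, $E_8$ entries of Table \ref{ADE1}, which is exactly the data recorded in the CIZ list.

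For the converse I would argue that any admissible $\O$ must, by the necessity half of the commutant discussion leading to \eq{commutants}, lie in the span of the $\O_m(n_i)$: the requirement that $\til h_m^T\cdot\O\cdot\th_m$ be a Jacobi form for every $\til h_m$ with $\til h_m^T\cdot\th_m$ a Jacobi form forces ${\bf S}^\dagger\O{\bf S}={\bf T}^\dagger\O{\bf T}=\O$ (such $\til h_m$ exist in abundance, so the bilinear pairing detects the full commutant condition). Using the reflection symmetry \eq{prop_reflection_Omega} I would then pass from $\O$ to the $(m-1)\times(m-1)$ reduced matrix $M_{r,r'}=\O_{r,r'}-\O_{r,-r'}$, under which \eq{positivity_ADE}--\eq{positivity_ADE2} become precisely $M_{r,r'}\in\ZZ_{\ge0}$ and $M_{1,1}=1$. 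The combinations $\th_{m,r}-\th_{m,-r}$ are the numerators of the level $m-2$ characters of $\widehat{A}_1$ via the Weyl--Kac formula, so $M$ is exactly a physical modular invariant in the sense of \cite{Cappelli:1987xt}; Proposition 2 there then yields the $A$-$D$-$E$ list, and matching each solution against \eq{def:OmegaMatrices} identifies it with an $\O^X$ of Table \ref{ADE1}.

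The main obstacle, and the step deserving the most care, is the dictionary underlying this last reduction: one must verify that the $S$ and $T$ matrices governing the $\widehat{A}_1$ characters agree (after the antisymmetrisation that removes the Weyl reflection and the attendant passage from $2m$ to $m-1$ components) with ${\bf S}^\dagger,{\bf T}^\dagger$ acting on the $\th_{m,r}$, so that CIZ's ``unique vacuum'' normalisation really coincides with \eq{positivity_ADE2} and their non-negativity with \eq{positivity_ADE}. Granting this identification the classification is immediate, but it is precisely here---rather than in the modular bookkeeping---that the content of the theorem resides, and I would spell out the correspondence between the divisor-indexed matrices $\O_m(n)$ and the entries of CIZ's $A_{m-1}$, $D_{m/2+1}$, $E_6$, $E_7$, $E_8$ invariants explicitly before quoting their result.
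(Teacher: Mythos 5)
Your proposal is correct and follows essentially the same route as the paper's proof: the Jacobi-form condition forces the commutant condition \eq{commutants}, whose solution space is spanned by the $\O_m(n_i)$ (Gepner--Qiu, CIZ Proposition 1); the positivity/integrality requirement reduces via \eq{prop_reflection_Omega} to conditions \eq{positivity_ADE}--\eq{positivity_ADE2}, classified by Cappelli--Itzykson--Zuber as the ADE list of Table \ref{ADE1}; and the Eichler--Zagier statement follows from \eq{EZ_on_theta}. Your additional care on two points the paper leaves implicit---that generic $\til h_m$ suffice to detect the full commutant condition, and that the $S$, $T$ data of the level $m-2$ $\widehat{A}_1$ characters matches that of the $\th_{m,r}$ after antisymmetrisation---is a welcome refinement but does not change the argument.
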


\begin{proof}
First, the Jacobi form condition on $\til h_m^T\cdot\O\cdot \th_m$ requires that $\O$ satisfies the commutant condition \eq{commutants}. 
It was shown in \cite{Gepner:1986hr} (cf. Proposition 1 of \cite{Cappelli:1987xt}) that the space of such $2m\times 2m$ matrices are spanned by $\{\Omega_{m}(n_1), \Omega_{m}(n_2), \dots ,\Omega_{m}(n_{\s_0(m)})\}$ given in \eq{def:OmegaMatrices}. 

Next, the positivity and integrality conditions on $c_{r,r'}$ are equivalent to those on the entries $\O_{r,r'}-\O_{r,-r'}$ given in (\ref{positivity_ADE}-\ref{positivity_ADE2}). 
The linear combinations of $\Omega_{m}(n_i)$ satisfying (\ref{positivity_ADE}-\ref{positivity_ADE2}) were shown in \cite{Cappelli:1987xt} to correspond to ADE root systems via Table \ref{ADE1}. 
Finally, the equality \eq{EZ_relation} follows from the equality \eq{EZ_on_theta}. 
\end{proof}

The relation between $\O^X$ and the ADE root system $X$ lies in the following two facts. 
First, $\O^X$ is a $2m\times 2m$ matrix where $m$ is the Coxeter number of $X$. 
Moreover,  $ \O^X_{r,r}-\O^X_{r,-r}=\a_r^X$ for $r=1,\dots,m-1$ coincides with the multiplicity of $r$ as a Coxeter exponent of $X$ (cf. Table \ref{tab:CoxNum}). 
Note the striking similarity between the expression for $\O^X$ and the denominator of the Coxeter Frame shape $\pi^X$ (cf. \S\ref{sec:holes:rootsys}).
For instance, $\O^X=\O_m(1)$ for $X=A_{m-1}$ is nothing but the $2m\times 2m$ identity matrix.

More generally, for a union  $X=\bigcup_i\! X_i$ of simply-laced root systems with the same Coxeter number, we let
\be\label{linear_combination_Omega_EZ}
\O^{X} = \sum_i \O^{X_i},\text{ and similarly }~{\cal W}^{X} = \sum_i {\cal W}^{X_i}.
\ee
Then we have the relation 
\be\label{EZ_relation}
 (\til h^T_m \cdot  \th_m) \lvert {\cal W}^X
 = \til h^T_m \cdot \O^X\cdot \th_m 
\ee
among different (odd) weight $k$ and index $m$ Jacobi forms corresponding to the same vector-valued modular form $\til h_m=(\til h_{m,r})$. Note that the operator ${\cal W}^X$ is in general no longer an involution and often not even invertible. The Eichler--Zagier operators ${\cal W}^X$ corresponding to Niemeier root systems (cf. \S\ref{sec:holes:lats}) will play a central role in \S\ref{sec:mckay}. 

The $\widehat A_1$ characters, which have led to the ADE classification of  Cappelli--Itzykson--Zuber, are rather ubiquitous in two-dimensional conformal field theory. They can be viewed as the building blocks of, for instance, the characters of $N=4$ super-conformal algebra and the partition functions of $N=2$ minimal models (cf. e.g. \cite{Gepner:1986hr,Eguchi1989}). Moreover, the positivity and the integrality condition (\ref{positivity_ADE}-\ref{positivity_ADE2}) that are necessary to obtain the ADE classification are completely natural from the point of view of the conformal field theories. This might be seen as suggesting a relationship between umbral moonshine and two-dimensional conformal field theories. The concrete realisation of such a relationship is beyond the scope of the present paper.

\subsection{From Meromorphic Jacobi Forms to Mock Modular Forms}\label{sec:forms:meromock}

In \S\ref{sec:forms:mock} we have seen the definition of mock modular form and its vector-valued generalisation. 
One of the natural places where such vector-valued mock modular forms occur is in the theta expansion of meromorphic Jacobi forms.
To be more precise, following \cite{zwegers} and \cite{Dabholkar:2012nd} we will establish a uniform way to separate a meromorphic Jacobi form $\psi$ into its {\em polar} and  {\em finite} parts
\be
\psi^{}(\t,z) = \psi^{P}(\t,z)+\psi^{F}(\t,z). 
\ee
The finite part will turn out to be a mock Jacobi form, admitting a theta expansion as in (\ref{eqn:forms:jac:thetaxpn}), whose theta-coefficients are the components of a vector-valued mock modular form.
Up to this point, all the mock modular forms playing a role in umbral moonshine, as well as  
other interesting examples including many of Ramunanjan's mock theta functions, can be obtained in this way as theta-coefficients of finite parts of meromorphic Jacobi forms.

For the purpose of this paper we will focus on the case of weight $1$ Jacobi forms with simple poles as a function of $z$. 
Consider such a Jacobi form $\psi$ with a pole at $z=z_s$, where $z_s$ is a point inside the fundamental cell $\a\t+\b$, $\a,\b \in (-1,0]$.   
The elliptic transformation (\ref{elliptic}) then forces $\psi$ to have poles at all  $z \in z_s+ \mathbb Z+\tau \mathbb Z$. 
With this property in mind, in the rest of the paper we will only write down the location of the poles inside the fundamental cell. 
To capture this translation property of the poles, following \cite{Dabholkar:2012nd} we define an averaging operator 
\be
{\rm Av}_{m}\left[F(y)\right] = \sum_{k\in \ZZ} q^{m k^2} y^{2m k} F(q^k y )
\ee
which takes a function of $y=e(z)$ with polynomial growth and returns a function of $z$ which is invariant under the index $m$ elliptic transformations \eqref{elliptic}. 
For a given pole $z=z_s$ of a weight $1$ index $m$ meromorphic Jacobi form $\psi$, we will consider  the image $\psi_{z_s}^P$ under ${\rm Av}_{m}$ of a suitably chosen meromorphic function $F_{z_s}(y)$ that has a pole at $z=z_s$, such that $\psi- \psi_{z_s}^P$ is regular at all  $z \in z_s+ \mathbb Z+\tau \mathbb Z$. 

In the remaining part of this subsection we will first review (following \cite{zwegers,Dabholkar:2012nd}) this construction of mock modular forms in more detail, 
and then extend the discussion of the Eichler--Zagier operators to 
meromorphic Jacobi forms and study how they act on the polar and the finite part separately. 
This will allow us to establish an ADE classification of mock Jacobi forms of a specific type in the next section and constitutes a crucial element in the construction of the umbral mock modular forms $H^X$.

\vspace{15pt}
\noindent{\em A Simple Pole at $z=0$}
\vspace{5pt}

To start with, consider a meromorphic Jacobi form $\psi^{}(\t,z)$ of weight $1$ and index $m$, with a simple pole at $z=0$ and no other poles.
Define the polar part of $\psi$ to be
\be \label{polar_simplest}
\psi^{P}(\tau,z)=  \chi(\t)
	{\rm Av}_{m}\left[ \frac{y+1}{y-1} \right] 
\ee
where $\chi(\t)/\p i$ is the residue of $\psi(\t,z)$ at $z=0$. 
For the applications in the present paper we need only consider the case that $\chi(\t)=\chi$ is a constant.
With this definition, one can easily check that $\psi^F = \psi-\psi^P$ is indeed a holomorphic function with no poles in $z$. 

Note that  
\be
\m_{m,0} (\t,z) = {\rm Av}_{m}\left[\frac{y+1}{y-1}\right],
\ee
where we define the generalised Appell--Lerch sum
\be\label{gAPsum}
\m_{m,j} (\t,z) =(-1)^{1+2j}  \sum_{k\in \ZZ}  q^{m k^2} y^{2 m k} \frac{(yq^{k})^{-2j}+(yq^{k})^{-2j+1}+\dots+(yq^k)^{1+2j} }{ 1-yq^k}\ee
for $j\in\tfrac{1}{2}\ZZ$. The function $\m_{m,0}$ enjoys the following relation to the modular group $\SL_2(\ZZ)$. Define the {\em completion} of $\m_{m,0} (\t,z)$ by setting
\be\label{pole_completion}
\widehat \m_{m} (\t,\bar \t,z) = \m_{m,0} (\t,z) - e(-\tfrac{1}{8}) \frac{1}{\sqrt{2m}}   \sum_{r \,({\rm mod } \,2m)} \th_{m,r}(\t,z)  \int^{i\inf}_{-\bar \t}  (\t'+\t)^{-1/2} \overline{S_{m,r}(-\bar \t')} \, {\rm d}\t', 
\ee
where $S_{m,r} (\t) $ denotes the unary theta series 
\be\label{def:S}
S_{m,r}  (\t) =-S_{m,-r}  (\t) = \frac{1}{2\p i} \frac{\pa}{\pa z} \th_{m,r} (\t,z)\big\lvert_{z=0} ,
\ee
then $\widehat\m_{m}$ transforms like a Jacobi form of weight $1$ and index $m$ for $\SL_2(\ZZ)$ but is clearly no longer holomorphic whenever $m>1$. 
From the above definition and the transformation \eq{transf_theta_2} of the theta functions, we see that $S_m =(S_{m,r})$ is a (vector-valued) weight 3/2 cusp form for $\SL_2(\ZZ)$.

Returning to our weight $1$ index $m$ Jacobi form $\psi$, assumed to have a simple pole at $z=0$, it is now straightforward to see that the finite part $\psi^F=\psi-\psi^P$, a holomorphic function on $\mathbb H\times \mathbb C$, has a completion given by
\be
\widehat\psi^F = \psi^F +\chi\,  e(-\tfrac{1}{8})  \frac{1}{\sqrt{2m}} \sum_{r \,({\rm mod } \,2m)} \th_{m,r}(\t,z)  \int^{i\inf}_{-\bar \t}  (\t'+\t)^{-1/2} \overline{S_{m,r}(-\bar \t')} \, {\rm d}\t'
\ee
that transforms like a Jacobi form of weight $1$ and index $m$ for $\SL_2(\ZZ)$. As such, $ \psi^F$ is an example of a {\em mock Jacobi form} (cf. \cite[\S7.2]{Dabholkar:2012nd}).
Since both $\psi$ and $\psi^{P}$ are invariant under the index $m$  elliptic transformation, so is the finite part $\psi^{F}$. 
This fact guarantees a theta expansion of $\psi^{F}$ analogous to that of a (weak) Jacobi form \eq{eqn:forms:jac:thetaxpn}
\be\label{mock_from_decomp}
\psi^{F}(\t,z) =\sum_{r \,(\rm{ mod }\,2m )} h_r(\t) \,  \th_{m,r}(\t,z), 
\ee
where $h=(h_r)$ is a weight 1/2 vector-valued holomorphic function on $\mathbb H$ whose completion
\be
\widehat h_r(\t) =h_r(\t)  +\chi\,  e(-\tfrac{1}{8}) \frac{1}{\sqrt{2m}} \int^{i\inf}_{-\bar \t}  (\t'+\t)^{-1/2} \overline{S_{m,r}(-\bar \t')} \, {\rm d}\t'
\ee
transforms as a weight $1/2$ vector-valued modular form with $2m$ components. 
As such, we conclude that $h=(h_r)$ is a vector-valued mock modular form for the modular group $\SL_2(\ZZ)$  with shadow $\chi \,S_m= (\chi\,S_{m,r})$. 

Note that $S_1$ vanishes identically. This is a reflection of the fact that $\mu_{1,0}$, in contrast to the $\mu_{m,0}$ for $m>1$, coincides with its completion, and is thus (already) a meromorphic Jacobi form, of weight $1$ and index $1$. By construction it has simple poles at $z\in\ZZ\t+\ZZ$ and nowhere else, and we also have the explicit formula
\be\label{CoverA}
\mu_{1,0}(\t,z)= -i \frac{\theta_1(\tau,2z)\, \eta(\tau)^3}{\theta_1(\tau,z)^2}= \frac{y+1}{y-1}- (y^2-y^{-2})\,q+ \cdots.
\ee
(See \S\ref{sec:JacTheta} for $\th_1(\t,z)$.) The function $\mu_{1,0}$ is further distinguished by being a meromorphic Jacobi form with vanishing finite part; a ``Cheshire cat'' in the language of \cite[\S8.5]{Dabholkar:2012nd}. It will play a distinguished role in \S\ref{sec:forms:umbral}, where it will serve as a device for producing meromorphic Jacobi forms of weight $1$ from (weak, holomorphic) Jacobi forms of weight $0$.

\vspace{15pt}
\noindent{\em Simple Poles at $n$-Torsion Points}
\vspace{5pt}

Next we would like to consider the more general situation in which we have a weight $1$ index $m$ meromorphic Jacobi form $\psi$ with simple poles at more general torsion points 
$z\in \QQ \t+ \QQ $. We introduce the row vector with two elements $s= (\a~\b)$ to label the pole at $z_s =\a\t+\b$ and write $y_s=e(z_s)$.
For the purpose of this paper we will restrict our attention to the $n$-torsion points satisfying $nz \in \ZZ \tau +\ZZ$, where $n$ is a divisor of the index $m$. 
Focus on a pole located at say $z_s=\a\t+\b$ with $\a,\b \in \frac{1}{n}\ZZ$. Again following \cite{Dabholkar:2012nd}, we require the corresponding polar term to be given by the formula
\be\label{polar_part_from_individual_poles}
\psi^P_{z_s}(\t,z) = \p i\, {\text{Res}}_{z= z_s} (\psi(\t,z) )\,{\rm Av}_{m}\left[\left(\frac{y}{y_s}\right)^{-2m\a}\,\frac{y/y_s+1}{y/y_s-1}\right],
\ee
generalising \eq{polar_simplest}. One can easily check that $ \psi-\psi^P_{z_s}$ has no  pole at  $z\in z_s + \ZZ \t + \ZZ$. 

As before, the above polar part is invariant under the elliptic transformation by construction. 
To discuss its variance under the modular group, first notice that the transformation $(\t,z) \mapsto \g(\t,z)$ maps the pole at $z_s$ to a different pole according to $s\mapsto s\g$. As a result, to obtain a mock Jacobi form for $\SL_2(\ZZ)$ from a meromorphic Jacobi form with poles at $n$-torsion points $z_s$ (where $n$ is the smallest integer such that $z_s\in \frac{\t}{n}\ZZ+ \frac{1}{n}\ZZ$), we should consider  meromorphic Jacobi forms that have poles at {all} the $n$-torsion points. 
Moreover, the modular transformation of $\psi$ dictates that the residues of the poles satisfy $D_s(\g\t) = D_{s\g}(\t)$, where we have defined, after \cite{Dabholkar:2012nd},
\be
D_s(\t)  = e(m\a z_s) \,{\text{Res}}_{z= z_s} (\psi(\t,z)) . 
\ee

More specifically, we would like to consider the situation where $\psi$ satisfies 
\be\label{pole_residue_ntorsion}
{\text{Res}}_{z= -\frac{a}{n}\t-\frac{b}{n}} \psi (\t,z) = \chi e(-ma(a\t+b)/n^2)/n \p i,\text{ for $a,b=0,1,\dots,n-1$,} 
\ee
corresponding to the simplest case where the function $D_s(\t)$ is just a constant. 
Without loss of generality we will also assume for the moment that $\psi$ has no other poles, as the more general situation can be obtained by taking linear combinations. 
In this case, using \eq{polar_part_from_individual_poles} it is not hard to see that the polar parts contributed by the poles at these $n$-torsion points  are given by  
the images under the Eichler--Zagier operator ${\cal W}_m{(n)}$  (cf. \eq{Atkin--Lehner1}) of the polar term contributed by the simple pole at the origin, so that
\be\label{pol_n_torsion}
\psi^P = \sum_{a,b=0}^{n-1}\psi^P_{-\frac{a}{n}\t-\frac{b}{n}} = \chi \,\m_{m,0}\big\lvert{\cal W}_m{(n)} .
\ee

From \eq{pole_completion} and the fact that the  Eichler--Zagier operators preserve the Jacobi transformations, we immediately see how considering Jacobi forms with simple poles at torsion points leads us to vector-valued mock modular forms with more general shadows. 
In this case, from \eq{EZ_on_theta}, \eq{pole_completion} and  \eq{pol_n_torsion} it is straightforward to see that the completion of the polar part
\be\label{completion_polar_n_torsion}
\widehat \psi^P   = \psi^P - \chi\, e(-\tfrac{1}{8})\frac{1}{\sqrt{2m}}   \sum_{r,r' \,({\rm mod } \,2m)} \th_{m,r}(\t,z)\, \O_m(n)_{r,r'}  \int^{i\inf}_{-\bar \t}  (\t'+\t)^{-1/2} \overline{S_{m,r'}(-\bar \t')} \, {\rm d}\t'
\ee
again transforms like a Jacobi form of weight $1$ and index $m$ for $\SL_2(\ZZ)$.

Following the same argument as before, we conclude that the theta-coefficients of the finite part
\be
\psi^F = \psi - \psi^P =\sum_{r \,(\rm{ mod }\,2m )} h_r(\t) \,  \th_{m,r}(\t,z),
\ee
define a vector-valued mock modular form $h=(h_r)$, whose completion is given by 
\be
\widehat h_r(\t) =h_r(\t)  +\chi \,e(-\tfrac{1}{8})\, \frac{1}{\sqrt{2m}} \sum_{r' \,(\rm{ mod }\,2m )}\O_m(n)_{r,r'}\int^{i\inf}_{-\bar \t}  (\t'+\t)^{-1/2}  \overline{S_{m,r'}(-\bar \t')} \, {\rm d}\t',
\ee
and whose shadow is hence given by a vector of unary theta series whose $r$-th component equals
\be\label{torsion_shadow}
\sum_{r' \,(\rm{ mod }\,2m )} \Omega_m{(n)}_{r,r'} \, S_{m,r'} 
\ee
where $r\in \ZZ/2m\ZZ$.
In particular, this means that the vector-valued mock modular forms arising from meromorphic Jacobi forms in this way are closely related to mock theta functions, as their shadows are always given by unary theta series.

Finally, we also note that the 
Eichler--Zagier operators and the operations of extracting polar and finite parts are 
commutative in the following sense. 
\begin{prop}\label{proposition_EZ_commutes}
Suppose $\psi$ is a weight $1$ index $m$ meromorphic Jacobi form with simple poles at $\til n$-torsion points with $\til n|m$ and with no poles elsewhere. 
Then for any positive integer $n$ such that $n|m$ and $(n,\til n)=1$ we have
\be
\left(\psi\lvert {\cal W}_m(n)\right)^P = \psi^P\lvert {\cal W}_m(n) . 
\ee
\end{prop}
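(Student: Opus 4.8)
The plan is to split $\psi=\psi^P+\psi^F$ and analyse how ${\cal W}_m(n)$ acts on each piece. Since ${\cal W}_m(n)$ commutes with the index $m$ elliptic transformation \eqref{elliptic}, both $\psi^P\lvert{\cal W}_m(n)$ and $\psi^F\lvert{\cal W}_m(n)$ remain invariant under the elliptic action, so each has a well-defined polar part built pole-by-pole as in \eqref{polar_part_from_individual_poles}. As this construction depends only on the residues at the poles, the operation $(\cdot)^P$ is additive on elliptic meromorphic functions and annihilates holomorphic ones. It therefore suffices to prove two claims: (i) that $\psi^F\lvert{\cal W}_m(n)$ is holomorphic in $z$, whence $(\psi\lvert{\cal W}_m(n))^P=(\psi^P\lvert{\cal W}_m(n))^P$; and (ii) that $\psi^P\lvert{\cal W}_m(n)$ is purely polar, i.e.\ equal to its own polar part.

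Claim (i) is immediate: by construction $\psi^F=\psi-\psi^P$ is holomorphic on $\HH\times\CC$, while the operator ${\cal W}_m(n)$ of \eqref{Atkin--Lehner1} is a finite sum of translations $z\mapsto z+\tfrac an\tau+\tfrac bn$ weighted by entire exponential factors. Such operations preserve holomorphy, so $\psi^F\lvert{\cal W}_m(n)$ has no poles and contributes nothing to the polar part.

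Claim (ii) carries the real content, and it is here that the coprimality $(n,\til n)=1$ is used. I would first record that $\psi\lvert{\cal W}_m(n)$ still has only simple poles, now at the $n\til n$-torsion points: a pole of $\psi$ at a $\til n$-torsion point $z_s$ produces, in the $(a,b)$-summand of \eqref{Atkin--Lehner1}, a pole at $z_s-\tfrac an\tau-\tfrac bn$, and coprimality ensures via the Chinese Remainder Theorem that these shifted points are pairwise distinct modulo $\ZZ\tau+\ZZ$, so no collisions into higher-order poles occur and \eqref{polar_part_from_individual_poles} applies termwise. Reducing by linearity to the model residue structure \eqref{pole_residue_ntorsion}, I may take $\psi^P=\chi\,\mu_{m,0}\lvert{\cal W}_m(\til n)$ as in \eqref{pol_n_torsion}, so that
\begin{gather}
\psi^P\lvert{\cal W}_m(n)=\chi\,\mu_{m,0}\lvert{\cal W}_m(\til n)\lvert{\cal W}_m(n)=\chi\,\mu_{m,0}\lvert{\cal W}_m(n\til n),
\end{gather}
the last equality being the composition law for the Eichler--Zagier operators at coprime indices (note $n\til n\mid m$, since $n$ and $\til n$ are coprime divisors of $m$). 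Reading \eqref{pol_n_torsion} backwards, the right-hand side is exactly the polar part of a meromorphic Jacobi form with simple poles at the $n\til n$-torsion points, hence purely polar. Combining with claim (i) gives $(\psi\lvert{\cal W}_m(n))^P=(\psi^P\lvert{\cal W}_m(n))^P=\psi^P\lvert{\cal W}_m(n)$.

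The main obstacle is the composition law ${\cal W}_m(\til n){\cal W}_m(n)={\cal W}_m(n\til n)$ for coprime $n,\til n$, which I expect to prove by a direct manipulation of the double sum. Writing each translation in \eqref{Atkin--Lehner1} as an instance of the elliptic action \eqref{elliptic}, the composite ${\cal W}_m(\til n)\circ{\cal W}_m(n)$ becomes a sum over pairs of shifts $\tfrac an\tau+\tfrac bn$ and $\tfrac{a'}{\til n}\tau+\tfrac{b'}{\til n}$, accompanied by explicit quadratic and bilinear phase cocycles arising from composing elliptic actions. The delicate point will be to reindex the combined shift $(\tfrac an+\tfrac{a'}{\til n})\tau+(\tfrac bn+\tfrac{b'}{\til n})$ by a single residue modulo $n\til n$ and to check that the accumulated phases collapse to precisely the normalisation prescribed for ${\cal W}_m(n\til n)$; the Chinese Remainder Theorem is what makes this reindexing a bijection, and coprimality is exactly what guarantees the phases recombine consistently. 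Once this identity is in hand the proposition follows from the two claims above.
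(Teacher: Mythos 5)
Your high-level skeleton (split $\psi=\psi^P+\psi^F$, note that ${\cal W}_m(n)$ preserves holomorphy so the finite part contributes nothing to $(\,\cdot\,)^P$, then show that $\psi^P\lvert{\cal W}_m(n)$ is purely polar) is sensible, and your claim (i) is correct. The genuine gap is the first step of claim (ii): the reduction ``by linearity to the model residue structure \eqref{pole_residue_ntorsion}'' is not available in the generality of the proposition. The hypotheses only require $\psi$ to have simple poles supported on the $\til n$-torsion points; its residue data consists of one function $D_s(\t)$ for each torsion class $s$, constrained only by the covariance $D_s(\g\t)=D_{s\g}(\t)$. In particular $D_s$ may be non-constant (invariant only under the stabiliser of the class $s$ in $\SL_2(\ZZ)$, a proper congruence subgroup), and the pattern of residues across the $\til n^2$ classes need not be the locked, one-parameter pattern of \eqref{pole_residue_ntorsion}. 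By contrast, the functions you reduce to, namely $\chi\,\mu_{m,0}\lvert{\cal W}_m(d)$ for $d\mid\til n$, span (over modular coefficients) only a $\sigma_0(\til n)$-dimensional family with constant, phase-locked residues. So your argument proves claim (ii), and hence the proposition, only on a proper subspace of the allowed $\psi$. The paper's proof avoids this entirely by working pole by pole with arbitrary residues: it computes ${\rm Res}_{z=z_s}\bigl(\psi\lvert{\cal W}_m(n)\bigr)$ directly from \eqref{Atkin--Lehner1} in terms of ${\rm Res}_{z=z_\ast}\psi$, assembles $\bigl(\psi\lvert{\cal W}_m(n)\bigr)^P$ via \eqref{polar_part_from_individual_poles}, and matches the result termwise against $\psi^P\lvert{\cal W}_m(n)$. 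To repair your proof you would need the analogous single-pole statement --- that ${\cal W}_m(n)$ sends each averaged one-pole function in \eqref{polar_part_from_individual_poles} to the sum of the corresponding averaged functions at the shifted poles --- and once you prove that, the detour through the composition law buys nothing.

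A secondary gap is that your self-declared ``main obstacle,'' the composition law ${\cal W}_m(\til n)\circ{\cal W}_m(n)={\cal W}_m(n\til n)$ for coprime divisors with $n\til n\mid m$, is only sketched. The law is in fact true, and your sketch identifies the correct mechanism: after composing the two sums, the accumulated phase differs from that of ${\cal W}_m(n\til n)$ by a bilinear cross term of the form $e\bigl(m\tfrac{a'b-ab'}{n\til n}\bigr)$, which is trivial precisely because $n\til n\mid m$, and the Chinese Remainder reindexing is legitimate because the summand of ${\cal W}_m(N)$ is well defined for indices modulo $N$ when acting on index $m$ elliptic-invariant functions. But in a complete proof this computation must actually be carried out, since it is the only place where the hypotheses $(n,\til n)=1$ and $n\til n\mid m$ enter your argument.
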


\begin{proof}
Denote the set of poles of $\psi$ in the unit cell by $S$, and focus on the pole of $\psi$ at $z_\ast=-\til a/\til n\t -\til b/\til n\in S$. From the action of ${\cal W}_m(n)$ we see that $\psi\lvert {\cal W}_m(n)$ has poles at all $z\in z_\ast + \frac{1}{n}\ZZ + \frac{\t}{n}\ZZ$. Focussing on the pole at $z=z_s=z_\ast-(a\t/n+b/n)$, from \eq{Atkin--Lehner1} we get 
\be
{\text{Res}}_{z= z_s} (\psi\lvert {\cal W}_m(n) )(\t,z)  = \frac{1}{n} \,e(\tfrac{2maz_s}{n})e(\tfrac{ma}{n^2}(a\t+b)) {\text{Res}}_{z= z_\ast} (\psi (\t,z)),
\ee
which leads to 
\begin{align}\notag
(\psi\lvert {\cal W}_m(n) )^P (\t,z) &= \frac{\p i}{n} \sum_{z_\ast=-\frac{\til a}{\til n}\t -\frac{\til b}{\til n}\in S} {\text{Res}}_{z= z_\ast} (\psi (\t,z))\\\notag& \times \sum_{a,b=0}^{n-1} e(\tfrac{2maz_s}{n})e(\tfrac{ma}{n^2}(a\t+b)) 
 \sum_{k\in \ZZ} q^{mk^2} y^{2mk} (q^ky/y_s)^{2m(\frac{a}{n}+\frac{\til a}{\til n})} \frac{q^ky/y_s+1}{q^ky/y_s-1} 
\end{align}
where $y_s$ denotes $y_s = e(z_s) = e(-(\til a/\til n+a/n)\t -(\til b/\til n+b/n))$ in the second line. 

By direct comparison  using \eq{Atkin--Lehner1} and (\ref{polar_part_from_individual_poles}), this is exactly $\psi^P\lvert {\cal W}_m(n) $ and this finishes the proof. 
\end{proof}

Since all the operations involved are linear,  we also have the following corollary.
\begin{cor}\label{cor:commute_EZ_polar}
Consider $\psi$ as defined as in Proposition \ref{proposition_EZ_commutes} and let $ {\cal W} =\sum_{i}c_i  {\cal W}_m(n_i)$ where the $n_i$ are divisors of $m$ satisfying $(n_i,\widetilde{n})=1$. Then 
\be
\left(\psi\lvert {\cal W}\right)^P = \psi^P\lvert {\cal W} 
\ee
and 
\be
\left(\psi\lvert {\cal W}\right)^F = \psi^F\lvert {\cal W}. 
\ee
Moreover, if we denote the theta-coefficients of $\psi^F$ by $h_\psi=((h_\psi)_r)$ and its shadow by $S_{\psi}=((S_{\psi})_r)$ with $r\in \ZZ/2m\ZZ$, then the theta-coefficients of $\left(\psi\lvert {\cal W}\right)^F$ form a vector-valued mock modular form $h_{\psi\lvert {\cal W}}$ satisfying 
\be
h_{\psi\lvert {\cal W}}= \O\, h_{\psi} , 
\ee
with shadow given by 
\be
S_{\psi\lvert {\cal W}}= \O\, S_{\psi}
\ee
where $\O = \sum_{i}c_i  {\O}_m(n_i)$ (cf. \eq{def:OmegaMatrices}).
\end{cor}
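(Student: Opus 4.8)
The plan is to reduce everything to Proposition \ref{proposition_EZ_commutes} together with the theta-level identity \eq{EZ_on_theta}, exploiting that every operation involved---extracting polar and finite parts, the Eichler--Zagier operators, and the theta decomposition---is $\CC$-linear.

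First I would establish the two displayed identities. Writing ${\cal W}=\sum_i c_i\,{\cal W}_m(n_i)$ with each $n_i$ a divisor of $m$ coprime to $\til n$, the linearity of the residue and of the averaging construction \eq{polar_part_from_individual_poles} gives $(\psi\lvert{\cal W})^P=\sum_i c_i\,(\psi\lvert{\cal W}_m(n_i))^P$; applying Proposition \ref{proposition_EZ_commutes} to each summand, whose hypothesis $(n_i,\til n)=1$ holds by assumption, turns this into $\sum_i c_i\,\psi^P\lvert{\cal W}_m(n_i)=\psi^P\lvert{\cal W}$. The identity for the finite part then follows formally, since $\psi^F=\psi-\psi^P$ and ${\cal W}$ is linear:
\be
(\psi\lvert{\cal W})^F=\psi\lvert{\cal W}-(\psi\lvert{\cal W})^P=\psi\lvert{\cal W}-\psi^P\lvert{\cal W}=(\psi-\psi^P)\lvert{\cal W}=\psi^F\lvert{\cal W}.
\ee

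Next I would pass to theta-coefficients. Starting from the theta expansion $\psi^F=\sum_r (h_\psi)_r\,\th_{m,r}=h_\psi^T\cdot\th_m$ and noting that each $(h_\psi)_r$ is a function of $\t$ alone, the $z$-shifts and prefactors in the definition \eq{Atkin--Lehner1} of ${\cal W}_m(n)$ act only on the theta functions. Hence $(h_\psi^T\cdot\th_m)\lvert{\cal W}_m(n)=h_\psi^T\cdot(\th_m\lvert{\cal W}_m(n))=h_\psi^T\cdot\O_m(n)\cdot\th_m$ by \eq{EZ_on_theta}. Summing against the $c_i$ and reading off the coefficient of each $\th_{m,r'}$---using that every $\O_m(n)$ is symmetric, which is immediate from \eq{def:OmegaMatrices}---identifies the theta-coefficient vector of $(\psi\lvert{\cal W})^F=\psi^F\lvert{\cal W}$ as $h_{\psi\lvert{\cal W}}=\O\,h_\psi$ with $\O=\sum_i c_i\,\O_m(n_i)$.

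Finally, for the shadow I would invoke the completion. The completion of $h_\psi$ adds to each component the Eichler integral of $\overline{(S_\psi)_r}$ appearing in \eq{completion_polar_n_torsion}; applying the constant integer matrix $\O$ commutes with both the integral and complex conjugation, so the completion of $\O\,h_\psi$ equals $\O\,\widehat{h_\psi}$ and its non-holomorphic part is the Eichler integral of $\overline{\O\,(S_\psi)}$. Thus $h_{\psi\lvert{\cal W}}$ is again a vector-valued mock modular form, with shadow $S_{\psi\lvert{\cal W}}=\O\,S_\psi$. I expect the only genuinely delicate point to be justifying that the coefficients $(h_\psi)_r$ really factor out of ${\cal W}_m(n)$---that is, that the operator acts on the $\th_{m,r}$ alone---which rests on their independence of $z$; everything else is bookkeeping within the linear structure already set up in \S\ref{sec:forms:ADE} and in the proof of Proposition \ref{proposition_EZ_commutes}.
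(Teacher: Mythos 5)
Your proof is correct and takes essentially the same route as the paper, whose entire argument for this corollary is the one-line remark that all operations involved are linear, relying—just as you do—on Proposition~\ref{proposition_EZ_commutes} for the polar-part identity, the subtraction $\psi^F=\psi-\psi^P$ for the finite part, the identity \eq{EZ_on_theta} (equivalently \eq{relation_EZ_Omega}) for the theta-coefficients, and the completion/commutant machinery of \S\ref{sec:forms:ADE}--\ref{sec:forms:meromock} for the shadow. Your fleshed-out details (the $\t$-dependent coefficients factoring through ${\cal W}_m(n)$, the symmetry of $\O_m(n)$ needed to write $h_{\psi\lvert{\cal W}}=\O h_\psi$ rather than $\O^T h_\psi$, and the fact that the constant real matrix $\O$ commutes with the Eichler integral and conjugation) are exactly the points the paper leaves implicit.
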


As a result, the relations \eq{EZ_relation} between different Jacobi forms of the same index can be applied separately to the polar and the finite part. 
In the present paper we will mostly be concerned with the application of the above to the case that $\til n=1$. 
For later use it will be useful to note the following property.

\begin{lem}\label{inversion_EZ}
For $\psi$ and $n$ as defined as in Proposition \ref{proposition_EZ_commutes}, we have  
\be
\psi^F\lvert {\cal W}(n) = -\psi^F\lvert {\cal W}(m/n),\quad \widehat \psi^F\lvert {\cal W}(n) = -\widehat \psi^F\lvert {\cal W}(m/n).
\ee
\end{lem}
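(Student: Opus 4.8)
The plan is to deduce both identities from the single algebraic reflection relation \eq{reflection_AL}, specialised to weight $k=1$, where it reads $v^T\cdot\O_m(n)\cdot\th_m=-\,v^T\cdot\O_m(m/n)\cdot\th_m$ for any vector $v=(v_r)$ with $v_r=-v_{-r}$. The derivation of this relation is purely combinatorial: it uses only the identity $\O_m(n)_{r,r'}=\O_m(m/n)_{r,-r'}$ (immediate from \eq{def:OmegaMatrices}), the reflection symmetry $\O_m(m/n)_{r,r'}=\O_m(m/n)_{-r,-r'}$ of \eq{prop_reflection_Omega}, the antisymmetry $v_r=-v_{-r}$, and a relabeling of the summation indices over $\ZZ/2m\ZZ$; in particular it is insensitive to whether the entries of $v$ are holomorphic. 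So once I exhibit $\psi^F$ (resp. $\widehat\psi^F$) in the form $v^T\cdot\O_m(n)\cdot\th_m$ with an antisymmetric $v$, both equalities drop out.

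First I would record the antisymmetry. Since $\psi$ has weight $1$, the modular transformation \eq{modular} with $\g=-I_2$ forces $\psi(\t,-z)=-\psi(\t,z)$; the polar part $\psi^P$ is odd in $z$ by its construction from $\mu_{m,0}$, and hence so is $\psi^F=\psi-\psi^P$. Equivalently, the theta coefficients $h=(h_r)$ of $\psi^F$ satisfy $h_r=-h_{-r}$, which is exactly \eq{halving_theta_coeff} for $k=1$. For the completion I would note that the added non-holomorphic piece is built from the unary theta series $S_{m,r}$, which obey $S_{m,r}=-S_{m,-r}$ by \eq{def:S}; combining this with the reflection symmetry of $\O_m(\widetilde n)$ shows that the completed coefficients $\widehat h=(\widehat h_r)$ are again antisymmetric, $\widehat h_r=-\widehat h_{-r}$.

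Next I would express the action of the Eichler--Zagier operator on the theta expansion. Since $h_r$ (resp. $\widehat h_r$) depends only on $\t$ (and $\bar\t$) and not on $z$, the $z$-shift and the prefactor in the definition \eq{Atkin--Lehner1} of ${\cal W}_m(n)$ act only on the theta functions, so that $\psi^F\lvert{\cal W}_m(n)=h^T\cdot\O_m(n)\cdot\th_m$ and $\widehat\psi^F\lvert{\cal W}_m(n)=\widehat h^T\cdot\O_m(n)\cdot\th_m$, using $\th_m\lvert{\cal W}_m(n)=\O_m(n)\cdot\th_m$ from \eq{EZ_on_theta}. This is the content of \eq{relation_EZ_Omega} transported to the (mock, resp. non-holomorphic) setting, consistent with the commutation statements of Proposition \ref{proposition_EZ_commutes} and Corollary \ref{cor:commute_EZ_polar}, whose hypotheses $n\mid m$, $\widetilde n\mid m$ and $(n,\widetilde n)=1$ are exactly those assumed here. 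Applying the reflection relation of the first paragraph to $v=h$ and then to $v=\widehat h$ now yields $\psi^F\lvert{\cal W}_m(n)=-\psi^F\lvert{\cal W}_m(m/n)$ and $\widehat\psi^F\lvert{\cal W}_m(n)=-\widehat\psi^F\lvert{\cal W}_m(m/n)$, respectively.

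I expect the only delicate point to be the justification that ${\cal W}_m(n)$ really does act by $\O_m(n)$ on the theta coefficients of the finite part and of its completion, rather than mixing in the poles: this is where I must lean on Corollary \ref{cor:commute_EZ_polar} (equivalently, on the fact that ${\cal W}_m(n)$ commutes with the extraction of the finite part under the coprimality hypothesis) to be sure that $\psi^F\lvert{\cal W}_m(n)$ is genuinely the finite part of $\psi\lvert{\cal W}_m(n)$ and carries the stated theta coefficients. Everything else is the elementary antisymmetry bookkeeping above.
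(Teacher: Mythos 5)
Your proof is correct and follows essentially the same route as the paper's own: oddness of $\psi^F$ in $z$ (and of its completion, via $S_{m,r}=-S_{m,-r}$) gives antisymmetric theta coefficients, and both identities then follow from the action \eq{EZ_on_theta} of ${\cal W}_m(n)$ on $\th_m$ together with the combinatorics of $\O_m(n)$ versus $\O_m(m/n)$ from \eq{def:OmegaMatrices}. Your closing worry is superfluous: the lemma applies ${\cal W}_m(n)$ directly to the holomorphic, elliptic-invariant function $\psi^F$ (it makes no claim identifying this with the finite part of $\psi\lvert{\cal W}_m(n)$), so no appeal to Corollary \ref{cor:commute_EZ_polar} is needed---the factoring of $h_r(\t)$ through the $z$-shifts in \eq{Atkin--Lehner1} already suffices.
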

\begin{proof}
From the property \eq{pol_n_torsion} of $\psi^P$  and the elliptic transformation of $\m_{m,0}$, it follows that 
$\psi^P(\t,z) = -\psi^P(\t,-z)$ and therefore $\psi^F(\t,z) = -\psi^F(\t,-z)$. As such, the vector-valued mock modular form $h=(h_r)$ arising from the theta expansion
\eq{mock_from_decomp} of $\psi^F$ satisfies $h_r=-h_{-r}$. Together with $S_{m,r}=-S_{m,-r}$, the lemma follows from the action of ${\cal W}_m{(n)}$ \eq{EZ_on_theta} on $\th_m=(\th_{m,r})$ and the definition \eq{def:OmegaMatrices} of the matrix $\O_m{(n)}$.
\end{proof}

\section{The Umbral Mock Modular Forms}\label{sec:forms:umbral}

Following the general discussion of the relevant automorphic objects in the previous section, in this section we will start specifying concretely the vector-valued mock modular forms which encode, according to our conjecture, the graded dimensions of certain infinite-dimensional modules for the umbral groups defined in \S\ref{sec:holes}. We will specify the shadows of these functions---the {\em umbral mock modular forms}---in \S\ref{sec:umbral shadow}. 
Subsequently, in \S\ref{sec:forms:genus0} we will show how these shadows distinguish the Niemeier root systems 
through a relation to genus zero groups and their principal moduli. Afterwards we will provide explicit expressions for the umbral forms of the A-type  Niemeier root systems by specifying a set of weight $0$ weak Jacobi forms. The umbral forms of D- and E-type Niemeier root systems will be specified in the next section.

\subsection{The Umbral Shadows} \label{sec:umbral shadow}

In \S\ref{sec:forms:meromock} we have seen how the theta expansion of the finite part of a meromorphic Jacobi form gives rise to a vector-valued mock modular form, and how different configurations of poles lead to different shadows. The shadows of the mock modular forms obtained in this way are always given by unary theta series. 
In this subsection we will see how the ADE classification discussed in \S\ref{sec:forms:ADE} leads to particular cases of the above construction.
Moreover, by combining the ADE classification and the construction of mock modular forms from meromorphic Jacobi forms discussed in \S\ref{sec:forms:meromock}, we will associate a specific shadow $S^X$, or equivalently a  pole structure of the corresponding meromorphic Jacobi form $\psi^X$, to each of the Niemeier root systems $X$.

Consider a meromorphic Jacobi form $\psi$ with weight $1$ and index $m$. 
Recall from \eq{pol_n_torsion} that 
 the contribution to its polar part $\psi^P$ from the simple poles at the $n$-torsion points with residues satisfying \eq{pole_residue_ntorsion}, is given by
 \be\label{eqn:umbshad:Wmn}
\sum_{a,b=0}^{n-1}\psi^P_{-\frac{a}{n}\t-\frac{b}{n}} = \chi \,\m_{m,0}\big\lvert{\cal W}_m{(n)}. 
\ee

Clearly, one may consider a linear combination of expressions as in (\ref{eqn:umbshad:Wmn}). 
Consider a weight $1$ index $m$ meromorphic Jacobi form $\psi$ with poles at $n_1$-,\dots,$n_{\k}$-torsion points where $n_j|m$, 
and where each $n_j$ contributes $c_j e(-ma (a\t+b)/n_j^2)/n_j \p i$ to the residue of the pole located at $ -\frac{a}{n_j}\t-\frac{b}{n_j}$.
From the above discussion it follows that its polar part is given by 
\be
 \psi^P =  \m_{m,0}\big\lvert{\cal W},\;{\text{where}}\;{\cal W} = \sum_{i=1}^\k c_i {\cal W}_m(n_i). 
 \ee 
By taking a linear combination of expressions as in \eq{completion_polar_n_torsion} we see that its completion, given by 
\be
\widehat \psi^P   = \psi^P -   e(-\tfrac{1}{8}) \frac{1}{\sqrt{2m}}  \sum_{r,r' \,({\rm mod } \,2m)} \th_{m,r}(\t,z)\, \O_{r,r'}  \int^{i\inf}_{-\bar \t}  (\t'+\t)^{-1/2} \overline{S_{m,r'}(-\bar \t')} \, {\rm d}\t', 
\ee
where ${\O} = \sum_{i=1}^\k c_i {\O}_m(n_i)$, transforms like a Jacobi form of weight $1$ and index $m$ for $\SL_2(\ZZ)$. 
Immediately we conclude that the theta-coefficients of the finite part $\psi^F=\psi-\psi^P$ constitute a vector-valued mock modular form with (the $r$-th component of) the shadow given by
\begin{gather}
 \sum_{r' \,({\rm mod } \,2m)}\O_{r,r'}  S_{m,r'}.
\end{gather}

Now, recall that in \S\ref{sec:forms:ADE} we used the reflection property $\til h_{m,r} = - \til h_{m,-r}$ to impose a positivity condition which then led to an ADE classification (cf. Theorem \ref{thm_ADE}). Analogously, in the context of meromorphic Jacobi forms we also have a natural positivity condition that we want to impose. 
Using the reflection property $S_{m,r} = - S_{m,-r}$, for $r \in \ZZ/2m\ZZ$ (cf. \eq{def:S}), we may instead consider  a $(m-1)$-component vector with the $r$-th component given by 
\begin{gather}
 \sum_{r'=1}^{m-1} (\O_{r,r'}-\O_{r,-r'})  S_{m,r'}, \;\; r=1,\dots, m-1. 
\end{gather}
Requiring that each component of this $(m-1)$-component vector is a non-negative linear combination of the unary theta series $S_{m,r}$, for $r=1,2,\dots,m-1$, with the normalisation $ \O_{1,1}-\O_{1,-1} =1$,
from Theorem \ref{thm_ADE} we immediately see that such $\O$ (and, equivalently, ${\cal W}$) are classified by ADE root systems. 
More precisely, to each irreducible simply-laced root system $X$ with Coxeter number $m$ we associate a $2m$-vector-valued cusp form $S^X$, of weight $3/2$ for $\SL_2(\ZZ)$, with $r$-th component given by
\be
 S^X_r = \sum_{r' ~{\text{mod}}~ 2m }^{} \O_{r,r'}^X S_{m,r'}, 
\ee
where $m$ denotes the Coxeter number of $X$ and the matrix $\O^X$ is defined as in Table \ref{ADE1}. 
For instance, we have 
\be S^{A_{m-1}}_r = S_{m,r}.\ee
For the D-series root systems with even rank we have 
\begin{gather}\label{eqn:shadow_even_D}
	S^{D_{2n}}_r
	=
	\begin{cases}
	S_{4n-2,r}+S_{4n-2,4n-2-r}&\text{ if $r$ is odd,}\\
	0&\text{ if $r$ is even,}
	\end{cases}
\end{gather}
and in the case that $n$ is odd we have
\begin{gather}
	S^{D_n}_r
	=
	\begin{cases}
	S_{2n-2,r}&\text{ if $r$ is odd,}\\
	S_{2n-2,2n-2-r}&\text{ if $r$ is even.}
	\end{cases}
\end{gather}
For $E_6$, $E_7$ and $E_8$ we have
\begin{gather}\label{eqn:E6_shadow}
	S^{E_6}_r
	=
	\begin{cases}
	S_{12,1}+S_{12,7}&\text{ if $r=1$ or $r=7$,}\\
	S_{12,4}+S_{12,8}&\text{ if $r=4$ or $r=8$,}\\
	S_{12,5}+S_{12,11}&\text{ if $r=5$ or $r=11$,}\\
	0&\text{ otherwise;}
	\end{cases}
\end{gather}
\begin{gather}
	S^{E_7}_r
	=
	\begin{cases}
	S_{18,1}+S_{18,17}&\text{ if $r=1$ or $r=17$,}\\
	S_{18,9}&\text{ if $r=3$ or $r=15$,}\\
	S_{18,5}+S_{18,13}&\text{ if $r=5$ or $r=13$,}\\
	S_{18,7}+S_{18,11}&\text{ if $r=7$ or $r=11$,}\\
	S_{18,3}+ S_{18,9}+S_{18,15}&\text{ if $r=9$,}\\
	0&\text{ otherwise;}
	\end{cases}
\end{gather}
\begin{gather}\label{eqn:E8_shadow}
	S^{E_8}_r
	=
	\begin{cases}
	S_{30,1}+S_{30,11}+S_{30,19}+S_{30,29}&\text{ if $r\in\{1,11,19,29\}$,}\\
	S_{30,7}+S_{30,13}+S_{30,17}+S_{30,23}&\text{ if $r\in\{7,13,17,23\}$,}\\
	0&\text{ otherwise;}
	\end{cases}
\end{gather}
where for simplicity we have only specified $S^{E_n}_r$ for $r\in\{1,\cdots,m-1\}\subset \ZZ/2m\ZZ$. The remaining components are determined by the rule $S^{E_n}_{-r}=-S^{E_n}_r$. 
 
Following \eq{linear_combination_Omega_EZ}, more generally for a union  $X=\bigcup_i\! X_i$ of simply-laced root systems with the same Coxeter number we have
 $S^{X} = \sum_i S^{X_i}$. With this definition, $S^{X}$ is given by the matrix $\O^X$ as
 \be\label{def:um_shadow}
 S^X = (S^X_r) \quad, \quad S^X_r = \sum_{r' ~{\text{mod}}~ 2m }^{} \O_{r,r'}^X S_{m,r'}.
 \ee

From the above discussion we see that 
the cusp form $S^X$ arises naturally as the shadow of a vector-valued mock modular form obtained from the theta expansion of a meromorphic Jacobi form $\psi^X$ with simple poles at $z\in \frac{\t}{n_i} \ZZ + \frac{1}{n_i} \ZZ $ for all $n_i|m$ such that $c_i>0$ in $\O^X = \sum_{i} c_i \O_m{(n_i)}$,
whose polar part is given by 
\be\label{eqn:polar_part_umbral_mock_jac}
(\psi^X)^P= \m_{m,0}(\t,z)\lvert {\cal W}^X . 
\ee

It is not hard to see that such meromorphic Jacobi forms exist for any simply-laced root system $X$ where all the irreducible components have the same Coxeter number.  Choose an arbitrary weight 0, index $m-1$ weak Jacobi form $\phi$ with $\phi(\t,0)\neq 0$ and assume, without loss of generality, that $\phi(\t,0) =1$. 
Recall from the definition of weak Jacobi forms in \S\ref{sec:forms:jac} that, if $\f(\t,z)$ is a weight $0$ weak Jacobi form then $\f(\t,0)$ is a weight $0$ modular form, which is necessarily a constant. 
As such, by multiplying with the meromorphic Jacobi form $\mu_{1,0}$ of weight $1$ and index $1$ (cf. (\ref{CoverA})) 
we obtain a weight 1 index $m$ meromorphic Jacobi form with simple poles at $z\in \mathbb Z \t+  \mathbb Z$ and nowhere else, and with the polar part given by $\m_{m,0}$. (See \cite[\S2.3]{UM} for a more detailed discussion, but note that $\mu_{1,0}$ is denoted there also by $\Psi_{1,1}$.) 
The Corollary \ref{cor:commute_EZ_polar} then shows that its image $
-\mu_{1,0}\phi\lvert {\cal W}^X$ under the corresponding Eichler--Zagier operator
is a weight $1$ index $m$ meromorphic Jacobi with polar part coinciding with \eq{eqn:polar_part_umbral_mock_jac}. 

Besides specifying the poles of the meromorphic Jacobi form, 
in what follows we will also require  an {\em optimal growth} condition (equivalent to the optimal growth condition formulated in \cite{Dabholkar:2012nd}, with its name derived from the fact that it guarantees the slowest possible growth of the coefficients of the corresponding mock Jacobi form). It  turns out that there does not always exist a meromorphic Jacobi form with polar part given by \eq{eqn:polar_part_umbral_mock_jac} that moreover satisfies this optimal growth condition for an arbitrary simply-laced root system $X$, but when it does exist it is unique. 

\begin{thm}\label{thm:uniqueness_umbral_mock_jac}
Let $X$ be a simply-laced root system with all irreducible components having the same Coxeter number $m$.  There exists at most one weight $1$ index $m$ meromorphic Jacobi form $\psi^X$ satisfying the following two conditions. First, its polar part $(\psi^X)^P$ is given by \eq{eqn:polar_part_umbral_mock_jac}. Second, its finite part 
\be
(\psi^X)^F = \psi^X-(\psi^X)^P = \sum_{r } h_r^X\th_{m,r}
\ee
satisfies the optimal growth condition
\be\label{optimal_growth}
q^{1/4m} h_r^X(\t) = O(1) 
\ee
as $\t\to i\inf$ for all $r\in \ZZ/2m\ZZ$.  
\end{thm}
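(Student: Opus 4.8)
The plan is to prove uniqueness by a difference argument that reduces everything to the nonexistence of weight $1$ holomorphic Jacobi forms. Suppose $\psi_1$ and $\psi_2$ are two weight $1$ index $m$ meromorphic Jacobi forms both satisfying the two conditions, and set $\psi=\psi_1-\psi_2$. Since the two forms share the same polar part $\m_{m,0}\lvert{\cal W}^X$ (cf. \eqref{eqn:polar_part_umbral_mock_jac}), all poles cancel and $\psi$ is a \emph{holomorphic} Jacobi form of weight $1$ and index $m$ on $\HH\times\CC$; moreover $\psi=(\psi_1)^F-(\psi_2)^F$, so its theta-coefficients $h_r=h_r^{X,(1)}-h_r^{X,(2)}$ inherit the optimal growth bound $q^{1/4m}h_r(\t)=O(1)$ of \eqref{optimal_growth}. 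Writing $\psi(\t,z)=\sum_{n,r}c(n,r)q^ny^r$ with $c(n,r)=C_{\tilde r}(r^2-4mn)$ depending only on $D=r^2-4mn$ and $\tilde r=r\bmod 2m$, the optimal growth condition is equivalent to $C_{\tilde r}(D)=0$ for all $D>1$. In particular $c(n,r)=0$ whenever $n<0$ (otherwise $D=r^2-4mn$ would force $r^2\le 1+4mn<0$), so $\psi$ is automatically a weak Jacobi form, and the only surviving coefficients with $n=0$ are $c(0,\pm1)$.

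Next I would exploit the rigidity of $\SL_2(\ZZ)$ in weight $1$. The first Taylor coefficient $\partial_z\psi|_{z=0}$ transforms as a \emph{genuine} modular form of weight $2$ for $\SL_2(\ZZ)$: differentiating the modular transformation \eqref{modular} at $z=0$ annihilates the factor $e(-mcz^2/(c\t+d))$ and leaves the automorphy factor $(c\t+d)^{2}$, with no quasimodular correction arising for the lowest Taylor coefficient. Since $\psi$ is weak, this form is holomorphic at the cusp, hence lies in $M_2(\SL_2(\ZZ))=\{0\}$ and vanishes identically. Reading off its $q^0$-coefficient gives $\sum_r r\,c(0,r)=0$; combined with the vanishing of $c(0,r)$ for $r\notin\{0,\pm1\}$ and the anti-symmetry $c(0,-1)=-c(0,1)$ forced by $\psi(\t,-z)=-\psi(\t,z)$, this yields $c(0,1)=c(0,-1)=0$. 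Hence $c(0,r)=0$ for all $r$, so $\psi$ is in fact a holomorphic Jacobi \emph{cusp} form of weight $1$ and index $m$.

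Finally I would pass to the theta decomposition. As $\psi$ is now a holomorphic Jacobi form, its components $h=(h_r)$ constitute a genuine (bounded at the cusp) weight $1/2$ vector-valued modular form for $\SL_2(\ZZ)$ transforming through ${\bf S}^\dag,{\bf T}^\dag$, and they satisfy $h_r=-h_{-r}$ because $k=1$ is odd (cf. \eqref{halving_theta_coeff}). By the Serre--Stark basis theorem, in its vector-valued form for the Weil representation attached to index $m$, every such weight $1/2$ form is a linear combination of unary theta series, all of which are invariant under $r\mapsto -r$. The anti-symmetric vector $h$ therefore lies in the intersection of the even theta span with its odd complement, which is $\{0\}$; equivalently, there are no nonzero Jacobi forms of weight $1$, so $\psi=0$ and $\psi_1=\psi_2$. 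I expect this last step to be the main obstacle: one must invoke the structure theory of weight $1/2$ forms, equivalently the vanishing $J_{1,m}=0$, rather than a naive dimension count, since iterating the Taylor-coefficient argument into higher weights eventually meets the nonzero cusp-form spaces $S_{12},S_{16},\dots$ and does not close on its own.
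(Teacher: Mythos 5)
Your opening reduction coincides with the paper's own proof: the paper observes that the difference $\psi_1-\psi_2$ of two solutions is a weight $1$, index $m$ weak Jacobi form satisfying the optimal growth condition, and then finishes in one line by citing Theorem 9.7 of Dabholkar--Murthy--Zagier, which states precisely that no such nonzero form exists. You instead try to prove that nonexistence statement from scratch, and there is a genuine gap at the sentence ``Hence $c(0,r)=0$ for all $r$, so $\psi$ is in fact a holomorphic Jacobi cusp form of weight $1$ and index $m$.'' Being a (strong) holomorphic Jacobi form requires $C_{\tilde r}(D)=0$ for \emph{every} $D>0$ and every class $\tilde r\in\ZZ/2m\ZZ$, whereas you have only established $C_{\tilde r}(D)=0$ for $D>1$ (optimal growth) together with $C_{\pm1}(1)=c(0,\pm1)=0$ (the weight-two Taylor-coefficient argument). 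The bound $D\le 1$ still permits nonzero coefficients $C_{\tilde r}(1)$ for every class with $\tilde r^2\equiv 1\pmod{4m}$, and as soon as $m$ is not a prime power there are such classes with $\tilde r\not\equiv\pm1\pmod{2m}$: for $m=6$ (the Niemeier cases $X=A_5^4D_4$ and $X=D_4^6$) one has $\tilde r=\pm5$, and the corresponding coefficient first occurs as $c(1,5)$ (since $5^2-24=1$), which your argument never touches --- the vanishing of $\partial_z\psi|_{z=0}$ gives only the single linear relation $\sum_r r\,c(n,r)=0$ for each $n$, not the vanishing of individual coefficients. Consequently the components $h_r$ are not known to be holomorphic at the cusp, so the Serre--Stark/Skoruppa step (which is indeed exactly the proof that $J_{1,m}=0$, but needs holomorphy) cannot be invoked. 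The affected values, $m\in\{6,10,12,14,18,22,30,46\}$, include many of the Niemeier Coxeter numbers to which the theorem is actually applied, so the gap is not a peripheral one; and contrary to your closing remark, the main obstacle is not the invocation of $J_{1,m}=0$ but the reduction to it.

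The gap can be repaired, but it needs a further idea rather than a patch: for each exact divisor $d\,\|\,m$, the Eichler--Zagier operator ${\cal W}_m(d)$ of \S\ref{sec:forms:ADE} sends $\psi$ to another weak Jacobi form of weight $1$ and index $m$, preserves discriminants (hence optimal growth), and permutes the theta components so that the class $\tilde r_d$ determined by $\tilde r_d\equiv-1\pmod{2d}$, $\tilde r_d\equiv+1\pmod{2m/d}$ is carried to $\pm1$; running your weight-two argument on $\psi\lvert{\cal W}_m(d)$ then yields $C_{\tilde r_d}(1)=0$, and since the $\tilde r_d$ exhaust the solutions of $\tilde r^2\equiv1\pmod{4m}$, one concludes that $\psi$ is a genuine holomorphic Jacobi form, at which point Skoruppa's theorem finishes the proof. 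Handling these extra polar classes is exactly what makes the Dabholkar--Murthy--Zagier result a theorem rather than an immediate corollary of $J_{1,m}=0$; as written, your proof is complete only when $m$ is a prime power, where $\tilde r^2\equiv1\pmod{4m}$ forces $\tilde r\equiv\pm1\pmod{2m}$.
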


\begin{proof}
If there are two distinct meromorphic Jacobi forms satisfying the above conditions, then their difference is necessarily a weight $1$ index $m$ weak Jacobi form of optimal growth in the sense of \cite{Dabholkar:2012nd}. But Theorem 9.7 of \cite{Dabholkar:2012nd} is exactly the statement that no such weak Jacobi form exists.
\end{proof}

\begin{cor}\label{cor:uniqueness_umbral_mock_mod}
Let $X$ be a simply-laced root system with all irreducible components having the same Coxeter number $m$.  
Then there exists at most one vector-valued mock modular form $h^X$ for $\SL_2(\ZZ)$ with shadow  $S^X$ that satisfies the optimal growth condition \eq{optimal_growth}.
\end{cor}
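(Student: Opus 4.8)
The plan is to reduce the statement to Theorem~\ref{thm:uniqueness_umbral_mock_jac} by promoting any candidate mock modular form to the finite part of a meromorphic Jacobi form of the type controlled there. Suppose $h^X=(h^X_r)$ and $(h^X)'=((h^X_r)')$ are two weight $1/2$ vector-valued mock modular forms for $\SL_2(\ZZ)$, each with shadow $S^X$ and each satisfying the optimal growth condition \eqref{optimal_growth}. To each I would associate the weight $1$, index $m$ function
\[
\psi = \mu_{m,0}\lvert {\cal W}^X + \sum_{r \,({\rm mod}\,2m)} h^X_r\,\th_{m,r},
\]
and likewise $\psi'$ with $h^X_r$ replaced by $(h^X_r)'$. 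The first summand is the prescribed polar part $(\psi^X)^P$ of \eqref{eqn:polar_part_umbral_mock_jac}; the second is the holomorphic function obtained by the (inverse) theta decomposition \eqref{mock_from_decomp}.

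First I would check that $\psi$ is a genuine meromorphic Jacobi form of weight $1$ and index $m$. Both summands are manifestly invariant under the index $m$ elliptic transformation, so $\psi$ is as well. For the modular transformation one compares completions: by the general formula \eqref{completion_polar_n_torsion} the completion $\widehat{(\psi^X)^P}$ carries a non-holomorphic term built from $\O^X_{r,r'}\,\overline{S_{m,r'}}$, while the hypothesis that $h^X$ has shadow $S^X=\O^X S_m$ (cf.~\eqref{def:um_shadow}) means that the completion of $\sum_r h^X_r\,\th_{m,r}$ carries exactly the same non-holomorphic term with the opposite sign. These cancel, so the completion of $\psi$ equals $\psi$ itself and transforms like a weight $1$, index $m$ Jacobi form for $\SL_2(\ZZ)$; since $\sum_r h^X_r\,\th_{m,r}$ is holomorphic, $\psi$ is meromorphic with poles only where $(\psi^X)^P$ has them. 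Thus $\psi$ has polar part $(\psi^X)^P$ and finite part $\psi^F=\sum_r h^X_r\,\th_{m,r}$.

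Next I would note that the optimal growth condition \eqref{optimal_growth} on $h^X$ is, verbatim, the optimal growth condition on the theta-coefficients of $\psi^F$. Hence $\psi$ satisfies both hypotheses of Theorem~\ref{thm:uniqueness_umbral_mock_jac}, and the same holds for $\psi'$. The theorem then forces $\psi=\psi'$, whence $\psi^F=(\psi')^F$, i.e. $\sum_r h^X_r\,\th_{m,r}=\sum_r (h^X_r)'\,\th_{m,r}$. Since the $\th_{m,r}$ for $r\in\ZZ/2m\ZZ$ are linearly independent, their $y$-expansions being supported on distinct residue classes modulo $2m$, this yields $h^X_r=(h^X_r)'$ for every $r$, proving the asserted uniqueness.

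The step I expect to require the most care is the cancellation of non-holomorphic completions in the second paragraph: it is precisely what makes $\psi$ a true meromorphic Jacobi form rather than a merely mock object, and it rests on the exact identification of the shadow $S^X$ with the matrix $\O^X$ governing the polar part $\mu_{m,0}\lvert{\cal W}^X$. All the ingredients are already assembled in \S\ref{sec:forms:meromock} and \S\ref{sec:umbral shadow}; the remaining work is to match the two completion formulas term by term and to confirm that no additional poles are introduced.
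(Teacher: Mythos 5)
Your proof is correct and follows essentially the same route as the paper: the paper likewise forms $\psi' = (\psi^X)^P + \sum_r h^X_r\,\th_{m,r}$, observes (via the inverse-multiplier relation between a mock modular form and its shadow, i.e.\ your cancellation of completions) that this is a meromorphic Jacobi form satisfying the hypotheses of Theorem~\ref{thm:uniqueness_umbral_mock_jac}, and concludes uniqueness. Your write-up simply makes explicit the completion-cancellation step and the final appeal to linear independence of the $\th_{m,r}$, which the paper leaves implicit.
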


\begin{proof}
Let $h^X$ be a vector-valued mock modular form  satisfying the above conditions. 
Consider $\psi'= (\psi^X)^P + \sum_{r } h_r^X \th_{m,r}$ with $(\psi^X)^P$  given by \eq{eqn:polar_part_umbral_mock_jac}. 
From the fact that the multiplier of a mock modular form is the inverse of that of its shadow and from the
discussion in \S\ref{sec:forms:meromock},  $\psi'$ is a weight $1$ index $m$ meromorphic Jacobi form satisfying the conditions of Theorem \ref{thm:uniqueness_umbral_mock_jac}.  It then follows from Theorem \ref{thm:uniqueness_umbral_mock_jac} that such $h^X$ is unique if it exists.
\end{proof}
So far our discussion has been very general, applicable to any simply-laced root system with all irreducible components having the same Coxeter number. 
In the next subsection we will see how the cusp forms $S^X$ with $X$ given by one of the 23 Niemeier root systems play a distinguished role. The $S^X$ for $X$ a Niemeier root system are called the {\em umbral shadows},
\be\label{eqn:umbralshadows}
S^X_r = \sum_{r' \in \ZZ/2m\ZZ} \O^X_{r,r'} S_{m,r'}. 
\ee

We will demonstrate the existence of meromorphic Jacobi forms $\psi^X$ satisfying the conditions of Theorem \ref{thm:uniqueness_umbral_mock_jac} for $X$ an A-type Niemeier root system in Proposition \ref{prop:A_UmbralForms}, and for D- and E-type Niemeier root systems in Proposition \ref{prop:DE_UmbralForms}. The resulting vector-valued functions defined by the theta-coefficients of the finite parts of these $\psi^X$ are the {umbral mock modular forms}, to be denoted $H^X=(H^X_r)$. The first few dozen Fourier coefficients of the umbral mock modular forms are tabulated in Appendix \ref{sec:coeffs}. For some of the Niemeier root systems $X$, the meromorphic Jacobi forms $\psi^X$ are closely related to some of the meromorphic Jacobi forms analysed in  \cite[\S9.5]{Dabholkar:2012nd}; for other Niemeier root systems $X$, the corresponding shadows $S^X$ fall outside the range of analysis in \cite{Dabholkar:2012nd} and this is why we find mock Jacobi forms of optimal growth for values of $m$ other than those appearing in the work of Dabholkar--Murthy--Zagier.

\subsection{From Niemeier Lattices to Principal Moduli}\label{sec:forms:genus0}

In the last subsection we have seen how ADE root systems have an intimate relation to  meromorphic Jacobi forms. 
More precisely, to a simply-laced root system with all irreducible components having the same Coxeter number $m$, we associate a pole structure for meromorphic Jacobi forms of weight $1$ and index $m$. 
Equivalently, we associate a weight $3/2$ vector-valued cusp form to every such root system, which plays the role of the shadow of the mock modular form arising from the meromorphic Jacobi form via the relation discussed in \S\ref{sec:forms:meromock}.
In this subsection we see how the shadows $S^X$ attached to Niemeier root systems are distinguished, and in particular, how they are related to the genus zero groups $\G^X$ of \S\ref{sec:holes:gzero}.

Recall \cite{MR1074485} that a {\em skew-holomorphic Jacobi form} of weight 2 and index $m$ is a smooth function $\til\f(\t,z)$ on $\HH\times \CC$ which is periodic in both $\t$ and $z$ with period $1$, transforms under the $S$-transformation as
\be
\til \f (-\tfrac{1}{\t},\tfrac{z}{\t})  \,e(-m\tfrac{z^2}{\t}) = \bar \t |\t| \til \f(\t,z)
\ee
and has a Fourier expansion
\be
\til \f (\t,z)   = \sum_{\substack{\Delta,r \in \mathbb Z \\ \Delta = r^2 {\text{ mod }} 4m}} C_{\til \f}(\D,r) \,\ex\!\left(\frac{r^2-\D}{4m}\Re (\t)+\frac{r^2+|\D|}{4m} i \Im(\t) + rz\right)
\ee
where $C_{\til \f}(\D,r)=0$ for $\D<0$. We denote the space of such functions by $J_{2,m}^+$. Recall that an integer $\D$ is called a {fundamental discriminant} if $\D=1$ or $\D$ is the discriminant of a quadratic number field.
Following Skoruppa \cite{MR1074485} (see also \cite{MR1116103}), given a pair $(\D_0,r_0)$ where $\D_0$ is a positive fundamental discriminant that is a square modulo $4m$ and $\Delta_0 =r_0^2 {\text{ mod }} 4m$, 
we may associate a weight 2 modular form 
\be
{\mathscr S}_{\Delta_0,r_0} (\til \f)=  c_{\til \f}(\Delta_0,r_0) + \sum_{n\geq 1} q^n \sum_{a|n} \left(\frac{\D_0}{a}\right) C_{\til \f}\,(\D_0\tfrac{n^2}{a^2},r_0\tfrac{n}{a}) 
\ee
for $\G_0(m)$
to each $\til \f\in J_{2,m}^{+}$, where $\left(\frac{\D_0}{a}\right)$ denotes the Jacobi symbol and $c_{\til \f}(\Delta_0,r_0)$ denotes a suitably chosen constant term. 

From the discussion in \S\ref{sec:forms:ADE}, it is not difficult to see that given a simply-laced root system $X$ with each of its irreducible components having the same Coxeter number $m$, we may consider the following skew-holomorphic form
\be\label{eqn:sigmaX}
\sigma^X = (\overline{S^X})^T \cdot \th_m = \sum_{r \, ({\rm mod}\,2m) } \overline{S^X_r (\t)}\th_{m,r}(\t,z) = \sum_{r ,r'\, ({\rm mod}\,2m)} \overline{S_{m,r} (\t)} \,\O^X_{r,r'} \th_{m,r'}(\t,z) 
\ee
of weight $2$ and index $m$.

Applying ${\mathscr S}_{\D_0,r_0}$ with the simplest choice $(\D_0,r_0)=(1,1)$ to the skew-holomorphic Jacobi form $\sigma^X$, and using the fact that the Jacobi symbol $\left(\tfrac{1}{a}\right)=1$ for all positive integers $a$, we arrive at a weight 2 form on $\G_0(m)$
\begin{align}
f^{X}(\t) &
= {\mathscr S}_{1,1}(\sigma^X)(\t)
=\frac{{\sf{r}}}{24} +\sum_{r=1}^{m-1} \a^{ X}_{r} \sum_{\varepsilon=\pm1}\sum_{k=-\lfloor \frac{\varepsilon r}{2m} \rfloor}^\infty (2km+\varepsilon r) \frac{q^{2km+\varepsilon r}}{1-q^{2km+\varepsilon r}}\label{def:weight2}
\end{align}
where $\sf{r}$ is the rank of the root system $X$ and $\a^{ X}_{r}$ is the multiplicity of the multiplicity of $r$ as a Coxeter exponent of $X$, which 
also coincides with the ``diagonal'' coefficient $\O^X_{r,r}-\O^X_{r,-r}$ of $S_{m,r}$ in the $r$-th component  of the vector-valued cusp form $S^X=(S^X_r)$ (cf.  Table \ref{tab:CoxNum} and \S\ref{sec:forms:ADE}). 

\begin{table}
\captionsetup{font=small}
\centering
\begin{tabular}{CCCC}\toprule
 X & m(X)&\pi^X & f^X \\\midrule
A_{m-1} & m& \frac{m}{1}  & \l_m	\vspace{0.2em}\\
D_{{m}/{2}+1} &m&  \frac{2.m}{1.(m/2)} & \l_2+\l_m-\l_{m/2}	\vspace{0.2em}\\
E_6 & 12&\frac{2.3.12}{1.4.6} & \l_2+\l_3+\l_{12} -\l_4-\l_6	\vspace{0.2em}\\ 
E_7& 18&\frac{2.3.18}{1.6.9} & \l_2+\l_3+\l_{18} -\l_6-\l_9	\vspace{0.2em}\\
E_8 & 30&\frac{2.3.5.30}{1.6.10.15} &\l_2+\l_3+\l_5+\l_{30} -\l_{6}-\l_{10}-\l_{15}\vspace{0.1em}\\
\bottomrule
\end{tabular}
\caption{\label{tab:ADEfX}{The weight 2 modular forms $f^X$ 
	}}
\end{table}

For $X=A_{m-1}$, one can easily see from the fact that $ \a^{ X}_{r}=1$ for all $r\in\{1,2,\dots,m-1\}$ that the associated weight--two form is nothing but the following Eisenstein form at level $m$ (cf. \eq{Eisenstein_form})
\bea\label{weight2:Aseries}
f^{A_{m-1}}(\t)= \l_m(\t)=\, q\pa_q\log\left(\frac{\eta(m\tau)}{\eta(\tau)}\right)=\frac{m-1}{24}+\sum_{k>0}\s(k) (q^k -m q^{mk}).
\eea
One can compute the function $f^X$ in a similar way for the D- and E-series and arrive at the result in Table \ref{tab:ADEfX}. 
From this table  one observes that the weight two form $f^X$ has a close relation to the Coxeter Frame shape $\pi^X$, and hence also to the matrix $\O^X$ according to Table \ref{ADE1}. We now discuss this further. 

From $\O^X$ where $X$ has Coxeter number $m$, we can obtain a map on the space spanned by the weight two modular forms $\{\l_n(\t), n|m\}$ of level $m$ by replacing each $\O_m{(m_1)}$ in $\O^X$ with the operator $w_m(m_1)$ which acts on Eisenstein forms according to
 \be\label{def:weigh2preAL}
 \l_{m_2}|w_m(m_1) = \l_{m_1\ast m_2} - \l_{m_1}, 
 \ee
where $m_1$ and $m_2$ are assumed to divide $m$, and $m_1\ast m_2=m_1m_2/(m_1,m_2)^2$ (cf. \S\ref{sec:holes:gzero}).
 Then the weight two form corresponding to a simply-laced root system $X$ with Coxeter number $m$ is nothing but 
\be\label{weight2_ALrel} 
f^X =  \l_m \lvert w^X=\l_{\pi^X}
\ee
where $\pi^X$ is the Coxeter Frame shape of $X$ (cf. \S\ref{sec:holes:rootsys}) and $\l_{\pi}$, for $\pi$ an arbitrary Frame shape, is defined in \S\ref{sec:holes:gzero}.

The reader will notice that the map $ w_m(e) $ acts in the same way as the Atkin--Lehner  involution ${W}_m(e)$  \eq{def:AT_inv} on modular forms for $\G_0(m)$ in the cases where $e \| m$.
(Notice that the set ${W}_m(e)$ is empty if $e$ is not an exact divisor of $m$.)
Indeed, from the definition of the Eisenstein form \eq{Eisenstein_form} one can compute that 
\be
(\l_f\lvert {W}_m(e)) (\t) = (\l_f\lvert_{\psi=1,w=2} \g) (\t)=  e (cm\t+de)^{-2} \l_f(\tfrac{ae\t+b}{cm\t+de}) = \l_{e\ast f}(\t)  - \l_e(\t) ,
\ee 
where $\g = \frac{1}{\sqrt{e}}\left( \begin{smallmatrix} ae & b\\ c m  & d e\end{smallmatrix}\right) \in\SL_2(\RR)$  (cf. \eq{eqn:sums:psiw_actn}). 
On the other hand, from \eq{EZ_on_theta} we see that the skew-holomorphic form $\sigma^X$ can be obtained as the image of the Eichler--Zagier operator
\be
\sigma^X  = \sigma^{A_{m-1}}\lvert  {\cal W}^X.  
\ee
Taken together, at a given $m$ and for a given union $X$ of simply-laced root systems with Coxeter number $m$ we have the equality
\be\label{weight2_EZ}
f^X={\mathscr S}_{1,1}\left( \sigma^{A_{m-1}}\lvert  {\cal W}^X\right) =\sum_i {\mathscr S}_{1,1}\left( \sigma^{A_{m-1}}\lvert  {\cal W}_m(e_i) \right),
\ee
where we have written ${\cal W}^X=\sum_i{\cal W}_m(e_i)$ explicitly in terms of its different components. 
At the same time, for the cases that all $e_i||m$, we also have 
\be\label{weight2_AT}
f^X=\sum_i f^{A_{m-1}}\lvert W_m(e_i)  .
\ee
For these cases, we note that the equality $f^{A_{m-1}}\lvert W_m(e_i) = {\mathscr S}_{1,1}\left( \sigma^{A_{m-1}}\lvert  {\cal W}_m(e_i) \right)$  can be viewed as a consequence of the relation between the Eichler--Zagier operators and the Atkin--Lehner involutions observed in \cite{MR958592,MR1074485}.  
Due to this relation, the Eichler--Zagier operators that define involutions are sometimes referred to as Atkin--Lehner involutions on Jacobi forms in the literature.

We conclude this section by observing that if $X$ is a Niemeier root system then 
\begin{gather}
{\mathscr S}_{1,1}(\sigma^X)=f^X=-q\partial_q\log T^X
\end{gather}
where $\sigma^X$ is the skew-holomorphic Jacobi form defined by $S^X$ in (\ref{eqn:sigmaX}) and $T^X$ is the principal modulus for $\G^X$ defined in \S\ref{sec:holes:gzero}. In this way we obtain a direct connection between the umbral shadows $S^X$ and the genus zero groups $\G^X$ attached to Niemeier root systems in \S\ref{sec:holes:gzero}.

\subsection{From Weight Zero Jacobi Forms to Umbral Mock Modular Forms } \label{sec:weight_zero_umbral_forms}

The goal of this subsection is to construct the umbral mock modular form $H^X$ which (conjecturally) encodes the graded dimension of the umbral module $K^X$ (cf. \S\ref{sec:conj:mod}) for every A-type  Niemeier root system $X$. 
In \S\ref{sec:umbral shadow} we have seen how to associate an umbral shadow $S^X$ to a Niemeier root system $X$.
Equivalently, we can associate a pole structure, which together with the optimal growth condition (\ref{optimal_growth}) determines a(t most one) meromorphic weight $1$ index $m$ Jacobi form $\psi^X$ according to Theorem \ref{thm:uniqueness_umbral_mock_jac}.
In this subsection we will explicitly construct meromorphic Jacobi forms $\psi^X$ satisfying the conditions of Theorem \ref{thm:uniqueness_umbral_mock_jac} for $X$ an A-type root system via certain weight $0$ Jacobi forms $\f^X$ (cf. Proposition \ref{prop:A_UmbralForms}).  
After obtaining these $\psi^X$, the procedure discussed in \S\ref{sec:forms:meromock} then immediately leads to the umbral forms $H^X$.
It is also possible to specify the D- and E-type Niemeier root systems in a similar way. However, the discussion would become somewhat less illuminating and we will instead specify these in an arguably more elegant way in \S\ref{sec:mckay}. 

Our strategy in the present subsection is the following. As mentioned in \S\ref{sec:umbral shadow}, if we take a weight $0$ index $m-1$ weak Jacobi form $\phi$ with $\phi(\t,0) =1$, then $
-\mu_{1,0}\phi\lvert {\cal W}^X$ is a weight $1$ index $m$ meromorphic Jacobi whose pole structure is automatically of the desired form (cf. \eq{CoverA}). Namely, it always leads to a vector-valued mock modular form whose shadow is given by $S^X$. In this subsection we will see how to select (uniquely) a weight $0$ form $\phi^X$ such that the resulting weight $1$ Jacobi form 
\be\label{eqn:weight_one_from_weight_zero}
\psi^X=
	-\mu_{1,0}\phi^X\lvert {\cal W}^X\ee 
satisfies the optimal growth condition (\ref{optimal_growth}).

For the simplest cases this optimal growth condition can be rephrased in terms of weight $0$ Jacobi forms  using the language of characters of the ${\cal N}=4$ superconformal  algebra.
Recall from \cite{Eguchi1987,Eguchi1988} that this algebra contains subalgebras isomorphic to the affine Lie algebra $\widehat{A}_1$ as well as the Virasoro algebra, and in a unitary representation the former of these acts with level $m-1$, for some integer $m>1$, and the latter with central charge $c=6(m-1)$. The unitary irreducible highest weight representations $V^{(m)}_{h,j}$ are labelled by the two ``quantum numbers" $h$ and $j$ which are the eigenvalues of $L_0$ and $\frac{1}{2}J_0^3$, respectively, when acting on the highest weight state. (We adopt a normalisation of the {$\SU(2)$ current} $J^3$ such that the zero mode $J^3_0$ has integer eigenvalues.) In the Ramond sector of the
superconformal algebra there are two types of highest weight representations: the {\em short} (or {\em BPS}, {\em supersymmetric}) ones with $h=\frac{m-1}{4}$ and $j\in\{ 0,\frac{1}{2},\cdots,\frac{m-1}{2}\}$, and the {\em long} (or {\em non-BPS}, {\em non-supersymmetric}) ones with $h > \frac{m-1}{4}$ and $j\in\{ \frac{1}{2},1,\cdots, \frac{m-1}{2}\}$. Their {\em (Ramond) characters}, defined as 
\be
{\rm ch}^{(m)}_{h,j}(\t,z) = \tr_{V^{(m)}_{h,j}} \left( (-1)^{J_0^3}y^{J_0^3} q^{L_0-c/24}\right),
\ee
are given by 
\be \label{masslesschar}
	{\rm ch}^{(m)}_{\frac{m-1}{4},j} (\t,z)= 
	\frac{\m_{m,j}  (\t,z)}{\m_{1,0}(\t,z)}
\ee
and 
\be \label{massivechar}
	{\rm ch}^{(m)}_{h,j} (\t,z) =
		(-1)^{2j}q^{h-\frac{m-1}{4}-\frac{j^2}{m}} \,  \frac{\th_{m,2j}(\t,z)-\th_{m,-2j}(\t,z)}{\mu_{1,0}(\t,z)} 
\ee
in the short and long cases, respectively, \cite{Eguchi1988}, where the function $ \m_{m,j}(\t,z)$ is defined as in  \eq{gAPsum}. 

\vspace{15pt}
\noindent{\em Lambencies $2,3,4,5,7,13$}
\vspace{10pt}

It turns out that for the pure A-type Niemeier root systems given as the union of $24/(\ll-1)$ copies of $A_{\ll-1}$ for $(\ll-1)|12$, 
the relevant criterion for $\phi^{(\ll)}$ is that of an {\em extremal Jacobi form} \cite[\S2.5]{UM}. The idea of an extremal Jacobi form can be viewed as a generalisation of the concept of an {extremal Virasoro character}, a notion that was introduced  in \cite{Hoehn2007} and discussed in 
\cite{Witten2007} in the context of pure $AdS_3$ gravity.

With the above definitions, following \cite{UM}, 
for $m$ a positive integer and $\phi$ a weak Jacobi form with weight $0$ and index $m-1$ we say $\phi$ is {\em extremal} if it admits an expression
\begin{gather}\label{eqn:forms:wtzero:ext}
	\f
	=	
	a_{\frac{m-1}{4},0}{\rm ch}^{(m)}_{\frac{m-1}{4},0} 
	+  
	a_{\frac{m-1}{4},\frac{1}{2}}{\rm ch}^{(m)}_{\frac{m-1}{4},\frac{1}{2}} 
			+ \sum_{0<r<m}
			\sum_{\substack{n>0\\ r^2-4mn< 0}}
			a_{\frac{m-1}{4}+n,\frac{r}{2}} {\rm ch}^{(m)}_{\frac{m-1}{4}+n,\frac{r}{2}}
\end{gather}
for some $a_{h,j}\in \CC$. Note the restriction on $n$ in the last summation in (\ref{eqn:forms:wtzero:ext}). Write $J^{\rm ext}_{0,m-1}$ for the subspace of $J_{0,m-1}$ consisting of extremal weak Jacobi forms.

We recall here that the extremal condition has a very natural interpretation in terms of the mock modular forms $h=(h_r)$ of weight $1/2$ via the relation 
\be
\psi = 
	-\mu_{1,0}\phi,\qquad \psi^F =\sum_{\text{$r$ mod $2m$}}h_r \th_{m,r},
\ee
discussed in \S\ref{sec:umbral shadow}. More precisely, the extremal condition is equivalent to the condition that 
\be
h_r = r\,\d_{r^2,1}\,a_{\frac{m-1}{4},\frac{1}{2}} q^{-\frac{1}{4m}} + { O}\left(q^{\frac{1}{4m}}\right)
\ee
as $\t\to i\inf$, which clearly implies the optimal growth condition $q^{1/4m}h_r(\t)=O(1)$ of Theorem \ref{thm:uniqueness_umbral_mock_jac}.

In \cite{UM} we proved that $\dim J^{\rm ext}_{0,m-1}=1$ in case $m-1$ divides $12$, and vanishes otherwise, at least when $m\leq 25$. (Cf. \cite[\S2.5]{UM}.) 
Explicitly, if $m-1$ divides $12$ then the one-dimensional space $J^{\rm ext}_{0,m-1}$ of extremal Jacobi forms with index $m-1$ is spanned by $\varphi\mm_1$, using the basis defined in \S\ref{subsec:basis_Jac}. 
These weight $0$ forms $\varphi^{(m)}_1$ arising from the extremal condition \eq{eqn:forms:wtzero:ext} will determine the unique weight $1$ meromorphic Jacobi forms $\psi^X$ satisfying the conditions of Theorem \ref{thm:uniqueness_umbral_mock_jac} for $X\in \{A_{1}^{24}, A_2^{12}, A_3^{8}, A_4^{6}, A_6^4, A_{12}^2\}$ according to (\ref{eqn:weight_one_from_weight_zero}) where $m$ is the Coxeter number of $X$.

\vspace{15pt}
\noindent{\em Lambencies $9,25$}
\vspace{10pt}

In order to include the other two pure A-type  Niemeier root systems ($X=A_8^3$ and $X=A_{24}$), it is sufficient to weaken the extremal condition slightly and consider 
weight $0$ and index $m-1$ Jacobi forms admitting an expression
\begin{gather}\label{condition_phi_9_25}
	\f
	=	
	a_{\frac{m-1}{4},0}{\rm ch}^{(m)}_{\frac{m-1}{4},0} 
	+  
	a_{\frac{m-1}{4},\frac{1}{2}}{\rm ch}^{(m)}_{\frac{m-1}{4},\frac{1}{2}} 
			+ \sum_{0<r<m}
			\sum_{\substack{n>0\\ r^2-4mn\leq  0}}
			a_{\frac{m-1}{4}+n,\frac{r}{2}} {\rm ch}^{(m)}_{\frac{m-1}{4}+n,\frac{r}{2}}
\end{gather}
for some $a_{h,j}\in \CC$. Notice that we have weakened the bound from $r^2-4mn<0$ to  $r^2-4mn\leq0$ in the last summand. 
This is directly related to the fact that for $m=9,25$ there exists $0<r<m$ such that $r^2=0$ mod $4m$. 

There exists at most one solution to (\ref{condition_phi_9_25}) up to rescaling (look ahead to Lemma \ref{prop:lemma_uniqueness_optimal}), and inspection reveals that at $m=9,25$ we have the non-zero solutions $\varphi^{(9)}_1$ and $\varphi^{(25)}_1$, in the notation of \S\ref{subsec:basis_Jac}. These Jacobi forms will determine, by way of (\ref{eqn:weight_one_from_weight_zero}), the unique weight $1$ meromorphic Jacobi forms $\psi^X$ satisfying the conditions of Theorem \ref{thm:uniqueness_umbral_mock_jac} for $X\in \{A_{8}^{3}, A_{24}\}$.

\vspace{15pt}
\noindent{\em Lambencies $6,10$}
\vspace{10pt}

In order to capture $X=A_5^4 D_4$ and $X=A_9^2 D_6$ at $m=6$ and $m=10$ (the cases with $m$ given by the product of two distinct primes), 
we will relax the extremal condition further and consider weight $0$ and index $m-1$ Jacobi forms admitting an expression
\begin{gather}\label{eqn:optimal_forms}
	\f
	=	
	a_{\frac{m-1}{4},0}{\rm ch}^{(m)}_{\frac{m-1}{4},0} +a_{\frac{m-1}{4},\frac{1}{2}} {\rm ch}^{(m)}_{\frac{m-1}{4},\frac{1}{2}}
				+ \sum_{0<r<m}
			\sum_{\substack{n>0\\ r^2-4mn\leq  1}}
			a_{\frac{m-1}{4}+n,\frac{r}{2}} {\rm ch}^{(m)}_{\frac{m-1}{4}+n,\frac{r}{2}}
\end{gather}
for some $a_{h,j}\in \CC$. 
For this more general condition we also have a uniqueness property. 
\begin{prop}\label{prop:lemma_uniqueness_optimal}
For a given index $m-1$ 
the dimension of the space of Jacobi forms satisfying (\ref{eqn:optimal_forms}) for some $a_{h,j}\in\CC$ is at most $1$.
\end{prop}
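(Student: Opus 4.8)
The plan is to show that the linear map sending a solution $\phi$ of \eqref{eqn:optimal_forms} to the scalar $\phi(\t,0)$ is \emph{injective}; since $\phi(\t,0)$ is a weight $0$ modular form and hence a single complex number, this at once bounds the dimension of the solution space by $1$. To set this up I would pass, exactly as in \S\ref{sec:weight_zero_umbral_forms}, from $\phi$ to the weight $1$ index $m$ meromorphic Jacobi form $\psi=-\mu_{1,0}\phi$ (cf. \eqref{CoverA}), and work with its theta-coefficients $h_r$ defined by $\psi^F=\sum_r h_r\,\th_{m,r}$.

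First I would record the precise meaning of condition \eqref{eqn:optimal_forms} in terms of growth. Using \eqref{masslesschar} and \eqref{massivechar}, the $\mathcal N=4$ decomposition is graded by the discriminant $D=r^2-4mn$: each massive character ${\rm ch}^{(m)}_{\frac{m-1}{4}+n,\frac{r}{2}}$ contributes, after multiplication by $\mu_{1,0}$, the pure theta-combination $(-1)^r q^{\,n-r^2/4m}(\th_{m,r}-\th_{m,-r})$, every Fourier coefficient of which carries the single discriminant $D=r^2-4mn$, while the massless character with quantum number $j$ has its most polar contribution at $D=4j^2$. Hence the coefficients of the $h_r$ of discriminant $D\ge 2$ receive contributions only from the massless characters with $j\ge 1$ (for which $4j^2\ge 4$) and from the massive characters with $r^2-4mn\ge 2$, which are precisely the terms excluded in \eqref{eqn:optimal_forms}. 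Thus \eqref{eqn:optimal_forms} is equivalent to the vanishing of all these coefficients, that is, to the optimal growth condition $q^{1/4m}h_r(\t)=O(1)$ of \eqref{optimal_growth}.

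With this in hand the injectivity is immediate. Suppose $\phi$ solves \eqref{eqn:optimal_forms} and $\phi(\t,0)=0$. The elliptic transformation \eqref{elliptic} gives $\phi(\t,\l\t+\m)=e(-(m-1)\l^2\t)\,\phi(\t,0)=0$ for all $\l,\m\in\ZZ$, so $\phi$ vanishes at every point of $\ZZ\t+\ZZ$, which is exactly the pole set of $\mu_{1,0}$. Therefore $\psi=-\mu_{1,0}\phi$ is holomorphic, i.e. a genuine weak Jacobi form of weight $1$ and index $m$, and by the previous paragraph it satisfies the optimal growth condition. Theorem 9.7 of \cite{Dabholkar:2012nd}---the statement, recalled in the proof of Theorem \ref{thm:uniqueness_umbral_mock_jac}, that there is no nonzero weak Jacobi form of weight $1$ and index $m$ of optimal growth---then forces $\psi=0$, whence $\phi=0$. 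So the kernel of $\phi\mapsto\phi(\t,0)$ is trivial and the space of solutions has dimension at most $1$, as claimed.

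The one step that genuinely requires care is the discriminant bookkeeping of the second paragraph, in particular the assertion that the massless character indexed by $j$ carries no contribution more polar than $D=4j^2$, so that admitting only $j\in\{0,\tfrac12\}$ together with the bound $r^2-4mn\le 1$ really does cut off the principal part at $D=1$. This is the direct analogue of the equivalence already established for the strictly extremal case \eqref{eqn:forms:wtzero:ext} (where the sharper bound $D<0$ leaves only the $j=\tfrac12$ term at order $q^{-1/4m}$), and it can be checked in the same manner from the explicit Appell--Lerch form \eqref{gAPsum} of $\mu_{m,j}$; everything else in the argument is formal.
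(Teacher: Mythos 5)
Your argument is correct, and at its core it effects the same reduction as the paper's proof: both produce, from a hypothetical nontrivial element of the solution space, a \emph{holomorphic} weak Jacobi form of weight $1$ and index $m$ whose Fourier coefficients vanish for discriminants $r^2-4mn>1$, and both then invoke Theorem 9.7 of Dabholkar--Murthy--Zagier (resting on Skoruppa's non-existence of weight $1$ Jacobi forms) to conclude that this form, and hence the original $\f$, vanishes. The difference is in the packaging. The paper works with two solutions $\f_1,\f_2$ and uses the identity $2\,{\rm ch}^{(m)}_{\frac{m-1}{4},0}+{\rm ch}^{(m)}_{\frac{m-1}{4},\frac12}+q^{-1/4m}(\th_{m,1}-\th_{m,-1})/\mu_{1,0}=0$ to choose a linear combination whose massless part is proportional to $q^{-1/4m}(\th_{m,1}-\th_{m,-1})/\mu_{1,0}$; multiplying by $\mu_{1,0}$ then gives a manifestly holomorphic theta-combination supported on $D\le 1$. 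You instead identify the one-dimensional coordinate on the solution space as the evaluation functional $\f\mapsto\f(\t,0)$ (the Witten index) and obtain holomorphy of $-\mu_{1,0}\f$ on its kernel geometrically, from the zeros of $\f$ along $\ZZ\t+\ZZ$ cancelling the poles of $\mu_{1,0}$; this observation is not used in the paper's proof of this proposition, though it is close to the fact, recorded in the proof of Proposition \ref{lemma_uniqueness_optimal_nonsquarefree}, that $(\mu_{1,0}\f)^P=\f(\t,0)\,\mu_{m,0}$. Note, however, that your route does not dispense with the character identity: the ``discriminant bookkeeping'' you rightly flag as the delicate step---that the $j=\tfrac12$ massless character contributes to the finite part only at $D=1$---is precisely equivalent to that identity, which says that the polar part of $\mu_{m,1/2}$ is $-2\mu_{m,0}$ and its finite part is $-q^{-1/4m}(\th_{m,1}-\th_{m,-1})$ (and one should also note that $\f(\t,0)=a_{\frac{m-1}{4},0}-2a_{\frac{m-1}{4},\frac12}$, since the massive characters and $q^{-1/4m}(\th_{m,1}-\th_{m,-1})/\mu_{1,0}$ all vanish at $z=0$, which is what ties your functional to the massless coefficients). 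So both key ingredients of the paper's proof reappear in yours, in a different order; what your framing buys is a conceptually cleaner statement---injectivity of the Witten-index functional rather than pairwise linear dependence---at the cost of having to argue separately for holomorphy and for the discriminant bound, which in the paper come simultaneously from the explicit rewriting.
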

\begin{proof}
If $\f_1$ and $\f_2$ are two weight $0$ Jacobi forms that can be written as  in (\ref{eqn:optimal_forms}), there exists a linear combination $\f$ of $\f_1$ and $\f_2$ 
satisfying 
\be
\f
	=	
	aq^{-\frac{1}{4m}}\frac{\th_{m,1}-\th_{m,-1}}{\mu_{1,0}}+
	\sum_{0<r<m}
			\sum_{\substack{n>0\\ r^2-4mn\leq  1}}
			a_{\frac{m-1}{4}+n,\frac{r}{2}} {\rm ch}^{(m)}_{\frac{m-1}{4}+n,\frac{r}{2}}
\ee
for some $a$, $a_{h,j}\in \CC$, where we have used that 
\be
2\,{\rm ch}^{(m)}_{\frac{m-1}{4},0} + {\rm ch}^{(m)}_{\frac{m-1}{4},\frac{1}{2}} 
	+q^{-\frac{1}{4m}}\frac{\th_{m,1}-\th_{m,-1}}{\mu_{1,0}}=0.
\ee
Equivalently, $\psi=
\mu_{1,0}\f$ is a weight $1$ index $m$ weak Jacobi form with Fourier expansion $\psi(\t,z)=\sum_{n,\ll}c(n,\ll) q^n y^\ll$, where $c(n,\ll) =0$ for $\ll^2-4mn>1$. But such a weight $1$ index $m$ weak Jacobi form does not exist according to Theorem 9.7 of \cite{Dabholkar:2012nd}, based on the fact that there is no (strong) Jacobi form of weight 1 and index $m$ for any positive integer $m$, as shown earlier by Skoruppa \cite{Sko_Thesis}.
 We therefore conclude that $\f=0$, and $\f_1$ and $\f_2$ are linearly dependent. 
\end{proof}

This more general condition \eq{eqn:optimal_forms} singles out the weight $0$ Jacobi forms $\varphi^{(6)}_1$ and $\varphi^{(10)}_1$ (cf. \S\ref{subsec:basis_Jac}) at $m\in \{6,10\}$ in addition to those already mentioned, for which $m$ is prime or the square of a prime. As above, these weight $0$ forms will determine, by way of (\ref{eqn:weight_one_from_weight_zero}), the unique weight $1$ meromorphic Jacobi forms $\psi^X$ satisfying the conditions of Theorem \ref{thm:uniqueness_umbral_mock_jac} for $X\in \{A_5^4 D_6, A_9^2D_6 \}$.

\begin{rmk}
We expect that the space of solutions to (\ref{eqn:optimal_forms}) is zero-dimensional for all but finitely many $m$.
The discussion of weight $1$ Jacobi forms in \cite[\S9]{Dabholkar:2012nd} suggests that Jacobi forms satisfying \eq{eqn:optimal_forms} might only exist for these values $m\in\{2,3,4,5,6,7,9,10,13, 25\}$ which are among those of relevance to umbral moonshine.
\end{rmk}

\newpage
\vspace{15pt}
\noindent{\em Lambencies $8,12,16,18$}
\vspace{10pt}

We are left with the A-type  Niemeier root systems with Coxeter numbers that are not square-free and not  squares of primes: they are $X=A_7^2 D_5^2, A_{11}D_7E_6, A_{15}D_9, A_{17}E_7$ with $m=8,12,16,18$ respectively. To discuss these cases, let us first point out a subtlety in our procedure for determining the weight $1$ meromorphic Jacobi form $\psi^X$ from a weight $0$ meromorphic Jacobi form $\f^X$ using \eq{eqn:weight_one_from_weight_zero}. Although the resulting weight $1$ form $\psi^X$ is unique following Theorem \ref{thm:uniqueness_umbral_mock_jac}, in general the corresponding weight $0$ form $\f^X$ is not. In other words,  there could be more than one $\f^X$ satisfying \eq{eqn:weight_one_from_weight_zero} for a given $\psi^X$.
For the A-type cases with $m\in \{2,3,4,5,6,7,9,10,13,25\}$ discussed above, there is no such ambiguity since the matrix $\O^X$ corresponding to the Eichler--Zagier operator ${\cal W}^X$ is invertible. 
On the other hand, the matrix $\O^X$ is not invertible for $X\in\{A_7^2 D_5^2, A_{11}D_7E_6, A_{15}D_9, A_{17}E_7\}$, corresponding to $m\in\{8,12,16,18\}$. 
At the same time, in these cases there exists a unique $d>1$ such that $d^2$ is a proper divisor of $m$ and correspondingly, there is  an interesting feature in the space of Jacobi forms. 
This is due to the fact (cf. \cite[\S4.4]{Dabholkar:2012nd}) that if $\psi_{k,m/d^2}(\t,z)$ transforms as a weight $k$ index $m/d^2$ Jacobi form then $\psi_{k,m/d^2}(\t,dz)$ transforms as a weight $k$ index $m$ Jacobi form. It turns out that the umbral forms discussed above, in particular those with lambency $\ll=2,3,4$, help to determine the weight $0$ forms $\phi^X$ at lambency $\ll=8,12,16,18$ by requiring the ``square relation"
\be\label{square_relation}
-\mu_{1,0}\f^{(\ll)}\lvert {\cal W}_\ll(d)(\t,z) = \frac{\ll/d^2 -1 }{24}\, \psi^{(\ll/d^2)} (\t,dz) ,
\ee
where $d$ is the unique integer such that  $d^2$ is a proper divisor of $\ll$ different from 1, and  $\psi^{(\ll/d^2)}$ is the weight $1$ meromorphic Jacobi form with index $\ll/d^2$ that we have constructed above via \eq{eqn:weight_one_from_weight_zero}. This extra condition \eq{square_relation} eliminates the kernel of the Eichler--Zagier operator ${\cal W}^X$ and renders our choice for $\f^X$ unique. 
Notice that, following Table \ref{tab:mugs} we have used the lambency $\ll$ to denote the Niemeier root system $X$, and the former simply coincides with the Coxeter number for the  A-type  cases discussed in this subsection. 

To specify this particular choice of $\f^X$, let us impose the following condition. 
For a non-square-free $m$ which is not a square of a prime, we consider the weight $0$ index $m-1$ Jacobi forms $\f$,  such that the finite part of the weight $1$ index $m$ Jacobi form $-\mu_{1,0}\f \lvert ({\bf 1}+{\cal W}_m(m/d))$ is a mock Jacobi form with expansion $\sum_{n,\ll} c(n,\ll) q^n y^\ll$ and $c(n,r)=0$ whenever $r^2-4mn>1$, i.e.
\be\label{projection_nonsquarefree}
\left(
-\mu_{1,0}\f \lvert ({\bf 1}+{\cal W}_m(m/d))\right)^F(\t,z) = \sum_{\substack{n,r\in \ZZ, n\geq0\\ r^2-4mn\leq1}} c(n,r) q^n y^r. 
\ee

From the above discussion we arrive at the following uniqueness property for such Jacobi forms. 
\begin{prop}\label{lemma_uniqueness_optimal_nonsquarefree}
Consider integers $m,\til m,d$ satisfying $m=\til m d^2$ and $\til m, d>1$. 
Given any meromorphic weight $1$ index $n$ Jacobi form $\psi^{(\til m)}(\t,z)$, 
there exists at most one weight $0$ index $m-1$ weak Jacobi form $\f$ satisfying \eq{projection_nonsquarefree}  and the square relation 
\be
-\mu_{1,0}\f\lvert {\cal W}_m(d ) (\t,z)=  \psi^{(\til m)} (\t,dz) .
\ee
\end{prop}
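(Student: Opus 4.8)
The plan is to follow the template of Proposition~\ref{prop:lemma_uniqueness_optimal}: assume two weak Jacobi forms $\f_1,\f_2$ each satisfy \eq{projection_nonsquarefree} together with the square relation, pass to the difference $\f=\f_1-\f_2$, and show $\f=0$ by reducing to the non-existence of a weight $1$ index $m$ weak Jacobi form of optimal growth (Theorem~9.7 of \cite{Dabholkar:2012nd}). Both defining conditions are linear in $\f$: the square relation is homogeneous, and \eq{projection_nonsquarefree} is the vanishing of all Fourier coefficients of discriminant $r^2-4mn>1$. Hence, writing $\psi=-\mu_{1,0}\f$, the difference satisfies the homogeneous versions $\psi\lvert{\cal W}_m(d)=0$ and $\bigl(\psi\lvert(\uu+{\cal W}_m(m/d))\bigr)^F$ supported on $r^2-4mn\le 1$.

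First I would show that the square relation collapses $\psi$ to a holomorphic object. Since $\psi$ has poles only at $z\in\ZZ\t+\ZZ$, we are in the situation $\til n=1$ of Proposition~\ref{proposition_EZ_commutes}, so Corollary~\ref{cor:commute_EZ_polar} applies to every ${\cal W}_m(n)$. Taking the polar part of $\psi\lvert{\cal W}_m(d)=0$ gives $\psi^P\lvert{\cal W}_m(d)=0$. But $\psi^P=-\f(\t,0)\,\mu_{m,0}$ with $\f(\t,0)$ a constant (being a weight $0$ modular form), while $\mu_{m,0}\lvert{\cal W}_m(d)$ is the sum of nonzero polar terms at the $d$-torsion points prescribed by \eq{pol_n_torsion} and is therefore nonzero for $d>1$. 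Hence $\f(\t,0)=0$, so $\psi^P=0$ and $\psi=\psi^F$ is a genuine weak holomorphic Jacobi form of weight $1$ and index $m$ (the double zero of the even form $\f$ at $z=0$ cancels the simple pole of $\mu_{1,0}$).

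Next I would use Lemma~\ref{inversion_EZ} to simplify the second condition. From $\psi^F\lvert{\cal W}_m(d)=0$ and the identity $\psi^F\lvert{\cal W}_m(d)=-\psi^F\lvert{\cal W}_m(m/d)$ we obtain $\psi^F\lvert{\cal W}_m(m/d)=0$, so that $\psi\lvert(\uu+{\cal W}_m(m/d))=\psi$. Thus \eq{projection_nonsquarefree} for $\f$ now reads simply: $\psi$ has vanishing Fourier coefficients whenever $r^2-4mn>1$. Hence $\psi$ is a weight $1$ index $m$ weak Jacobi form of optimal growth, which forces $\psi=0$ by Theorem~9.7 of \cite{Dabholkar:2012nd} (equivalently, by Skoruppa's non-existence of weight $1$ Jacobi forms \cite{Sko_Thesis}). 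Since $\mu_{1,0}\not\equiv 0$ this yields $\f=0$, i.e. $\f_1=\f_2$.

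The main obstacle is the middle step: it is precisely because $d^2\mid m$ that ${\cal W}^X$ is non-invertible (its kernel is what obstructs uniqueness of the weight $0$ form in the first place), and the square relation is needed to eliminate this kernel. The key is to recognise that the square relation forces $\f(\t,0)=0$ through $\psi^P\lvert{\cal W}_m(d)=0$, thereby promoting the meromorphic $\psi$ to an honest weak Jacobi form on which the non-existence theorem bites. The remaining care is bookkeeping: verifying the coprimality hypothesis $(\til n,n)=1$ with $\til n=1$ so that Corollary~\ref{cor:commute_EZ_polar} and Lemma~\ref{inversion_EZ} apply, and confirming $\mu_{m,0}\lvert{\cal W}_m(d)\ne 0$ so that the vanishing of $\psi^P\lvert{\cal W}_m(d)$ genuinely forces $\f(\t,0)=0$.
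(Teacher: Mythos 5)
Your proof is correct and follows essentially the same route as the paper's: both arguments rest on Corollary \ref{cor:commute_EZ_polar}, Lemma \ref{inversion_EZ}, the identification $(\mu_{1,0}\f)^P = \f(\t,0)\,\mu_{m,0}$ (together with the non-vanishing of $\mu_{m,0}\lvert{\cal W}_m(d)$), and the non-existence of weight $1$ index $m$ weak Jacobi forms of optimal growth (Theorem 9.7 of \cite{Dabholkar:2012nd}). The only difference is organizational: by passing to the difference $\f=\f_1-\f_2$ at the outset you make the square relation homogeneous, so the polar part vanishes outright and the non-existence theorem applies to $\psi=-\mu_{1,0}\f$ itself, whereas the paper applies it to $\mu_{1,0}(\f_1-\f_2)\lvert({\bf 1}+{\cal W}_m(m/d))$ and then recovers $\f_1=\f_2$ by reassembling finite and polar parts.
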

\begin{proof}
Assume that there are two weight $0$ index $m-1$ weak Jacobi forms $\f_1$ and $\f_2$  satisfying  
\begin{gather}
\mu_{1,0}\f_1\lvert {\cal W}_m(d ) (\t,z)=  
\mu_{1,0}\f_2\lvert {\cal W}_m(d )(\t,z)=  \psi^{(\til m)} (\t,dz) 
\end{gather}
and therefore $\left(
\mu_{1,0}\f_1\lvert {\cal W}_m(d )\right)^F =  \left(
\mu_{1,0}\f_2\lvert {\cal W}_m(d )\right)^F$.
It then follows from Lemma \ref{inversion_EZ} that
\be\label{identify_finite_part}
\left(
\mu_{1,0}\f_1\lvert {\cal W}_m(m/d )\right)^F =  \left(
\mu_{1,0}\f_2\lvert {\cal W}_m(m/d )\right)^F.
\ee
Note that we also have 
\begin{gather}
\begin{split}
\f_1(\t,0) \m_{m,0}\lvert {\cal W}_m(d ) 
&=\left(
\mu_{1,0}\f_1\lvert {\cal W}_m(d )\right)^P \\
&=\left(\mu_{1,0}\f_2\lvert {\cal W}_m(d )\right)^P\\
&= 
\f_2(\t,0) \m_{m,0}\lvert {\cal W}_m(d ),
\end{split}
\end{gather}
which leads to the equality between the two constants $\f_1(\t,0)=\f_2(\t,0)$, and hence
\be \left(
\mu_{1,0}\f_1 \lvert ({\bf 1}+{\cal W}_m(m/d))\right)^P = \left(
\mu_{1,0}\f_2 \lvert ({\bf 1}+{\cal W}_m(m/d))\right)^P.
\ee

Next, assume that $\f_1$ and $\f_2$ both satisfy \eq{projection_nonsquarefree}. 
Then $
\mu_{1,0}(\f_1-\f_2) \lvert ({\bf 1}+{\cal W}_m(m/d))$  is a weight $1$ index $m$ weak Jacobi form with expansion $\sum_{n,\ll}c(n,\ll) q^n y^\ll$, where $c(n,\ll) =0$ for $\ll^2-4mn>1$, which can only be identically zero (cf.  \cite{Dabholkar:2012nd}, Theorem 9.7) and 
 we arrive at 
\begin{gather}
\mu_{1,0}\f_1 \lvert ({\bf 1}+{\cal W}_m(m/d))=
\mu_{1,0}\f_2 \lvert ({\bf 1}+{\cal W}_m(m/d)).
\end{gather}

Combining with \eq{identify_finite_part} and using Corollary \ref{cor:commute_EZ_polar} we obtain $(
\mu_{1,0}\f_1 )^F=(
\mu_{1,0}\f_2)^F$.
Again, the polar part $(
\mu_{1,0}\f)^P$ of the meromorphic Jacobi form $
\mu_{1,0}\f$ for any weight $0$ index $m-1$ weak Jacobi form $\f$ is given by $
\f(\t,0)\m_{m,0}$. This proves that $\mu_{1,0}\f_1=
\mu_{1,0}\f_2$ and hence  $\f_1=\f_2$.
\end{proof}
 At $m\in\{8,12,16,18\}$, applying the above Proposition and choosing 
\begin{gather}
\psi^{(\til m)} = 
-\mu_{1,0}\f^{(\til m)},\quad  \til m=2,3,4,
\end{gather}
with $\f^{(\til m)}$ as in Table \ref{tab:weight_zero_form} 
 gives the weight $0$ forms we need in order to specify the remaining umbral mock modular forms $H^X$ for $X$ of A-type. 

The explicit expressions for $\f^X$ for all A-type  Niemeier root systems $X$ are given in Table \ref{tab:weight_zero_form}, where the basis we use for weight $0$ weak Jacobi forms is summarised in \S\ref{subsec:basis_Jac} and $\varphi_0^{(\ll)}$ denotes the constant $\varphi_0^{(\ll)}=\varphi^{(\ll)}_1(\t,0)$.

 \begin{table}[h]
\captionsetup{font=small}
\begin{center}
\caption{Weight Zero Jacobi Forms}\label{tab:weight_zero_form}
\medskip

\begin{tabular}{ccccccccccc}
\multicolumn{1}{c|}{$\rs$}&$A_1^{24}$&$A_2^{12}$&$A_3^8$&$A_4^6$&$A_5^4D_4$&$A_6^4$&$A_8^3$&$A_9^2D_6$&$A_{12}^2$&$A_{24}$\\
	\midrule
\multicolumn{1}{c|}{$\ll$}&	2&	3&	4&	5&	6&	7&	9&	10&	13&	25\\
	\midrule
	\multicolumn{1}{c|}{$\varphi_0^{(\ll)}  \phi^{(\ll)}$}&			 $\varphi^{(2)}_1$&	 $\varphi^{(3)}_1$&	 $\varphi^{(4)}_1$&	 $\varphi^{(5)}_1$&	 $\varphi^{(6)}_1$& $\varphi^{(7)}_1$&$\varphi^{(9)}_1$&$\varphi^{(10)}_1$&	$\varphi^{(13)}_1$&$\varphi^{(25)}_1$\\
\end{tabular}
\end{center}

\begin{center}
\begin{tabular}{cccc}
\multicolumn{1}{c|}{$\rs$}&$A_7^2D_5^2$&$A_{11}D_7E_6$&$A_{15}D_9$\\
	\midrule
\multicolumn{1}{c|}{$\ll$}&	8& 12&	16\\
	\midrule
\multicolumn{1}{c|}{$\varphi_0^{(\ll)}  \phi^{(\ll)}$}&$\varphi^{(8)}_1+\tfrac{1}{2}\varphi^{(8)}_2$&$\varphi^{(12)}_1+\varphi^{(12)}_2$&$\varphi^{(16)}_1+\tfrac{1}{2}\varphi^{(16)}_2$\\
\end{tabular}
\end{center}
\begin{center}
\begin{tabular}{cc}
\multicolumn{1}{c|}{$\rs$}&$A_{17}E_7$\\
	\midrule
\multicolumn{1}{c|}{$\ll$}&	18\\
	\midrule
\multicolumn{1}{c|}{$\varphi_0^{(\ll)}  \phi^{(\ll)}$}&$ \varphi^{(18)}_1+\tfrac{1}{3}\varphi^{(18)}_3 + 4\xi (\varphi^{(12)}_1+2\varphi^{(12)}_2+\tfrac{1}{3}\varphi^{(12)}_3)$\\
\end{tabular}
\end{center}
\end{table}

Given $\f^X$ and using the Eichler--Zagier operator ${\cal W}^X$ defined in \S\ref{sec:forms:ADE}, the formula \eq{eqn:weight_one_from_weight_zero} gives the weight $1$ meromorphic Jacobi form $\psi^X$. From there, using the method described in \S\ref{sec:forms:meromock} we can separate it into the polar and the finite part $\psi^X=(\psi^X)^P+(\psi^X)^F$ in a canonical way.  As can be verified by inspection, the choices of $\f^X$ specified in this subsection determine solutions to the hypotheses of Theorem \ref{thm:uniqueness_umbral_mock_jac}. 

\begin{prop}\label{prop:A_UmbralForms}
Let $X$ be one of the $14$ Niemeier root systems with an A-type component (cf. (\ref{eqn:holes:NieRoot_A})) and let $\phi^X$ be as specified in Table \ref{tab:weight_zero_form}. Then the meromorphic Jacobi form $\psi^X$ determined by (\ref{eqn:weight_one_from_weight_zero}) is the unique such function satisfying the conditions of Theorem \ref{thm:uniqueness_umbral_mock_jac}. Write 
\be
(\psi^X)^F = \sum_{r \,({\rm mod}\,2m)} H_r^X \th_{m,r} , 
\ee
then $H^X=(H_r^X)$ is the unique vector-valued mock modular form with shadow $S^X$ satisfying the optimal growth condition (cf. Corollary \ref{cor:uniqueness_umbral_mock_mod}).  
\end{prop}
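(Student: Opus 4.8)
The plan is to reduce the proposition to a finite, case-by-case verification, since the two uniqueness assertions come for free. Once we exhibit, for each of the $14$ A-type root systems, a meromorphic Jacobi form $\psi^X$ meeting the two hypotheses of Theorem \ref{thm:uniqueness_umbral_mock_jac}, that theorem forces it to be the unique such function, and Corollary \ref{cor:uniqueness_umbral_mock_mod} then identifies $H^X=(H^X_r)$ as the unique vector-valued mock modular form with shadow $S^X$ of optimal growth. So the entire content is \emph{existence}: that the explicit $\phi^X$ of Table \ref{tab:weight_zero_form} produce, via (\ref{eqn:weight_one_from_weight_zero}), a $\psi^X$ with the prescribed polar part and optimal growth.

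First I would dispatch the polar part uniformly. Each $\phi^X$ is a weight $0$ index $m-1$ weak Jacobi form normalized so that $\phi^X(\t,0)=1$ (this is the role of the constant $\varphi_0^{(\ll)}$ in the table), so $-\mu_{1,0}\phi^X$ is a weight $1$ index $m$ meromorphic Jacobi form with simple poles on $\ZZ\t+\ZZ$ and polar part exactly $\mu_{m,0}$, as in \S\ref{sec:umbral shadow}. Applying ${\cal W}^X$ and invoking Corollary \ref{cor:commute_EZ_polar} gives $(\psi^X)^P=\mu_{m,0}\lvert {\cal W}^X$, which is (\ref{eqn:polar_part_umbral_mock_jac}); the same corollary tells us that the theta-coefficients of $(\psi^X)^F$ are $\Omega^X$ applied to those of $(-\mu_{1,0}\phi^X)^F$. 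Since applying the fixed matrix $\Omega^X$ to a vector of $q$-series preserves the bound $q^{1/4m}(\cdot)=O(1)$ on each component, optimal growth of $\psi^X$ follows from optimal growth of $-\mu_{1,0}\phi^X$ whenever the latter holds.

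That observation settles every case with $\Omega^X$ invertible, namely lambencies $2,3,4,5,6,7,9,10,13,25$. For $m-1\mid 12$ (lambencies $2,3,4,5,7,13$) the form $\phi^X=\varphi_1^{(m)}$ is the extremal Jacobi form of \cite{UM}, and the equivalence recorded after (\ref{eqn:forms:wtzero:ext}) between extremality and the bound $h_r=r\,\delta_{r^2,1}a\,q^{-1/4m}+O(q^{1/4m})$ gives optimal growth directly. For lambencies $9,25$ and $6,10$ I would instead check that the tabulated $\phi^X$ satisfies the relaxed conditions (\ref{condition_phi_9_25}) and (\ref{eqn:optimal_forms}) respectively; since a character ${\rm ch}^{(m)}_{\frac{m-1}{4}+n,r/2}$ contributes a term of $q$-order $-(r^2-4mn)/4m$, the admissibility bounds $r^2-4mn\le 0$ and $r^2-4mn\le 1$ are precisely equivalent to $q^{1/4m}h_r=O(1)$, so optimal growth again follows.

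The main obstacle is the non-invertible cases $m\in\{8,12,16,18\}$ (lambencies $8,12,16,18$), where $\Omega^X$ has a kernel and $-\mu_{1,0}\phi^X$ need \emph{not} have optimal growth on its own; the fast-growing directions must instead be annihilated by $\Omega^X$. Here I would run the argument through the square relation (\ref{square_relation}) together with the growth condition (\ref{projection_nonsquarefree}), verifying for each of the four explicit combinations of Table \ref{tab:weight_zero_form} that $-\mu_{1,0}\phi^X\lvert {\cal W}_m(d)$ reproduces the rescaled lower-index form $\tfrac{m/d^2-1}{24}\psi^{(m/d^2)}(\t,dz)$ and that the finite part of $-\mu_{1,0}\phi^X\lvert({\bf 1}+{\cal W}_m(m/d))$ has vanishing Fourier coefficients $c(n,r)$ for $r^2-4mn>1$. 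Proposition \ref{lemma_uniqueness_optimal_nonsquarefree} guarantees that these two requirements pin down $\phi^X$ uniquely, so it suffices to check the tabulated forms meet them; Lemma \ref{inversion_EZ}, relating ${\cal W}_m(n)$ and ${\cal W}_m(m/n)$ on finite parts, then converts the bound (\ref{projection_nonsquarefree}) on the combined image into the optimal growth bound (\ref{optimal_growth}) on the theta-coefficients of $(\psi^X)^F$ itself. These verifications are the genuinely computational heart of the proof, requiring the $\mathcal{N}=4$ character decompositions of the tabulated Jacobi forms and the attendant discriminant bounds, but each is a finite check; completing all four establishes existence in the remaining cases and thereby finishes the proposition.
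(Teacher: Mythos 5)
Your proposal is correct and takes essentially the same route as the paper: the paper's own proof of Proposition \ref{prop:A_UmbralForms} is exactly the content of \S\ref{sec:weight_zero_umbral_forms} — the polar part comes for free from Corollary \ref{cor:commute_EZ_polar}, optimal growth comes from the extremality-type conditions in the cases where $\Omega^X$ is invertible and from the square relation together with the projection condition (\ref{projection_nonsquarefree}) and Proposition \ref{lemma_uniqueness_optimal_nonsquarefree} in the non-square-free cases $m\in\{8,12,16,18\}$ — after which the paper simply states that the hypotheses of Theorem \ref{thm:uniqueness_umbral_mock_jac} "can be verified by inspection", with both uniqueness clauses supplied by Theorem \ref{thm:uniqueness_umbral_mock_jac} and Corollary \ref{cor:uniqueness_umbral_mock_mod}. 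Your account organizes those same finite checks slightly more explicitly (e.g.\ noting that $\Omega^X$ has non-negative integer entries and so preserves the growth bound, and that in the non-invertible cases the fast-growing directions must be killed by $\Omega^X$), which is detail the paper leaves to the reader.
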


As a result, the weight $0$ weak Jacobi forms constructed in this subsection define the umbral mock modular forms $H^X$ for each of the A-type  Niemeier root systems (cf. \eq{eqn:holes:NieRoot_A}). 
The first few dozen coefficients of the components $H^X_r$ are given in Appendix \ref{sec:coeffs}. It is a reflection of the close relationship between the notions of optimal growth formulated here and in \cite{Dabholkar:2012nd} that the umbral forms $H^X$ attached to A-type Niemeier root systems are closely related to the mock modular forms of weight $1$ that appear in \S A.2 of  \cite{Dabholkar:2012nd}.

\section{The Umbral McKay--Thompson Series}\label{sec:mckay}

The purpose of this section is to discuss the {\em umbral McKay--Thompson series} $H^X_g$ conjecturally defining the graded character of the group $G^X$ attached to the umbral module $K^X$ (cf. \S\ref{sec:conj:mod}). In \S\ref{sec:mckay:aut} we specify their mock modular properties. 
In \S\ref{sec:forms:low_lambencies} we discuss the McKay--Thompson series $H^X_g$ attached to the five A-type  Niemeier root systems $X$ with prime Coxeter numbers. 
Subsequently, in \S\ref{sec:forms:mult} we discuss the {\em multiplicative relations} relating the McKay--Thompson series $H^X_g$ and $H^{X'}_{g'}$ attached to different Niemeier root systems $X$ and $X'$ with the Coxeter number of one of the root systems being an integer multiple of the other. 
In \S\ref{sec:mckay:mocktheta} we collect the relations between certain McKay--Thompson series and known mock theta series. 
As we discuss in  \S\ref{sec:mckay_spec} in detail, these relations, together with the constructions presented in \S\ref{sec:weight_zero_umbral_forms} and \S\ref{sec:forms:low_lambencies}, are sufficient to determine most of the umbral McKay--Thompson series completely. More specifically, we determine all of the umbral McKay--Thompson series $H^X_g$ attached to all conjugacy classes $[g]$ of the umbral group $G^X$ corresponding to all the A-type  Niemeier root systems, except for $X=A_6^4, A_{12}^2$, corresponding to lambencies $7,13$ (cf. (\ref{sec:holes:gzero})), for which we provide partial specifications. We also determine all of the umbral McKay--Thompson series $H^X_g$ for all conjugacy classes $[g]$ of the umbral group $G^X$ corresponding to all the D-type Niemeier root systems, except for the lambencies $10+5$ and $22+11$. 
For these D-type Niemeier root systems  $10+5$ and $22+11$ we
specify all the McKay--Thompson series $H^X_g$ except for $[g]=4A\subset G^{(10+5)}$ and $[g]=2A\subset G^{(22+11)}$. For $X=E_6^4$ we specify $H^X_g$ except for $[g]\in \{6A,8AB\}$, while for $X=E_8^3$ we omit the cases $[g]\in \{2A,3A\}$. 
We also provide the first few dozen coefficients of all the umbral McKay--Thompson series in Appendix \ref{sec:coeffs}. The conjugacy class names are defined in \S\ref{sec:chars:irr}.

\subsection{Shadows}\label{sec:mckay:aut}

In \S\ref{sec:holes:gps} we described the umbral groups $G^{\rs}$ and attached twisted Euler characters $\bar{\chi}^{\rs_A}$, ${\chi}^{\rs_A}$, $\bar{\chi}^{\rs_D}$, ${\chi}^{\rs_D}$, \&c., to the A-, D-, and E-components of each Niemeier root system. In this section we will explain how to use these characters to define a function $S^{\rs}_g$, for each $g\in G^{\rs}$, which turns out to be the shadow of the vector-valued mock modular form $H^{\rs}_g$. 

Let $\rs$ be a Niemeier root system and suppose that ${ m}$ is the Coxeter number of $\rs$.
Then given $g\in G^{\rs}$ we define $2m\times 2m$ matrices $\Omega^{X_A}_g$, $\Omega^{X_D}_g$, and $\Omega^{X_E}_g$, with entries indexed by $\ZZ/2m\ZZ\times \ZZ/2m\ZZ$, as follows. 
We define $\Omega^{X_A}_g$ by setting
\begin{gather}
\Omega^{X_A}_g=\chi^{X_A}_gP_m^0+\bar{\chi}^{X_A}_gP_m^1	
\end{gather}
where $P_m^s$ is the diagonal matrix (of size $2m\times 2m$, with entries indexed by $\ZZ/2m\ZZ\times\ZZ/2m\ZZ$) with $r$-th diagonal equal to $1$ or $0$ according as $r=s$ mod $2$ or not,
\begin{gather}\label{eqn:mckay:aut:Pdefn}
	(P^s_m)_{r,r'}=\delta_{r,s(2)}\delta_{r,r'(2m)}.
\end{gather}
In (\ref{eqn:mckay:aut:Pdefn}) we write $\delta_{i,j (n)}$ for the function that is $1$ when $i=j\pmod{n}$ and $0$ otherwise. Note that $P_m^0+P_m^1=\Omega_m(1)$ is the $2m\times 2m$ identity matrix. According to the convention that $\chi^{X_A}_g=\bar{\chi}^{X_A}=0$ if $X_A=\emptyset$ we have $\Omega^{X_A}_g=0$ for all $g\in G^X$ in case $X_A$ is empty; i.e. in case there are no type A components in the Niemeier root system $X$. 

If $X_D\neq \emptyset$ then $m$ is even. For $m>6$ we define $\Omega^{X_D}_g$ by setting
\begin{gather}
\Omega^{X_D}_g=\chi^{X_D}_gP_m^0+\bar{\chi}^{X_D}_gP_m^1+\chi^{X_D}\Omega_m(m/2),
\end{gather}
whilst for $m=6$---an exceptional case due to triality for $D_4$---we define $\Omega=\Omega^{X_D}_g$ so that
\begin{gather}
	\Omega_{r,r'}=
		\begin{cases}
		\check{\chi}^{X_D}_g\delta_{r,r'(12)},&\text{if $r=0,3$ mod $6$,}\\
		\bar{\chi}^{X_D}_g\delta_{r,r'(12)}+\chi^{X_D}_g\delta_{r,-r'(6)}\delta_{r,r'(4)},&\text{if $r=1,5$ mod $6$,}\\
		{\chi}^{X_D}_g\delta_{r,r'(12)}+\chi^{X_D}_g\delta_{r,-r'(6)}\delta_{r,r'(4)},&\text{if $r=2,4$ mod $6$}.
		\end{cases}
\end{gather}
The matrices $\Omega^{X_E}_g$ are defined by setting
\begin{gather}
	\Omega^{X_E}_g=
	\begin{cases}
		(\chi^{X_E}_gP_m^0+\bar{\chi}^{X_E}_gP_m^1)(\Omega_m(1)+\Omega_m(4))+\chi^{X_E}_g\Omega_m(6)&\text{if $m=12$,}\\
		\bar{\chi}^{X_E}_g(\Omega_m(1)+\Omega_m(6)+\Omega_m(9)),&\text{if $m=18$,}\\
		\bar{\chi}^{X_E}_g(\Omega_m(1)+\Omega_m(6)+\Omega_m(10)+\Omega_m(15)),&\text{if $m=30$.}
	\end{cases}
\end{gather}
Now for $X$ a Niemeier root system we set $\Omega^X_g=\Omega^{X_A}_g+\Omega^{X_D}_g+\Omega^{X_E}_g$, and we define $S^X_g$ by setting
\begin{gather}
	S^X_g=\Omega^X_g\cdot S_m.
\end{gather}
This generalises the construction (\ref{def:um_shadow}). We conjecture (cf. \S\ref{sec:conj:aut}) that the vector-valued function $S^{\rs}_g$ is the shadow of the mock modular form $H^{\rs}_g$ attached to $g\in G^X$. We will specify (most of) the $H^X_g$ explicitly in the remainder of \S\ref{sec:mckay}. 

\begin{rmk}
The matrices $\Omega^X$, corresponding to the case where $[g]$ is the identity class,  admit an ADE classification as explained in \S\ref{sec:forms:ADE}. It is natural to ask what the criteria are that characterise these matrices $\Omega^X_g$, attached as above to elements $g\in G^X$ via the twisted Euler characters defined in \S\ref{sec:holes:gps}.
\end{rmk}

\subsection{Prime Lambencies }\label{sec:forms:low_lambencies}

In this subsection we review the mock modular forms $H^X_g=(H^X_{g,r})$ conjecturally encoding the graded characters of the umbral module $K^X$ (cf. Conjecture \ref{conj:conj:mod:Kell}) of the umbral group $G^X$ for the five Niemeier root systems $X$ with prime Coxeter numbers. Explicitly, these are the root systems $X=A_1^{24}, A_2^{12}, A_4^{6}, A_6^{4}, A_{12}^{2}$ with $\ll=2,3,5,7,13$, and the corresponding McKay--Thompson series are denoted $H^{(\ll)}_g=(H^{(\ll)}_{g,r})$ with $r=1,2,\dots,\ll-1$, using the notation given in Table \ref{tab:mugs}. 
In the next subsection we will see that they determine many of the umbral McKay--Thompson series attached to the other Niemeier root systems with non-prime Coxeter numbers. 
The discussion of this subsection follows that of \cite{UM}. The McKay--Thompson series for $\ll=2$, $X=A_1^{24}$ were first computed in \cite{Cheng2010_1,Gaberdiel2010,Gaberdiel2010a,Eguchi2010a}.

In order to give explicit formulas for the mock modular forms $H^{(\ll)}_g=(H^{(\ll)}_{g,r})$, we consider a slightly different function
\be\label{def:Hhat}
\widehat H^{(\ll)}_{g,r} = H^{(\ll)}_{g,r} - \frac{\chi^{(\ll)}_r(g)}{\chi^{(\ll)}}H^{(\ll)}_r,
\ee
where $H^{(\ll)}=(H^{(\ll)}_r)$ is the umbral form specified in \S\ref{sec:weight_zero_umbral_forms} corresponding to the identity class $1A$ and whose Fourier coefficients are given in Appendix \ref{sec:coeffs} with $H^{(\ll)}_r=H^{(\ll)}_{1A,r}$. We also let
\be
\chi^{(\ll)}_r(g) =\chi^{X_A}_{g} \; {\text{for }} r= 0 \;{\text{mod }}2, \quad\chi^{(\ll)}_r(g) =\bar \chi^{X_A}_{g} \; {\text{for }} r= 1 \;{\text{mod }}2,
\ee
and
\be
 \chi^{(\ll)} =\chi^{X_A}_{1A} =\bar \chi^{X_A}_{1A} = \frac{24}{\ll-1}
\ee
for 
\be X_A  = X = A_{\ll-1}^{24/(\ll-1)}
\ee
where the characters $\chi^{X_A}_g$ and $\bar{\chi}^{X_A}_g$ are defined in \S\ref{sec:holes:gps} and given explicitly in \S\ref{sec:chars:eul}. Following the discussion in \S\ref{sec:mckay:aut} we note that the combination \eq{def:Hhat} of  $H^{(\ll)}_{g,r} $ and $H^{(\ll)}_r$ has the property of being a modular form rather than a mock modular form, as the shadows of the two contributions to  $\widehat H^{(\ll)}_{g,r}$ cancel.

Subsequently, we define  weight two modular forms 
\be\label{def:F_weight2form}
F^{(\ll)}_g = \sum_{r=1}^{\ll-1} \widehat H^{(\ll)}_{g,r} S_{\ll,r}
\ee
where $S_{\ll,r} $ is again the unary theta series given in \eq{def:S}. 
For $\ll>3$ we also specify further weight two modular forms  by setting
\be
F^{(\ll),2}_g = \sum_{r=1}^{\ll-1} \widehat H^{(\ll)}_{g,r} S_{\ll,\ll-r}.
\ee
As explained in detail in \cite{UM}, specifying $F^{(\ll)}_g$ and $F^{(\ll),2}_g$ is sufficient to determine $H^{(\ll)}_g$ uniquely for $\ll=2,3,5$. 
In the case $\ll=2$ there is only one term in the sum \eq{def:F_weight2form} and it is straightforward to obtain $\widehat H^{(\ll)}_{g,r}$ from the weight 2 form  $F^{(\ll)}_g$. In the case $\ll=3,5$, we also utilise the following fact obeyed by the conjugacy classes of $G^{(\ll)}$. 

For any umbral group $G^{(\ll)}$ corresponding to an A-type  Niemeier root lattice with Coxeter number ${\ll}>2$, for a given conjugacy class $[g]$ with $\chi^{(\ll)}_1(g)>0$, there exists a (not necessarily different)  conjugacy class $[g']$ with the property
\be\label{def_paired_classes}
\chi^{(\ll)}_1(g) =\chi^{(\ll)}_1(g'),\quad \chi^{(\ll)}_2(g) = -\chi^{(\ll)}_2(g') 
\ee
and the order of $g$ and $g'$ are either the same or related by a factor of $2$ or $1/2$. 
For such paired classes we have
\be\label{relation_paired_classes2}
H^{(\ll)}_{g,r} + (-1)^{r} H^{(\ll)}_{g',r} =0 .
\ee
In particular, $H^{(\ll)}_{g,even}=0$  for the self-paired classes.
For $\ll=7,13$, this serves to constrain the function $H^{(\ll)}_g$ and supports the claims regarding their modular properties discussed in  \S\ref{sec:conj:aut}. We refer the readers to \S4 of \cite{UM} for explicit expressions for the weight two forms $F^{(\ll)}_g$ and $F^{(\ll),2}_g$. 

Note that, from the discussion in \S\ref{sec:mckay:aut}, the relation \eq{def_paired_classes} implies that the shadows attached to such paired classes satisfy 
 \be
S^{(\ll)}_{g,r} + (-1)^{r} S^{(\ll)}_{g',r} =0 .
\ee
Therefore, the paired relation \eq{relation_paired_classes2} can be viewed as a consequence of (the validity of) Conjecture  \ref{conj:conj:moon:opt}.

\subsection{Multiplicative Relations}\label{sec:forms:mult}
In this subsection we will describe a web of relations, the {multiplicative relations}, among the  mock modular forms attached to different Niemeier lattices. The nomenclature comes from the fact that these relations occur only among mock modular forms attached to  Niemeier root systems with one Coxeter number being an integer multiple of the other. 
To simplify the discussion we will distinguish the following two types. 
The  {\em horizontal relations}  relate umbral McKay--Thompson series $H^X_g$ and $H^{X'}_g$ attached to different Niemeier root systems with the same Coxeter number, {\em i.e.} ${  m}(X)={  m}(X')$. 
The {\em vertical relations}  connect umbral McKay--Thompson series $H^X_g$ and $H^{X'}_g$ attached to different Niemeier root systems $X$ and $X'$ with ${  m}(X)|{  m}(X')$ and ${  m}(X)\neq {  m}(X')$.

First we will discuss the horizontal relations, summarised in Table  \ref{fig:horizontal_relations}. 
Note that there are five pairs of Niemeier root systems that share the same Coxeter number $m\in \{6,10,12,18,30\}$. 
Let us choose such a pair $(X',X)$. 
From the definition of the umbral shadows  \eq{def:um_shadow} and the generalisation in \S\ref{sec:mckay:aut} to the non-identity conjugacy classes, we see that it can happen that the $r'$-th component $S^{X'}_{g',r'}$ of the shadow  attached to the conjugacy class $[g']$ of the umbral group $G^{X'}$ is expressible as a linear combination of the components $S^{X}_{g,r}$ of the shadow attached to the conjugacy class $[g]$ of the umbral group $G^{X}$. It turns out that for all five equal-Coxeter-number pairs of Niemeier root systems, this indeed happens for various pairs $([g'],[g])$ of conjugacy classes.

\begin{table}[h!]
\captionsetup{font=small}
\centering
\begin{tabular}{CCLC}\toprule
X'&X& ([g'],[g])&{\rm Relations}\\\midrule
\multirow{7}*{6+3}&\multirow{7}*{6}&(1A,1A)& H^{X'}_{g',r}= H^{X}_{g,r}+H^{X}_{g,6-r}\\
&&  (2A,2B)&r {\text{ odd }}\\ &&(3B,3A)\\ && (4A,8AB)\\\cmidrule(l){3-4}
&&(2C,4A)&H^{X'}_{g',r}= H^{X}_{g,r}-H^{X}_{g,6-r}\\
&&(4B,8AB)&r {\text{ odd }}\\
&&(6B,6A)\\
\midrule
\multirow{3}*{10+5}&\multirow{3}*{10} &(1A,1A)& H^{X'}_{g',r}= H^{X}_{g,r}+H^{X}_{g,10-r}\\
&&(2A,4AB)&\\\cmidrule(l){3-4}
&&(2B,4AB)& H^{X'}_{g',r}= H^{X}_{g,r}-H^{X}_{g,10-r}\\
\midrule
\multirow{2}*{12+4}&\multirow{2}*{12}&(1A,1A)& 
H^{X'}_{g',r}= H^{X}_{g,r}+H^{X}_{g,6+r}
\;,r=1,5 \\
&&(2A,2A)& H^{X'}_{g',4}= H^{X}_{g,4}+H^{X}_{g,8}\\\midrule
\multirow{2}*{18+9}&\multirow{2}*{18}&\multirow{1}*{(1A,1A)}& 
H^{X'}_{g',r}= H^{X}_{g,r}+H^{X}_{g,18-r}\\\cmidrule(l){3-4}
&&\multirow{1}*{(2A,1A)}&H^{X'}_{g',r}= H^{X}_{g,r}-H^{X}_{g,18-r}\\
\midrule
\multirow{2}*{30+6,10,15}&\multirow{2}*{30+15}&\multirow{2}*{(1A,1A)}&
H^{X'}_{g',1}= H^{X}_{g,1}+H^{X}_{g,11}
 \\ 
&&&H^{X'}_{g',7}= H^{X}_{g,7}+H^{X}_{g,13}
\\ 
\bottomrule
\end{tabular}
\caption{Horizontal Relations. 
\label{fig:horizontal_relations}}
\end{table}

Note that the relation between the shadows
\be\label{horizontal_relation_shadow}
S^{X'}_{g',r'} = \sum_r c_{r',r}\, S^{X}_{g,r}
\ee
is a necessary but insufficient condition for the linear relation 
\be\label{horizontal_relation_mmf}
H^{X'}_{g',r'} = \sum_r c_{r',r} H^{X}_{g,r}
\ee
between the corresponding McKay--Thompson series to hold, since the coincidence of the shadow guarantees the coincidence of the corresponding mock modular form only up to the addition of a modular form. Nevertheless, it turns out that in umbral moonshine  the relation between the McKay--Thompson series \eq{horizontal_relation_mmf} holds whenever the relation between the shadow \eq{horizontal_relation_shadow} holds non-trivially with $S^{X'}_{g',r'}, S^{X}_{g,r}\neq 0$. This fact, together with the more general multiplicative relations \eq{vertical_relation_mmf}, can again be viewed as the consequence of the conjectured uniqueness of such mock modular forms (Conjecture \ref{conj:conj:moon:opt}).
See Table  \ref{fig:horizontal_relations} for the list of such horizontal relations. 
In particular, for the identity element a relation $H^{X'}_{r'} = \sum_r c_{r',r} H^{X}_{r}$ holds for some $c_{r',r}\in \ZZ$ for all the five pairs $(X',X)$ of Niemeier root systems with the same Coxeter numbers.

Note that, together with the discussion in \S\ref{sec:weight_zero_umbral_forms} and Proposition \ref{proposition_EZ_commutes}, this implies more specifically that the umbral mock modular forms $H^{X'}_r$ can be obtained as the theta-coefficients of the finite part of the meromorphic weight $1$ Jacobi form
\be
\psi^{X'} = 
			-\mu_{1,0}\f^{X'}\lvert {\cal W}^{X'}\;,\;\;\f^{X'}=\f^{X}
\ee
for the four pairs $(X',X)\in\{(6,6+3),(10,10+5),(12,12+4),(18,18+9)\}$ with A-type  root systems $X$ and with the weight $0$ Jacobi forms given in Table \ref{tab:weight_zero_form}.

\begin{table}
\centering
\scalebox{.85}{
\begin{tabular}{CCCL}\toprule
X'&X&([g'],[g])&{\text{ Relations}} \\\midrule
\multirow{4}*{4}&\multirow{4}*{2}& (1A,2A), (2B,4A)&{ (H^{X'}_{g',1}-H^{X'}_{g',3} )(2\t) = H^{X}_{g,1} (\t)}\\ 
&&(2C,4B),(3A,6A) \\
&&(4A,4C),(4C,8A) \\ 
&& (6BC,12A), (7AB,14AB) \\
\midrule
\multirow{6}*{6}&\multirow{2}*{2}&(1A,3A),(2B,6A)& \sum_{n=0}^2 (-1)^n H^{X'}_{g',1+2n} (3\t) = H^{X}_{g,1}(\t) \\ 
&&(8AB,12A)& \\  \cmidrule(l){2-4}
&\multirow{4}*{3} &(1A,2B),(2A,2C)& (H^{X'}_{g',r} -H^{X'}_{g',6-r} ) (2\t) =  H^{X}_{g,r}(\t)\\
&&(2B,4C),(4A,4B)&\\
&&(3A,6C),(6A,6D)&\\
&&(8AB,8CD)\\\midrule
6+3&2&  (5A,15AB)& \sum_{n=0}^2 (-1)^n H^{X'}_{g',1+2n} (3\t) = H^{X}_{g,1} (\t)\\ 
\midrule
\multirow{2}*{8}&\multirow{2}*{4} & (1A,2C),(2BC,4C) & { (H^{X'}_{g',r}-H^{X'}_{g',8-r} )(2\t) = H^{X}_{g,r} (\t) }\\
&&(4A,4B)\\\midrule
\multirow{1}*{9}&\multirow{1}*{3}& (1A,3A) ,(2B,6C)& (H^{X'}_{g',r}+H^{X'}_{g',r+6}-H^{X'}_{g',6-r})(3\t) = H^{X}_{g,r}(\t)\\ 
\midrule
  \multirow{2}*{10}&\multirow{1}*{2} &(1A,5A),(4AB,10A) &  \sum_{n=0}^4 (-1)^n H^{X'}_{g',1+2n} (5\t) = H^{X}_{g,1}(\t)\\ 
 \cmidrule(l){2-4}
 &\multirow{1}*{5} & (1A,2C),(4AB,4CD)& (H^{X'}_{g',r} -H^{X'}_{g',10-r} ) (2\t) =  H^{X}_{g,r}(\t)\\
 \midrule
 10+5& 2&(3A,15AB) & (H^{X'}_{g',1}-H^{X'}_{g',3}+H^{X'}_{g',5}/2)(5\t) = H^{X}_{g,1}(\t) \\
 \midrule
  \multirow{2}*{12} & 4 & (1A,3A) & (H^{X'}_{g',r}+H^{X'}_{g',r+8}-H^{X'}_{g',8-r})(3\t) = H^{X}_{g,r}(\t) \\\cmidrule(l){2-4}
 &6& (1A,2B) & (H^{X'}_{g',r} -H^{X'}_{g',12-r} ) (2\t) =  H^{X}_{g,r}(\t) \\\midrule
 \multirow{2}*{12+4}& \multirow{2}*{6}&(3A,3A) &  (H^{X'}_{g',1} -H^{X'}_{g',5} ) (2\t) =  (H^{X}_{g,1} -H^{X}_{g,5} )(\t)\\
 &&(2B,8AB)& H^{X'}_{g,4} (2\t)= (H^{X}_{g,2} +H^{X}_{g,4} )(\t)\\
\midrule
 \multirow{2}*{14+7} & \multirow{2}*{2} & (1A,7AB),(2A,14AB) &( -\tfrac{1}{2} H^{X'}_{g',7} +\sum_{n=0}^2 (-1)^n H^{X'}_{g',1+2n})(7\t) = H^{X}_{g,1}(\t) \\ 
  &&(3A,21AB) &\\\midrule
  16&8& (1A,2BC) &  (H^{X'}_{g',r} -H^{X'}_{g',16-r} ) (2\t) =  H^{X}_{g,r}(\t)\\
\midrule
  \multirow{2}*{18} &  \multirow{1}*{6}& (1A,3A) & (H^{X'}_{g',r} -H^{X'}_{g',12-r}+H^{X'}_{g',12+r} ) (3\t) =  H^{X}_{g,r}(\t)  \\ 
  \cmidrule(l){2-4}
  & \multirow{1}*{9}& (1A,2B) &3 (H^{X'}_{g',r} -H^{X'}_{g',18-r})(2\t) = H^{X}_{g,r}(\t) \\ 
  \midrule
  22+11&2 &(1A,11A)& ( -\tfrac{1}{2} H^{X'}_{g',11} +\sum_{n=0}^4 (-1)^n H^{X'}_{g',1+2n})(11\t) = H^{X}_{g,1}(\t)\\\midrule
   \multirow{1}*{25}& \multirow{1}*{5}& (1A,5A) &    (\sum_{n=0}^2  H^{X'}_{g',10n+r}-\sum_{n=1}^2  H^{X'}_{g',10n-r})(5\t) = H^{X}_{g,r}(\t) \\
   \midrule
   \multirow{2}*{30+15}& 2 & (1A,15AB) &  ( -\tfrac{1}{2} H^{X'}_{g',15} +\sum_{n=0}^6 (-1)^n H^{X'}_{g',1+2n})(15\t) = H^{X}_{g,1}(\t)\\\cmidrule(l){2-4}
   &10+5 & (1A,3A) & (H^{X'}_{g',r}+H^{X'}_{g',10-r}-H^{X'}_{g',10+r})(3\t) = H^{X}_{g,r}(\t)
   \\\midrule
   46+23&2& (1A,23AB) & ( -\tfrac{1}{2} H^{X'}_{g',23} +\sum_{n=0}^{10} (-1)^n H^{X'}_{g',1+2n})(23\t) = H^{X}_{g,1}(\t) \\
 \bottomrule
\end{tabular}
}
\caption{Vertical Relations. 
\label{tab:vertical_relations}}
\end{table}

In fact, a linear relation between the shadows attached to different Niemeier root systems can happen more generally and not just among those with the same Coxeter numbers. 
The first indication that non-trivial relations might exist across different Coxeter numbers is the following property of the building blocks of the umbral shadow.  As one can easily check, the unary theta function $S_{m}=(S_{m,r})$ defined in \eq{def:S} at a given index $m$ can be re-expressed in terms of those at a higher index as 
\begin{align}\label{shadow_relation}
S_{m,r}(\t) = \sum_{\ll=0}^{\lceil \frac{n}{2} \rceil -1} S_{nm,r+2m\ell}(n\t) - \sum_{\ll=1}^{\lfloor \frac{n}{2} \rfloor } S_{nm,2m\ell-r}(n\t) ,
\end{align}
for any positive integer $n$.
The above equality makes it possible to have the relation
\be\label{vertical_relation_shadow}
\sum_{r'} c_{r,r'} S^{X'}_{g',r'}(k\t) =  S^{X}_{g,r}(\t)
\ee
for some $k\in \ZZ_{>0}$. 
We will see that this relation between the umbral shadows does occur for many pairs of Niemeier root systems $(X',X)$ with ${  m}(X') = k {  m}(X)$. Moreover, whenever this relation holds non-trivially with $S^{X'}_{g',r'}, S^{X}_{g,r}\neq 0$, the corresponding relation among the McKay--Thompson series
\be\label{vertical_relation_mmf}
\sum_{r'} c_{r,r'}  H^{X'}_{g',r'} (\tfrac{{  m}(X')}{{  m}(X)}\t) =  H^{X}_{g,r}(\t) 
\ee
also holds. 

We summarise a minimal set of such relations in Table \ref{tab:vertical_relations}. In Tables \ref{fig:horizontal_relations} and  \ref{tab:vertical_relations}, when it is not explicitly specified, the relation holds for all values of $r$ such that all  $H^{X'}_{g',r_1}$ and $H^X_{g,r_2}$ appearing on both sides of the equation have $1\leq r_1\leq m(X')$ and $1\leq r_2\leq m(X)$. 
 From the relations recorded in these tables  as well as the paired relations \eq{relation_paired_classes} and \eq{relation_paired_classes_Z3}, many further relations can be derived. 
For example, combining the relations between the $(1A,2BC)$ classes for $(X',X)=(16,8)$, the $(2BC,4C)$ classes for $(X',X)=(8,4)$ and the $(4C,8A)$ classes at $(X',X)=(4,2)$, one can deduce that there is a multiplicative relation $\sum_{n=0}^7 (-1)^n H^{(16)}_{1A,2n+1}(8 \tau) = H^{(2)}_{8A,1}(\tau)$. 
See \S\ref{sec:mckay_spec} for more detailed information.

\begin{figure}[h]
\begin{center}
\includegraphics[scale=0.35]{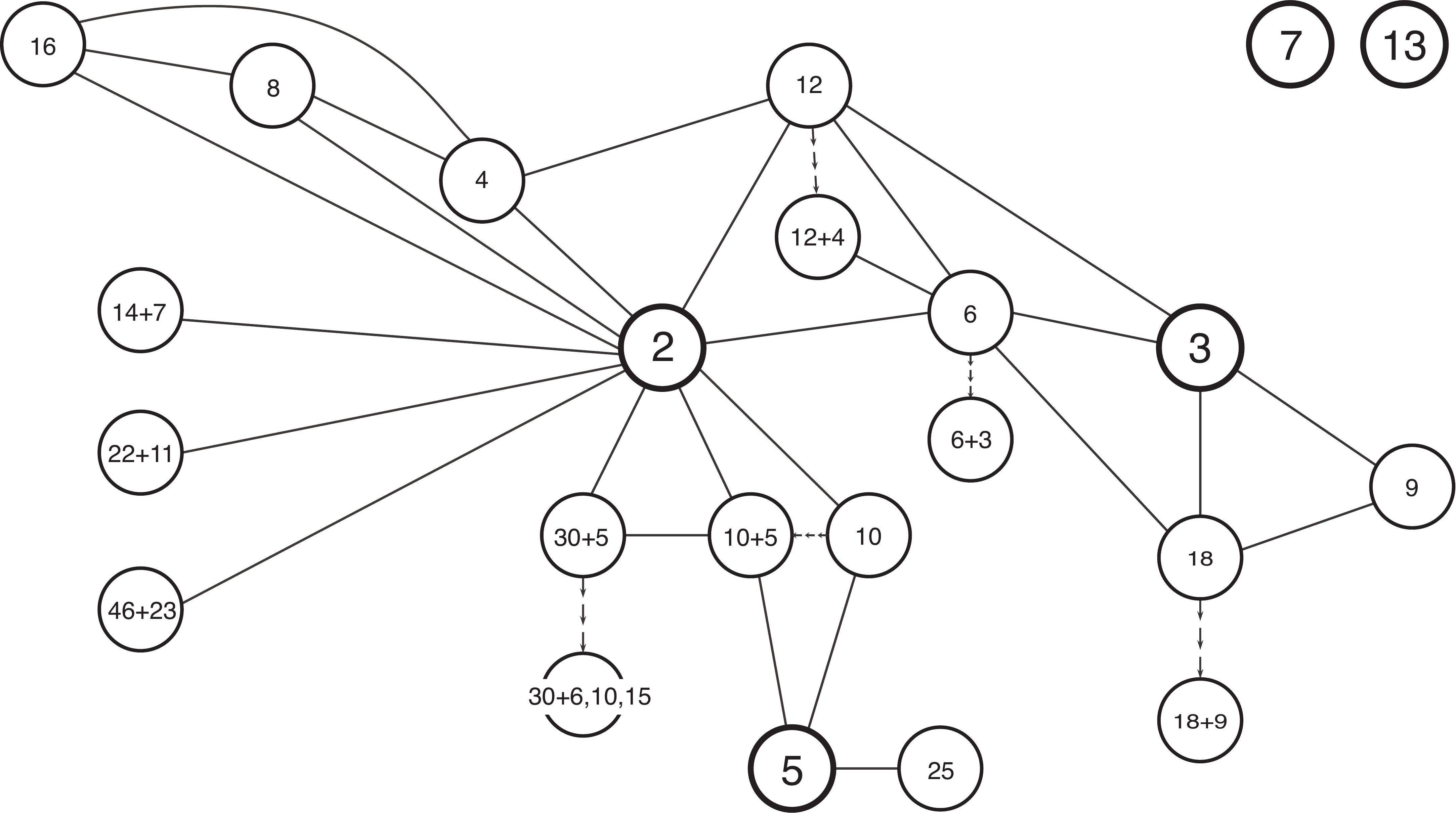}
\caption{ Multiplicative Relations.\label{fig:diag}}
\label{diagram}
\end{center}
\end{figure}

These multiplicative relations form an intricate web relating umbral moonshine at different lambencies. We summarise this web in Figure \ref{fig:diag}, where the horizontal relations are indicated by arrowed dashed lines and the vertical relations are indicated by solid  lines. 

We will finish this section with a discussion of a curious property of the multiplicative relations. Note that these relations occur among the McKay--Thomson series attached to $[g]\subset G^X $ and $[g']\subset G^{X'}$ with $ord(g') \, {  m}(X')$ coinciding with $ord(g) \, {  m}(X) $ up to a factor of 2.  
This property can be understood as a consequence of the relation between the level $n_g$ of the automorphy group $\G_0(n_g)$ of the McKay--Thomson series $H^X_{g}$ and the order $ord(g)$ of the group element (cf. \S\ref{sec:conj:aut}). The extra factor of 2 can be understood as a consequence of the structure $G^X= 2.\bar G^X$ of the associated umbral groups (cf. \S\ref{sec:holes:gps}).

\subsection{Mock Theta Functions}\label{sec:mckay:mocktheta}

In this subsection we record relations between the McKay--Thompson series of umbral moonshine and known mock theta functions. 
Many of the mock theta functions arising appear either in Ramanujan's last letter to Hardy or in his lost notebook \cite{Ramanujan_lost}. 
In what follows we will give explicit expressions for the mock theta series
using the {\em $q$-Pochhammer symbol} 
\be
(a;q)_n =\prod_{k=0}^{n-1} (1-a\,q^k) .
\ee

For lambency $2$, two of the functions $H^{(2)}_{g}(\t)$ are related to  Ramanujan's mock theta functions of orders $2$ and $8$ through
\begin{gather}
	\begin{split}
H^{(2)}_{4B}(\t) & = -2\, q^{-1/8} \m(q)=-2\,q^{-1/8}\sum_{n\geq 0}  \frac{(-1)^n\,q^{n^2}(q;q^2)_n}{(-q^2;q^2)_n^2} 
\\
H^{(2)}_{8A}(\t) & = -2\, q^{-1/8} U_0(q)=  -2\, q^{-1/8}= \sum_{n\geq 0}  \frac{q^{n^2}(-q;q^2)_n}{(-q^4;q^4)_n}. 
	\end{split}
\end{gather}

For $\ll=3$ we encounter the following  order $3$ mock theta functions of  Ramanujan: 
\begin{gather}
	\begin{split}
H^{(3)}_{2B,1}(\t) &=H^{(3)}_{2C,1}(\t)=H^{(3)}_{4C,1}(\t)=-2q^{-1/12} f(q^2)\\
H^{(3)}_{6C,1}(\t) &= H^{(3)}_{6D,1}(\t) = -2 q^{-1/12} \chi(q^2)
 \\
H^{(3)}_{8C,1}(\t) &= H^{(3)}_{8D,1}(\t) = -2 q^{-1/12} \phi(-q^2) \\
H^{(3)}_{2B,2}(\t) &= - H^{(3)}_{2C,2}(\t) = -4 q^{2/3} \omega(-q) 
\\\label{L3mocktheta}
H^{(3)}_{6C,2}(\t) &= - H^{(3)}_{6D,2}(\t) = 2 q^{2/3} \rho(-q),
	\end{split}
\end{gather}
where
\begin{align}\notag
f(q) &=  1+ \sum_{n=1}^\infty \frac{q^{n^2}}{(1+q)^2(1+q^2)^2 \cdots (1+q^n)^2} 
\\\notag
\phi(q) &=  1+\sum_{n=1}^\infty \frac{q^{n^2}}{(1+q^2)(1+q^4) \cdots (1+q^{2n})} \,  \\\notag
\chi(q) &= 1+ \sum_{n=1}^\infty \frac{q^{n^2}}{(1-q+q^2)(1-q^2+q^4) \cdots (1-q^n+q^{2n})} \, \\\notag
\omega(q) &=  \sum_{n=0}^\infty \frac{q^{2n(n+1)}}{(1-q)^2(1-q^3)^2 \cdots (1-q^{2n+1})^2} \, \\\label{order3mock}
\rho(q) &=  \sum_{n=0}^\infty \frac{q^{2n(n+1)}}{(1+q+q^2)(1+q^3+q^6) \cdots (1+q^{2n+1}+q^{4n+2})} \;.
\end{align}

For $\ll=4$ and $\ll=16$ we have the relations 
\begin{gather}
	\begin{split}
{H}^{(4)}_{2C,1}(\t)&=q^{-\frac{1}{16}} \left(-2 S_0(q) + 4T_0(q)\right), \\
{H}^{(4)}_{2C,3}(\t)&=q^{\frac{7}{16}} \left(2 S_1(q) - 4T_1(q)\right), \\ 
{H}^{(4)}_{4C,1}(\t)&= -2\, q^{-\frac{1}{16}} S_0(q), \\
{H}^{(4)}_{4C,3}(\t)&=2 \,q^{\frac{7}{16}} \, S_1(q) 
	\end{split}
\end{gather}
and 
\begin{align}
H^{(16)}_{1A,2}(\tau) &= H^{(16)}_{1A,14}=2 q^{-1/16} T_0(-q) =\frac{1}{2} \left({H}^{(4)}_{2C,1}(\t)-{H}^{(4)}_{4C,1}(\t)\right) \\\notag
H^{(16)}_{1A,4}(\tau) &= H^{(16)}_{1A,12}=2 q^{-1/4} V_1(q) \notag \\
H^{(16)}_{1A,6}(\tau) &= H^{(16)}_{1A,10}= 2 q^{7/16} T_1(-q)  
\end{align}
to the order 8 mock theta functions 
\begin{align}
S_0(\t) &= \sum_{n\geq 0} \frac{q^{n^2} (-q;q^2)_n}{(-q^2;q^2)_n} \\
S_1(\t) &= \sum_{n\geq 0} \frac{q^{n(n+2)} (-q;q^2)_n}{(-q^2;q^2)_n} \\
T_0(\t) &= \sum_{n\geq 0} \frac{q^{(n+1)(n+2)} (-q^2;q^2)_n}{(-q;q^2)_{n+1}} \\
T_1(\t) &= \sum_{n\geq 0} \frac{q^{n(n+1)} (-q^2;q^2)_n}{(-q;q^2)_{n+1}}\\
V_1 (\t) & =  \sum_{n\geq 0} \frac{q^{(n+1)^2} (-q;q^2)_n}{(q;q^2)_{n+1}}
\end{align}
discussed in  \cite{Gordon_Mcintosh}. We also have
$H^{(16)}_{2A,r}= - H^{(16)}_{1A,r}$ for $r=2,4,6,10,12,14$.

For $\ll=5$ we encounter four of  Ramanujan's order $10$ mock theta functions:
\begin{gather}
	\begin{split}
&H^{(5)}_{2BC,1}(\t) =H^{(5)}_{4CD,1}(\t) = -2 q^{{-\frac{1}{20}}} \,X(q^2)
\\ 
&H^{(5)}_{2BC,3}(\t) =H^{(5)}_{4CD,3}(\t) = -2 q^{{-\frac{9}{20}}} \,\chi_{10}(q^2)
\\ 
&H^{(5)}_{2C,2}(\t) =-H^{(5)}_{2B,2}(\t)  = 2q^{-\frac{1}{5}} \,\psi_{10}(-q)
\\ 
&H^{(5)}_{2C,4}(\t) =-H^{(5)}_{2B,4}(\t)  = -2q^{\frac{1}{5}} \,\f_{10}(-q)
	\end{split}
\end{gather}
given by 
\begin{align}
\phi_{10}(q) &= \sum_{n=0}^\infty \frac{q^{n(n+1)/2}}{(q;q^2)_{n+1}} \; \\
\psi_{10}(q) &= \sum_{n=0}^\infty \frac{q^{(n+1)(n+2)/2}}{(q;q^2)_{n+1}} \; \\
X(q) &= \sum_{n=0}^\infty \frac{(-1)^n q^{n^2}}{(-q;q)_{2n}} \; \\
\chi_{10}(q) &= \sum_{n=0}^\infty \frac{(-1)^n q^{(n+1)^2}}{(-q;q)_{2n+1}} \;. 
\end{align} 

At $\ll=6$ and $\ll=6+3$ we find
\begin{align}
H^{(6)}_{2B,3}(\tau)&=-2 \,q^{-3/8} \psi_6(q) \; \\
H^{(6+3)}_{2A,1}(\tau)&=-2 \,q^{-1/24} \phi_6(q) \; \\
H^{(6+3)}_{3B,1}(\tau)&=-2 \,q^{-1/24} \gamma_6(q) \; \\
H^{(6+3)}_{2B,1}(\tau)&=-2 \,q^{-1/24} f(q) \; \\
H^{(6+3)}_{4B,1}(\tau)&=-2 \,q^{-1/24} \phi(-q) \; \\
H^{(6+3)}_{6B,1}(\tau)&=-2 \,q^{-1/24} \chi(q) \; 
\end{align}
where $f(q), \phi(q),\chi(q)$ are third order mock theta functions given earlier and
\begin{align}
\psi_6(q)&= \sum_{n=0}^\infty \frac{(-1)^n q^{(n+1)^2}(q;q^2)_n}{(-q;q)_{2n+1}}  \; \\
\phi_6(q)&=\sum_{n=0}^\infty \frac{(-1)^n q^{n^2}(q;q^2)_n}{(-q;q)_{2n}} \; \\
\gamma_6(q)&=\sum_{n=0}^\infty \frac{ q^{n^2}(q;q)_n}{(q^3;q^3)_n} \;
\end{align}
are sixth order mock theta functions. 

For $\ll=8$
\begin{align}
H^{(8)}_{1A,2}(\tau)&=H^{(8)}_{1A,6}= 4 \,q^{-1/4} A(q)=4 \,q^{-1/4} \sum_{n\geq 0} \frac{q^{(n+1)} (-q^2;q^2)_n}{(q;q^2)_{n+1}}\\ 
H^{(8)}_{1A,4}(\tau)&=4 \,q^{1/2} B(q)=4 \,q^{1/2}  \sum_{n\geq 0} \frac{q^{n} (-q;q^2)_n}{(q;q^2)_{n+1}} 
\end{align}
where $A(q)$, $B(q)$ are  both second order mock theta functions 
discussed in \cite{MR2317449}.

For $\ll=12$ and $\ll=12+4$ we have
\begin{align}
H^{(12)}_{1A,2}(\tau)&=H^{(12)}_{1A,10}(\tau)= -2 \,q^{-4/48} \sigma(q) \; \\
H^{(12)}_{1A,4}(\tau) &= H^{(12)}_{1A,8}(\tau) = 2 \,q^{2/3} \omega(q) \; \\
H^{(12+4)}_{1A,1}(\tau) &=- q^{-1/48}\left( f(q^{1/2})-f(-q^{1/2}) \right) \;
\end{align}
where $\sigma(q)$ is the order $6$ mock theta function
\be
\sigma(q) =  \sum_{n=0}^\infty  \frac{q^{(n+1)(n+2)/2} (-q;q)_n}{(q;q^2)_{n+1}}
\ee
 and $\omega(q)$  and $f(q)$ are  third order mock theta functions given in \eq{order3mock}.

For $\ll=30+6,10,15$ we find four of Ramanujan's mock theta functions of order $5$: 
\begin{align}
H^{(30+6,10,15)}_{1A,1}(\tau) &=q^{-1/120} \left( 2\, \chi_0(q) - 4 \right) \notag\\ 
H^{(30+6,10,15)}_{1A,7}(\tau) &= 2\, q^{71/120} \chi_1(q) \notag\\
H^{(30+6,10,15)}_{2A,1}(\tau) &=-2 q^{-1/120}  \phi_0(-q)  \notag\\
H^{(30+6,10,15)}_{2A,7}(\tau) &=2 q^{-49/120} \phi_1(-q) 
\end{align}
where 
\begin{align}
\chi_0(\t)&=   \sum_{n\geq 0} \frac{q^{n} }{(q^{n+1};q)_n}\notag\\
\chi_1(\t)&= \sum_{n\geq 0} \frac{q^{n} }{(q^{n+1};q)_{n+1}} \notag\\
\phi_0(\t)&=   \sum_{n\geq 0} q^{n^2} {(-q;q^2)_n}\notag\\
\phi_1(\t)&=   \sum_{n\geq 0} q^{(n+1)^2} {(-q;q^2)_n}.
\end{align}

\subsection{Specification}\label{sec:mckay_spec}

In this subsection we will combine the different types of data on the McKay--Thompson series discussed in \S\ref{sec:weight_zero_umbral_forms} and \S\ref{sec:forms:low_lambencies}-\ref{sec:mckay:mocktheta}, and explain how they lead to explicit expressions for the umbral McKay--Thompson series. 

First we will note one more relation among umbral McKay--Thompson series $H^X_g$ and $H^X_{g'}$  attached to different conjugacy classes of the same umbral group $G^X$. 
Notice that for all the A-type  Niemeier root systems as well as $X= E_6^4$, the corresponding umbral group takes the form
\be
G^X = 2.\bar G^X 
\ee
and the corresponding McKay--Thompson series display the following paired relation. 
For the A-type  Niemeier root system $X$,  for every conjugacy class $[g]\subset G^X$ with $\bar\chi^{X_A}_g>0$ there is a unique conjugacy class $[g']$  with 
\be
\chi^{X_A}_g = -\chi^{X_A}_{g'},\quad \bar\chi^{X_A}_g = \bar \chi^{X_A}_{g'},
\ee
that we say to be paired with $[g]$. For such paired classes, the corresponding McKay--Thompson series satisfy the relation 
\be\label{relation_paired_classes} 
H^{X}_{g,r} + (-1)^{r} H^{X}_{g',r} =0 .
\ee
This generalises the paired property for the pure A-series discussed in \S\ref{sec:forms:low_lambencies}. 
Similarly, for $X=E_6^4$ we have for every conjugacy class $[g]\subset G^X$ with $\chi^{X_E}_g>0$ a conjugacy class $[g']$  with 
\be
\chi^{X_E}_g = -\chi^{X_E}_{g'},\quad \bar\chi^{X_E}_g = \bar \chi^{X_E}_{g'},
\ee
that is paired with $[g]$. For such paired classes, the corresponding McKay--Thompson series again satisfy the relation  \eq{relation_paired_classes}. 

For the lambency $\ll=6+3$, corresponding to $X=D_4^6$, the corresponding umbral group has the form
\be
G^X = 3.\bar G^X 
\ee
and the conjugacy classes with $\bar\chi^{X_D}(g)>0$ form pairs satisfying
\be
\chi^{X_D}_g = \chi^{X_D}_{g'},\quad \check \chi^{X_D}_g +2\check\chi^{X_D}_{g'}  = 0 . 
\ee
For these paired conjugacy classes, the McKay--Thompson series have the property
\be\label{relation_paired_classes_Z3}
H^{X}_{g,1} - H^{X}_{g',1} =0,\quad H^{X}_{g,3} +2 H^{X}_{g',3} =0.
\ee
In particular,  we have $H^{(6+3)}_{g,3}=0$  for all the self-paired classes.
From the discussion in \S\ref{sec:mckay:aut} we see that the relation between the twisted Euler characters and the paired relations is implied by the shadows of the umbral McKay--Thompson series, just as the multiplicative relations discussed in the \S\ref{sec:forms:mult}. 
As a result, one can view the relations \eq{relation_paired_classes} and \eq{relation_paired_classes_Z3} as a consequence of the apparently general phenomenon that the umbral McKay--Thompson series are determined by their mock modular properties together with the optimal growth condition, cf. Conjecture \ref{conj:conj:moon:opt}.

In the rest of this subsection we will tie these relations together and discuss each lambency separately. 
First of all, for all the A-type  Niemeier root systems 
\begin{gather}
	\ll\in\{2,3,4,5,6,7,8,9,10,12,13,16,18,25\},
\end{gather}
the discussion in \S\ref{sec:weight_zero_umbral_forms} specifies all $H^{(\ll)} = (H^{(\ll)}_{1A,r})$, $r=1,\dots,\ll-1$ and it remains only to specify $H^{(\ll)}_g$ with $[g] \subset G^X$ different from the identity class.  
For $\ll=2,3,5$, the discussion in  \S\ref{sec:forms:low_lambencies} specifies all the McKay--Thompson series. 
For $\ll=7,13$, the discussion in  \S\ref{sec:forms:low_lambencies} gives partial information on all the McKay--Thompson series. 
For $\ll=4$, the vertical relations in Table \ref{tab:vertical_relations} specify all the odd components of $H^{(4)}_{g,r}$ for all conjugacy classes $[g]\in G^{(4)}$ except for $[g]=4B, 8A$. 
The odd components for $[g]=4B, 8A$ can be specified by the identities
\begin{gather}\label{lambency4_1}
	\begin{split}
(H^{(4)}_{4B,1}-H^{(4)}_{4B,3})( \tau)&= -2 \frac{\eta(\t/2)\eta(\t)^4}{\eta(\t)^2\eta(4 \tau)^2}, \\
(H^{(4)}_{8A,1}-H^{(4)}_{8A,3})(\t)&= -2 \frac{ \eta( \tau)^3}{\eta(\tau/2) \eta(4 \tau)}.
	\end{split}
\end{gather}
We are left to determine the second components $H^{(4)}_{g,2}(\t)$, which are given by 
\begin{gather}\label{lambency4_2}
	\begin{split}
{ H}^{(4)}_{\;3A,2}(\t) 
& = \frac{1}{4}H^{(4)}_{2}(\t) +\frac{1}{2\h(2\t)^3}\Big(-3\L_2(\t) -4\L_3(\t) + \L_6(\t)\Big),\\
{ H}^{(4)}_{\;7AB,2}(\t) 
& =  \frac{1}{8}H^{(4)}_{2}(\t) +\frac{1}{12\,\h(2\t)^3}\Big(-{7} \L_2(\t)  -4 \L_7(\t) + \L_{14}(\t)  +{28} f_{14}(\t)\Big).
	\end{split}
\end{gather}
Together with the paired relation \eq{relation_paired_classes}, Table \ref{tab:vertical_relations}, and 
\be
{H}^{(4)}_{g,2}(\t) =0 \quad\text{for all}\; [g]\not\in\{1A,2A,3A,6A,7AB,14AB\},
\ee
\eq{lambency4_1}-\eq{lambency4_2}  completely specifies all  $H^{(4)}_g = (H^{(4)}_{g,r})$  
for all $[g]\subset G^{(4)}$.

For $\ll=6$, the vertical relations in Table \ref{tab:vertical_relations} to the McKay--Thompson series of lambencies $\ll=2,3$ suffice to specify all $H^{(6)}_{g} = (H^{(6)}_{g,r})$ except for the components $r=1,5$ of the classes $3A$ and $6A$. Subsequently, the relation to $H^{(18)}_{1A,r}$ in Table \ref{tab:vertical_relations} and the paired relation \eq{relation_paired_classes} determines 
$H^{(18)}_{3A}$ and $H^{(18)}_{6A}$.

For $\ll=8$, the vertical relations to $\ll=4$ recorded in Table \ref{tab:vertical_relations}, together with the paired relation \eq{relation_paired_classes} and $H_{2BC,2n}^{(8)}=H_{4A,2n}^{(8)}=0$ for all $n=1,2,3$ specify all the $H^{(8)}_g=(H^{(8)}_{g,r})$. 

For $\ll=9$, the vertical relations to $\ll=3$ recorded in Table \ref{tab:vertical_relations} together with the paired relation \eq{relation_paired_classes} determine all $H^{(9)}_{g}$ except for $[g]=3A,6A$. 
To determine $H^{(9)}_{3A}$ and $H^{(9)}_{6A}$, we have 
\begin{align*}
(H^{(9)}_{3A,1}-H^{(9)}_{3A,5}+H^{(9)}_{3A,7})(3\t) &= -6 \,\frac{\h(\t) \h(12\t) \h(18\t)^2 }{\h(6\t) \h(9\t) \h(36\t)}  \\
(H^{(9)}_{3A,2}-H^{(9)}_{3A,4}+H^{(9)}_{3A,8})(3\t) &= -3\left( \frac{\h(\t) \h(2\t) \h(3\t)^2 }{\h(4\t)^2 \h(9\t)}- \frac{\h(2\t)^6 \h(12\t) \h(18\t)^2 }{\h(\t) \h(4\t)^4 \h(6\t)\h(9\t)\h(36\t)}\right) \\
H^{(9)}_{3A,3}(\tau) = H^{(9)}_{3A,6}(\tau) & =H^{(9)}_{6A,3}(\tau) = H^{(9)}_{6A,6}(\tau)=0 .
\end{align*}
Together with the paired relation this determines all $H^{(9)}_{3A,r}$ and $H^{(9)}_{6A,r}$ and finishes the specification for $\ll=9$.

For $\ll=10$, the vertical relations to $\ll=2$ and $\ll=5$ recorded in Table \ref{tab:vertical_relations} specify all $H^{(10)}_g$. 
For $\ll=12, 16, 18, 25$, there is only one conjugacy class $2A$ except for the identity class. 
The paired relation  \eq{relation_paired_classes} relating the McKay--Thompson series for the $2A$ class to that of the identity class then determines all $H^{(\ll)}_g$. 

Next we turn to the D-type Niemeier root systems. 
For $\ll=6+3$, the horizontal relations in Table \ref{fig:horizontal_relations} determine all McKay--Thompson series $H^{(6+3)}_g$, except for $[g]\in\{3C,6C\}$ that are given by
\begin{align}\notag
H^{(6+3)}_{3C,1}(\t) &= -2 \frac{\eta^2(\t)}{\eta(3\t)} \;,\;H^{(6+3)}_{6C,1}(\t) = -2 \frac{\eta(2\t)\,\eta(3\t)}{\eta(6\t)},\\
H^{(6+3)}_{3C,3}&=H^{(6+3)}_{6C,3} =0 .
\end{align}
 For $\ll=10+5$, the horizontal relations in Table \ref{fig:horizontal_relations} and the vertical relations in Table \ref{tab:vertical_relations} determine all McKay--Thompson series $H^{(10+5)}_g$, except for $[g] =4A$. For $\ll=14+7$, the vertical relations in Table \ref{tab:vertical_relations} determine all McKay--Thompson series $H^{(14+7)}_g$. For $\ll=18+9$, the horizontal relations in Table \ref{fig:horizontal_relations} determine all McKay--Thompson series $H^{(18+9)}_g$. For $\ll=22+11$, the vertical relations in Table \ref{tab:vertical_relations} determine the umbral form $H^{(22+11)}_{1A}$. 
 For $\ll=30+15$ and $\ll=46+23$, the vertical relations in Table \ref{tab:vertical_relations} determine the only McKay--Thompson series $H^{(\ll)}_{1A}$ attached to the corresponding Niemeier root system.

 Finally we discuss the McKay--Thompson series attached to the two E-type Niemeier root systems. 
 For $\ll=12+4$, the horizontal relations in Table \ref{fig:horizontal_relations} and the vertical relations in Table \ref{tab:vertical_relations} determine all the McKay--Thompson series $H^{(12+4)}_{g}$ except for $[g]\in\{6A,8AB\}$. For $\ll=30+6,10,15$, the horizontal relations in Table \ref{fig:horizontal_relations} and \ref{tab:vertical_relations} determine the  umbral mock modular form $H^{(30+6,10,15)}_{1A}$.

Focusing on the umbral mock modular form $H^X=H^X_{1A}$ corresponding to the identity class, one can explicitly check that the above specification determines the unique vector-valued mock modular form satisfying the conditions of Corollary \ref{cor:uniqueness_umbral_mock_mod}. 
 
 \begin{prop}\label{prop:DE_UmbralForms}
Let $X$ be one of the D- or E-type Niemeier root systems (cf. (\ref{eqn:holes:NieRoot_D}), (\ref{eqn:holes:NieRoot_E})).  
The vector-valued mock modular form $H^X=H^X_{1A}$ specified above is the unique vector-valued mock modular form with shadow $S^X$ satisfying the optimal growth condition (cf. Corollary \ref{cor:uniqueness_umbral_mock_mod}).
\end{prop}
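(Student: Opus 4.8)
The uniqueness half of the statement is already furnished by Corollary \ref{cor:uniqueness_umbral_mock_mod}, so the plan is to reduce the proposition to an \emph{existence} claim: for each of the seven $D$-type and two $E$-type Niemeier root systems one must exhibit a single vector-valued mock modular form with shadow $S^X$ obeying the optimal growth condition \eqref{optimal_growth}, after which the corollary forces it to be the unique such form and hence to coincide with the candidate $H^X_{1A}$ assembled in \S\ref{sec:mckay_spec}. That candidate is the object I would verify. In principle one could instead run the $A$-type machinery of \S\ref{sec:weight_zero_umbral_forms} — producing a weight-$0$ index-$(m-1)$ weak Jacobi form $\phi^X$ with $\phi^X(\tau,0)=1$ and setting $\psi^X=-\mu_{1,0}\phi^X\lvert{\cal W}^X$ as in \eqref{eqn:weight_one_from_weight_zero}, then reading off $H^X$ from the finite part — but since $H^X_{1A}$ has already been pinned down via the multiplicative relations, I would verify those directly.

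First I would check the shadow. For each identity-class form the relevant row of Table \ref{fig:horizontal_relations} or \ref{tab:vertical_relations} writes $H^X_{1A,r}$ as a (possibly rescaled) linear combination $\sum_{r'}c_{r,r'}H^{X'}_{1A,r'}(k\tau)$ of components of $A$-type umbral forms, whose shadows are $S^{X'}$ by Proposition \ref{prop:A_UmbralForms}, together with a handful of components furnished by explicit eta-quotients (e.g. $H^{(6+3)}_{3C,1}$, $H^{(9)}_{3A}$). Passing to the nonholomorphic completion, the shadow of such a combination is the corresponding combination of rescaled unary theta series $S_{m',r'}(k\tau)$; the theta identity \eqref{shadow_relation}, which underlies the shadow versions \eqref{horizontal_relation_shadow} and \eqref{vertical_relation_shadow} of the multiplicative relations, then identifies this with $S^X$ as defined in \eqref{def:um_shadow} and \eqref{eqn:sigmaX}. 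The eta-quotient components are genuinely modular, with vanishing shadow, matching precisely the components of $S^X$ that vanish (cf. \eqref{eqn:shadow_even_D}).

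Next I would verify optimal growth. For the $D$-type systems specified by horizontal relations ($\ll=6+3,10+5,18+9$ at the identity class) the relation preserves the index $m$ and merely recombines components, so $q^{1/4m}H^X_{1A,r}=O(1)$ is inherited termwise from the $A$-type form. For the vertical relations ($\ll=14+7,22+11,30+15,46+23$, together with the $E$-type cases) the rescaling $\tau\mapsto k\tau$ and the specific coefficients are arranged so that the leading $q$-powers combine to give the required $O(q^{-1/4m})$ behaviour, and the eta-quotient components are checked directly from their leading exponents. Consistency of the over-determined web — that different chains of relations defining the same component agree — is to be confirmed case by case, exactly as illustrated by the derived relation for $H^{(16)}_{1A}$ in \S\ref{sec:forms:mult}.

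The hard part will be the last two items taken together: a shadow relation determines a mock modular form only up to a weakly holomorphic modular form, so the real content is to confirm that the explicitly assembled $H^X_{1A}$ carries \emph{no} spurious modular piece that would violate \eqref{optimal_growth}. The clean way to dispatch this is structural: once the construction exhibits \emph{some} optimal-growth form with shadow $S^X$, Corollary \ref{cor:uniqueness_umbral_mock_mod} guarantees it is unique, so the identification is automatic and the entire burden reduces to the finite, case-by-case inspection of the leading Fourier coefficients of the nine $D$- and $E$-type forms against the shadows $S^X$ and the growth bound — a verification best carried out on the explicit $q$-expansions tabulated in the appendices.
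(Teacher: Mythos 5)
Your proposal is correct and follows essentially the same route as the paper: the paper's (largely implicit) proof of Proposition \ref{prop:DE_UmbralForms} consists precisely of invoking Corollary \ref{cor:uniqueness_umbral_mock_mod} for uniqueness and then checking explicitly that the forms $H^X_{1A}$ assembled in \S\ref{sec:mckay_spec} from the multiplicative relations and eta-quotient identities have shadow $S^X$ and satisfy the optimal growth condition \eqref{optimal_growth}. Your expansion of what that check entails — shadows via \eqref{shadow_relation} and Proposition \ref{prop:A_UmbralForms}, growth inherited from the $A$-type forms through the horizontal and vertical relations, and the residual ambiguity by a weakly holomorphic modular form dispatched by the uniqueness corollary — is exactly the content the authors compress into ``one can explicitly check.''
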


Together with Proposition \ref{prop:A_UmbralForms}, this proposition establishes our construction of the unique vector-valued mock modular form $H^X$ corresponding to all 23 Niemeier root systems $X$.

\section{Conjectures}\label{sec:conj}

In this section we pose the umbral moonshine conjectures connecting the 
umbral groups $G^X$ and the mock modular forms $H^X_g$ discussed in the previous sections. 

\subsection{Modules}\label{sec:conj:mod}

In this section we formulate a conjecture that relates the umbral McKay--Thompson series $H^{\rs}_g$ to an infinite-dimensional $G^{\rs}$-module $K^{\rs}$. 

Recall that a {\em super-space} $V$ is a $\ZZ/2\ZZ$-graded vector space $V=V_{\bar{0}}\oplus V_{\bar{1}}$.  
If $T:V\to V$ is a linear operator preserving the grading then the {\em super-trace} of $T$ is given by $\str_VT=\tr_{V_{\bar{0}}}T-\tr_{V_{\bar{1}}}T$ where $\tr_WT$ denotes the usual trace of $T$ on $W$. 
We say that $V$ is purely even (odd) when $V=V _{\bar{0}}$ ($V _{\bar{1}}$). If $V$ is a $G$-module, with $G$-action preserving the $\ZZ/2\ZZ$-grading, then the function $g\mapsto \str_Vg$ is called the {\em super-character} of $G$ determined by $V$.

\begin{conj}\label{conj:conj:mod:Kell}
Let $\rs$ be a Niemeier root system and let ${ m}$ be the Coxeter number of $\rs$. 

There exists a naturally defined $\ZZ/2m\ZZ\times\QQ$-graded super-module
\begin{gather}
	K^{\rs}=\bigoplus_{r\text{ mod }2m} K^{\rs}_r=\bigoplus_{r\text{ mod }2m}
	\bigoplus_{\substack{D\in\ZZ\\D=r^2\text{ mod }4m}}
	K^{\rs}_{r,-D/4m}
\end{gather}
for ${G}^{\rs}$ such that 
the graded super-character attached to an element $g\in G^{\rs}$ coincides with the vector-valued mock modular form  
\begin{gather}\label{eqn:conj:mod:str}
	c^\rs H^{\rs}_{g,r}(\tau)=
	\sum_{\substack{D\in\ZZ\\D=r^2\text{ mod }4m}}\str_{K^{\rs}_{r,-D/4m}}(g)\,q^{-D/4m},
\end{gather}
where $c^X=1$ except for  $X=A_8^3$, for which $c^X=3$. 
Moreover, the homogeneous component $K^{\rs}_{r,d}$ of $K^{\rs}$ is purely even if $d>0$. 
\end{conj}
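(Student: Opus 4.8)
The plan is to reduce the existence of $K^X$ to an integrality-and-positivity statement about the Fourier coefficients of the $H^X_g$, following the strategy T.~Gannon \cite{Gannon:2012ck} used to settle the $M_{24}$ case. Fix a Niemeier root system $X$ with Coxeter number $m$. For each pair $(r,D)$ with $r\in\ZZ/2m\ZZ$ and $D\equiv r^2\ (\mathrm{mod}\ 4m)$, define $\chi_{r,D}\colon G^X\to\CC$ by letting $\chi_{r,D}(g)$ be the coefficient of $q^{-D/4m}$ in $c^X H^X_{g,r}(\tau)$. The specifications of \S\ref{sec:mckay} show that $H^X_g$ depends only on the class $[g]$, so each $\chi_{r,D}$ is a class function on $G^X$. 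The key observation is that producing a $\ZZ/2m\ZZ\times\QQ$-graded super-module with the super-character prescribed by \eqref{eqn:conj:mod:str} is equivalent to showing that every $\chi_{r,D}$ is a \emph{virtual} character of $G^X$: once $\chi_{r,D}=\sum_i m^i_{r,D}\,\chi_i$ with all $m^i_{r,D}\in\ZZ$ (the $\chi_i$ running over the irreducible characters tabulated in Appendix \ref{sec:chars}), one sets $K^X_{r,-D/4m}=\bigoplus_i V_i^{\oplus|m^i_{r,D}|}$ and declares $V_i$ even or odd according to the sign of $m^i_{r,D}$. The ``purely even for $d>0$'' clause is then exactly the assertion that $m^i_{r,D}\ge 0$ whenever $D<0$.

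First I would establish integrality of the multiplicities $m^i_{r,D}=\langle\chi_{r,D},\chi_i\rangle$. A priori these inner products, computed from the character table by $\frac{1}{|G^X|}\sum_{g}\overline{\chi_i(g)}\,\chi_{r,D}(g)$, lie only in $\QQ$; the integers $c^X$ are inserted precisely to clear the denominator arising for $A_8^3$. Granting this, integrality is equivalent to a family of congruences relating the coefficients of $H^X_g$ for an element $g$ to those for its powers. These congruences are a consequence of the modular data: each $H^X_g$ is a vector-valued mock modular form for $\G_0(n_g)$ with the multiplier $\nu^X_g$ and shadow $S^X_g$ specified in \S\ref{sec:mckay:aut}, and the compatibility of these data across the cyclic subgroups generated by the $g$ forces the relevant Thompson-type congruences. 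For small $D$ the integrality can be read directly off the explicit decompositions of Appendix \ref{sec:decompositions}; the substance of the argument is to promote this to all $D$.

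Next I would address positivity. The cleanest route is to write each super-trace $\str_{K^X_{r,-D/4m}}(g)$ exactly as a Rademacher sum built from $\nu^X_g$, the shadow $S^X_g$, and the principal part dictated by the optimal-growth condition of Theorem \ref{thm:uniqueness_umbral_mock_jac}. Substituting these exact formulas into $m^i_{r,D}=\langle\chi_{r,D},\chi_i\rangle$ expresses each multiplicity as a rapidly convergent series in which, for $D<0$, the identity-class contribution dominates as $-D\to\infty$; this yields $m^i_{r,D}>0$ for all sufficiently large $-D$, and the finitely many remaining cases with $0<d\le d_0$ are checked against Appendix \ref{sec:decompositions}. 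Positivity for $D<0$ simultaneously produces the even module and kills its odd part, while the polar terms $D\ge 0$ (the BPS ``head'', where $d\le 0$) are left free to carry the odd component.

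The hard part will be making the positivity step unconditional. A finite check establishes the statement up to any fixed degree and is precisely what the appendices provide as evidence, but an honest proof for all infinitely many coefficients of all $23$ root systems requires either uniform effective control of the error terms in the Rademacher expansions---uniform in $r$ and across all conjugacy classes---or, preferably, a direct construction of the modules $K^X$ (for instance a vertex-operator-algebraic or geometric model carrying the $G^X$-action with the correct bigrading) from which the $G^X$-structure and positivity would be manifest. No such construction is presently available, and the difficulty that made Gannon's single-group theorem substantial is here multiplied across the full family of Niemeier lattices; this is why the assertion is recorded as a conjecture rather than a theorem.
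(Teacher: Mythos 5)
The statement you were asked to prove is Conjecture \ref{conj:conj:mod:Kell} of the paper: the paper itself contains no proof, only supporting evidence, namely the explicit low-degree decompositions of Appendix \ref{sec:decompositions}, Gannon's theorem \cite{Gannon:2012ck} settling the case $X=A_1^{24}$, and the remark that similar techniques should extend to the remaining Niemeier root systems. Your proposal correctly recognizes this status, and its content --- the reduction to integrality of the multiplicities $\langle\chi_{r,D},\chi_i\rangle$ plus positivity for $d>0$, the observation that $c^X=3$ serves to clear the denominator occurring for $X=A_8^3$, and the Rademacher-sum/asymptotic-dominance strategy for positivity --- is precisely the Gannon-style route the paper gestures at, while your closing admission that the positivity step cannot be made unconditional with the tools at hand is exactly why the statement is recorded as a conjecture rather than a theorem.
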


The reason for the exceptional value $c^X=3$ for $X=A_8^3$ is the curious fact that there are no integer combinations of irreducible characters of $G^X$ that coincide with the coefficients $q^{-D/36}$, $D=-27\l^2$ for some integer $\l$, of $H^{\rs}_{g,r}(\tau)$ (cf. Conjecture \ref{conj:conj:disc:doub}). For example, the minimal positive integer $c$ for which $g\mapsto cH^X_{g,6}$ is a graded virtual super-character of $G^X$ is $c=c^X=3$.

Combining the above conjecture and the paired relations \eq{relation_paired_classes} and \eq{relation_paired_classes_Z3} of the McKay--Thompson series we arrive at the following. 
\begin{conj}\label{conj:conj:mod:factoring_through}
Let ${\rs}$ be a Niemeier root system and set $c=
\#G^{\rs}/\#\bar{G}^{\rs}$. Then 
the $\QQ$-graded $G^X$-module $K^X_{r}$ is a faithful representation of $G^X$ when $r\equiv 0\pmod{c}$ and factors through $\bar G^X$ otherwise. 
\end{conj}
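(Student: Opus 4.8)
Throughout, the statement is necessarily conditional on Conjecture \ref{conj:conj:mod:Kell}, so I would assume that the module $K^X$ exists with the two properties asserted there: its graded super-character records the $H^X_{g,r}$ via \eq{eqn:conj:mod:str}, and $K^X_{r,d}$ is purely even whenever $d>0$. The plan is to reformulate the whole dichotomy in terms of the central subgroup $Z:=\widehat{W}^X/W^X\trianglelefteq G^X$, which by \S\ref{sec:holes:gps} is cyclic of order $c=\#G^X/\#\bar G^X\in\{1,2,3\}$ with $G^X/Z\cong\bar G^X$. (For $c=2$ normality forces centrality since $\Aut(\ZZ/2)=1$; for $c=3$ centrality follows from the triple-cover structure $G^X=3.\bar G^X$.) Since $Z$ is central and each graded piece $K^X_{r,d}$ is a $G^X$-submodule, $Z$ acts on $K^X_r$, and the $G^X$-action on $K^X_r$ factors through $\bar G^X$ precisely when $Z$ acts trivially. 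Thus it suffices to show that $Z$ acts trivially on $K^X_r$ iff $r\not\equiv 0\pmod c$, and that in the complementary case the representation is faithful; the case $c=1$ is vacuous, so I assume $c\in\{2,3\}$ and let $z$ generate $Z$.

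First I would convert the paired relations into statements about the $Z$-action. The central element $z$ acts on the defining lattice by negating the signed characters ($\chi^{X_A}$, resp.\ the triality character) while fixing the permutation characters $\bar\chi^{X_A}$; comparing with the defining properties of the partner class $[g']$ in \eq{relation_paired_classes} (resp.\ \eq{relation_paired_classes_Z3}) identifies $[g']$ with $[z^k g]$ for a suitable $k$. Via \eq{eqn:conj:mod:str} each paired relation therefore becomes an identity among the super-traces $\str_{K^X_{r,d}}(g)$ and $\str_{K^X_{r,d}}(z^k g)$, valid in every degree $d$. Because $z$ is central of order $c$ it acts semisimply on each $K^X_{r,d}$ with eigenvalues among the $c$-th roots of unity, and these super-trace identities constrain which eigenvalues occur.

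The case $c=2$ is clean and I would treat it in full. Here \eq{relation_paired_classes} reads $H^X_{g,r}=-(-1)^r H^X_{zg,r}$, which becomes $\str_{K^X_{r,d}}(zg)=(-1)^{r+1}\str_{K^X_{r,d}}(g)$ for all $g$ and all $d$. On a component with $d>0$, where $K^X_{r,d}$ is purely even so that super-trace equals trace, putting $g=1$ gives $\tr_{K^X_{r,d}}(z)=(-1)^{r+1}\dim K^X_{r,d}$; since $z$ is an involution this forces $z=(-1)^{r+1}\,\mathrm{id}$ on $K^X_{r,d}$. Hence $z$ acts trivially for $r$ odd and by $-1$ for $r$ even, which is exactly the dichotomy $r\not\equiv 0\pmod 2$ versus $r\equiv 0\pmod 2$.

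The main obstacle is the triality case $c=3$, $X=D_4^6$, together with upgrading ``$Z$ acts nontrivially'' to genuine faithfulness. For $c=3$ I would run the analogue with \eq{relation_paired_classes_Z3}: the relation $H^X_{g,1}=H^X_{g',1}$ forces the super-character of $K^X_{1,d}$ to be constant along $Z$-orbits, hence (by purely-even-ness and a Vandermonde/discrete-Fourier argument) $z$ to act trivially there, while the twisted relation $H^X_{g,3}+2H^X_{g',3}=0$, combined with the vanishing $H^X_{g,3}=0$ on self-paired classes, should pin $z$ to a primitive cube root of unity on $K^X_{3,d}$. Reconciling the coefficient $2$ in \eq{relation_paired_classes_Z3} with the two-dimensional triality character $\check\chi^{X_D}$ and with the precise placement of $[g']$ in the $Z$-coset is the delicate point, and I expect this bookkeeping to be the hard part, since it has no clean scalar reduction as in $c=2$. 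Finally, two gaps must be closed to finish: purely-even-ness controls only the degrees $d>0$, so the finitely many polar and low-lying components ($d\le 0$) must be checked directly against the explicit forms of \S\ref{sec:mckay} and the decompositions of Appendix \ref{sec:decompositions}; and since ``faithful'' is stronger than ``$Z$ acts nontrivially,'' I would verify for $r\equiv 0\pmod c$ that the induced $\bar G^X$-action has trivial kernel by checking, from the same explicit character data, that the class functions $g\mapsto\str_{K^X_{r,d}}(g)$ separate the conjugacy classes of $G^X$.
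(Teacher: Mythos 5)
Your overall route coincides with the paper's: Conjecture \ref{conj:conj:mod:factoring_through} is stated there without proof, as the statement one ``arrives at'' by combining Conjecture \ref{conj:conj:mod:Kell} with the paired relations \eq{relation_paired_classes} and \eq{relation_paired_classes_Z3}, which is exactly your strategy. Your treatment of the $c=2$ cases is correct and is the intended argument: the pairing conditions on the twisted Euler characters identify $[g']$ with $[zg]$, and specialising to $g=1$ together with purely-even-ness in positive degree forces $z=(-1)^{r+1}\operatorname{id}$ on $K^X_{r,d}$.

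The genuine flaw is in your $c=3$ discussion, and it is structural rather than bookkeeping: the normal subgroup $Z=\widehat{W}^X/W^X\cong\ZZ/3\ZZ$ is \emph{not} central in $G^{(6+3)}\simeq 3.\Sym_6$. This is not a central triple cover ($\Sym_6$ has Schur multiplier of order $2$); elements mapping to odd permutations invert $Z$, so $z\sim z^{-1}$ in $G^X$. You can read this off the paper's own character table (Table \ref{tab:chars:irr:6+3}): the faithful irreducibles $\chi_{12},\dots,\chi_{16}$ take the real values $-3,-3,-6,-9,-15$ on the class $3A$ containing $z$, whereas a central element must satisfy $|\chi(z)|=\chi(1)$. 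Consequently your plan to ``pin $z$ to a primitive cube root of unity'' on $K^X_{3,d}$ cannot work: no $G^X$-module has $z$ acting as the scalar $\omega$, since $z$ and $z^{-1}$ are conjugate while $\omega\operatorname{id}$ and $\omega^{2}\operatorname{id}$ are not. The correct deduction is the following. Conjugacy of $z$ and $z^{2}$ forces $\tr_{K_{3,d}}(z)=\tr_{K_{3,d}}(z^{2})$, so if $a,b,b$ are the multiplicities of the eigenvalues $1,\omega,\omega^{2}$ then $\tr(z)=a-b$; the relation $H^X_{1A,3}+2H^X_{3A,3}=0$ at $g=1$ gives, for $d>0$ where super-trace equals trace, $(a+2b)+2(a-b)=0$, hence $a=0$. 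Thus $z$ acts with eigenvalues $\omega,\omega^{2}$ in equal multiplicity and with no fixed vectors, so $z\notin\ker$ on $K^X_3$; while $H^X_{g,1}=H^X_{g',1}$ at $g=1$ gives $\tr_{K_{1,d}}(z)=\dim K_{1,d}$, so $z$ acts trivially on $K^X_1$. This also explains the coefficient $2$ in \eq{relation_paired_classes_Z3} that you found mysterious: it encodes $\tr(z)=-\tfrac{1}{2}\dim$. Your two remaining caveats are correctly placed and genuinely needed: triviality of $Z$ on $K^X_r$ for $r\not\equiv 0\pmod{c}$ already gives factoring through $\bar{G}^X$ since $Z$ is normal, but non-triviality of $Z$ only gives $\ker\cap Z=1$, and upgrading to faithfulness for $r\equiv 0\pmod{c}$ requires either simplicity-type arguments (e.g.\ for $\bar{G}^X\simeq M_{24}$, $M_{12}$) or inspection of the explicit decompositions of Appendix \ref{sec:decompositions} for the small groups; neither the paper's one-line derivation nor the paired relations supply this for free.
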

As discussed in \S\ref{sec:holes}, we have $c=2$ for $\ll\in \{3,4,5,6,7,8,9,10,12,13,16,18,25,12+4\}$ and $c=3$ for $\ll=6+3$, and $c=1$ in the remaining $8$ cases.

As mentioned in \S\ref{sec:intro}, for the special case $X=A_1^{24}$, Conjecture \ref{conj:conj:mod:Kell} has been shown to be true by T. Gannon in\cite{Gannon:2012ck}, although the construction of $K^X$ is still absent. In this case, we have $c=1$ and hence Conjecture \ref{conj:conj:mod:factoring_through} is automatically true. It should be possible to apply the techniques similar to that in \cite{Gannon:2012ck} to prove Conjecture \ref{conj:conj:mod:Kell} for other Niemeier root systems $X$. 

\subsection{Modularity}\label{sec:conj:aut}

We have attached a cusp form $S^{\rs}_g$ to each $g\in G^{\rs}$ in \S\ref{sec:mckay:aut} by utilising the naturally defined permutation representations of $G^{\rs}$, and the corresponding twisted Euler characters, that are described in \S\ref{sec:holes:gps}. We begin this section with an explicit formulation of the conjecture that these cusp forms describe the shadows of the super-characters attached to the conjectural $G^{\rs}$-module $K^{\rs}$.

In preparation for the statement define $n_g$ and $h_g$ for $g\in G^{\rs}$ as follows. Take $n_g$ to be the order of the image of $g$ in $\bar{G}^{\rs}$ (cf. \S\ref{sec:holes:gps}), and set $h_g=N_g/n_g$ where $N_g$ denotes the product of the shortest and longest cycle lengths of the permutation which is the image of $g$ under $G^{\rs}\to\Sym_{\Phi}$. These values are on display in the tables of \S\ref{sec:chars:eul}. They may also be read off from the cycle shapes $\widetilde{\Pi}^{\rs}_g$ and $\bar{\Pi}^{\rs_A}_g$, $\bar{\Pi}^{\rs_D}_g$, \&c., attached to the permutation representations constructed in \S\ref{sec:holes:gps}, for $n_g$ is the maximum of the cycle lengths appearing in $\bar{\Pi}^{\rs_A}_g$, $\bar{\Pi}^{\rs_D}_g$ and $\bar{\Pi}^{\rs_E}_g$, and if $\widetilde{\Pi}^{\rs}_g=j_1^{m_1}\cdots j_k^{m_k}$ with $j_1^{m_1}<\dots<j_1^{m_k}$ and $m_i>0$ then $h_g=j_1j_k/n_g$.

\begin{conj}\label{conj:conj:aut:shad}
The graded super-characters (\ref{eqn:conj:mod:str}) for fixed $\rs$ and $g\in G^{\rs}$ and varying $r\in\ZZ/2m\ZZ$ define the components of a vector-valued mock modular form $H^{\rs}_{g}$ of weight $1/2$ on $\Gamma_0(n_g)$ with shadow function $S^{\rs}_g$.  
\end{conj}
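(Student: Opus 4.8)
The statement presupposes Conjecture \ref{conj:conj:mod:Kell}, so granting the existence of the module $K^{\rs}$ the graded super-characters of \eqref{eqn:conj:mod:str} equal $c^{\rs}H^{\rs}_{g,r}$, and the content to be established is purely a statement about the modular properties of the McKay--Thompson series $H^{\rs}_g$ assembled in §\ref{sec:mckay}. The plan is therefore to prove, for each Niemeier root system $\rs$ with Coxeter number $m$ and each $g\in G^{\rs}$, that the explicitly constructed $H^{\rs}_g$ is a weight-$1/2$ vector-valued mock modular form on $\Gamma_0(n_g)$ with shadow $S^{\rs}_g$. The natural engine is the meromorphic Jacobi form picture of §\ref{sec:forms:meromock}, which simultaneously controls the mock modularity and the shadow.

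First I would dispose of the identity class. For $g=1A$ one has $n_g=1$, and Propositions \ref{prop:A_UmbralForms} and \ref{prop:DE_UmbralForms} already establish that $H^{\rs}=H^{\rs}_{1A}$ is a vector-valued mock modular form for $\SL_2(\ZZ)=\Gamma_0(1)$ with shadow $S^{\rs}=S^{\rs}_{1A}=\Omega^{\rs}\cdot S_m$. This is the base case, and it anchors the remaining classes through the multiplicative relations of §\ref{sec:forms:mult}.

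For a general class I would exhibit a meromorphic weight-$1$ index-$m$ Jacobi form $\psi^{\rs}_g$ on $\Gamma_0(n_g)$ whose finite-part theta-coefficients are the $H^{\rs}_{g,r}$, with polar part a twisted analogue of \eqref{eqn:polar_part_umbral_mock_jac}: the residues at the torsion points should be governed by the twisted Euler characters $\chi^{\rs_A}_g,\bar\chi^{\rs_A}_g,\dots$ of §\ref{sec:holes:gps} in exactly the combination recorded by the matrix $\Omega^{\rs}_g$ of §\ref{sec:mckay:aut}. Running the completion procedure of §\ref{sec:forms:meromock} on $\psi^{\rs}_g$ then shows that $\widehat H^{\rs}_g$ transforms as a weight-$1/2$ vector-valued modular form whose non-holomorphic piece is controlled by $\Omega^{\rs}_g\cdot S_m$; since $S^{\rs}_g=\Omega^{\rs}_g\cdot S_m$ by definition, this identifies the shadow. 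To pin down the level and the multiplier $\nu^{\rs}_g$ I would read them off from the explicit data of §\ref{sec:mckay_spec}: where $H^{\rs}_g$ is an eta-quotient or a named mock theta function (§\ref{sec:mckay:mocktheta}) the level and multiplier are classical; where $H^{\rs}_g$ is reconstructed for a prime lambency from the weight-two forms $F^{(\ll)}_g$ of §\ref{sec:forms:low_lambencies}, the level descends from that of $F^{(\ll)}_g$; and where $H^{\rs}_g$ is obtained from a lower-lambency series via the horizontal or vertical relations (Tables \ref{fig:horizontal_relations}, \ref{tab:vertical_relations}) one tracks the effect of the scaling $\tau\mapsto (m(\rs')/m(\rs))\tau$ on the level. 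This last bookkeeping is what produces the numerology relating $\mathrm{ord}(g)\,m(\rs)$ and $n_g$ noted at the end of §\ref{sec:forms:mult}, with the factor of two traced to $G^{\rs}=2.\bar G^{\rs}$; the resulting $n_g$ is then cross-checked against the cycle-shape prescription $h_g=j_1j_k/n_g$.

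The main obstacle is the uniform production of the twisted Jacobi forms $\psi^{\rs}_g$. Proving that the polar part of $\psi^{\rs}_g$ carries precisely the residues encoded by the twisted Euler characters---rather than verifying this class by class against the tabulated data---is essentially equivalent to constructing the module $K^{\rs}$ itself, which remains open. Concretely, several series are only partially specified in §\ref{sec:mckay} (for example $[g]=4A\subset G^{(10+5)}$, $[g]\in\{6A,8AB\}\subset G^{(12+4)}$, and $[g]\in\{2A,3A\}\subset G^{(30+6,10,15)}$), so the case-by-case verification cannot presently be completed there; this is precisely why the statement remains conjectural. A uniform resolution would likely follow Gannon's treatment of $\rs=A_1^{24}$ in \cite{Gannon:2012ck}: establish that a weight-$1/2$ mock modular form with prescribed shadow $S^{\rs}_g$, multiplier $\nu^{\rs}_g$, level $n_g$, and optimal growth is unique, thereby reducing the whole statement to a finite computation whose consistency with integral super-characters is the content of Conjecture \ref{conj:conj:mod:Kell}.
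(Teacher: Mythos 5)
This statement is Conjecture \ref{conj:conj:aut:shad} of the paper: the paper offers no proof of it, only supporting evidence, so there is no argument of the paper's for your proposal to diverge from. Your proposal correctly reproduces exactly that evidence structure --- the identity class settled by Propositions \ref{prop:A_UmbralForms} and \ref{prop:DE_UmbralForms} via the meromorphic Jacobi form machinery of \S\ref{sec:forms:meromock}, the shadows $S^{\rs}_g=\Omega^{\rs}_g\cdot S_m$ prescribed through the twisted Euler characters of \S\ref{sec:holes:gps}, the levels and multipliers read off from the explicit specifications, mock theta identities and multiplicative relations of \S\ref{sec:mckay} --- and it accurately isolates the genuinely open step (a uniform construction of the twisted Jacobi forms $\psi^{\rs}_g$, equivalently of the module $K^{\rs}$, or else a Gannon-style uniqueness theorem in the spirit of Conjecture \ref{conj:conj:moon:opt}), which is precisely why the statement remains conjectural rather than a theorem.
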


Let $\nu^X_g$ denote the multiplier system of $H^X_g$. Since the multiplier system of a mock modular form is the inverse of the multiplier system of its shadow, Conjecture \ref{conj:conj:aut:shad} completely determines the modular properties of $H^X_g$---i.e. the matrix-valued function $\nu^X_g$---when $S^X_g$ is non-vanishing. However, it may happen that $S^X_g$ vanishes identically and $H^X_g$ is a(n honest) modular form. The following conjecture puts a strong restriction on $\nu^X_g$ even in the case of vanishing shadow.
\begin{conj}\label{conj:conj:aut:aut}
The multiplier system $\nu^X_g$ for $H^X_g$ coincides with the inverse of the multiplier system for $S^X$ when restricted to $\G_0(n_gh_g)$.
\end{conj}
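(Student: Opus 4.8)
The plan is to reduce the statement to a purely representation-theoretic commutation property and then isolate the role of $h_g$. First I would invoke the principle recalled in \S\ref{sec:forms:mock} that the multiplier system of a (vector-valued) mock modular form is the inverse of the multiplier system of its shadow. By construction $H^X=H^X_{1A}$ has shadow $S^X=\Omega^X\cdot S_m$, and since $\Omega^X$ commutes with both ${\bf S}$ and ${\bf T}$ (this is precisely the commutant condition \eqref{commutants} underlying Theorem \ref{thm_ADE}), the vector $S^X$ transforms under all of $\SL_2(\ZZ)$ with exactly the Weil representation $\rho_m$ carried by $S_m$ in \eqref{transf_theta}--\eqref{transf_theta_2}. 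Thus the multiplier of $S^X$ is $\rho_m$, its inverse is the multiplier $\nu^X_{1A}$ of $H^X$, and the assertion to be proved becomes the statement that $S^X_g=\Omega^X_g\cdot S_m$ transforms under $\G_0(n_gh_g)$ with the same representation $\rho_m$, i.e.
\[
\Omega^X_g\,\rho_m(\gamma)=\rho_m(\gamma)\,\Omega^X_g \qquad \text{for all } \gamma\in\G_0(n_gh_g).
\]

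Second I would analyse $\Omega^X_g$ via its decomposition $\Omega^X_g=\Omega^{X_A}_g+\Omega^{X_D}_g+\Omega^{X_E}_g$ from \S\ref{sec:mckay:aut}. Each summand is a combination of the Eichler--Zagier matrices $\Omega_m(n)$, which commute with all of $\rho_m$ by \eqref{commutants}, together with the parity projectors $P_m^0,P_m^1$ weighted by the twisted Euler characters $\chi^{X_A}_g,\bar{\chi}^{X_A}_g$ and their $D$- and $E$-analogues. The $\Omega_m(n)$-contributions therefore commute with $\rho_m(\gamma)$ for every $\gamma$, so the entire obstruction is carried by the projector parts. Since $P_m^0$ and $P_m^1$ are diagonal they commute with the diagonal matrix $\rho_m(T)$ automatically; hence the problem collapses to showing that the parity-projector parts commute with $\rho_m(\gamma)$ for a set of generators of $\G_0(n_gh_g)$ other than $T$, that is, for the generators built from ${\bf S}$.

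The crux, and the step I expect to be the main obstacle, is to show that for $\gamma\in\G_0(n_gh_g)$ the matrix $\rho_m(\gamma)$ preserves the parity grading $r\bmod 2$, i.e. $\rho_m(\gamma)_{r,r'}=0$ unless $r\equiv r'\pmod 2$; this is exactly commutation with $P_m^0$ and $P_m^1$, and the content of the conjecture is that $n_gh_g$ is the precise level at which it holds. I would attack this by writing the matrix elements of $\rho_m(\gamma)$ as the Gauss-sum expressions coming from \eqref{transf_theta}--\eqref{transf_theta_2}, and by relating the integer $h_g$---defined through the cycle shape $\widetilde{\Pi}^X_g$ as the product of the shortest and longest cycle lengths divided by $n_g$---to the conductor of the quadratic (theta) multiplier attached to $S_m$. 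This is the mock-modular analogue of the monstrous ``$n|h$'' phenomenon, and I expect a conceptual proof to pass through the skew-holomorphic form $\sigma^X$ of \eqref{eqn:sigmaX} and the Atkin--Lehner/level structure developed in \S\ref{sec:forms:genus0}, which already encodes how the Frame-shape data governs the level.

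Finally I would treat separately the classes for which the shadow $S^X_g$ vanishes identically, where the reduction of the first paragraph is unavailable and $H^X_g$ is an honest (weakly holomorphic) modular form. For these one must compute $\nu^X_g$ directly from the explicit eta-quotient expressions recorded in \S\ref{sec:mckay} (for example \eqref{lambency4_1}) and verify by hand that the resulting multiplier agrees with $\rho_m^{-1}$ restricted to $\G_0(n_gh_g)$. This case-by-case verification, together with the general parity argument above for the non-vanishing classes, would complete the proof; unifying the two into a single conceptual statement about the level $n_gh_g$ is the principal difficulty.
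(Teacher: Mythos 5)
A point of order first: this statement is Conjecture \ref{conj:conj:aut:aut} of the paper. The paper proves nothing here---it is one of the open assertions of umbral moonshine---so your text has to stand alone as a proof, and it does not: both of its essential steps are left as plans rather than arguments. Moreover, even the reduction you begin with is conditional on another open conjecture: identifying $\nu^X_g$ with the inverse of the multiplier of $S^X_g$ requires knowing that $H^X_g$ is a vector-valued mock modular form on $\G_0(n_g)$ whose shadow is precisely $S^X_g$, which is Conjecture \ref{conj:conj:aut:shad} and is likewise unproven. Your proposal is therefore, at best, a derivation of one conjecture from another, and only on the classes with non-vanishing shadow.

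Within that conditional framework there are two concrete gaps. First, your claim that after discarding the $\Omega_m(n)$ parts ``the entire obstruction is carried by the [mod $2$] projector parts'' is false in general: by \S\ref{sec:mckay:aut}, for root systems containing $D_4$ (lambencies $6$ and $6+3$, where $m=6$) the matrix $\Omega^{X_D}_g$ is graded by $r$ modulo $6$, with the three blocks $r\equiv 0,3$, $r\equiv\pm 1$ and $r\equiv\pm2\pmod 6$ carrying the distinct characters $\check{\chi}^{X_D}_g$, $\bar{\chi}^{X_D}_g$, $\chi^{X_D}_g$; commutation with $\rho_6(\gamma)$ then requires preservation of this mod-$6$ block structure, not merely of the parity of $r$, so parity preservation is not even the correct statement of your ``crux.'' Second, the crux itself is asserted rather than proven, and is mis-framed: whether $\rho_m(\gamma)$ respects a given congruence grading depends only on $m$ and $\gamma$, not on $g$, so what must actually be shown is that for every $X$ and every class $g$ whose $\Omega^X_g$ involves the asymmetric projectors, the group-theoretic integer $n_gh_g$ (defined via the cycle shape $\widetilde{\Pi}^X_g$) is divisible by the level at which $\rho_m$ acquires the required block form---a case-check across all $23$ Niemeier root systems and their conjugacy classes that you never perform. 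Finally, the vanishing-shadow classes, which the paper explicitly singles out as the real content of the conjecture, are exactly where your reduction is unavailable; at lambency $2$, for instance, every shadow $S^X_g$ is either proportional to $S_2$ or identically zero, so your parity argument says nothing there and the entire conjecture lives in the case you defer to ``verification by hand.'' That verification is not carried out, cannot be completed from the paper's data alone (the series $H^X_g$ are only partially specified at $\ll=7,13$ and for certain classes at $\ll=10+5$, $22+11$, $12+4$ and $30+6,10,15$), and no mechanism is offered by which the specific integer $h_g$, rather than some other level, would emerge from it.
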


\subsection{Moonshine}\label{sec:conj:moon}

We now formulate a conjecture which may be regarded as the analogue of the principal modulus property (often referred to as the genus zero property) of monstrous moonshine. 

The monstrous McKay--Thompson series $T_g$, for $g$ an element of the monster, are distinguished in that each one is a principal modulus with pole at infinity for a genus zero group $\G_g$, meaning that $T_g$ is a $\G_g$-invariant holomorphic function on the upper-half plane having a simple pole at the infinite cusp of $\G_g$, but having no poles at any other cusps of $\G_g$. Equivalently, $T_g$ satisfies the conditions
\begin{gather}\label{eqn:conj:moon:pmod}
	\begin{split}
	&\text{(i)}\quad T_g|_{1,0}\g=T_g\text{ for all $\g\in \G_g$},\\
	&\text{(ii)}\quad qT_g(\t)=O(1)\text{ as $\t\to i\infty$},\\
	&\text{(iii)}\quad T_g(\t)=O(1)\text{ as $\t\to \alpha\in \QQ$ whenever $\infty\notin \G_g\alpha$},
	\end{split}
\end{gather}
for some group $\G_g$, where $T_g|_{1,0}\g$ is the function $\t\mapsto T_g(\g\t)$ by definition (cf. (\ref{eqn:sums:psiw_actn})). 
Note that the existence of a non-constant function $T_g$ satisfying the conditions (\ref{eqn:conj:moon:pmod}) implies that the group $\G_g$ has genus zero, for such a function necessarily induces an isomorphism from $X_{\G_g}$ (cf. (\ref{eqn:sums:XG})) to the Riemann sphere.
As such, if we assume that both $T_g$ and $T'_g$ satisfy these conditions and both $qT_g$ and $qT'_g$ have the expansion $1+O(q)$ near $\t\to i\inf$, then $T_g$ and $T'_g$ differ by an additive constant; i.e. $T_g'=T_g+C$ for some $C\in \CC$. 
In other words, the space of solutions to (\ref{eqn:conj:moon:pmod}) is $1$ or $2$ dimensional, according as the genus of $X_{\G_g}$ is positive or $0$.

Observe the similarity between condition (ii) of (\ref{eqn:conj:moon:pmod}) and the optimal growth condition (\ref{optimal_growth}). Since  $q^{-1}$ is the minimal polar term possible for a non-constant $\G_g$-invariant function on the upper-half plane, assuming that the stabiliser of infinity in $\G_g$ is generated by $\pm \left(\begin{smallmatrix} 1&1\\0&1\end{smallmatrix}\right)$, the condition (ii) is an optimal growth condition on modular forms of weight $0$; the coefficients of a form with higher order poles will grow more quickly. The condition (iii) naturally extends this to the situation that $\G_g$ has more than one cusp.

Accordingly, we now formulate an analogue of (\ref{eqn:conj:moon:pmod})---and an extension of the optimal growth condition (\ref{optimal_growth}) to vector-valued mock modular forms of higher level---as follows. Suppose that $\nu$ is a (matrix-valued) multiplier system on $\G_0(n)$ with weight $1/2$, and suppose, for the sake of concreteness, that $\nu$ coincides with the inverse of the multiplier system of $S^X$, for some Niemeier root system $X$, when restricted to $\G_0(N)$ for some $N$ with $n|N$. Observe that, under these hypotheses, every component $H_r$ has a Fourier expansion in powers of $q^{1/4m}$ where $m$ is the Coxeter number of $X$, so $q^{-1/4m}$ is the smallest order pole that any component of $H$ may have. Say that a vector-valued function $H=(H_r)$ is a mock modular form of {\em optimal growth} for $\G_0(n)$ with multiplier $\nu$, weight $1/2$ and shadow $S$ if
\begin{gather}\label{eqn:conj:moon:opt}
	\begin{split}
	&\text{(i)}\quad H|_{1/2,\nu,S}\g=H\text{ for all $\g\in \G_0(n)$},\\
	&\text{(ii)}\quad q^{1/4m}H_r(\t)=O(1)\text{ as $\t\to i\infty$ for all $r$,}\\
	&\text{(iii)}\quad H_r(\t)=O(1)\text{ for all $r$ as $\t\to \alpha\in \QQ$, whenever $\infty\notin \G_g\alpha$}.
	\end{split}
\end{gather}
In condition (i) of (\ref{eqn:conj:moon:opt}) we write $|_{\nu,1/2,S}$ for the weight $1/2$ action of $\G_0(n)$ with multiplier $\nu$ and twist by $S$ (cf. (\ref{eqn:sums:gtwact})), on holomorphic vector-valued functions on the upper-half plane.  

Recall that $\nu^X_g$ denotes the multiplier system of $H^X_g$, and $S^X_g$ is its shadow. Recall also that $n_g$ denotes the order of (the image of) $g\in G^X$ in the quotient group $\bar{G}^X$. We now conjecture that the umbral McKay--Thompson series all have optimal growth in the sense of (\ref{eqn:conj:moon:opt}), and this serves as a direct analogue of the Conway--Norton conjecture of monstrous moonshine, that all the monstrous McKay--Thompson series are principal moduli for genus zero subgroups of $\SL_2(\RR)$; or equivalently, that they are all functions of optimal growth in the sense of (\ref{eqn:conj:moon:pmod}).
\begin{conj}\label{conj:conj:moon:opt}
Let $X$ be a Niemeier root system and let $g\in G^X$. Then $H^X_g$ is the unique, up to scale, mock modular form of optimal growth for $\G_0(n_g)$ with multiplier $\nu^X_g$, weight $1/2$ and shadow $S^X_g$.
\end{conj}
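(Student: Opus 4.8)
The plan is to separate the statement into an \emph{existence} part---that $H^X_g$ itself is a mock modular form of optimal growth for $\Gamma_0(n_g)$ with multiplier $\nu^X_g$, weight $1/2$ and shadow $S^X_g$---and a \emph{uniqueness} part---that any such form is a scalar multiple of $H^X_g$. The existence part follows largely from the constructions of \S\ref{sec:weight_zero_umbral_forms} and \S\ref{sec:mckay}: the transformation data (weight, multiplier, shadow) is the content of Conjectures \ref{conj:conj:aut:shad} and \ref{conj:conj:aut:aut}, which I would take as established for the cases at hand, while condition (ii) of (\ref{eqn:conj:moon:opt}) at the infinite cusp is precisely the optimal-growth condition (\ref{optimal_growth}) built into Theorem \ref{thm:uniqueness_umbral_mock_jac}. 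The remaining work for existence is to verify condition (iii), the boundedness of each component $H^X_{g,r}$ at every cusp not equivalent to $\infty$; here I would exploit the explicit eta-quotient and multiplicative-relation expressions recorded in \S\ref{sec:forms:low_lambencies}--\S\ref{sec:mckay_spec}, together with the fact that $S^X_g$ is a cusp form, to control the non-holomorphic completion and its growth at the remaining cusps.

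For uniqueness, the key observation is that if $H$ is any mock modular form satisfying (\ref{eqn:conj:moon:opt}) with shadow proportional to $S^X_g$, then a suitable linear combination of $H$ and $H^X_g$ has vanishing shadow and is therefore a genuine weakly holomorphic vector-valued modular form of weight $1/2$ for $\Gamma_0(n_g)$ with multiplier $\nu^X_g$, still subject to the optimal-growth conditions (ii) and (iii) at all cusps. The goal is to show that the space of such optimal-growth forms is zero. I would reduce this to a statement about weight $1$, index $m$ weak Jacobi forms for $\Gamma_0(n_g)$, exactly as in the proof of Theorem \ref{thm:uniqueness_umbral_mock_jac}: contracting the vector against $\theta_m$ and multiplying into $\mu_{1,0}$ produces a weak Jacobi form whose Fourier coefficients $c(n,r)$ vanish whenever $r^2-4mn>1$, and one wants to conclude that such a form must be identically zero. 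The input from Dabholkar--Murthy--Zagier (their Theorem 9.7), however, is stated only for the full modular group, so the main obstacle is establishing a level-$n_g$ analogue of that non-existence result with the specific matrix-valued multiplier $\nu^X_g$.

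To overcome this obstacle I would set up a valence-type argument adapted to $X_{\Gamma_0(n_g)}$. Concretely, I would compute, cusp by cusp, the maximal pole order that the optimal-growth conditions permit (a single simple pole in $q^{1/4m}$ at $\infty$, boundedness elsewhere), translate this into a line-bundle degree, and show via the valence formula for weight $1/2$ forms on $\Gamma_0(n_g)$---with the cusp widths and the multiplier exponents of $\nu^X_g$ entering the cusp contributions---that the total available degree cannot support a nonzero holomorphic section. The delicate points are the correct bookkeeping of the multiplier phases at each cusp, governed by Conjecture \ref{conj:conj:aut:aut} which pins $\nu^X_g$ down on $\Gamma_0(n_gh_g)$, and the treatment of elliptic points. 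I expect this dimension count to be the genuinely hard step, since it is the direct analogue of the genus zero property of monstrous moonshine, whose only known proofs proceed either through Borcherds' vertex-algebraic machinery or through a Rademacher-sum construction. An alternative route to both existence and uniqueness would therefore be to realise $H^X_g$ as a regularised Rademacher sum attached to its polar part and multiplier, with the convergence and weight-$1/2$ regularisation supplying the main technical difficulty.
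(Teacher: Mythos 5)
You should be aware at the outset that the statement you set out to prove is Conjecture \ref{conj:conj:moon:opt} of the paper: it is not proven there, and the paper offers no general argument for it. What the paper actually establishes is the special case of the identity class, where $n_g=1$ and $\G_0(n_g)=\SL_2(\ZZ)$ has a single cusp, so that conditions (ii) and (iii) of (\ref{eqn:conj:moon:opt}) collapse to the single condition (\ref{optimal_growth}); this is Corollary \ref{cor:uniqueness_umbral_mock_mod}, whose proof is essentially the reduction you describe for uniqueness --- the difference of two candidates has vanishing shadow, pairing its components against $\th_m$ gives a weight $1$, index $m$ weak Jacobi form with $c(n,r)=0$ whenever $r^2-4mn>1$, and Theorem 9.7 of \cite{Dabholkar:2012nd} says no such form exists. (Note that in this reduction no multiplication by $\mu_{1,0}$ is needed or wanted: contracting the $2m$ components of weight $1/2$ against $\th_m$ already yields weight $1$; multiplying by $\mu_{1,0}$ would raise the weight to $2$ and reintroduce poles.) For all other conjugacy classes the paper proves nothing; it only records that the case $X=A_1^{24}$ was settled in \cite{Cheng2011} by the Rademacher-sum method you mention at the end as an alternative route.

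The genuine gap in your proposal is therefore the step you yourself flag as ``the genuinely hard step'': the level-$n_g$ analogue of the Dabholkar--Murthy--Zagier nonexistence theorem, equivalently your valence-type dimension count on $X_{\G_0(n_g)}$. This cannot be deferred as a technical verification --- it is precisely the open content of the conjecture, the direct analogue of the genus-zero property of monstrous moonshine, and no bookkeeping of cusp widths, multiplier phases and elliptic points of the kind you sketch is available, at any level $n_g>1$, for these matrix-valued weight-$1/2$ multipliers. Worse, the input data of such a valence formula is itself conjectural: the multiplier $\nu^X_g$ is only pinned down on $\G_0(n_gh_g)$ by Conjecture \ref{conj:conj:aut:aut}, which is unproven. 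The same circularity affects your existence half: Conjectures \ref{conj:conj:aut:shad} and \ref{conj:conj:aut:aut}, which you propose to ``take as established,'' are exactly the unproven modularity statements for the $H^X_g$, which for non-identity $g$ are specified in \S\ref{sec:mckay} only as explicit $q$-series. In summary, your outline correctly reproduces the paper's strategy where the paper has one (the identity class, via Theorem \ref{thm:uniqueness_umbral_mock_jac}), correctly identifies where that strategy breaks for general $g$, and correctly names the two known ways around the obstruction (a valence/dimension argument, or Rademacher sums as in \cite{Cheng2011}); but it does not close the gap, and as written it is a research program rather than a proof.
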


Conjecture \ref{conj:conj:moon:opt} should serve as an important step in obtaining a characterisation of the mock modular forms of umbral moonshine.
Note that the above conjecture has been proven in \S\ref{sec:umbral shadow} (cf. Corollary \ref{cor:uniqueness_umbral_mock_mod})  for the identity class of $G^X$ for all  Niemeier root systems $X$. 
Note that in the case of the identity class we have $\G_0(n) =\SL_2(\ZZ)$ which has  the cusp (representative) at $i\inf$ as the only cusp. 
As a result, the more general conditions in \eq{eqn:conj:moon:opt} reduce to the condition \eq{optimal_growth} discussed in \S\ref{sec:forms:umbral}.
For $X=A_1^{24}$,  this conjecture was proven for all conjugacy classes of $G^X = M_{24}$ in \cite{Cheng2011}.
 See also \cite{2012arXiv1212.0906C} for related results in this case.

\subsection{Discriminants}\label{sec:conj:disc}

One of the most striking features of umbral moonshine is the apparently intimate relation between the number fields on which the irreducible representations of $G^X$ are defined and the discriminants of the vector-valued mock modular form $H^X$. 
In this subsection we will discuss this ``discriminant property", extending the discussion in \cite{UM}.

First, for a Niemeier root system with Coxeter number $m$ we observe that the discriminants of the components $H^X_r$ of the mock modular form $H^{X}=H^{X}_{1A}$ determine some important properties of the representations of $G^X$. Here we say that an integer $D$ is a {\em discriminant of $H^{X}$} if there exists a term $q^d=q^{-\frac{D}{4m}}$ with non-vanishing Fourier coefficient in at least one of the components. The following result can be verified explicitly using the tables in \S\S\ref{sec:chars},\ref{sec:decompositions}.
\begin{prop}\label{discri1}
Let $X$ be one of the 23 Niemeier root systems. If $n>1$ is an integer satisfying 
\begin{enumerate}
\item{there exists an element of $G^X$ of order $n$}, and
\item{there exists an integer $\l$ that satisfies at least one of the following conditions and such that $D = -n \l^2$ is a discriminant of $H^{X}$. First, 
$(n,\lambda)=1$, and second,  $\l^2$ is a proper divisor of $n$,}
\end{enumerate}
then there exists at least one pair of irreducible representations $\varrho$ and $\varrho^*$ of $G^X$ and at least one element $g \in G^X$ such that $\tr_{\varrho}(g)$ is not rational but
\be\label{n_type}
{\tr}_{\varrho} (g), {\tr}_{ \varrho^*} (g) \in \QQ(\sqrt{-n})
\ee
and $n$ divides $ord(g)$.
\end{prop}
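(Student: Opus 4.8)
The plan is to prove Proposition~\ref{discri1} by a finite verification carried out uniformly across the $23$ Niemeier root systems, drawing on the three bodies of explicit data assembled in the appendices: the Fourier coefficients of the umbral forms (Appendix~\ref{sec:coeffs}), the irreducible characters of the $G^X$ together with their fields of values (Appendix~\ref{sec:chars}), and the decompositions of the low-lying homogeneous subspaces $K^X_{r,d}$ into irreducibles (Appendix~\ref{sec:decompositions}). The strategy is to reduce the statement to a claim about a single homogeneous subspace, then to locate within it the required conjugate pair of representations, and finally to exhibit the element $g$.

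First I would fix $X$ with Coxeter number $m$ and translate hypothesis (2). Writing $d=-D/4m$ with $D=-n\lambda^2$, the condition that $D$ is a discriminant of $H^X$ says precisely that some component $H^X_r$ has a nonzero Fourier coefficient at $q^d$, equivalently---via Conjecture~\ref{conj:conj:mod:Kell}, realised concretely by the decompositions of Appendix~\ref{sec:decompositions}---that the homogeneous subspace $K^X_{r,d}$ is a nonzero $G^X$-module. Since $\sqrt{D}=\lambda\sqrt{-n}$ we have $\QQ(\sqrt{D})=\QQ(\sqrt{-n})$, so the quadratic field attached to this discriminant is exactly $\QQ(\sqrt{-n})$ regardless of $\lambda$; the two alternatives $(n,\lambda)=1$ and $\lambda^2\mid n$ in hypothesis (2) are the regimes under which this identification is the one actually detected by the coefficient, and I would record both as part of the case enumeration rather than attempt to derive them from a general principle.

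Next I would examine the explicit decomposition of $K^X_{r,d}$ into $G^X$-irreducibles. The content of the proposition is that, whenever this subspace is nonzero with $d$ of the above shape, its decomposition contains with nonzero super-multiplicity a pair of irreducibles $\varrho,\varrho^*$ whose common character field is exactly $\QQ(\sqrt{-n})$. Because $\QQ(\sqrt{-n})$ is imaginary quadratic, the nontrivial element of $\Gal(\QQ(\sqrt{-n})/\QQ)$ is complex conjugation, so $\varrho^*=\bar\varrho$ and $\tr_{\varrho^*}(g)=\overline{\tr_{\varrho}(g)}$ for all $g$; thus the pair is automatically complex-conjugate and $(\ref{n_type})$ will follow once $\tr_{\varrho}(g)$ is shown to be non-rational for one suitable $g$. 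This step is read off directly from Appendix~\ref{sec:decompositions} together with the field-of-values data in the tables of Appendix~\ref{sec:chars}. I would then select from the character table a class on which $\varrho$ is non-rational---such a class exists precisely because $\QQ(\varrho)=\QQ(\sqrt{-n})\neq\QQ$---and since $\tr_{\varrho}(g)\in\QQ(\zeta_{\mathrm{ord}(g)})$ generates $\QQ(\sqrt{-n})$, one has $\QQ(\sqrt{-n})\subseteq\QQ(\zeta_{\mathrm{ord}(g)})$, with the sharper divisibility $n\mid\mathrm{ord}(g)$ and the compatibility with hypothesis (1) confirmed class-by-class.

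The hard part will be that there is no uniform conceptual mechanism forcing the discriminant $-n\lambda^2$ of the mock modular form to produce a representation defined over $\QQ(\sqrt{-n})$: the correspondence is an empirical feature of umbral moonshine, made rigorous only by the explicit finite check. In particular, the clean conclusion $n\mid\mathrm{ord}(g)$ is stronger than what the general cyclotomic-conductor bound $\QQ(\sqrt{-n})\subseteq\QQ(\zeta_{\mathrm{ord}(g)})$ guarantees---the latter would only control the squarefree part and a possible factor of the discriminant---so its verification relies essentially on the detailed structure recorded in the character tables rather than on a general cyclotomic argument. Consequently I expect the proof to remain a tabulated case analysis, with the representation-theoretic remarks above serving only to organise and shorten, but not to replace, the explicit verification.
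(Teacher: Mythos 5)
Your proposal is correct and takes essentially the same approach as the paper: the paper's entire proof is the remark that the proposition "can be verified explicitly using the tables" of irreducible characters and decompositions, i.e. precisely the tabulated, case-by-case verification you describe. Your structural observations (conjugate pairs via complex conjugation, the cyclotomic containment, and the fact that $n \mid \mathrm{ord}(g)$ must be checked from the tables rather than deduced from conductor bounds) serve, as you yourself note, only to organise that finite check, not to replace it.
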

The list of integers $n$ satisfying the two conditions of Proposition \ref{discri1} is given in Table \ref{list_discriminant}. We omit from the table lambencies of Niemeier root systems for which there exists no integer $n$ satisfying these conditions.   
 
From now on we say that an irreducible representation $\varrho$ of the umbral group $G^X$ is of {\em type $n$} if $n$ is an integer satisfying the two conditions of Proposition \ref{discri1} and the character values of $\varrho$ generate the field $\QQ(\sqrt{-n})$. Evidently, irreducible representations of {type $n$} come in pairs $(\varrho,\varrho^*)$ with ${\tr}_{\varrho^*} (g)$ the complex conjugate of ${\tr}_{ \varrho} (g)$ for all $g\in G^X$. The list of all irreducible representations of type $n$ is also given in Table \ref{list_discriminant}. (See \S\ref{sec:chars:irr} for the character tables of the $G^X$ and our notation for irreducible representations.) 
 
Recall that the {\em Frobenius--Schur indicator} of an irreducible ordinary representation of a finite group is $1$, $-1$ or $0$ according as the representation admits an invariant symmetric bilinear form, an invariant skew-symmetric bilinear form, or no invariant bilinear form, respectively. The representations admitting no invariant bilinear form are precisely those whose character values are not all real. We can now state the next observation.
\begin{prop}\label{FS_indicator}
For each Niemeier root system $X$, an irreducible representation $\varrho$ of $G^X$ has Frobenius--Schur indicator $0$ if and only if it is of type $n$ for some $n$ defined in Proposition \ref{discri1}. 
\end{prop}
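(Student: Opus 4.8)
\textbf{Proof proposal for Proposition \ref{FS_indicator}.}

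The plan is to establish the biconditional by characterising both sides in terms of the rationality of character values, and then invoking Proposition \ref{discri1} together with a case-by-case examination of the $23$ umbral groups. The key conceptual link is the classical fact that an irreducible complex representation $\varrho$ of a finite group has Frobenius--Schur indicator $0$ precisely when its character $\chi_\varrho$ takes at least one non-real value, equivalently when $\varrho\not\simeq\varrho^*$. Thus the statement ``$\varrho$ has indicator $0$'' is equivalent to ``$\chi_\varrho$ is not real-valued,'' and my task reduces to showing that, for each $G^X$, an irreducible representation fails to be real-valued if and only if it is of type $n$ for some $n$ arising in Proposition \ref{discri1}.

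First I would address the easier implication: if $\varrho$ is of type $n$, then by definition its character values generate $\QQ(\sqrt{-n})$, which is an imaginary quadratic field, so $\chi_\varrho$ takes non-real values and the indicator is necessarily $0$. For the converse, I would need to show that \emph{every} irreducible representation with non-real character is accounted for by some type $n$. The natural approach is to note that whenever $\chi_\varrho$ is non-real, the values of $\chi_\varrho$ generate some field $\QQ(\sqrt{-d})$ with $d>0$ (this uses that for the groups $G^X$ in question the non-real character values in fact lie in imaginary quadratic fields, which can be read off directly from the character tables in \S\ref{sec:chars:irr}), and then to verify that the relevant $d$ indeed satisfies the two conditions of Proposition \ref{discri1}, i.e. that $G^X$ has an element of the appropriate order and that $-d\lambda^2$ occurs as a discriminant of $H^X$ for a suitable $\lambda$.

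The main obstacle will be this converse direction, since it is essentially a finite but nontrivial verification spanning all $23$ Niemeier root systems: one must confirm, group by group, that the list of imaginary quadratic fields appearing among the non-real irreducible characters coincides exactly with the list of types $n$ permitted by the discriminant property of $H^X$. This is where the tables of \S\ref{sec:chars} and \S\ref{sec:decompositions} do the real work---Proposition \ref{discri1} already guarantees that each admissible $n$ produces a pair $(\varrho,\varrho^*)$ of type $n$, so the content of the converse is the \emph{absence} of any stray non-real irreducible whose field $\QQ(\sqrt{-d})$ is not realised as a type via the discriminants of $H^X$. I expect that for the groups with few irreducibles the check is immediate, while the larger groups such as $M_{24}$, $2.M_{12}$ and $2.\mathit{AGL}_3(2)$ will require careful cross-referencing; since the proposition is asserted to be verifiable explicitly from the character tables, I would organise the argument as a uniform reduction to the indicator-zero criterion followed by a tabulated case analysis, rather than seeking a representation-theoretic mechanism that forces the coincidence a priori.
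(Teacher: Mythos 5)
Your proposal is correct and follows essentially the same route as the paper: the paper also reduces the Frobenius--Schur indicator $0$ condition to the non-reality of character values (this is stated immediately before the proposition) and then treats the biconditional as a finite verification against the character tables of \S\ref{sec:chars:irr}, where indicator-$0$ characters are marked and their values are recorded in the fields $\QQ(\sqrt{-n})$, cross-referenced with the types listed in Table \ref{list_discriminant}. The paper offers no further mechanism beyond this tabulated check, so your organisation of the argument matches its intent.
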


The {\em Schur index} of an irreducible representation $\varrho$ of a finite group $G$ is the smallest positive integer $s$ such that there exists a degree $s$ extension $k$ of the field generated by the character values ${\tr}_{\varrho}( g)$ for $g\in G$ such that $\varrho$ can be realised over $k$. Inspired by Proposition \ref{FS_indicator} we make the following conjecture.
\begin{conj}\label{conj:conj:disc:sch}
If $\varrho$ is an irreducible representation of $G^X$ of type $n$ then the Schur index of $\varrho$ is equal to $1$.
\end{conj}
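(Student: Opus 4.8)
The plan is to recast the statement in the language of the Brauer group and then reduce to a purely local computation. If $\varrho$ is irreducible of type $n$ with character $\chi$, then the simple component of $\QQ G^X$ affording $\chi$ has the form $M_t(D)$, where $D$ is a division algebra whose centre is exactly the character field $K=\QQ(\sqrt{-n})$; the Schur index $s$ in the sense of the statement is precisely $\mathrm{ind}(D)=\sqrt{\dim_K D}$, i.e. the order of $[D]$ in $\mathrm{Br}(K)$. First I would invoke the Benard--Schacher theorem, which forces $K$ to contain a primitive $s$-th root of unity and makes all nonzero local indices of $[D]$ equal to $s$. Since the type-$n$ fields appearing in Table~\ref{list_discriminant} are imaginary quadratic with $n>1$, their only roots of unity are $\pm1$ unless $n=3$; this already gives $s\in\{1,2\}$ except when $n=3$, where it yields $s\mid 6$, and together with $s\mid\chi(1)$ only finitely many possibilities remain to exclude.

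Next I would apply the Albert--Brauer--Hasse--Noether theorem over $K$: the class $[D]$ is trivial if and only if $\mathrm{inv}_w[D]=0$ for every place $w$ of $K$. The decisive structural input is that $K=\QQ(\sqrt{-n})$ is imaginary quadratic, so its unique archimedean place is complex and $\mathrm{Br}(\CC)=0$; thus nothing ramifies at infinity, and $s=1$ becomes equivalent to $[D]$ being unramified at every finite place of $K$, all of which lie over rational primes dividing $|G^X|$. The mechanism is already transparent for $G^X=\SL_2(3)$ at $\ll=7$ (type $3$): the faithful two-dimensional characters restrict on the Sylow $2$-subgroup $Q_8$ to the rational quaternionic component $\bigl(\tfrac{-1,-1}{\QQ}\bigr)$, which ramifies only at $\{2,\infty\}$. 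Passing to $K=\QQ(\sqrt{-3})$ kills the archimedean ramification (the place is complex) and the prime $2$ is inert, so $K_w/\QQ_2$ is the unramified quadratic extension and splits the local quaternion algebra; hence the induced class over $K$ is trivial and $s=1$. The removal of the archimedean contribution is exactly the general reason a quaternionic character, which over a totally real field would carry Schur index $2$ from infinity, can descend to index $1$ over an imaginary quadratic field.

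The verification itself I would carry out case by case over the finite explicit list of type-$n$ pairs $(\varrho,\varrho^*)$ in Table~\ref{list_discriminant}. For each pair I would compute the local Schur indices $m_w(\chi)$ at the finite places $w$ lying over primes $p\mid |G^X|$, using the character table of $G^X$ together with the standard reduction to $p$-quasi-elementary subgroups, Yamada-type formulas for the Schur subgroup, or an implementation such as Unger's algorithm in \textsf{GAP}. Benard--Schacher then says it suffices to confirm that $[D]$ is unramified at all of these finitely many places, whence $s=1$ by the reduction above. Because every $G^X$ and its character table are completely explicit, and $\mathrm{Br}(\CC)=0$ disposes of infinity uniformly, this is in principle a mechanical finite check; Proposition~\ref{FS_indicator} guarantees that the list of characters one must treat is exactly those of Frobenius--Schur indicator $0$.

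The hard part will be the $p=2$ local analysis for the non-solvable groups and the stem extensions---$M_{24}$, $2.M_{12}$, $2.\AGL_3(2)$, $\GL_2(5)/2$ and $3.\Sym_6$---where the Roquette and odd-order simplifications that trivialise Schur indices do not apply and the $2$-adic invariants of the genuinely projective faithful characters are the most delicate to pin down. Beyond the bookkeeping, the real obstacle is conceptual: I would like a uniform explanation of why the finite ramification of $[D]$ should always vanish once the archimedean obstruction is removed by passing to $\QQ(\sqrt{-n})$---presumably because the discriminant $-n$ that determines the character field (via the discriminant property of $H^X$ in Proposition~\ref{discri1}) also governs the local invariants of $[D]$. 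The cleanest route, which would bypass the local computation entirely, would be to exhibit each type-$n$ representation explicitly over $\QQ(\sqrt{-n})$, most plausibly by constructing it directly from the Niemeier- or Leech-lattice data underlying $G^X$; finding such realisations uniformly is the crux of the matter.
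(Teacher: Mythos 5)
You should first be aware that the paper does not prove this statement: it is posed as Conjecture~\ref{conj:conj:disc:sch}, an open prediction. The only support the paper offers is (i) Benard's theorem \cite{Ben_SchInd} that every Schur index of $M_{24}$, and of $M_{12}$, equals $1$, which settles $X=A_1^{24}$ and the non-faithful type-$n$ characters of $G^{(3)}\simeq 2.M_{12}$; (ii) Margolin's explicit realisation \cite{Mar_M12} of the faithful pair $(\chi_{16},\chi_{17})$ of $2.M_{12}$ over $\QQ(\sqrt{-2})$; and (iii) the consistency check of Proposition~\ref{FS_indicator}, since indicator $-1$ would force index at least $2$. So there is no argument in the paper to compare yours with; your proposal must stand on its own, and as it stands it is a program rather than a proof.

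As a program it is the right one, and its skeleton is sound: the centre of the relevant division algebra $D$ is $K=\QQ(\sqrt{-n})$, the Schur index is $\mathrm{ind}(D)$, Benard--Schacher bounds it by the roots of unity in $K$, Albert--Brauer--Hasse--Noether reduces $s=1$ to the vanishing of all local invariants, and---the key point---$K$ imaginary quadratic has no real places, so the archimedean obstruction that produces index $2$ for quaternionic characters disappears. Your $\SL_2(3)$ computation is also correct: $\chi_6,\chi_7$ are twists of the quaternionic $\chi_5$ by linear characters defined over $\QQ(\zeta_3)$, and $\bigl(\tfrac{-1,-1}{\QQ}\bigr)\otimes_{\QQ}\QQ(\sqrt{-3})$ splits because $2$ is inert in $\QQ(\sqrt{-3})$ and the infinite place is complex. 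Two gaps remain, one minor and one essential. Minor: your root-of-unity step (``only $\pm1$ unless $n=3$'') overlooks $n=4$, which does occur in Table~\ref{list_discriminant} (for $A_4^6$, $A_9^2D_6$, $A_{12}^2$) and gives $\QQ(\sqrt{-4})=\QQ(i)$ with primitive fourth roots of unity; also, Benard--Schacher equates local indices only among places over a fixed rational prime, not across different primes. Neither affects the logic. Essential: after your reductions the entire content of the conjecture is concentrated in the finite-place computations---above all the $2$-adic invariants for $M_{24}$, $2.M_{12}$, $2.\AGL_3(2)$, $\GL_2(5)/2$, $\GL_2(3)$ and $3.\Sym_6$---and these are exactly what you defer as ``a mechanical finite check.'' Until those invariants are actually shown to vanish, or explicit $\QQ(\sqrt{-n})$-models are exhibited (which is precisely the kind of evidence the paper itself cites via Margolin), the conjecture remains open: your proposal identifies the correct route but does not close it.
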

In other words, we conjecture that the irreducible $G^X$-representations of type $n$ can be realised over $\QQ(\sqrt{-n})$. For 
$X=A_1^{24}$ this speculation is in fact a theorem, since it is known \cite{Ben_SchInd} that the Schur indices for $M_{24}$ are always $1$. For 
$X=A_2^{12}$ it is also known \cite{Ben_SchInd} that the Schur indices for $\bar G^{(3)}= M_{12}$ are also always $1$. Moreover,  the representations of $G^{(3)}\simeq 2.\bar{G}^{(3)}$ with characters $\chi_{16}$ and $\chi_{17}$ in the notation of Table \ref{tab:chars:irr:3} have been constructed explicitly over $\QQ(\sqrt{-2})$ in \cite{Mar_M12}. Finally, Proposition \ref{FS_indicator} constitutes a non-trivial consistency check for Conjecture \ref{conj:conj:disc:sch} since the Schur index is at least $2$ for a representation with Frobenius--Schur indicator equal to $-1$.

\vspace{18pt}
\begin{table}[h!] 
\centering  
\begin{tabular}{CCC}
\toprule
X & n &(\varrho,\varrho^*)\\\midrule
A_1^{24}& 7,15,23 & (\chi_{3},\chi_{4}),(\chi_{5},\chi_{6}),(\chi_{10},\chi_{11}),(\chi_{12},\chi_{13}),(\chi_{15},\chi_{16})\\
A_2^{12}& 5,8,11,20 & (\chi_{4},\chi_{5}),(\chi_{16},\chi_{17}),(\chi_{20},\chi_{21}),(\chi_{22},\chi_{23}),(\chi_{25},\chi_{26})\\
A_3^8& 3,7&(\chi_{2},\chi_{3}),(\chi_{13},\chi_{14}),(\chi_{15},\chi_{16})\\ 
A_4^6& 4&(\chi_{8},\chi_{9}),(\chi_{10},\chi_{11}),(\chi_{12},\chi_{13})\\
A_5^4D_4&8&(\chi_{6},\chi_{7})\\
A_6^4&3&(\chi_{2},\chi_{3}),(\chi_{6},\chi_{7})\\
A_9^2D_6&4&(\chi_3,\chi_4)\\
A_{12}^{2}&4&(\chi_3,\chi_4)\\
D_4^6&15&(\chi_{12},\chi_{13})\\
E_6^4&8&(\chi_{6},\chi_{7})\\
\bottomrule
\end{tabular}
\caption{\label{list_discriminant}  
	The irreducible representations of type $n$.}
\end{table}

Equipped with the preceding discussion we are now ready to state our main observation for the discriminant property of umbral moonshine. \begin{prop}
Let $X$ be a Niemeier root system with Coxeter number $m$.
Let $n$ be one of the integers in Table \ref{list_discriminant} and let $\l_{n}$ be the smallest positive integer such that $D = -n \l_{n}^2$ is a discriminant of $H^X$. Then $K^X_{r,-D/4m} = \varrho_{n} \oplus \varrho_{n}^*$ where $\varrho_{n}$ and $ \varrho_{n}^*$ are dual irreducible representations of type $n$. Conversely, if $\varrho$ is an irreducible representation of type $n$ and $-D$ is the smallest positive integer such that $K^X_{r,-D/4m}$ has $\varrho$ as an irreducible constituent then there exists an integer $\l$ such that $D = - n \l^2$. 
\end{prop}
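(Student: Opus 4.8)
The plan is to reduce both assertions to a finite inspection of the explicit data recorded in the appendices, since $K^X$ is specified through its graded super-characters. By (\ref{eqn:conj:mod:str}) the homogeneous component $K^X_{r,-D/4m}$ is, as a virtual $G^X$-module, determined by the rule that its super-character at $g$ is the coefficient of $q^{-D/4m}$ in $c^X H^X_{g,r}$; for the cases at hand $D<0$, so $-D/4m>0$ and the component is purely even (Conjecture \ref{conj:conj:mod:Kell}), whence this coefficient is its honest character value $\tr_{K^X_{r,-D/4m}}(g)$. Thus, writing $\chi_{r,D}$ for the class function $g\mapsto [q^{-D/4m}]\,c^X H^X_{g,r}$, the forward claim becomes the identity of characters $\chi_{r,-n\lambda_n^2}=\tr_{\varrho_n}+\tr_{\varrho_n^*}$, while the converse is a statement about the first graded piece in which a given $\varrho$ occurs. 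Note that no root system in Table \ref{list_discriminant} is $A_8^3$, so $c^X=1$ throughout and the normalisation plays no role here.

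First I would isolate the one structural input that organises the computation. The Fourier coefficients of every component $H^X_{g,r}$ are rational integers, as read off from the explicit expressions collected in \S\ref{sec:mckay} and tabulated in Appendix \ref{sec:coeffs}, so each $\chi_{r,D}$ takes integer values. Combined with Proposition \ref{FS_indicator}, which identifies the irreducibles of type $n$ as exactly those with Frobenius--Schur indicator $0$ (equivalently, with non-real character), this forces $\varrho$ and its dual $\varrho^*$ to occur with equal multiplicity in each $K^X_{r,-D/4m}$: a rational-valued virtual character cannot contain a Galois-conjugate pair with unequal multiplicities. This disposes of the pairing in both directions and reduces the forward statement to the single assertion that at the minimal discriminant $-n\lambda_n^2$ the relevant component is the conjugate pair with multiplicity one and nothing more.

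With this reduction in hand, the forward direction is carried out case by case over the ten pairs $(X,n)$ of Table \ref{list_discriminant}. For each I would locate $\lambda_n$ by scanning the components $H^X_r$ in Appendix \ref{sec:coeffs} for the smallest $\lambda$ with $q^{n\lambda^2/4m}$ occurring with non-zero coefficient, identify the index $r$ realising it, and then either read the decomposition of $K^X_{r,-n\lambda_n^2/4m}$ directly from Appendix \ref{sec:decompositions} or compute the multiplicities $\langle \chi_{r,-n\lambda_n^2},\tr_{\varsigma}\rangle$ against each irreducible $\varsigma$ using the character table of \S\ref{sec:chars:irr}; in every instance the outcome is $\varrho_n\oplus\varrho_n^*$. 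The converse is handled by the complementary scan: for each type $n$ irreducible $\varrho$ listed in Table \ref{list_discriminant}, read off from Appendix \ref{sec:decompositions} the least $-D>0$ for which $\varrho$ is a constituent of some $K^X_{r,-D/4m}$, and verify directly that $-D$ has the form $n\lambda^2$.

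The main obstacle is that the statement is, at present, genuinely empirical: there is no uniform mechanism forcing the first occurrence of a type $n$ representation to sit at a discriminant of the shape $-n\lambda^2$, so beyond the rationality and Frobenius--Schur bookkeeping the argument cannot be compressed below a direct confrontation with the tables. A conceptual proof would presumably have to relate the Galois action on the homogeneous subspaces to the action of the multiplier system $\nu^X_g$ on the powers $q^{-D/4m}$ (cf. \S\ref{sec:conj:aut}), controlling when a $\QQ(\sqrt{-n})$-rational constituent can first appear; establishing such a link is exactly what is missing, and is the reason the proposition is phrased as a verification rather than deduced from the modularity and module conjectures.
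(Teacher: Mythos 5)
Your proposal is correct and takes essentially the same route as the paper: there the proposition is presented as an empirical observation, verified by exactly the finite inspection you describe of the character tables (\S\ref{sec:chars:irr}) and the decomposition tables (Appendix \ref{sec:decompositions}), and your organizing reductions --- purely-even components via Conjecture \ref{conj:conj:mod:Kell}, $c^X=1$ because $A_8^3$ does not occur in Table \ref{list_discriminant}, and integrality of the Fourier coefficients forcing Galois-dual irreducibles to appear with equal multiplicity --- are all sound. One trivial slip: Table \ref{list_discriminant} contains sixteen pairs $(X,n)$ spread over its ten rows, not ten pairs, so the case check is slightly longer than you state.
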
 
 
Extending this we make the following conjecture.
\begin{conj}\label{conj:conj:disc:dualpair}
Let $X$ be a Niemeier root system with Coxeter number $m$.
If $D$ is a discriminant of $H^X$ which satisfies $D = -n \l^2$  for some integer $\l$ then the representation $K^X_{r,-D/4m}$ has at least one dual pair of irreducible representations of type $n$ arising as irreducible constituents. 
\end{conj}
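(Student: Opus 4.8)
The plan is to establish the conjecture conditionally on Conjecture \ref{conj:conj:mod:Kell}, so that each $K^X_{r,-D/4m}$ is a genuine $G^X$-super-module whose graded super-character is recovered from the Fourier coefficients of the $H^X_g$ via \eqref{eqn:conj:mod:str}. Writing $c_g(d)$ for the coefficient of $q^d$ in $c^X H^X_{g,r}$ with $d=-D/4m$, the multiplicity of an irreducible $\varrho$ in $K^X_{r,d}$ is the inner product $\frac{1}{|G^X|}\sum_{[g]}|[g]|\,\overline{\chi_\varrho(g)}\,\str_{K^X_{r,d}}(g)$, so the entire statement reduces to controlling the arithmetic of the class function $g\mapsto c_g(d)$ as $g$ ranges over $G^X$. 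First I would record that, for $X$ in Table \ref{list_discriminant}, an irreducible $\varrho$ has non-real character if and only if it is of type $n$ (Proposition \ref{FS_indicator}); hence it suffices to show that $g\mapsto c_g(d)$ takes a genuinely irrational value lying in $\QQ(\sqrt{-n})$ on some $g$ whose order is divisible by $n$, for then the projection of the super-character onto the non-real irreducibles is forced to contain a type-$n$ dual pair $(\varrho_n,\varrho_n^*)$.

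The crux is the second step: to exhibit this $\QQ(\sqrt{-n})$-valuedness precisely when $D=-n\lambda^2$. I would extract it from the Galois action on Fourier coefficients. The automorphism $\sigma_a\in\Gal(\QQ(\zeta_N)/\QQ)$ sends $H^X_g$ to $H^X_{g^a}$, compatibly with the twisting of the multiplier $\nu^X_g$ fixed by Conjectures \ref{conj:conj:aut:shad} and \ref{conj:conj:aut:aut}, whence $\sigma_a(c_g(d))=c_{g^a}(d)$; irrationality of $c_g(d)$ over $\QQ$ is thus equivalent to $c_g(d)\neq c_{g^a}(d)$. The restriction of the generated field to $\QQ(\sqrt{-n})$ should come from the quadratic Gauss-sum structure of the weight-$1/2$ multiplier: the dependence of $\nu^X_g$ on the exponent $-D/4m$ enters through a Kronecker symbol $\left(\tfrac{D}{\cdot}\right)$ that becomes sensitive to the choice of $\sqrt{-n}$ exactly when $D$ is of the form $-n\lambda^2$. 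Pinning this down identifies the responsible constituent as a type-$n$ dual pair and, combined with the preceding Proposition (which settles the minimal case $\lambda=\lambda_n$), gives the base case of the induction on $\lambda$.

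To propagate from $\lambda=\lambda_n$ to arbitrary $\lambda$ I would invoke the recursive and multiplicative structure of the coefficients: the coefficient at exponent $-n\lambda^2/4m$ is governed, via Hecke-type relations among the weight-$1/2$ forms together with the multiplicative relations of \S\ref{sec:forms:mult}, by those at the fundamental discriminant and at smaller $\lambda$, and the same Gauss-sum factor reappears, so the type-$n$ dual pair survives as a constituent for every admissible $\lambda$. This reduces the claim, in each of the finitely many $X$ of Table \ref{list_discriminant}, to the base case plus the positivity of a single inner product.

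The hard part will be making the Galois and Gauss-sum mechanism precise and uniform across all the relevant root systems. The multiplier systems $\nu^X_g$ are only implicitly determined by Conjectures \ref{conj:conj:aut:shad}–\ref{conj:conj:aut:aut}, and proving that an individual Fourier coefficient is \emph{exactly} $\QQ(\sqrt{-n})$-valued—rather than merely non-rational—demands an effective handle on these multipliers that the present paper supplies only for the McKay–Thompson series it computes explicitly. Absent such a uniform conceptual input, the argument degenerates into a large but finite verification for each $X$, and it remains in any case contingent on the existence of the modules $K^X$.
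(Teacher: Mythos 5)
There is no proof in the paper to compare against: the statement you are addressing is Conjecture \ref{conj:conj:disc:dualpair}, and the paper offers only supporting evidence for it --- the explicit decomposition tables of Appendix \ref{sec:decompositions} for low-lying discriminants, and a citation recording that the case $X=A_1^{24}$ was verified (by Creutzig--H\"ohn--Miezaki, via congruences modulo $2$ for the McKay--Thompson series). So your proposal must be judged on its own terms, as an attempted proof strategy for an open conjecture; it is, as you acknowledge, doubly conditional (on Conjecture \ref{conj:conj:mod:Kell} and on the modularity Conjectures \ref{conj:conj:aut:shad}--\ref{conj:conj:aut:aut}).

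Judged on those terms, the crux of your argument fails. Your plan is to exhibit some $g$ with $n\mid \operatorname{ord}(g)$ for which the coefficient $c_g(d)$, $d=-D/4m$, is irrational and lies in $\QQ(\sqrt{-n})$, and to conclude from this that a type-$n$ dual pair occurs in $K^X_{r,d}$. This can never happen: every Fourier coefficient of every umbral McKay--Thompson series is a rational integer (this is visible throughout Appendix \ref{sec:coeffs} and is implicit in the paper's discussion of the factor $c^X=3$ for $A_8^3$, where the issue is integrality of the \emph{multiplicities}, never of the coefficients themselves). Indeed your own Galois-equivariance observation, $\sigma_a(c_g(d))=c_{g^a}(d)=\str_{K^X_{r,d}}(g^a)$, shows that once $K^X$ exists with integer graded super-characters, the multiplicities of $\varrho_n$ and $\varrho_n^*$ in $K^X_{r,d}$ are forced to be \emph{equal}; the irrationality you hope to detect is exactly what rationality of the super-character rules out. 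Worse, even if an irrational value did occur, it would prove the wrong thing: an irrational super-character value only forces the multiplicities of $\varrho_n$ and $\varrho_n^*$ to \emph{differ}, which is compatible with one of them being zero --- the single module $\varrho_n$ (no dual partner) already has irrational character. So irrationality is evidence against, not for, the dual-pair conclusion. The actual content of the conjecture, given integrality and Galois symmetry, is a \emph{positivity} statement: the common multiplicity $\langle \chi_{\varrho_n}, \str_{K^X_{r,d}}\rangle$ is nonzero precisely when $D=-n\lambda^2$. No amount of information about fields of definition of coefficients or multiplier systems can see this; what is needed is congruence-type information (e.g.\ the parity obstruction: a module built without type-$n$ constituents, being a sum of doublets and rational-character pieces, would have all traces even, so odd coefficients force such constituents --- this mod-$2$ mechanism is how the $A_1^{24}$ case was actually settled) or explicit evaluation of the multiplicity inner products. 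Your final ``Hecke/multiplicative-relations'' induction on $\lambda$ inherits the same defect, since it propagates a property (membership in $\QQ(\sqrt{-n})$) that never holds in the first place.
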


Conjecture (\ref{conj:conj:disc:dualpair}) has been verified for the case $X=A_1^{24}$ in \cite{2012arXiv1211.3703C}.

We conclude this section with conjectures arising from the observation (cf. \S\ref{sec:decompositions}) that the conjectural $G^X$-module $K^X_{r,d}$ is typically  isomorphic to several copies of a single  representation. We say a $G$-module $V$ is a {\em doublet} if it is isomorphic to the direct sum of two copies of a single representation of $G$, and interpret the term {\em sextet} similarly.

\begin{conj}\label{conj:conj:disc:doub}
Let $X$ be a Niemeier root system and let $m$ be the Coxeter number of $X$.
Then the representation $K^{X}_{r,-D/4m}$ is a doublet  
if and only if $D \neq0$ and $D \neq -n\l^2$ for any integer $\l$ and for any $n$ listed in Table \ref{list_discriminant} corresponding to $X$. 
If $X=A_8^3$ 
then the representation $K^{X}_{r,-D/4m}$ is a sextet if and only if $D \neq -27 \l^2$ for some integer $\l$.  
\end{conj}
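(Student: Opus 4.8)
The plan is to reformulate the conjecture as a statement purely about the Fourier coefficients of the now-determined functions $H^X_g$ together with the character tables of the $G^X$, and then to isolate the obstruction that blocks a fully uniform argument. Granting Conjecture~\ref{conj:conj:mod:Kell}, the homogeneous piece $K^X_{r,-D/4m}$ is, as a virtual $G^X$-module, completely determined by the requirement that its graded super-character be $g\mapsto c^X[q^{-D/4m}]H^X_{g,r}$; for $D<0$ the parity clause of Conjecture~\ref{conj:conj:mod:Kell} makes it an honest (purely even) representation. Since every $H^X_g$ has been pinned down in Propositions~\ref{prop:A_UmbralForms} and~\ref{prop:DE_UmbralForms} and in \S\ref{sec:mckay_spec}, the class function $g\mapsto c^X[q^{-D/4m}]H^X_{g,r}$ is a concretely computable vector indexed by the conjugacy classes of $G^X$, and the content of the conjecture is that, for $D\neq 0$ with $D\neq -n\l^2$, this vector equals $2\chi_\varrho$ for a single irreducible $\varrho$ (respectively $6\chi_\varrho$ in the $A_8^3$ sextet case).

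First I would dispatch the failure directions. For $D=-n\l^2$ with $n$ as in Table~\ref{list_discriminant}, Conjecture~\ref{conj:conj:disc:dualpair} supplies a dual pair $(\varrho_n,\varrho_n^*)$ of type-$n$ irreducibles inside $K^X_{r,-D/4m}$; by Proposition~\ref{FS_indicator} these have Frobenius--Schur indicator $0$, hence are inequivalent with character values in $\QQ(\sqrt{-n})\setminus\QQ$, and since a doublet $2\chi_\varrho$ is supported on a single irreducible the presence of two inequivalent constituents rules out the doublet structure. The sextet clause for $A_8^3$ requires a different input at $D=-27\l^2$, namely the observation recorded in the remark after Conjecture~\ref{conj:conj:mod:Kell} that there is no integer combination of irreducible characters of $G^{A_8^3}$ matching the relevant coefficients; this is precisely what obstructs a uniform sextet there and forces the normalisation $c^{A_8^3}=3$. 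This reduces matters to the converse: showing that at all remaining $D\neq 0$ the coefficient vector is exactly $2\chi_\varrho$ (or $6\chi_\varrho$).

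For the converse I would proceed in two stages. The low-lying cases are a finite check: for each of the $23$ root systems one decomposes the computed coefficient vectors against the character table (the data assembled in \S\ref{sec:chars} and \S\ref{sec:decompositions}) and verifies the multiplicity-two, respectively multiplicity-six, pattern for every discriminant appearing below the tabulated range. The genuinely infinite part is to promote this to all $D$. Here the natural tool is the rigidity coming from Corollary~\ref{cor:uniqueness_umbral_mock_mod} and Conjecture~\ref{conj:conj:moon:opt}: the $H^X_g$ are the unique mock modular forms of optimal growth with the prescribed shadows $S^X_g$, so one would try to express, for each fixed irreducible $\varrho$, the projection $\frac{1}{|G^X|}\sum_{g\in G^X}\overline{\chi_\varrho(g)}\,H^X_{g,r}$ as a single mock modular form whose coefficients are manifestly twice a fixed pattern away from the excluded discriminants, thereby controlling the multiplicity of $\varrho$ in every graded piece at once.

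The main obstacle is precisely this promotion from finitely many verified coefficients to the uniform statement, and it has two independent difficulties. The first is that, away from $X=A_1^{24}$ (where Conjecture~\ref{conj:conj:mod:Kell} is a theorem of Gannon~\cite{Gannon:2012ck}), the module $K^X$ is not yet constructed, so even interpreting $K^X_{r,-D/4m}$ as a genuine rather than virtual representation rests on an open conjecture; a complete proof is therefore conditional on Conjecture~\ref{conj:conj:mod:Kell}. The second is that the doublet property is a multiplicity-one-times-two rigidity for which there is no evident representation-theoretic mechanism: nothing in the shadow data or the optimal-growth characterisation forces the coefficient vector at a non-excluded discriminant to concentrate on a single irreducible, so closing this step would require either an explicit construction of $K^X$ exhibiting the repeated-representation structure directly, or a new positivity-and-integrality input beyond what Theorem~\ref{thm:uniqueness_umbral_mock_jac} and the discriminant analysis of Proposition~\ref{discri1} already provide.
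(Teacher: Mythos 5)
The statement you were given is a conjecture: the paper offers no proof of it, only the finite computational evidence assembled in Appendix \ref{sec:decompositions}, so the only substantive question is whether your plan is sound. It is not, because it rests on a misreading of the key definition. The paper defines a \emph{doublet} to be a module isomorphic to two copies of \emph{a single representation} --- not a single \emph{irreducible} representation --- so ``doublet'' means precisely that every irreducible constituent occurs with even multiplicity, and ``sextet'' means every multiplicity is divisible by six. Your reformulation, that at non-excluded discriminants the coefficient vector ``equals $2\chi_\varrho$ for a single irreducible $\varrho$'', is therefore not the content of Conjecture \ref{conj:conj:disc:doub}, and it is visibly false: already in the first table of Appendix \ref{sec:decompositions} one finds, for example, that $K^{(2)}_{1,55/8}$ has character $2\left(\chi_{18}+\chi_{19}+\chi_{23}+\chi_{24}+\chi_{25}+\chi_{26}\right)$, a doublet in the paper's sense supported on six distinct irreducibles ($D=-55$ is not of the form $-n\l^2$ for $n\in\{7,15,23\}$). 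Had you carried out your proposed finite check against your criterion, you would have wrongly concluded that the conjecture fails at almost every discriminant.

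The misreading also breaks your argument in the excluded direction. Under the correct definition, the existence of a dual pair $(\varrho_n,\varrho_n^*)$ of constituents --- which is all Conjecture \ref{conj:conj:disc:dualpair} provides --- does not contradict doubletness: $2(\varrho_n\oplus\varrho_n^*)$ is a perfectly good doublet containing such a pair. What rules out the doublet at $D=-n\l^2$ is that the pair occurs with \emph{odd} multiplicity, and the paper asserts this only at the minimal such discriminant (the unlabelled proposition preceding Conjecture \ref{conj:conj:disc:dualpair}, where $K^X_{r,-D/4m}=\varrho_n\oplus\varrho_n^*$ exactly); for larger $\l$ no stated result supplies the needed parity. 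Your closing diagnosis --- that everything is conditional on Conjecture \ref{conj:conj:mod:Kell} and that no uniform mechanism is known to control all discriminants --- is accurate and matches the paper's own situation (which is why the statement is left as a conjecture), but the correct target for such a mechanism is a mod-$2$ (respectively mod-$6$) congruence for the multiplicity generating functions $\frac{1}{|G^X|}\sum_{g}\overline{\chi_\varrho(g)}\,H^X_{g,r}$, in the spirit of the mod-two analysis of \cite{Creutzig2012} for $X=A_1^{24}$, not concentration on a single irreducible.
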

In particular, for the nine Niemeier root systems 
\begin{gather}
	A_7^2D_5^2,\; A_{11}D_7E_6,\; D_6^4,\; D_8^3,\; D_{10}E_7^2,\; D_{12}^2,\; D_{16}E_8,\; D_{24},\; E_8^3,
\end{gather}
that have no irreducible representation with vanishing Forbenius--Schur indicator and have no terms with zero discriminant, we conjecture that all the representations $K^{X}_{r,-D/4m}$ corresponding to $H^X_{g}$ are doublets. 

\section{Conclusions and Discussion}\label{sec:conc}

Let us start by recapitulating the main results of this paper. 
Taking the 23 Niemeier lattices as the starting point, in \S\ref{sec:holes:gps} we identify a finite group $G^X$---the umbral group---for each Niemeier root system $X$. 
On the other hand, using the ADE classification discussed in \S\ref{sec:forms:ADE} and Theorem \ref{thm:uniqueness_umbral_mock_jac}, we identify a distinguished vector-valued mock modular form $H^X$---the umbral form---for each Niemeier root system $X$. We then conjecture (among other things) 
\begin{enumerate}
\item (Conjecture \ref{conj:conj:mod:Kell}) that the mock modular form $H^X$ encodes the graded super-dimension of a certain infinite-dimensional, $\ZZ/2m\ZZ\times \QQ$-graded module $K^X$ for $G^X$,
\item (Conjecture \ref{conj:conj:aut:shad}) that the graded super-characters $H^X_g$ arising from the action of $G^X$ on $K^X$ are vector-valued mock modular forms with concretely specified shadows $S^X_g$, and 
\item (Conjecture \ref{conj:conj:moon:opt}) that the umbral McKay--Thompson series $H^X_g$ are uniquely determined by an optimal growth property which is directly analogous to the genus zero property of monstrous moonshine.
\end{enumerate}
To lend evidence in support of these conjectures we explicitly identify (almost all of) the umbral McKay--Thompson series $H^X_g$.

In this way, from the 23 Niemeier root systems we obtain 23 instances of umbral moonshine, encompassing all the 6 instances previously discussed in \cite{UM} and in particular the case with $G^X=M_{24}$ first discussed in the context of the $K3$ elliptic genus \cite{Eguchi2010}. 
Apart from uncovering 17 new instances, we believe that the relation to Niemeier lattices sheds important light on the underlying structure of umbral moonshine. 
First, the construction of the umbral group $G^X$ is now completely uniform: $G^X$ is the outer-automorphism group of the corresponding Niemeier lattice (cf. \eq{def:umbral_group}).   Second, it provides an explanation for why the 6 instances discussed in \cite{UM} are naturally labelled by the divisors of 12: they correspond to Niemeier root systems given by evenly many copies (viz., $24/(\ll-1)$) of an A-type root system $A_{\ll-1}$.
Third, it also sheds light on the relation between umbral moonshine and meromorphic weight 1 Jacobi forms as well as weight $0$ Jacobi forms. For as we have seen in \S\ref{sec:forms:meromock}, the umbral forms $H^X$ can be constructed uniformly by taking theta-coefficients of finite parts of certain weight $1$ meromorphic Jacobi forms, but in general the relevant meromorphic Jacobi form has simple poles not only at the origin but also at non-trivial torsion points 
whenever the corresponding root system has a D- or E-type root system as an irreducible component. As a result, in those cases the relation to weight $0$ Jacobi forms is less direct as the Eichler--Zagier operator ${\cal W}^X$ of \eq{eqn:weight_one_from_weight_zero} is no longer proportional to the identity. In particular, in these cases the umbral mock modular form $H^X$ does not arise in a direct way from the decomposition of a weight $0$ (weak) Jacobi form into irreducible characters for the ${\cal N}=4$ superconformal algebra. 

Recall that the relevant weight $0$ Jacobi form in the construction described in \S\ref{sec:weight_zero_umbral_forms} coincides with the elliptic genus of a $K3$ surface in the case of the Niemeier root system $X=A_1^{24}$ ($\ll=2$, $G^X=M_{24}$). 
As the relation to weight $0$ forms becomes less straightforward in the more general cases, the relation between umbral moonshine and sigma models, or in fact any kind of conformal field theory, also becomes more opaque. An interesting question is therefore the following. What, if any, kind of physical theory or geometry should attach to the more general instances of umbral moonshine? 

To add to this puzzle, the Borcherds lift of the $K3$ elliptic genus is a Siegel modular form which also plays an important role in type II as well as heterotic string theory compactified on $K3\times T^2$ \cite{DVV,Kawai:1995hy,LopesCardoso:1996nc}. 
As pointed out in \cite{Cheng2010_1} and refined in \cite{CheDun_M24MckAutFrms}, Mathieu moonshine in this context (corresponding here to $X=A_1^{24}$) leads to predictions regarding Siegel modular forms which have been partially proven in \cite{Raum}. 
Furthermore, this Siegel modular form also serves 
as the square of the denominator function of a generalised Kac--Moody algebra developed by Gritsenko--Nikulin in the context of mirror symmetry for $K3$ surfaces\cite{GriNik_K3SrfsLorKMAlgsMrrSym,GriNik_ArthMrrSymCYMfds,GriNik_AutFrmLorKMAlgs_I,GriNik_AutFrmLorKMAlgs_II,GrNik_SieAutFrmCorrLorKMAlgs}. 
As discussed in detail in \cite[\S5.5]{UM}, many of these relations to string theory and $K3$ geometry extend to some of the other 5 instances of umbral moonshine discussed in that paper. 
Since the relation between umbral forms and weight $0$ modular forms is modified when D- or E-type root systems are involved, it would be extremely interesting to determine how the above-mentioned relations to string theory, $K3$ surfaces, and generalised Kac--Moody algebras manifest in the more general cases.  

Regarding $K3$ surfaces, note that Niemeier lattices have a long history of application to this field, and the study of the symmetries of $K3$ surfaces  in particular. 
See for instance \cite{Kondo_old}. See \cite{Nikulin:2011} for an analysis involving all of the Niemeier lattices. It would be interesting to explore the extent to which recent work \cite{Taormina:2013mda,Taormina:2013jza} applying the Niemeier lattice $L^X$ to the $X=A_1^{24}$ case of umbral moonshine can be extended to other Niemeier root systems in light of \cite{Nikulin:2011}.

In another direction, the physical context of  Mathieu moonshine has been extended recently to $K3$ compactifications of heterotic string theory with 8 supercharges \cite{Cheng:2013kpa}. As the structure of theories with 8 supercharges is much less rigid than those with 16 supercharges, one might speculate that a suitable generalisation of \cite{Cheng:2013kpa} could provide  physical realisations of more instances of umbral moonshine.

 Apart from posing the umbral moonshine conjecture, in this paper we have also noted  various intricate and mysterious properties of this new moonshine story. 
 An important example is the close relation between Niemeier lattices and the genus zero groups discussed in \S\ref{sec:holes:gzero} and \S\ref{sec:forms:genus0}. 
 Another is the multiplicative relations discussed in \S\ref{sec:forms:mult} between McKay--Thompson series attached to different Niemeier root systems that plays an important role in the explicit specification of the umbral McKay--Thompson series in \S\ref{sec:mckay_spec}. 
While we have a concrete description of these two properties in terms of mock modular forms and related structures, their origin is still unclear.
We also observe empirically discriminant relations between number fields underlying irreducible representations of $G^X$ and the discriminants of the vector-valued mock modular form $H^X$ (\S\ref{sec:conj:disc}) extending the observations made in \cite[\S5.4]{UM}. It would be extremely interesting to have a 
representation theoretic explanation of these relations.  
  
Last but not least, the construction of the umbral modules $K^X$ is clearly an important next step in unraveling the mystery of umbral moonshine. 
  
\section*{Acknowledgements}

We owe particular thanks to George Glauberman for alerting us to the connection between the umbral groups of our previous paper and the stabilisers of deep holes in the Leech lattice. This observation was a crucial catalyst for the present work. We also thank Daniel Allcock, Kathrin Bringmann, Scott Carnahan, Noam Elkies, Igor Frenkel, Terry Gannon, Ching Hung Lam,  Geoffrey Mason, Atsushi Matsuo, Sameer Murthy, Ken Ono, Erik Verlinde and Don Zagier for helpful comments and discussions. We thank the referees for numerous helpful suggestions and comments. MC would like to thank the Simons Center for Geometry and Physics, Stanford University and the Haussdorff Center for Mathematics for hospitality. The work of JH was supported by NSF grant 1214409.

\clearpage

\appendix

\section{Special Functions}\label{sec:modforms}

\subsection{Dedekind Eta Function}\label{sec:mdlrfrms:dedeta}
The {\em Dedekind eta function}, denoted $\eta(\t)$, is a holomorphic function on the upper half-plane defined by the infinite product 
\be\eta(\t)=q^{1/24}\prod_{n\geq 1}(1-q^n)\ee
where $q=\ex(\t)=e^{\tpi \t}$. It is a modular form of weight $1/2$ for the modular group $\SL_2(\ZZ)$ with multiplier $\e:\SL_2(\ZZ)\to\CC^*$, which means that 
\be\e(\g)\eta(\g\t){\rm jac}(\g,\t)^{1/4}=\eta(\t)\ee
for all $\g = \left(\begin{smallmatrix} a&b\\ c&d \end{smallmatrix}\right) \in\SL_2(\ZZ)$, where ${\rm jac}(\g,\t)=(c\t+d)^{-2}$. The { multiplier system} $\e$ may be described explicitly as 
\be\label{Dedmult}
\e\bem a&b\\ c&d\eem 
=
\begin{cases}
	\ex(-b/24),&c=0,\,d=1\\
	\ex(-(a+d)/24c+s(d,c)/2+1/8),&c>0
\end{cases}
\ee
where $s(d,c)=\sum_{m=1}^{c-1}(d/c)((md/c))$ and $((x))$ is $0$ for $x\in\ZZ$ and $x-\lfloor x\rfloor-1/2$ otherwise. We can deduce the values $\e(a,b,c,d)$ for $c<0$, or for $c=0$ and $d=-1$, by observing that $\e(-\g)=\e(\g)\ex(1/4)$ for $\g\in\SL_2(\ZZ)$.

\subsection{Jacobi Theta Functions}\label{sec:JacTheta}
We define the {\em Jacboi theta functions} $\th_i(\t,z)$ as follows for $q=e(\t)$ and $y=e(z)$.
\begin{align}	\th_1(\t,z)
	&= -i q^{1/8} y^{1/2} \prod_{n=1}^\infty (1-q^n) (1-y q^n) (1-y^{-1} q^{n-1})\\
	\th_2(\t,z)
	&=  q^{1/8} y^{1/2} \prod_{n=1}^\infty (1-q^n) (1+y q^n) (1+y^{-1} q^{n-1})\\
	\th_3(\t,z)
	&=  \prod_{n=1}^\infty (1-q^n) (1+y \,q^{n-1/2}) (1+y^{-1} q^{n-1/2})\\
	\th_4(\t,z) 
	&=  \prod_{n=1}^\infty (1-q^n) (1-y \,q^{n-1/2}) (1-y^{-1} q^{n-1/2})
\end{align}
Note that there are competing conventions for $\th_1(\t,z)$ in the literature and our normalisation may differ from another by a factor of $-1$ (or possibly $\pm i$).

\subsection{Higher Level Modular Forms}

The congruence subgroups of the modular group $\SL_2(\ZZ)$ that are most relevant for this paper are the {\em Hecke congruence groups}
\bea\label{congruence1}
\G_0(N) &=& \bigg\{\bigg[\begin{array}{cc} a&b\\c&d\end{array}\bigg]\in \SL_2(\ZZ)  , c= 0 \text{ mod }N\;
\bigg\}.
\eea
A modular form for $\G_0(N)$ is said to have {\em level} $N$. For $N$ a positive integer a modular form of weight $2$ for $\G_0(N)$ is given by
\bea\label{Eisenstein_form}
\l_N(\t)&=&\, q\pa_q\log\left(\frac{\eta(N\tau)}{\eta(\tau)}\right)\\\notag&=&\frac{N-1}{24}+\sum_{k>0}\s(k) (q^k -N q^{Nk})
\eea
where $\s(k)$ is the divisor function $\s(k)=\sum_{d\lvert k}d$. The function $\l_N$ is, of course, only non-zero when $N>1$.

Observe that a modular form on $\Gamma_0(N)$ is a modular form on $\Gamma_0(M)$ whenever $N|M$, and for some small $N$ the space of forms of weight $2$ is spanned by the $\l_d(\t)$ for $d$ a divisor of $N$. 

A discussion of the ring of weak Jacobi forms of higher level can be found in \cite{aoki}.

\subsection{Weight Zero Jacobi Forms}\label{subsec:basis_Jac}

According to \cite{Gri_EllGenCYMnflds} the graded ring $J_{0,*}= \bigoplus_{m \geq 1} J_{0,m-1}$, of weak Jacobi forms with weight $0$ and integral index (cf. \S\ref{sec:forms:jac}), is finitely generated, by $\varphi^{(2)}_{1}$, $\varphi^{(3)}_{1}$ and $\varphi^{(4)}_{1}$, where
\begin{gather}
\begin{split}\label{phiform}
\varphi^{(2)}_{1}&=4\left(f_2^2+f_3^2+f_4^2\right),\\
\varphi^{(3)}_{1}&=2\left(f_2^2f_3^2+f_3^2f_4^2+f_4^2f_2^2\right),\\
\varphi^{(4)}_{1}&=4 f_2^2f_3^2f_4^2,
\end{split}
\end{gather}
and $f_i(\tau,z)=\theta_i(\tau,z)/\theta_i(\tau,0)$ for $i\in\{2,3,4\}$ (cf. \S\ref{sec:JacTheta}). If we work over $\ZZ$ then we must include 
\be
\varphi^{(5)}_{1}=\frac{1}{4}\left(\varphi^{(4)}_{1}\varphi^{(2)}_{1}-(\varphi^{(3)}_{1})^2\right)
\ee
as a generator also, so that $J^{\ZZ}_{0,*}=\ZZ[\varphi^{(2)}_{1},\varphi^{(3)}_{1},\varphi^{(4)}_{1},\varphi^{(5)}_{1}]$. The ring $J_{0,*}$ has an ideal
\be
J_{0,*}(q)=\bigoplus_{m>1}J_{0,m-1}(q)= \left\{ \phi \in J_{0,*} \mid \phi(\tau,z)= \sum_{\substack{n, r \in \ZZ\\n >0}} c(n,r) q^n y^r \right\}
\ee
consisting of Jacobi forms that vanish in the limit as $\t\to i\inf$ (i.e. have vanishing coefficient of $q^0y^r$, for all $r$, in their Fourier expansion). This ideal is principal and generated by a weak Jacobi
form of weight $0$ and index $6$ given by
\be
\zeta(\tau,z)= \frac{\theta_1(\tau,z)^{12}}{\eta(\tau)^{12}}
\ee
(cf. \S\ref{sec:JacTheta} for $\theta_1$ and $\eta$). Gritsenko shows \cite{Gri_EllGenCYMnflds} that for any positive integer $m$ the quotient $J_{0,m-1}/J_{0,m-1}(q)$ is a vector space of dimension $m-1$ admitting a basis consisting of weight $0$ index $m-1$ weak Jacobi forms $\varphi^{(m)}_{n}$ (denoted $\psi^{(n)}_{0,m-1}$ in \cite{Gri_EllGenCYMnflds}) for $1\leq n\leq m-1$ such that the coefficient of $q^0y^k$ in $\varphi^{(m)}_{n}$ vanishes for $|k|>n$ but does not vanish for $|k|= n$. In fact Gritsenko works in the subring $J^{\ZZ}_{0,*}$ of Jacobi forms having integer Fourier coefficients and his $\varphi^{(m)}_{n}$ furnish a $\ZZ$-basis for the $\ZZ$-module $J^{\ZZ}_{0,*}/J^{\ZZ}_{0,*}(q)$. In what follows we record explicit formulas for some of the $\varphi^{(m)}_{n}$.

Following \cite{Gri_EllGenCYMnflds} we define
\begin{gather}
\begin{split}
\varphi^{(7)}_{1} &= \varphi^{(3)}_{1} \varphi^{(5)}_{1} - (\varphi^{(4)}_{1})^2, \\
\varphi^{(9)}_{1} &= \varphi^{(3)}_{1} \varphi^{(7)}_{1} - (\varphi^{(5)}_{1})^2 , \\
\varphi^{(13)}_{1} &= \varphi^{(5)}_{1} \varphi^{(9)}_{1}- 2(\varphi^{(7)}_{1})^2,
\end{split}
\end{gather}
and define $\varphi^{(m)}_{1}$ for the remaining positive integers $m$ according to the following recursive procedure. For $(12,m-1)=1$ and $ m > 5$ we set
\be
\varphi^{(m)}_{1}=(12,m-5) \varphi^{(m-4)}_{1} \varphi^{(5)}_{1}+(12,m-3) \varphi^{(m-2)}_{1} \varphi^{(3)}_{1}-2(12,m-4) \varphi^{(m-3)}_{1} \varphi^{(4)}_{1}.
\ee
For $(12,m-1)=2$ and $m > 10$ we set
\be
\varphi^{(m)}_{1}= \frac{1}{2} \bigl((12,m-5) \varphi^{(m-4)}_{1} \varphi^{(5)}_{1}+(12,m-3) \varphi^{(m-2)}_{1} \varphi^{(3)}_{1}-2(12,m-4) \varphi^{(m-3)}_{1} \varphi^{(4)}_{1} \bigr).
\ee
For $(12,m-1)=3$ and $m > 9$ we set
\be
\varphi^{(m)}_{1}= \frac{2}{3} (12,m-4) \varphi^{(m-3)}_{1} \varphi^{(4)}_{1} + \frac{1}{3} (12,m-7) \varphi^{(m-6)}_{1} \varphi^{(7)}_{1}-(12,m-5) \varphi^{(m-4)}_{1} \varphi^{(5)}_{1}.
\ee
For $(12,m-1)=4$ and $m > 16$ we set
\be
\varphi^{(m)}_{1}= \frac{1}{4} \bigl( (12,m-13) \varphi^{(m-12)}_{1} \varphi^{(13)}_{1}+(12,m-5) \varphi^{(m-4)}_{1} \varphi^{(5)}_{1}-(12,m-9) \varphi^{(m-8)}_{1} \varphi^{(9)}_{1} \bigr).
\ee
For $(12,m-1)=6$ and $m> 18$ we set
\be
\varphi^{(m)}_{1}=  \frac{1}{3} (12,m-4) \varphi^{(m-3)}_{1} \varphi^{(4)}_{1} + \frac{1}{6} (12,m-7) \varphi^{(m-6)}_{1} \varphi^{(7)}_{1}-\frac{1}{2} (12,m-5) \varphi^{(m-4)}_{1} \varphi^{(5)}_{1}.
\ee
Finally, for $(12,m-1)=12$ and $m> 24$ we set
\be
\varphi^{(m)}_{1}= \frac{1}{6}(12,m-4) \varphi^{(m-3)}_{1} \varphi^{(4)}_{1}-\frac{1}{4}(12,m-5) \varphi^{(m-4)}_{1} \varphi^{(5)}_{1} + \frac{1}{12}(12,m-7) \varphi^{(m-6)}_{1}\varphi^{(7)}_{1},
\ee
where we have set 
\be
{\varphi}^{(25)}_{1} =  \frac{1}{2} \varphi^{(21)}_{1} \varphi^{(5)}_{1}- \varphi^{(19)}_{1} \varphi^{(7)}_{1} + \frac{1}{2} (\varphi^{(13)}_{1})^2.
\ee

The $\varphi^{(m)}_{2}$ are defined by setting
\begin{gather}
	\begin{split}
\varphi^{(3)}_{2} &= (\varphi^{(2)}_{1})^2-24\, \varphi^{(3)}_{1},\\
\varphi^{(4)}_{2}  &= \varphi^{(2)}_{1} \varphi^{(3)}_{1} - 18 \,\varphi^{(4)}_{1},\\
\varphi^{(5)}_{2} &= \varphi^{(2)}_{1} \varphi^{(4)}_{1} - 16\, \varphi^{(5)}_{1},
	\end{split}
\end{gather}
and
\begin{gather}
\varphi^{(m)}_{2} = (12,m-4)\, \varphi^{(m-3)}_{1} \varphi^{(4)}_{1} - (12,m-5) \varphi^{(m-4)}_{1} \varphi^{(5)}_{1} - (12,m-1) \varphi^{(m)}_{1}
\end{gather}
for $m>5$, and the remaining $\varphi^{(m)}_n$ for $2\leq m\leq 25$ are given by
\begin{gather}\label{eqn:forms:wtzero:varphimn}
	\begin{split}
\varphi^{(m)}_{n} &= \varphi^{(m-3)}_{n-1} \varphi^{(4)}_{1},\\
\varphi^{(m)}_{m-2}&= (\varphi^{(2)}_{1})^{m-3} \varphi^{(3)}_{1}, \\
\varphi^{(m)}_{m-1} &= (\varphi^{(2)}_{1})^{m-1},  
	\end{split}
\end{gather}
where the first equation of (\ref{eqn:forms:wtzero:varphimn}) holds for $3\le n\le m-3$.

\newpage

\section{Characters}\label{sec:chars}

In \S\ref{sec:chars:irr} we give character tables (with power maps and Frobenius--Schur indicators) for each group $G^{\rs}$ for $X$ a Niemeier root system. These were computed with the aid of the computer algebra package GAP4 \cite{GAP4}. We use the abbreviations $a_n=\sqrt{-n}$ and $b_n=(-1+\sqrt{-n})/2$ in these tables.

The tables in \S\ref{sec:chars:eul} furnish cycle shapes and character values---the twisted Euler characters---attached to the representations of the groups $G^{\rs}$ described in \S\ref{sec:holes:gps}. Using this data we can obtain explicit expressions for the shadows $S^X_g$ of the vector-valued mock modular forms $H^{\rs}_g$ according to the prescription of \S\ref{sec:mckay:aut}.

\begin{sidewaystable}
\subsection{Irreducible Characters}\label{sec:chars:irr}
\begin{center}
\caption{Character table of $G^{\rs}\simeq M_{24}$, $\rs=A_1^{24}$}\label{tab:chars:irr:2}
\smallskip
\begin{small}

\end{center}
\end{table}

\clearpage

\begin{sidewaystable}
\subsection{Euler Characters}\label{sec:chars:eul}

The tables in this section describe the twisted Euler characters and associated cycle shapes attached to each group $G^{\rs}$ in \S\ref{sec:holes:gps}. According to the prescription of \S\ref{sec:mckay:aut} the character values $\bar{\chi}^{\rs_A}_g$, $\chi^{\rs_A}_g$, \&c., can be used to describe the shadows of the vector-valued mock modular forms $H^{\rs}_g$ attached to each $g\in G^{\rs}$ by umbral moonshine. We also identify symbols $n_g|h_g$ which are used in \S\ref{sec:conj:aut} to formulate conjectures about the modularity of $H^{\rs}_g$. By definition $n_g$ is the order of the image of $g\in G^{\rs}$ in $\bar{G}^{\rs}$ and $h_g=N_g/n_g$ where $N_g$ denotes the product of shortest and longest cycle lengths appearing in the cycle shape $\widetilde{\Pi}^{\rs}_g$. 

Note that we have $\widetilde{\Pi}^{\rs}_g=\widetilde{\Pi}^{\rs_A}_g=\bar{\Pi}^{\rs_A}_g$ in case $\rs=A_1^{24}$. More generally, we will have $\widetilde{\Pi}^{\rs}_g=\widetilde{\Pi}^{\rs_A}_g$ when $X=X_A$ (cf. \S\ref{sec:holes:lats}), and similarly when $X=X_D$ or $X=X_E$, so we suppress the row (that would otherwise be) labelled $\widetilde{\Pi}^{\rs}_g$ in these cases.

\begin{center}
\caption{Twisted Euler characters and Frame shapes at $\ll=2$, $\rs=A_1^{24}$}\label{tab:chars:eul:2}
\smallskip
\begin{tabular}{l@{ }|@{ }r@{ }r@{ }r@{ }r@{ }r@{ }r@{ }r@{ }r@{ }r@{ }r@{ }r@{ }r@{ }r@{ }r@{ }r}
\toprule
$[g]$	&1A	&2A	&2B	&3A	&3B	&4A	&4B	&4C	&5A	&6A	&6B	\\
	\midrule
$n_g|h_g$&$1|1$&$2|1$&${2|2}$&$3|1$&$3|3$&$4|2$&$4|1$&${4|4}$&$5|1$&$6|1$&$6|6$&\\
	\midrule
$\bar{\chi}^{\rs_A}_{g}$&     
	$24$&   $8$&   		$0$&   		$6$&   		$0$&   		$0$&   		$4$&  		$0$&   		$4$&   		$2$&   		$0$&  \\
	\midrule
$\bar{\Pi}^{\rs_A}_{g}$&
	$1^{24}$&	$1^{8}2^8$&	$2^{12}$&		$1^63^6$&	$3^8$&	$2^44^4$&	$1^42^24^4$&	$4^6$&		$1^45^4$&		$1^22^23^26^2$&	$6^4$\\\midrule 
\midrule
$[g]$	& 7AB	&8A	&10A	&11A&12A	&12B	&14AB	&15AB	&21AB	&23AB	\\
	\midrule
$n_g|h_g$&$7|1$&$8|1$&$10|2$&$11|1$&$12|2$&$12|12$&$14|1$&$15|1$&$21|3$&$23|1$\\
	\midrule
$\bar{\chi}^{\rs_A}_{g}$&
	$3$&   		$2$&   		$0$&   		$2$&   		$0$&   		$0$& 		$1$&   		$1$&   		$0$&   		$1$\\
	\midrule
$\bar{\Pi}^{\rs_A}_{g}$&	
	$1^37^3$&	$1^22^14^18^2$&	$2^210^2$&		$1^211^2$&$2^14^16^112^1$&$12^2$&	$1^12^17^114^1$&	$1^13^15^115^1$&$3^121^1$&	$1^123^1$\\\bottomrule
\end{tabular}
\end{center}

\begin{center}
\caption{Twisted Euler characters and Frame shapes at $\ll=3$, $\rs=A_2^{12}$}\label{tab:chars:eul:3}
\smallskip
\begin{tabular}{l@{ }|@{\;}r@{\,}r@{\,}r@{\,}r@{\,}r@{\,}r@{\,}r@{\,}r@{\,}r@{\,}r@{\,}r@{\,}r@{\,}r@{\,}r@{\,}r@{\,}r@{\,}r@{\,}r@{\,}r@{\,}r@{\,}r@{\,}r@{\,}r@{\,}r@{\,}r@{\,}r}\toprule
$[g]$&   	1A&   		2A&   		4A&   		2B&   		2C&   		3A&   		6A&   		3B&   		6B&   		4B& 	  		4C&   		5A&   		10A&   		12A&   		6C&   		6D&   		8AB&   	 	8CD&   		20AB&   		11AB&   		22AB\\ 
	\midrule
$n_g|h_g$&$1|1$&$1|4$&${2|8}$&$2|1$&$2|2$&$3|1$&$3|4$&${3|3}$&${3|12}$&$4|2$&$4|1$&$5|1$&$5|4$&$6|24$&$6|1$&$6|2$&$8|4$&$8|1$&${10|8}$&$11|1$&$11|4$\\
	\midrule
$\bar{\chi}^{\rs_A}_{g}$&   $12$&   $12$&   		$0$&   		$4$&   		$4$&   		$3$&   		$3$&  		$0$&   		$0$&   		$0$&   		$4$&   		$2$&   		$2$&   		$0$&   		$1$&   		$1$&   		$0$&   		$2$&   		$0$&   		$1$&   		$1$\\
$\chi^{\rs_A}_{g}$&   $12$&   $-12$&   		$0$&   		$4$&   		$-4$&   		$3$&   		$-3$&   		$0$&   		$0$&   		$0$&  		$0$&   		$2$&   		$-2$&   		$0$&   		$1$&   		$-1$&   		$0$&   		$0$&   		$0$&   		$1$&   		$-1$\\
	\midrule
$\bar{\Pi}^{\rs_A}_{g}$&$1^{12}$&	$1^{12}$&	$2^6$&		$1^42^4$&	$1^42^4$&	$1^33^3$&	$1^33^3$&	$3^4$&		$3^4$&		$2^24^2$&	$1^44^2$&	$1^25^2$&	$1^25^2$&	$6^2$&		$1^12^13^16^1$&$1^12^13^16^1$&$4^18^1$&	$1^22^18^1$&	$2^110^1$&$1^111^1$&	$1^111^1$\\
$\widetilde{\Pi}^{\rs_A}_{g}$&$1^{24}$&	${2^{12}}$&${4^6}$&	$1^82^8$&	${2^{12}}$&	$1^63^6$&	${2^36^3}$&$3^8$&	${6^4}$&	$2^44^4$&	$1^42^24^4$&	$1^45^4$&${2^210^2}$&	${12^2}$&		$1^22^23^26^2$&${2^26^2}$&$4^28^2$&$1^22^14^18^2$&${4^120^1}$&$1^211^2$&${2^122^1}$
	\\\bottomrule
\end{tabular}
\smallskip
\end{center}
\end{sidewaystable}

\begin{table}
\begin{center}
\caption{Twisted Euler characters and Frame shapes at $\ll=4$, $\rs=A_3^8$}\label{tab:chars:eul:4}
\smallskip
\begin{tabular}{l@{\, }|@{\;}r@{\, }r@{\, }r@{\, }r@{\, }r@{\, }r@{\, }r@{\, }r@{\, }r@{\, }r@{\, }r@{\, }r@{\, }r}\toprule
$[g]$&   		1A&   2A&   	2B&   	4A&			4B&			2C&   	3A&   	6A&   		6BC&   	8A&   	4C&   	7AB&   	14AB\\ 
	\midrule
$n_g|h_g$&$1|1$& $1|2$&	$2|2$&	$2|4$&			${4|{4}}$&	$2|1$& 	$3|1$& 	$3|2$&		$6|2$&	${4|{8}}$&		$4|1$&  	$7|1$&	$7|2$\\	
	\midrule
$\bar{\chi}^{\rs_A}_g$&   $8$&$8$&	$0$& 	$0$& 		$0$&		$4$&  	$2$& 	$2$&  		$0$& 	$0$& 	$2$& 	$1$& 	$1$\\
$\chi^{\rs_A}_g$&   $8$&$-8$&	$0$&	$0$& 		$0$&		$0$&  	$2$& 	$-2$& 		$0$& 	$0$& 	$0$& 	$1$& 	$-1$\\
	\midrule
$\bar{\Pi}^{\rs_A}_g$&	$1^8$&$1^8$&$2^4$&	$2^4$&		$4^2$&		$1^42^2$&$1^23^2$&$1^23^2$&	$2^16^1$&$4^2$&	$1^22^14^1$&$1^17^1$&$1^17^1$\\
$\widetilde{\Pi}^{\rs_A}_g$&	$1^{24}$&$1^8{2^8}$&$2^{12}$&${2^4}{4^4}$&	$4^6$&		$1^82^8$&	$1^63^6$&${1^22^23^26^2}$&	$2^36^3$&${4^2}{8^2}$&$1^42^24^4$	&$1^37^3$&${1^12^17^114^1}$
\\\bottomrule
\end{tabular}
\smallskip
\end{center}

\end{table}

\begin{table}
\begin{center}
\caption{Twisted Euler characters and Frame shapes at $\ll=5$, $\rs=A_4^6$}\label{tab:chars:eul:5}
\smallskip
\begin{tabular}{l@{\, }|@{\;}r@{\, }r@{\, }r@{\, }r@{\, }r@{\, }r@{\, }r@{\, }r@{\, }r@{\, }r@{\, }r@{\, }r@{\, }r}\toprule
$[g]$&   		1A&		2A&   	2B&   	2C&			3A&			6A&   	5A&   	10A&   		4AB&   	4CD&	12AB\\ 
	\midrule
$n_g|h_g$&		$1|1$&	$1|4$&	$2|2$&	$2|1$&		$3|3$&		$3|12$&	$5|1$&	$5|4$&		$2|8$&		$4|1$&	$6|24$	\\	
	\midrule
$\bar{\chi}^{\rs_A}_{g}$&   $6$&	$6$&	$2$& 	$2$& 		$0$&		$0$&  	$1$& 	$1$&  		$0$& 	$2$& 	$0$ 	\\
$\chi^{\rs_A}_{g}$&   $6$&	$-6$&	$-2$&	$2$& 		$0$&		$0$&  	$1$& 	$-1$& 		$0$&	$0$& 	$0$ 	\\
	\midrule
$\bar{\Pi}^{\rs_A}_{g}$&	$1^6$&	$1^6$&	$1^22^2$&$1^22^2$&	$3^2$&		$3^2$&	$1^15^1$&$1^15^1$&	$2^3$&	$1^24^1$&$6^1$\\
$\widetilde{\Pi}^{\rs_A}_{g}$&	$1^{24}$&	${2^{12}}$&${2^{12}}$&$1^82^8$&$3^8$&${6^4}$&$1^45^4$&${2^210^2}$&${4^6}$&$1^42^24^4$&${12^2}$\\\bottomrule
\end{tabular}
\smallskip
\end{center}
\end{table}

\begin{table}
\begin{center}
\caption{Twisted Euler characters and Frame shapes at $\ll=6$, $\rs=A_5^4D_4$}\label{tab:chars:eul:6}
\smallskip
\begin{tabular}{l|rrrrrrr}\toprule
$[g]$&   		1A&		2A&   	2B&   	4A&			3A&			6A&   	8AB\\ 
	\midrule
$n_g|h_g$&		$1|1$&	$1|2$&	$2|1$&	$2|2$&		$3|1$&		$3|2$&	$4|2$\\	
	\midrule
$\bar{\chi}^{\rs_A}_g$&
			$4$&	$4$&	$2$&	$0$&	$1$&	$1$&	$0$\\
$\chi^{\rs_A}_g$&
			$4$&	$-4$&	$0$&	$0$&	$1$&	$-1$&	$0$\\
			\midrule
$\bar{\Pi}^{\rs_A}_g$&
			$1^{4}$&	$1^4$&	$1^22^1$&	$2^2$&	$1^13^1$&	$1^13^1$&	$4^1$\\
$\widetilde{\Pi}^{\rs_A}_g$&
			$1^{20}$&	$1^42^8$&	$1^62^7$&	$2^24^4$&	$1^53^5$&	$1^12^23^16^2$&	$4^18^2$\\
			\midrule
$\bar{\chi}^{\rs_D}_g$&
			$1$&	$1$&	$1$&	$1$&	$1$&	$1$&	$1$\\
${\chi}^{\rs_D}_g$&
			$1$&	$1$&	$-1$&	$1$&	$1$&	$1$&	$-1$\\
$\check{\chi}^{\rs_D}_g$&
			$2$&	$2$&	$0$&	$2$&	$-1$&	$-1$&	$0$\\
			\midrule
$\bar{\Pi}^{\rs_D}_g$&
			$1^1$&	$1^1$&	$1^1$&	$1^1$&	$1^1$&	$1^1$&	$1^1$\\
$\widetilde{\Pi}^{\rs_D}_g$&
			$1^4$&	$1^4$&	$1^22^1$&	$1^4$&	$1^13^1$&	$1^13^1$&	$1^22^1$\\
	\midrule
$\widetilde{\Pi}_g^X$&
			$1^{24}$&	$1^82^8$&	$1^82^8$&	$1^42^24^4$&	$1^63^6$&	$1^22^23^26^2$&	$1^22^14^18^2$
	\\\bottomrule
\end{tabular}
\smallskip
\end{center}

\end{table}

\begin{table}
\begin{center}
\caption{Twisted Euler characters and Frame shapes at $\ll=6+3$, $\rs=D_4^6$}\label{tab:chars:eul:6+3}
\begin{tabular}{l@{\;}|@{\;}r@{\;}r@{\;}r@{\;}r@{\;}r@{\;}r@{\;}r@{\;}r@{\;}r@{\;}r@{\;}r@{\;}r@{\;}r@{\;}r@{\;}r}\toprule
$[g]$&   		1A&		3A&   	2A&   	6A&			3B&			6C&   	4A&	12A&	5A&	15AB&	2B&	2C&	4B&	6B&	6C\\ 
	\midrule
$n_g|h_g$&		$1|1$&	$1|3$&	$2|1$&	$2|3$&		$3|1$&		$3|3$&	$4|2$&	$4|6$&	$5|1$&	$5|3$&	$2|1$&	$2|2$&	$4|1$&	$6|1$&	$6|6$\\	
	\midrule
$\bar{\chi}^{\rs_D}_g$&
			$6$&	$6$&	$2$&	$2$&	$3$&	$0$&	$0$&	$0$&	$1$&	$1$&	$4$&	$0$&	$2$&	$1$&	$0$\\
${\chi}^{\rs_D}_g$&
			$6$&	$6$&	$2$&	$2$&	$3$&	$0$&	$0$&	$0$&	$1$&	$1$&	$-4$&	$0$&	$-2$&	$-1$&	$0$\\
$\check{\chi}^{\rs_D}_g$&
			$12$&	$-6$&	$4$&	$-2$&	$0$&	$0$&	$0$&	$0$&	$2$&	$-1$&	$0$&	$0$&	$0$&	$0$&	$0$\\
			\midrule
$\bar{\Pi}^{\rs_D}_g$&
			$1^6$&	$1^6$&	$1^22^2$&	$1^22^2$&	$1^33^1$&	$3^2$&	$2^14^1$&	$2^14^1$&	$1^15^1$&	$1^15^1$&	$1^42^1$&	$2^3$&$1^24^1$&$1^12^13^1$&	$6^1$\\
$\widetilde{\Pi}^{\rs_D}_g$&
			$1^{24}$&	$1^63^6$&	$1^82^8$&	$1^22^23^26^2$&	$1^63^6$&	$3^8$&	$2^44^4$&$2^14^16^112^1$&	$1^45^4$&$1^13^15^115^1$&$1^22^8$&$2^{12}$&$1^42^24^4$&$1^22^23^26^2$&$6^4$
	\\\bottomrule
\end{tabular}
\smallskip
\end{center}

\end{table}

\begin{table}
\begin{center}
\caption{\label{tab:FrmG7fp}
Twisted Euler characters and Frame shapes at $\ll=7$, $\rs=A_6^4$}\label{tab:chars:eul:7}
\begin{tabular}{l|rrrrr}\toprule
$[g]$&   1A&   2A&   4A&   3AB&   6AB\\ 
	\midrule
$n_g|h_g$&		$1|1$&	$1|4$&	$2|8$&	$3|1$&	$3|4$\\
	\midrule
$\bar{\chi}^{\rs_A}_{g}$&	4&	4&	0&	1&	1\\
$\chi^{\rs_A}_{g}$&	4&	-4&	0&	1&	-1\\
	\midrule
$\bar{\Pi}^{\rs_A}_{g}$&	$1^4$&	$1^4$&	$2^2$&	$1^13^1$&	$1^13^1$\\
$\widetilde{\Pi}^{\rs_A}_{g}$&	$1^{24}$&	${2^{12}}$&	${4^6}$&	$1^63^6$&	${2^36^3}$
\\
\bottomrule
\end{tabular}
\end{center}

\end{table}

\begin{table}
\begin{center}
\caption{Twisted Euler characters and Frame shapes at $\ll=8$, $\rs=A_7^2D_5^2$}\label{tab:chars:eul:8}
\begin{tabular}{l|rrrrr}\toprule
	$[g]$&	1A&	2A&	2B&2C&4A\\
		\midrule
$n_g|h_g$&		$1|1$&$1|2$&${2|1}$&$2|1$&${2|4}$\\	
		\midrule
$\bar{\chi}^{\rs_A}_g$&	2&2&0&2&0	\\
$\chi^{\rs_A}_g$&	2&-2&0&0&0	\\
		\midrule
$\bar{\Pi}^{\rs_A}_g$&	$1^2$&$1^2$&$2^1$&$1^2$&$2^1$\\
$\widetilde{\Pi}^{\rs_A}_g$&$1^{14}$&$1^22^6$&$2^7$&$1^82^3$&$2^14^3$\\
		\midrule
$\bar{\chi}^{\rs_D}_g$&	2&2&2&0&0	\\
$\chi^{\rs_D}_g$&	2&-2&0&0&0	\\
		\midrule
$\bar{\Pi}^{\rs_D}_g$&	$1^2$&$1^2$&$1^2$&$2^1$&$2^1$\\
$\widetilde{\Pi}^{\rs_D}_g$&	$1^{10}$&$1^62^2$&$1^82^1$&$2^5$&$2^34^1$\\
		\midrule
$\widetilde{\Pi}_g^X$&	$1^{24}$&	$1^82^8$&$1^82^8$&$1^82^8$&$2^44^4$
\\\bottomrule
\end{tabular}
\end{center}

\end{table}

\begin{table}
\begin{center}
\caption{Twisted Euler characters and Frame shapes at $\ll=9$, $\rs=A_8^3$}\label{tab:chars:eul:9}
\begin{tabular}{l|rrrrrr}\toprule
	$[g]$&	1A&	2A&	2B&2C&3A&6A\\
		\midrule
$n_g|h_g$&		$1|1$&$1|4$&${2|1}$&$2|2$&$3|3$&$3|12$\\	
		\midrule
	$\bar{\chi}^{\rs_A}_{g}$	&3&3&1&1&0&0\\
	$\chi^{\rs_A}_{g}$		&3&-3&1&-1&0&0\\
		\midrule
	$\bar{\Pi}^{\rs_A}_{g}$	&$1^3$&$1^3$&$1^12^1$&$1^12^1$&$3^1$&$3^1$\\
	$\widetilde{\Pi}^{\rs_A}_{g}$&$1^{24}$&${2^{12}}$&$1^82^8$&$2^{12}$&$3^8$&${6^4}$
	\\\bottomrule
\end{tabular}
\end{center}
\end{table}

\begin{table}
\begin{center}
\caption{Twisted Euler characters and Frame shapes at $\ll=10$, $\rs=A_9^2D_6$}\label{tab:chars:eul:10}
\begin{tabular}{l|rrr}\toprule
	$[g]$&	1A&	2A&	4AB\\
		\midrule
$n_g|h_g$&		$1|1$&$1|2$&${2|2}$\\	
		\midrule
$\bar{\chi}^{\rs_A}_g$&2&2&0\\
$\chi^{\rs_A}_g$&2&-2&0\\
		\midrule
$\bar{\Pi}^{\rs_A}_g$&$1^2$&$1^2$&$2^1$\\
$\widetilde{\Pi}^{\rs_A}_g$&$1^{18}$&$1^22^8$&$2^14^4$\\
	\midrule
$\bar{\chi}^{\rs_D}_g$&$1$&$1$&$1$\\
$\chi^{\rs_D}_g$&$1$&$1$&$-1$\\
		\midrule
$\bar{\Pi}^{\rs_D}_g$&$1^1$&$1^1$&$1^1$\\
$\widetilde{\Pi}^{\rs_D}_g$&$1^6$&$1^6$&$1^42^1$\\
	\midrule
$\widetilde{\Pi}^{\rs}_g$&	$1^{24}$&$1^82^8$&$1^42^24^4$
	\\\bottomrule
\end{tabular}
\end{center}

\end{table}

\begin{table}
\begin{center}
\caption{Twisted Euler characters and Frame shapes at $\ll=10+5$, $\rs=D_6^4$}\label{tab:chars:eul:10+5}
\begin{tabular}{l|rrrrr}\toprule
	$[g]$&	1A&	2A&	3A&2B&4A\\
		\midrule
	$n_g|h_g$&$1|1$&$2|2$&$3|1$&$2|1$&$4|4$\\
		\midrule
$\bar{\chi}^{\rs_D}_g$&$4$&$0$&$1$&$2$&$0$\\
$\chi^{\rs_D}_g$&$4$&$0$&$1$&$-2$&$0$\\
		\midrule
$\bar{\Pi}^{\rs_D}_g$&$1^4$&$2^2$&$1^13^1$&$1^22^1$&$4^1$\\
$\widetilde{\Pi}^{\rs_D}_g$&$1^{24}$&$2^{12}$&$1^63^6$&$1^82^8$&$4^6$
	\\\bottomrule
\end{tabular}
\end{center}
\end{table}

\begin{table}
\begin{center}
\caption{Twisted Euler characters and Frame shapes at $\ll=12$, $\rs=A_{11}D_7E_6$}\label{tab:chars:eul:12}
\begin{tabular}{l|rr}\toprule
	$[g]$&	1A&	2A\\
		\midrule
	$n_g|h_g$&		$1|1$&$1|2$\\	
		\midrule
$\bar{\chi}^{\rs_A}_g$&$1$&$1$\\
${\chi}^{\rs_A}_g$&$1$&$-1$\\
	\midrule
$\bar{\Pi}^{\rs_A}_g$&$1^1$&$1^1$\\
$\widetilde{\Pi}^{\rs_A}_g$&$1^{11}$&$1^12^5$\\
	\midrule
$\bar{\chi}^{\rs_D}_g$&$1$&$1$\\
${\chi}^{\rs_D}_g$&$1$&$-1$\\
	\midrule
$\bar{\Pi}^{\rs_D}_g$&$1^1$&$1^1$\\
$\widetilde{\Pi}^{\rs_D}_g$&$1^7$&$1^52^1$\\
	\midrule
$\bar{\chi}^{\rs_E}_g$&$1$&$1$\\
${\chi}^{\rs_E}_g$&$1$&$-1$\\
	\midrule
$\bar{\Pi}^{\rs_E}_g$&$1^1$&$1^1$\\
$\widetilde{\Pi}^{\rs_E}_g$&$1^6$&$1^22^2$\\
	\midrule
$\widetilde{\Pi}^{\rs}_g$&$1^{24}$&$1^82^8$
\\\bottomrule
\end{tabular}
\end{center}
\end{table}

\begin{table}
\begin{center}
\caption{Twisted Euler characters and Frame shapes at $\ll=12+4$, $\rs=E_6^4$}\label{tab:chars:eul:12+4}
\begin{tabular}{l|rrrrrrr}\toprule
$[g]$&   		1A&		2A&   	2B&   	4A&			3A&			6A&   	8AB\\ 
	\midrule
$n_g|h_g$&	$1|1$&$1|2$&$2|1$&$2|4$&$3|1$&$3|2$&$4|8$\\
	\midrule
$\bar{\chi}^{X_E}_g$&
			$4$&	$4$&	$2$&	$0$&	$1$&	$1$&	$0$\\
$\chi^{X_E}_g$&
			$4$&	$-4$&	$0$&	$0$&	$1$&	$-1$&	$0$\\
			\midrule
$\bar{\Pi}^{\rs_E}_g$&
			$1^4$&	$1^4$&	$1^22^1$&	$2^2$&	$1^13^1$&	$1^13^1$&	$4^1$\\
$\widetilde{\Pi}^{\rs_E}_g$&
			$1^{24}$&	$1^82^8$&	$1^82^8$&	$2^44^4$&	$1^63^6$&	$1^22^23^26^2$&	$4^28^2$
	\\\bottomrule
\end{tabular}
\smallskip
\end{center}
\end{table}

\begin{table}
\begin{center}
\caption{Twisted Euler characters and Frame shapes at $\ll=13$, $\rs=A_{12}^2$}\label{tab:chars:eul:13}
\begin{tabular}{r|rrr}\toprule
	$[g]$&	1A&	2A&	4AB\\
		\midrule
$n_g|h_g$&		$1|1$&$1|4$&${2|8}$\\	
		\midrule
	$\bar{\chi}^{\rs_A}_{g}$	&2&2&0\\
	$\chi^{\rs_A}_{g}$	&2&-2&0\\
		\midrule
	$\bar{\Pi}^{\rs_A}_{g}$	&$1^2$&$1^2$&$2^1$\\
	$\widetilde{\Pi}^{\rs_A}_{g}$&$1^{24}$&${2^{12}}$&${4^6}$
	\\\bottomrule
\end{tabular}
\end{center}

\end{table}

\begin{table}
\begin{center}
\caption{Twisted Euler characters and Frame shapes at $\ll=14+7$, $\rs=D_8^3$}\label{tab:chars:eul:14+7}
\begin{tabular}{l|rrr}\toprule
	$[g]$&	1A&	2A&	3A\\
		\midrule
	$n_g|h_g$&$1|1$&$2|1$&$3|3$\\
	\midrule
	$\bar{\chi}^{\rs_D}_{g}$	&3&1&0\\
	$\chi^{\rs_D}_{g}$	&3&1&0\\
		\midrule
	$\bar{\Pi}^{\rs_D}_{g}$	&$1^3$&$1^12^1$&$3^1$\\
	$\widetilde{\Pi}^{\rs_D}_{g}$&$1^{24}$&$1^8{2^{8}}$&${3^8}$
	\\\bottomrule
\end{tabular}
\end{center}
\end{table}

\begin{table}
\begin{center}
\caption{Twisted Euler characters and Frame shapes at $\ll=16$, $\rs=A_{15}D_9$}\label{tab:chars:eul:16}
\begin{tabular}{l|rr}\toprule
	$[g]$&	1A&	2A\\
		\midrule
	$n_g|h_g$&$1|1$&$1|2$\\
	\midrule
$\bar{\chi}^{\rs_A}_g$&$1$&$1$\\
${\chi}^{\rs_A}_g$&$1$&$-1$\\
	\midrule
$\bar{\Pi}^{\rs_A}_g$&$1^1$&$1^1$\\
$\widetilde{\Pi}^{\rs_A}_g$&$1^{15}$&$1^12^7$\\
	\midrule
$\bar{\chi}^{\rs_D}_g$&$1$&$1$\\
${\chi}^{\rs_D}_g$&$1$&$-1$\\
	\midrule
$\bar{\Pi}^{\rs_D}_g$&$1^1$&$1^1$\\
$\widetilde{\Pi}^{\rs_D}_g$&$1^9$&$1^72^1$\\
	\midrule
$\widetilde{\Pi}^{\rs}_g$&$1^{24}$&$1^82^8$
\\\bottomrule
\end{tabular}
\end{center}
\end{table}

\clearpage

\begin{table}
\begin{center}
\caption{Twisted Euler characters and Frame shapes at $\ll=18$, $\rs=A_{17}E_7$}\label{tab:chars:eul:18}
\begin{tabular}{l|rr}\toprule
	$[g]$&	1A&	2A\\
		\midrule
	$n_g|h_g$&$1|1$&$1|2$\\
	\midrule
$\bar{\chi}^{\rs_A}_g$&$1$&$1$\\
${\chi}^{\rs_A}_g$&$1$&$-1$\\
	\midrule
$\bar{\Pi}^{\rs_A}_g$&$1^1$&$1^1$\\
$\widetilde{\Pi}^{\rs_A}_g$&$1^{17}$&$1^12^8$\\
	\midrule
$\bar{\chi}^{\rs_E}_g$&$1$&$1$\\
	\midrule
$\bar{\Pi}^{\rs_E}_g$&$1^1$&$1^1$\\
$\widetilde{\Pi}^{\rs_E}_g$&$1^7$&$1^7$\\
	\midrule
$\widetilde{\Pi}^{\rs}_g$&$1^{24}$&$1^82^8$
\\\bottomrule
\end{tabular}
\end{center}
\end{table}

\begin{table}
\begin{center}
\caption{Twisted Euler characters and Frame shapes at $\ll=18+9$, $\rs=D_{10}E_7^2$}\label{tab:chars:eul:18+9}
\begin{tabular}{l|rr}\toprule
	$[g]$&	1A&	2A\\
		\midrule
	$n_g|h_g$&$1|1$&$2|1$\\
	\midrule
$\bar{\chi}^{\rs_D}_g$&$1$&$1$\\
${\chi}^{\rs_D}_g$&$1$&$-1$\\
	\midrule
$\bar{\Pi}^{\rs_D}_g$&$1^1$&$1^1$\\
$\widetilde{\Pi}^{\rs_D}_g$&$1^{10}$&$1^{8}2^1$\\
	\midrule
$\bar{\chi}^{\rs_E}_g$&$2$&$0$\\
	\midrule
$\bar{\Pi}^{\rs_E}_g$&$1^2$&$2^1$\\
$\widetilde{\Pi}^{\rs_E}_g$&$1^{14}$&$2^7$\\
	\midrule
$\widetilde{\Pi}^{\rs}_g$&$1^{24}$&$1^82^8$
\\\bottomrule
\end{tabular}
\end{center}
\end{table}

\clearpage

\begin{table}
\begin{center}
\caption{Twisted Euler characters and Frame shapes at $\ll=22+11$, $\rs=D_{12}^2$}\label{tab:chars:eul:22+11}
\begin{tabular}{l|rr}\toprule
	$[g]$&	1A&	2A\\
		\midrule
		$n_g|h_g$&$1|1$&$2|2$\\
		\midrule
	$\bar{\chi}^{\rs_D}_{g}$	&2&0\\
	$\chi^{\rs_D}_{g}$	&2&0\\
		\midrule
	$\bar{\Pi}^{\rs_D}_{g}$	&$1^2$&$2^1$\\
	$\widetilde{\Pi}^{\rs_D}_{g}$&$1^{24}$&${2^{12}}$
	\\\bottomrule
\end{tabular}
\end{center}
\end{table}

\begin{table}
\begin{center}
\caption{Twisted Euler characters and Frame shapes at $\ll=25$, $\rs=A_{24}$}\label{tab:chars:eul:25}
\begin{tabular}{l|rr}\toprule
	$[g]$&	1A&	2A\\
		\midrule
	$n_g|h_g$&	$1|1$&	$1|4$\\
	\midrule
$\bar{\chi}^{\rs_A}_g$&$1$&$1$\\
${\chi}^{\rs_A}_g$&$1$&$-1$\\
	\midrule
$\bar{\Pi}^{\rs_A}_g$&$1^1$&$1^1$\\
$\widetilde{\Pi}^{\rs_A}_g$&$1^{24}$&$2^{12}$
\\\bottomrule
\end{tabular}
\end{center}
\end{table}

\begin{table}
\begin{center}
\caption{Twisted Euler characters and Frame shapes at $\ll=30+6,10,15$, $\rs=E_8^3$}\label{tab:chars:eul:30+6,10,15}
\begin{tabular}{l|rrr}\toprule
$[g]$&   		1A&		2A&   	3A\\ 
	\midrule
	$n_g|h_g$&	$1|1$&	$2|1$&	$3|3$\\
	\midrule
$\bar{\chi}^{\rs_E}_g$&
			$3$&	$1$&	$0$\\
			\midrule
$\bar{\Pi}^{\rs_E}_g$&
			$1^3$&	$1^12^1$&	$3^1$\\
$\widetilde{\Pi}^{\rs_E}_g$&
			$1^{24}$&	$1^82^8$&	$3^8$
	\\\bottomrule
\end{tabular}
\smallskip
\end{center}
\end{table}

\clearpage

\section{Coefficient Tables}\label{sec:coeffs}

In this section we furnish tables of Fourier coefficients of small degree for the vector-valued mock modular forms $H^{\rs}_{g}$ attached to elements $g\in G^{\rs}$.
For each Niemeier root system $X$ and each conjugacy class $[g]\in G^X$ we give a table that displays the coefficients of $H^{\rs}_{g,r}$ for sufficiently many $r$ that any other component coincides with one of these up to sign. For instance, we always have $H^X_{g,r}=-H^X_{g,-r}$, so it suffices to list the $H^X_{g,r}$ for $0<r<m$ when the Coxeter number of $X$ is $m$. When $X$ has no A-type components there are further redundancies, so that $H^X_{g,2}=H^X_{g,4}=0$ and $H^X_{g,1}=H^X_{g,5}$, for example, when $X=D_4^6$. 
Recall that $H_g^X$ is conjectured to coincide with the graded trace function of the umbral module $K^X$ for all Niemeier root systems $X$ except for $X=A_3^8$, and the relation has an additional factor of 3 when $X=A_3^8$ (cf. Conjecture \ref{conj:conj:mod:Kell}).
 
For a Niemeier root system $X$ with Coxeter number $m$, the first row of each table labels the conjugacy classes, and the first column labels exponents of  $q^{1/4m}$, so that the entry in the row labelled $d$ and the column labelled $nZ$ in the table captioned $H^{\rs}_{g,r}$ is the coefficient of $q^{d/4m}$ in the Fourier expansion of $H^{\rs}_{g,r}$ for $[g]= nZ$.
Occasionally the functions $H^{\rs}_{g}$ and $H^{\rs}_{g'}$ coincide for non-conjugate $g$ and $g'$ and when this happens we condense information into a single column, writing $7AB$ in Table \ref{tab:coeffs:2_1}, for example, to indicate that the entries in that column are Fourier coefficients for both $H^{(2)}_{7A}$ and $H^{(2)}_{7B}$.

\clearpage

\begin{sidewaystable}
\subsection{Lambency 2}
\begin{small}
\centering
\caption{McKay--Thompson series $H^{(2)}_{g,1}=H^{\rs}_{g,1}$ for $\rs=A_1^{24}$}\label{tab:coeffs:2_1}\smallskip

\end{table}

\clearpage

\section{Decompositions}\label{sec:decompositions}

As explained in \S\ref{sec:mckay} (see also \S\ref{sec:conj:mod}) our conjectural proposals for the umbral McKay--Thompson series $H^{X}_{g,r}(\t)=\sum_dc^{X}_{g,r}(d)q^{d}$ (cf. \S\S\ref{sec:mckay},\ref{sec:coeffs}) determine the $G^{X}$-modules $K^{X}_{r,d}$ up to isomorphism for $d>0$, at least for those values of $d$ for which we can identify all the Fourier coefficients $c^{X}_{g,r}(d)$. In this section we furnish tables of explicit decompositions of $K^{X}_{r,d}$ into irreducible representations for $G^{X}$, for the first few values of $d$. The coefficient $c^{X}_r(d)$ of $H^{X}_{g,r}$ is non-zero only when $d=n-r^2/4m$ for some integer $n\geq 0 $, where $m$ denotes the Coxeter number of $X$. For each of the tables in this section the rows are labelled by the values $4m d$, so that the entry in row $k$ and column $\chi_i$ indicates the multiplicity of the irreducible representation of $G^X$ with character $\chi_i$ (in the notation of 
\S\ref{sec:chars:irr}) appearing in the $G^{X}$-module $K^{X}_{r,k/4m}$. One can observe that these tables support Conjectures \ref{conj:conj:mod:Kell}, \ref{conj:conj:disc:dualpair} and \ref{conj:conj:disc:doub}, as well as Conjecture \ref{conj:conj:mod:factoring_through}.

As we have seen in \S\ref{sec:holes:gps}, for some Niemeier root systems, especially those with higher Coxeter numbers, the umbral groups $G^X$ are very small 
and the decompositions of the $G^X$-module into irreducibles are  extremely simple and do not need to be tabulated. For those cases we give instead the following expression for the module in terms of the umbral McKay--Thompson series $H^X_g$ whose low order coefficients are tabulated in \S\ref{sec:coeffs}.

For the Niemeier root system $X=A_9^2 D_6$ corresponding to  
$\ll=10$, we have $G^X \cong4$ and the decomposition is simply given by
\begin{align}
\sum_{\substack{d>0\\40d=-r^2 \text{ mod }~40}} q^d K^{(10)}_{r,d} &= \frac{\chi_3+\chi_4}{2} H^{(10)}_{1A,r}, \quad r {\text{ even}},\\ 
\sum_{\substack{d>0\\40d=-r^2 \text{ mod }~40}} q^d K^{(10)}_{r,d} &= \frac{\chi_1+\chi_2}{2} H^{(10)}_{1A,r} + \frac{\chi_1-\chi_2}{2} H^{(10)}_{4AB,r} , ~ r{\text{ odd}}.
 \end{align}

For the six Niemeier root systems 
\begin{gather}
	A_{11}D_7E_6,\; A_{15}D_9,\; A_{17}E_7,\; A_{24},\; D_{10}E_7^2,\;D_{12}^2,
\end{gather}
corresponding to $\ll\in\{12,16,18,25,18+9,22+11\}$ the associated umbral group is $G^X \cong 2$ and the decomposition of the representations is simply given by
\be
\sum_{\substack{d>0\\4m d=-r^2 \text{ mod }~4m}} q^d K^{X}_{r,d} =  \frac{\chi_1+\chi_2}{2} H^{X}_{1A,r} + \frac{\chi_1-\chi_2}{2} H^{X}_{2A,r} ,
\ee
where $m$ again denotes the Coxeter number of $X$. 
In particular, for the A-type cases above, begin $A_{11}D_7E_6$, $A_{15}D_9$, $A_{17}E_7$, and $A_{24}$, this reduces to
\begin{align}
\sum_{\substack{d>0\\4m d=-r^2 \text{ mod }~4m}} q^d K^{X}_{r,d} &=  {\chi_1} H^{X}_{1A,r}, \quad r{\text{ odd},}
\\
\sum_{\substack{d>0\\4m d=-r^2 \text{ mod }~4m}} q^d K^{X}_{r,d} &=  {\chi_2} H^{X}_{1A,r} , \quad r{\text{ even},}
\end{align}
where $m$ again denotes the Coxeter number of $X$. 

For the two Niemeier lattices $D_{16}E_8$ and $D_{24}$
the umbral group is $G^X \cong 1$ and the decomposition is completely trivial.  

\clearpage

 \begin{sidewaystable}

\subsection{Lambency 2}

\caption{Decomposition of $K^{(2)}_1=K^X_1$ for $X=A_1^{24}$}\vspace{.2cm}
 \begin{center}
 \begin{small}


\end{center}
\end{minipage}
\end{table}

\clearpage

\addcontentsline{toc}{section}{References}
\providecommand{\href}[2]{#2}\begingroup\raggedright\endgroup

\end{document}